\newtheorem{theorem}{Theorem}[section]
\newtheorem{corollary}[theorem]{Corollary}
\newtheorem{lemma}[theorem]{Lemma}
\theoremstyle{definition}
\newtheorem{defn}{Definition}[section]
\newtheorem{assumption}{Assumption}
\theoremstyle{remark}
\newtheorem*{remark}{Remark}
\newcommand{\eps}{\varepsilon}
\newcommand{\veps}{\varepsilon}
\newcommand{\dist}{\mbox{dist}}
\newcommand{\la}{\lambda}
\newcommand{\lan}{\langle}
\newcommand{\ran}{\rangle}
\newcommand{\PP}{\mathbf{P}}
\newcommand{\EE}{\mathbf{E}}
\newcommand{\R}{\mathbb{R}}
\newcommand{\RR}{\mathbb{R}}
\newcommand{\Z}{\mathbb{Z}}
\newcommand{\N}{\mathbb{N}}
\newcommand{\NN}{\mathbb{N}}
\newcommand{\clc}{\mathcal{C}}
\newcommand{\cle}{\mathcal{E}}
\newcommand{\clf}{\mathcal{F}}
\newcommand{\clk}{\mathcal{K}}
\newcommand{\cln}{\mathcal{N}}
\newcommand{\clp}{\mathcal{P}}
\newcommand{\clr}{\mathcal{R}}
\newcommand{\clv}{\mathcal{V}}
\newcommand{\clq}{\mathcal{Q}}
\newcommand{\ap}{\textsc{ap}}
\title[Convergence of QSD in Unbounded Domains]{Asymptotics of Quasi-Stationary Distributions of Small Noise Stochastic Dynamical Systems in Unbounded Domains}
\date{November 2019}
\author{Amarjit Budhiraja}
\author{Nicolas Fraiman}
\author{Adam Waterbury}
\address{University of North Carolina at Chapel Hill}
\subjclass[2010]{Primary 60J10, 34F05; Secondary 60F10, 92D25}
\keywords{quasi-stationary distributions, uniform large deviation principles, random perturbations, long time behavior}
\begin{document}

\begin{abstract}
We consider a collection of Markov chains that model the evolution of multitype biological populations. The state space of the chains is the positive orthant, and the boundary of the orthant is the absorbing state for the Markov chain and represents the extinction states of different population types. We are interested in the long-term behavior of the Markov chain away from extinction, under a small noise scaling. Under this scaling, the trajectory of the Markov process over any compact interval converges in distribution to the solution of an ordinary differential equation (ODE) evolving in the positive orthant. We study the asymptotic behavior of the quasi-stationary distributions (QSD) in this scaling regime. Our main result shows that, under conditions, the limit points of the QSD are supported on the union of interior attractors of the flow determined by the ODE. We also give lower bounds on expected extinction times which scale exponentially with the system size. Results of this type when the  deterministic dynamical system obtained under the scaling limit is given by a discrete time evolution equation and the dynamics are essentially in a compact space (namely, the one step map is a bounded function)  have been studied by Faure and Schreiber (2014). Our results extend these to a setting of an unbounded state space and continuous time dynamics. The proofs rely on uniform large deviation results for small noise stochastic dynamical systems and methods from the theory of continuous time dynamical systems.

In general QSD for Markov chains with absorbing states and unbounded state spaces may not exist. We study one basic family of Binomial-Poisson models in the positive orthant where one can use Lyapunov function methods to establish existence of QSD and also to argue the tightness of the QSD of the scaled sequence of Markov chains. The results from the first part are then used to characterize the support of limit points of this sequence of QSD.

\end{abstract}

\maketitle

\section{Introduction}
In this work we study discrete time Markov chains with values in the $d$-dimensional positive orthant that are absorbed upon hitting the boundary of the orthant. 
Such processes are well suited to model biological and ecological systems \cite{gylsil,hog} where each coordinate represents the population size of individuals of a given type/species.
One of the  fundamental issues in mathematical biology is to characterize the conditions for a population of interacting species to coexist, that is, to survive for a long time with no extinctions. Many real-world systems are certain to go extinct eventually, yet appear to be stationary over any reasonable time scale. Generally, the finite nature of the resources available prevents the system from growing without limit. Thus, provided we wait long enough, a sufficiently strong downward fluctuation in population size is bound to occur.  We are interested in studying the long-term behavior of such systems away from extinction, under a suitable scaling of the system.

The processes we consider have a natural scaling parameter ($N$) representing the  system size. From standard
results, as $N\to \infty$, the linearly interpolated trajectory of the  state process $X^N$, over any compact time interval $[0,T]$, converges in  distribution in $C([0,T]:\RR_+^d)$ (the space of continuous functions from $[0,T]$ to $\RR_+^d$, equipped with 
the  uniform topology) to the solution of an ordinary differential equation (ODE) of the form $\dot{\varphi}(t) = G(\varphi(t))$, $\varphi(0) = x$ (see \eqref{eq:dynsys}).
Our goal is to analyze the limiting behavior of the steady states of $X^N$, conditioned on non-extinction, as $N\to \infty$, in terms of the properties of the flow determined by the above ODE.
The steady state of a Markov chain conditioned on non-extinction is made precise through the notion of a quasi-stationary distribution (QSD) (see Definition \ref{def:qsd}). We refer the reader to \cite{melvil} for a comprehensive background and survey of results in the theory of quasi-stationary distributions. QSD are important objects in biological models and discussions of applications in biology can be found in \cite{pol,pol2,bucpol,gos,gos2}.  

Our first main result (Theorem \ref{thm:main}) studies asymptotics of QSD of $X^N$(denoted as $\mu_N$), as $N\to \infty$, provided they exist and the sequence $\{\mu_N\}$ is tight. Specifically, in Theorem \ref{thm:main} we show that, under Assumptions \ref{assu:LLN}, \ref{assu:ap-classesfinite}, \ref{assu:mgf} and \ref{assu:irrbdr}, any limit point $\mu$ of the sequence of QSD $\{\mu_N\}$ is invariant under the flow determined by the ODE \eqref{eq:dynsys} and is supported on the union of interior attractors of the flow. We also provide lower bounds on the probability of non-extinction over a fixed time horizon that scale exponentially in system size. These bounds readily give similar lower bounds on expected time to extinction.

In general Markov chains with absorbing states and an unbounded state space may fail to have a QSD. Conditions for existence of QSD have been studied in \cite{ferkesmarpic,van,vanpol}; however these results are not easily applicable to the models considered in this work. We instead make use of the recent work of Champagnat and Villemonais \cite{chavil} that gives general and broadly applicable Lyapunov function-based Foster type criterion for existence of QSD (see Theorem \ref{thm:foster}). In our second main result we consider a basic family of Markov chains that we refer to as Binomial-Poisson models where the results of \cite{chavil} can be applied to give existence of QSD. Using the stability properties of these Markov chains we obtain bounds on exponential moments of certain hitting times that allow us to construct suitable Lyapunov functions (and related objects) for which the conditions in Theorem \ref{thm:foster} are satisfied, thus establishing the existence of a QSD $\mu_N$ for each  $N$. In fact, this QSD can be characterized as the limit, as $n\to \infty$, of the law of $X^N_n$, conditioned on non-extinction, starting from an arbitrary initial condition in the interior. Using this characterization, and similar moment estimates as used in the construction of the Lyapunov functions, we then argue that the sequence of QSD is tight. Finally, from these results and other properties of the model, we establish our second main result (Theorem \ref{thm:poisbin}), which says that the Binomial-Poisson model introduced in Section \ref{sec:mainresults}  satisfies all the conditions in Theorem \ref{thm:main} and therefore provides an important class of Markov chains where the conclusions of Theorem 
\ref{thm:main} hold.

\subsection{Approach and Proof Idea}
We now comment on the proof of Theorem \ref{thm:main}.  Our results are motivated by the work of 
Faure and Schrieber \cite{fausch} (see also the unpublished manuscript of Marmet \cite{marmet}) 
which considers analogous problems for a class of Markov chains where the  deterministic dynamical system obtained under the scaling limit is given by a discrete time evolution equation and the dynamics are essentially in a compact space (namely, the one step map is a bounded function). As in \cite{fausch}, one of the important ingredients in the proof is an analysis of the large deviation behavior of the sequence of small noise Markov chains in Section \ref{sec:themodel}. However due to the continuous time setting here one needs to study large deviation principles on suitable path spaces. One of the issues that arises in the large deviation analysis is that transition probabilities of the Markov chain behave in a degenerate manner near the boundaries. Due to this, the associated local rate functions have poor regularity properties, which in turn makes establishing a global large deviation principle (LDP) on the path space technically challenging. Another issue arises from the unboundedness of the state space. In particular, the moment generating functions of the noise sequences can become arbitrarily large as the system state becomes large. In order to handle these issues, we instead consider LDP for a collection of modified chains in $\RR^d$. These modified chains behave identically to the original chain until  exiting from a given compact set $K$ in the interior of the orthant, and, upon exiting, the modified chains change their behavior to a more regular dynamics in an appropriate sense. The large deviation estimates that are needed for our analysis can be obtained by piecing together such LDP associated with all such compact sets $K$. A similar approach, in a setting where the state space is compact, has been proposed in \cite{marmet}. Another important point in the analysis is that one needs large deviation estimates that are uniform in initial condition in compact sets, in the sense of Freidlin and Wentzell \cite[Chapter 3.3, pages 91-92]{frewen}. For this we use results on uniform Laplace principles for small noise stochastic difference equations that have been developed in \cite[Section 6.7]{dupell}.
The recent work \cite{salbuddup} shows that a uniform Laplace principle implies a uniform Large deviation principle in the sense of Freidlin and Wentzell. These results together allow  us to establish uniform probability estimates that are needed in our large deviation analysis (see Section \ref{sec:ldest}).

The proof of Theorem \ref{thm:main}, analogous to  \cite{fausch}, also requires a detailed analysis of the 
dynamical system properties of the flow associated with the ODE \eqref{eq:dynsys}. In particular a careful understanding of the properties of continuous time analogs of absorption preserving pseudo-orbits (in the terminology of \cite{fausch}) and those of the associated recurrence classes are key to the proof (see Section \ref{sec:ap}). Although some of the arguments are similar to \cite{fausch} there are new challenges that arise due to the unboundedness of the state space and the continuous time dynamics. To handle these features we exploit the stability properties of the underlying ODE and develop several a priori estimates for pseudo-orbits that are uniform in time and/or space.  The dynamical systems results in Section \ref{sec:ap} and the large deviation estimates in Section \ref{sec:ldest} take us most of the way to the proof of Theorem \ref{thm:main}. In particular in Section \ref{sec:behnb}, using these results, we establish the lower bound on probabilities of non-extinction given in Theorem 
\ref{thm:main} and also that the limit points $\mu$ of the QSD are invariant under the flow, they do not charge the boundary, and in fact that they are supported on the union of absorption preserving recurrence classes in the interior. The final step is to show that the support in fact lies in the union of the interior attractors. For this, following \cite{fausch}, we reformulate the notion of recurrence in terms of the quasipotential associated with the rate functions in the underlying large deviation principles.  Section \ref{sec:vrec} introduces the quasipotential and this alternative notion of recurrence and proves the equivalence between these two definitions of recurrence classes. The second definition is more well suited for the analysis and allows the use of large deviation estimates of Section \ref{sec:ldest} in studying the behavior of the stochastic dynamical system in terms of the properties of the recurrence classes. Combining the results of Section \ref{sec:vrec} with the results of Section
\ref{sec:ldest} and properties of absorption preserving pseudo-orbits studied in Section \ref{sec:ap}, the proof of the main result is completed in Section \ref{sec:pfofmain}.

\subsection{Organization}
The paper is organized as follows. 
In Section \ref{sec:res} we introduce the model of interest, state the assumptions and present the main results of the paper. 
In Section \ref{sec:ap} we introduce some notions from the theory of dynamical systems, and study properties of recurrence points and associated (pseudo) orbits for the dynamical system associated with the law of large numbers limit of the underlying sequence of scaled Markov chains.
In Section \ref{sec:ldest} we establish some key large deviation estimates.
In Section \ref{sec:behnb} we give some important asymptotic properties of  QSD (provided they exist) for the Markov chains considered in this work.
In Section \ref{sec:vrec} we introduce the quasipotential $V$ that governs the large deviation behavior of the model and study the properties of $V$-chain recurrence.
In Section \ref{sec:pfofmain} we complete the proof of our first  main theorem, namely Theorem \ref{thm:main}. Finally, Section \ref{sec:qsd} proves the second main result of this work, Theorem \ref{thm:poisbin}, which gives an important family of models for which Theorem \ref{thm:main} can be applied.

\subsection{Notation}
\label{sec:notat}
Let $\Delta \doteq \R_+^d$, 
 $\Delta^o \doteq \{x\in \Delta: x>0\}$, where inequalities for vectors are interpreted componentwise, and $\partial \Delta \doteq \Delta \setminus \Delta^o$. 
 Let for $N\in \NN$,
 $\Delta_N \doteq \Delta \cap \frac{1}{N}\Z^d$, $\partial \Delta_N \doteq \partial \Delta \cap \frac{1}{N}\Z^d$, and 
 $\Delta_N \doteq \Delta^o \cap \frac{1}{N}\Z^d$. 
 For $x,y \in \RR^d$, $\lan x, y\ran \doteq \sum_{i=1}^d x_i y_i$.
For $x \in \RR^d$ and $A\subset \RR^d$, $\dist(x,A)\doteq \inf_{y\in A}\|x-y\|$. 
We denote by $\cln^{\eps}(A)$ the $\eps$-neighborhood of a  set $A$ in $\Delta$, namely
$\cln^{\eps}(A) \doteq \{x \in \Delta: \dist(x,A)<\eps\}$.
For $r>0$ and $x \in \R^d$, $B_r(x)$ will denote the open ball of radius $r$ centered at $x$. Denote by $\clp(S)$ the space of probability measures on a Polish space $S$, equipped with the topology of weak convergence. For a $\mu \in \clp(S)$ and $\mu$-integrable $f: S\to \RR$, we write $\int f d\mu$ as $\mu(f)$. The support of $\mu \in \clp(S)$ will be denoted as $\mbox{supp}(\mu)$.
For a signed measure $\eta$ on $S$, $\|\eta\|_{TV}$ denotes its total variation norm, namely
$$\|\eta\|_{TV} = \sup_{f}\left |\int f d\eta\right|,$$
where the supremum is taken over all measurable maps $f: S \to \R$ such that $\sup_{x \in S}|f(x)|\le 1$.
For a bounded $F: S \to \RR$, we denote $\sup_{x\in S} |F(x)|$ by $\|F\|_{\infty}$.
We denote by $\clk$ the collection of all convex compact subsets  with a nonempty interior that are contained in $\Delta^o$.
For $T<\infty$, we denote by $C([0,T]:S)$  the space of continuous functions from $[0,T]$ to $S$, equipped with 
the  uniform topology.
For $\phi \in C([0,T]:\R^d)$, let $\|\phi\|_{*,T}\doteq \sup_{0\le t \le T}\|\phi(t)\|$.
Given a metric space $S_1$ and a Polish space $S_2$, a stochastic kernel $x\mapsto \theta(dy | x)$ on $S_2$ given $S_1$ is a measurable map from $S_1$ to $\clp(S_2)$.

\section{Statement of results}\label{sec:res}

\subsection{The model}
\label{sec:themodel}

Consider the sequence $\{X_k^N\}_{k\in \N_0}$ of $\Delta_N$-valued random variables defined as
\begin{equation}\label{eq:bmc}
\begin{aligned}
	X_{k+1}^N &= X_k^N + \frac{1}{N} \eta^N_{k+1}(X_k^N), \; k \in \N_0,\\
	X_0^N &= x^N
\end{aligned}
\end{equation}
where for each $x \in \Delta_N$, $\eta^N_k(x)$ is a $\Z^d$-valued random variable with distribution $\theta^N(\cdot | x)$ 
such that $\mbox{supp}(\theta^N(\cdot | x)) \subset \prod_{i=1}^d [-Nx_i, \infty)$.

 We will denote by $\PP^N_{\nu}$ the probability measure under which the Markov chain $\{X_k^N\}$ has the initial distribution $\nu$, namely $\PP^N_{\nu}(X_0^N \in A) = \nu(A)$. If $\nu = \delta_x$, we write $\PP^N_{\nu}$ as simply $\PP^N_x$.
\begin{defn}
	\label{def:qsd}
	A probability measure $\mu_N$ on $\Delta_N^o$ is said to be a \emph{quasi-stationary distribution} (QSD) for the Markov chain $\{X_k^N\}$ if for every $n \in \N$
	$$\PP_{\mu_N}[X_k^N =j \mid X_k^N \in \Delta_N^o] = \mu_N(j), \; \mbox{ for all } j \in \Delta_N^o \mbox{ and } k \in \N.$$
\end{defn}

\subsection{Definitions and Assumptions}
\label{sec:defnassu}
Consider the continuous time process $\hat X^N$ obtained from a linear interpolation of $X^N$, given as
\begin{align}\label{eqn:continterpolation}
	\hat{X}^N(t) = X^N_n + [X^N_{n+1} - X^N_{n}](Nt - n), \; t \in [n/N, (n+1)/N], \; n \in \N_0,
\end{align}
The following assumption on the law of large numbers behavior of  $\hat X^N$ will play a central role in our study of 
asymptotic properties of   QSD of $X^N$.
\begin{assumption}
	\label{assu:LLN}
	There is a Lipschitz function $G: \Delta \to \RR^d$ such that for any sequence $x_N \to x$, with $x_N \in \Delta_N$ for every $N \in \NN$,
	\begin{equation}
		\PP_{x_N}\left(\sup_{0\le t \le T}\|\hat X^N(t) - \varphi_t(x)\| > \eps\right) \to 0, \mbox{ as } N \to \infty, \mbox{ for every } T\in [0,\infty) \mbox{ and } \eps >0 \label{eq:llnps}
		\end{equation}
	where $\{\varphi_t(x)\}_{t\ge 0}$ is the solution of the ODE
\begin{equation}\label{eq:dynsys}
	\dot{\varphi}(t) = G(\varphi(t)), \; \varphi(0) = x.
\end{equation}

\end{assumption}

We now  introduce the notion of absorption preserving pseudo-orbits for the flow associated with the ODE \eqref{eq:dynsys}. Discrete time analogs of these were introduced in \cite{fausch}.

\begin{defn}
Given $\delta, T > 0$, consider a family of points $\xi = (\xi_0 = x,\dots, \xi_n = y) \in \Delta^{n+1}$ and a collection of times $T \leq T_1, \dots,  T_{n-1}$ such that
\begin{itemize}
\item $\|\xi_0 - \xi_1\| < \delta$
\item whenever $\xi_i \in \partial \Delta$, $\xi_{i+1} \in \partial \Delta$
\item $\|\xi_{i+1} - \varphi_{T_i}(\xi_i)\| < \delta$ for $1 \leq i \leq n-1$.
\end{itemize}
The piecewise continuous path
\[
\left(x, \{\varphi_t(\xi_1) : t \in [0,T_1]\}, \{\varphi_t(\xi_2) : t \in[0,T_2]\}, \dots, \{\varphi_t(\xi_{n-1}) : t \in [0,T_{n-1}]\}, y\right).
\]
is said to be a $(\delta,T)$ absorption preserving pseudo-orbit (\ap--pseudo-orbit) from $x$ to $y$. Occasionally, we will also refer to the sequence $\{\xi_i\}_{i=0}^n$ as a $(\delta, T)$ \ap--pseudo-orbit from $x$ to $y$.
\end{defn}

\begin{defn}
For two points $x,y \in \Delta$, say that $x <_\ap y$ if for all $\delta, T > 0$ 
there is a $(\delta,T)$ \ap--pseudo-orbit from $x$ to $y$. If $x <_\ap y$ 
and $y <_\ap x$, we write $x \sim_\ap y$. If $x \sim_\ap x$,
 then $x$ is said to be an \ap--chain recurrent point. Let $\clr_\ap$ denote the set of of
  \ap--chain recurrent points, and note that $\sim_\ap$ is an equivalence relation 
  on $\clr_\ap$. For $x \in \clr_\ap$, the equivalence class $[x]_\ap$ of all $y \in \clr_{\ap}$ such that $y \sim_{\ap} x$ is said to
   be \ap--basic class. Such a class  is called maximal if, whenever for some $y\in \clr_{\ap}$, $x <_\ap y$, we have $y \in [x]_{\ap}$.
  A maximal \ap--basic class is called an \ap--quasiattractor. We let $\clr_{\ap}^* \doteq \clr_{\ap}\cap \Delta^o$.
\end{defn}

\label{sec:hypo}
The following will be  our main assumptions on the dynamical system $\{\varphi_t(x)\}$.
Parts (c) and (d) say that the velocity fields decay as the boundaries are approached but not at too fast a rate.
Part (e) is our main stability assumption on the dynamics. Parts (a), (b) are requirements on recurrence classes for the flow that are satisfied quite broadly.
\begin{assumption}\label{assu:ap-classesfinite}
\begin{enumerate}[(a)]
		\item There are a a finite  number of \ap--basic classes contained in $\Delta^o$, which are denoted by $\{K_i\}_{i=1}^{v}$. Each 
		$K_i$ is a  closed set. Additionally, for some $l < v$, $\{K_i\}_{i=1}^l$ are \ap-quasiattractors and $\{K_i\}_{i=l+1}^v$ 
	are non \ap-quasiattractors.
	\item For each $i = 1, \ldots, v$ there is a $x_i\in K_i$ such that, for every $T>0$, $\{\varphi_t(x_i): t\ge T\}$ is dense in $K_i$. 
	 \item There exists $\eps>0$ and $m>0$ such that for every
 $i= 1, \ldots, d$, $G_i(x)> mx_i$  whenever  $x \in \Delta^o$ and $x_i \le \eps$.
 \item For every $i=1, \ldots, d$, as $\delta \to 0$, $\sup\limits_{x \in \Delta: x_i \le \delta}G_i(x) \to 0$.
 \item For some $\kappa \in (0,\infty)$ and $M \in (1,\infty)$, $\lan x, G(x)\ran \le -\kappa \|x\|^2$ for all $x \in \Delta$ with $\|x\|\ge M$.
 \end{enumerate}
\end{assumption}

We will need certain assumptions on the moment generating functions of $\theta^N(\cdot | x)$. 
\begin{assumption}\label{assu:mgf}
	The following hold:
	\begin{enumerate}[(a)]
		\item For every $N\in \NN$, $\zeta \in \RR^d$,   and $x \in  \Delta_N^o$
		$$H^N(x,\zeta) \doteq \log \int_{\RR^d} \exp\{\lan \zeta, y\ran\} \theta^N(dy | x) <\infty.$$
		\item There exists a stochastic kernel $\theta(dy | x)$ on $\RR^d$ given $\Delta^o$ such that 
		\begin{enumerate}[(i)]
			\item For every $x\in \Delta^o$, the convex hull of $\mbox{supp}(\theta(\cdot | x)) = \RR^d$.
			\item The map $x\mapsto \theta(\cdot | x)$ is a continuous map from $\Delta^o$ to $\clp(\RR^d)$.
			\item For every $\zeta \in \RR^d$ and $K \in \clk$, $\sup_{x\in K} H(x,\zeta) <\infty$,
			where
			$$H(x,\zeta) \doteq \log \int_{\RR^d} \exp\{\lan \zeta, y\ran\} \theta(dy | x).$$
			Furthermore,
			as $N\to \infty$,
			$$\sup_{x\in K\cap \Delta_N} |H^N(x,\zeta) - H(x,\zeta)| \to 0.$$
			
			\end{enumerate}
	\end{enumerate}
	
\end{assumption}

We introduce one final assumption to provide a lower bound on the probability that $X^N$ is absorbed when its initial state is sufficiently close to $\partial \Delta$.

\begin{assumption}
	\label{assu:irrbdr}
	\begin{enumerate}[(a)]
		\item For each $N\in \NN$ and $x,y \in \Delta_N^o$, there is a $k \in \NN$ such that $\PP_y^N(X^N_k=x)>0$.
		\item  For every $\gamma \in (0,\infty)$ and $T\in \N$, there is an open neighborhood $U_{\gamma}$ of $\partial \Delta$ in $\Delta$ such that
	$$\liminf_{N\to \infty} \inf_{x \in U_{\gamma}\cap \Delta_N} \frac{1}{N} \log \PP_x(\hat X^{N}(T) \in \partial \Delta) \ge -\gamma.$$
	\end{enumerate}
	
\end{assumption}

We now present our main results.

\subsection{Main results}
 \label{sec:mainresults}



It is easy to see that under Assumption \ref{assu:ap-classesfinite}, for all $x\in \Delta$ and $t\ge 0$, $\varphi_t(x) \in \Delta$. In particular $\varphi_t$ is a measurable map from $\Delta$ to itself for every $t\ge 0$. We recall the definition of an invariant measure for the flow $\{\varphi_t\}$.
\begin{defn}
A probability measure $\mu$ on $\Delta$ is \emph{$\{\varphi_t\}$-invariant} if $\mu(\varphi_t^{-1}(A)) = \mu(A)$ for every measurable $A \subseteq \Delta$ and $t > 0$.
\end{defn}

\begin{theorem}\label{thm:main}
	Suppose that for every $N\in \N$, there exists a quasi-stationary distribution $\mu_N$ for $\{X^N_n\}_{n\in \NN_0}$ and that  the sequence $\{\mu_N\}$ is relatively compact
	as a sequence of probability measures on $\Delta^o$.  Suppose that Assumptions \ref{assu:LLN}, \ref{assu:ap-classesfinite}, \ref{assu:mgf} and \ref{assu:irrbdr} are satisfied.
	Then any weak limit point $\mu$ of this sequence  is $\{\varphi_t\}$-invariant and is supported on $\cup_{i=1}^l K_i$.
	Moreover, letting 
	\begin{equation}\label{eq:lan}
		\lambda_N \doteq [\PP_{\mu_N}(X_1^N \in \Delta^o)]^N,\end{equation} 
	there is a $c>0$ and $N_0 \in \N$ such that $\lambda_N \ge 1 - e^{-cN}$ for all $N \ge N_0$.
\end{theorem}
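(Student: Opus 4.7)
The plan is to exploit the QSD identity $\PP_{\mu_N}(X_k^N \in A) = \lambda_N^{k/N} \mu_N(A)$ for $A \subset \Delta_N^o$, combined with the LLN in Assumption \ref{assu:LLN} and the uniform LDP estimates of Section \ref{sec:ldest}, to first prove $\lambda_N \to 1$ and invariance of limit points, and then to use the quasipotential $V$ from Section \ref{sec:vrec} to narrow the support down to the quasi-attractors.

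First I would establish the exponential lower bound on $\lambda_N$. Tightness of $\{\mu_N\}$ in $\Delta^o$ lets me fix a compact $K \in \clk$ with $\mu_N(K^c)$ small uniformly in $N$. For $x \in K$, the flow $\varphi_t(x)$ stays in a larger compact $K' \in \clk$ on $[0,1]$, using Assumption \ref{assu:ap-classesfinite}(c),(e) to keep it bounded away from both $\partial\Delta$ and infinity, and the uniform LDP of Section \ref{sec:ldest} then yields a uniform positive lower bound on $\PP_x^N(\hat X^N(1) \in K')$ for $x\in K\cap\Delta_N$. Combining with Assumption \ref{assu:irrbdr}(b) to control mass near $\partial\Delta$, we get $\PP_{\mu_N}(X_1^N \in \Delta^o) \ge 1 - e^{-c N}$ for some $c>0$, whence $\lambda_N \ge 1 - e^{-cN}$ after adjusting $c$.

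Second, to prove $\{\varphi_t\}$-invariance of any limit point $\mu$, fix $t>0$ and $f \in C_b(\Delta)$ (extended by $0$ on extinction). The QSD identity gives
\[
\mu_N(f) = \lambda_N^{-\lfloor Nt\rfloor/N}\, \EE_{\mu_N}\bigl[f(X^N_{\lfloor Nt\rfloor})\mathbf{1}_{X^N_{\lfloor Nt\rfloor}\in \Delta_N^o}\bigr].
\]
By Assumption \ref{assu:LLN}, $X^N_{\lfloor Nt\rfloor}$ is close to $\varphi_t(X_0^N)$; by the bound just shown, $\lambda_N^{\lfloor Nt\rfloor/N}\to 1$; and the extinction contribution is negligible by tightness of $\{\mu_N\}$ in $\Delta^o$. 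Passing to a weakly convergent subsequence yields $\mu(f) = \mu(f\circ\varphi_t)$. Next, Assumption \ref{assu:irrbdr}(b) precludes $\mu$ from charging $\partial\Delta$, since any fixed mass there would force $\lambda_N$ below $1$ exponentially fast. For $x \in \Delta^o \setminus \clr_\ap$, non-recurrence gives $\delta,T>0$ and a neighborhood $U$ of $x$ admitting no $(\delta,T)$ \ap--pseudo-orbit from $U$ back to $U$; the uniform LDP of Section \ref{sec:ldest} combined with the pseudo-orbit estimates of Section \ref{sec:ap} then produces $c'>0$ and $n_0$ with $\sup_{y \in U \cap \Delta_N} \PP_y^N(X^N_{n_0} \in U) \le e^{-c' n_0}$, which by flow-invariance forces $\mu(U)=0$; hence $\mu$ is supported on $\clr_\ap^*$.

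The main obstacle is the final refinement from $\clr_\ap^*$ to $\cup_{i=1}^l K_i$. Here I would invoke Section \ref{sec:vrec}: the relation $\sim_\ap$ coincides with mutual zero-$V$-cost reachability, and a basic class $K_i$ is a quasi-attractor precisely when escape has strictly positive $V$-cost. For a non-attractor $K_j$ with $j>l$, there exists a quasi-attractor $K_i$ reachable from $K_j$ at zero cost while $V(K_i,K_j)>0$. A Freidlin--Wentzell--type comparison, adapted via the uniform LDP on path space, shows that over windows of length polynomial in $N$, mass initialized in $\cln^{\eps}(K_j)$ is transported to a neighborhood of $\cup_{i\le l}K_i$ with probability $1-e^{-c''N}$, while the reverse transition is exponentially suppressed. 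Combined with the QSD identity and $\lambda_N \to 1$, iterating yields $\mu(\cln^{\eps}(K_j)) = 0$ for every $\eps>0$ and every $j>l$. The delicate technical point is ensuring all LDP bounds are uniform in initial condition over the relevant compacts and are compatible with the modified-chain construction of Section \ref{sec:ldest} designed to bypass the degeneracy of the local rate functions near $\partial\Delta$, together with uniform control of $V$ on compact subsets of $\Delta^o$.
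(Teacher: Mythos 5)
Your overall roadmap (LDP estimates + QSD identity $\Rightarrow$ lower bound on $\lambda_N$; LLN + QSD identity $\Rightarrow$ invariance of limit points; quasipotential comparison $\Rightarrow$ refinement to quasi-attractors) tracks the paper's architecture, and your invariance argument in the second paragraph is essentially the paper's Theorem \ref{thm:invariance}. However, there is a genuine circularity gap in the first step, and a wrong turn in the third.

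\textbf{Gap in the lower bound on $\lambda_N$.} Relative compactness of $\{\mu_N\}$ in $\clp(\Delta^o)$ only gives $\mu_N(K^c)\le\eps$ for a \emph{fixed} $\eps>0$ and some $K\in\clk$. It does not give exponentially small mass outside $K$. So even granting a uniform estimate $\PP_x(X^N_1\in\Delta^o)\ge 1-e^{-cN}$ for $x\in K\cap\Delta_N$ (which follows from an LDP \emph{upper} bound on escape, not a ``positive lower bound'' as you write), decomposing $\PP_{\mu_N}(X^N_1\in\Delta^o)$ on $K$ and $K^c$ only yields $\PP_{\mu_N}(X^N_1\in\Delta^o)\ge(1-\eps)(1-e^{-cN})$, which is bounded away from $1$ rather than $\ge 1-e^{-c'N}$. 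Assumption \ref{assu:irrbdr}(b) makes matters worse, not better: it gives a \emph{lower} bound on absorption probability from near $\partial\Delta$. The paper instead sidesteps ever estimating $\PP_{\mu_N}(X^N_1\in\partial\Delta)$ on the whole support. It picks an attractor $A\subset\Delta^o$ (guaranteed by Assumption \ref{assu:ap-classesfinite}(a) and Corollary \ref{cor:quasiattract}) with fundamental neighborhood $U$, observes from the QSD identity that $\lambda_N^T\mu_N(U)=\PP_{\mu_N}(X^N_{NT}\in U)\ge\inf_{x\in U\cap\Delta_N}\PP_x(X^N_{NT}\in U)\,\mu_N(U)$, divides by $\mu_N(U)>0$ (Lemma \ref{lem:posmzr}), and bounds the infimum by $1-e^{-c_0N}$ via Lemma \ref{lem:eigenlemma}. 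No control of mass away from $U$ is needed. Exponential decay of $\mu_N$ near $\partial\Delta$ (your Assumption \ref{assu:irrbdr}(b) step) is then a \emph{consequence} (Theorem \ref{thm:abs}), not an input; running it in the other order is circular.

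\textbf{Support on $\clr_\ap^*$.} You propose to show $\mu(U)=0$ for a neighborhood $U$ of a non-recurrent interior point via an LDP bound $\sup_{y\in U}\PP_y(X^N_{n_0}\in U)\le e^{-c'n_0}$ ``which by flow-invariance forces $\mu(U)=0$.'' This does not close: a one-shot probability bound on return to $U$ does not translate into $\mu(U)=0$, and what actually does the work once invariance is established is the Poincar\'e recurrence theorem. The paper's route is: invariance (Theorem \ref{thm:invariance}) + Poincar\'e recurrence (Theorem \ref{thm:poincare}) $\Rightarrow$ $\mu$-a.e.\ $x$ satisfies $x\in\omega(x)$, so $\operatorname{supp}(\mu)$ lies in the Birkhoff center; Lemma \ref{lem:birkhoffcenter} places $BC(\varphi)\cap\Delta^o\subset\clr_\ap^*$; Theorem \ref{thm:abs} removes the boundary. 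No LDP is needed at this step. Your appeal to LDP here is a red herring and would require substantial extra work to be made rigorous.

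Your final paragraph on the refinement from $\clr_\ap^*$ to $\cup_{i\le l}K_i$ captures the right spirit (quasipotential + escape from non-attractors over exponentially long time windows, balanced against $\lambda_N^{m_N T_0}\to 1$), and is consistent with the paper's Lemmas \ref{lem:timeaway} and \ref{lem:choosevi} and the bookkeeping with events $\cle_N,\cle'_N$ in Section \ref{sec:pfofmain}, though the decomposition and the balancing of exponents there is considerably more delicate than your sketch suggests.
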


We now introduce a basic family of Markov chains which we refer to as the {\em Binomial-Poisson models} for which Theorem \ref{thm:main} can be applied.

Consider a population with $d$ types of particles evolving in discrete time in which at each time step, any given particle dies with probability $1/N$, and given that the population size  at previous time step was $Nx = (Nx_i)_{i=1}^d$, the number of particles of type $i$ that are produced at the next time step follows a
Poisson distribution with mean $F_i(x)$  distribution for some
 $F:\Delta \to \RR_+^d$. 
 Denoting the total number of particles of type $i$ at time $k$ as $NX^{N,i}_k$, the evolution of $X^N_k = (X^{N,1}_k, \ldots, X^{N,d}_k)$ is then given by \eqref{eq:bmc} where, for each $N$, $\theta^N(dy|x) \equiv \theta^{N,*}(dy|x)$ is the distribution of $U-V$ where $U = (U_i)_{i=1}^d$, $V = (V_i)_{i=1}^d$, $\{U_i, V_j, i,j=1, \ldots, d\}$ are mutually independent,  $U_i \sim \mbox{Poi}(F_i(x))$ (namely, a Poisson random variable with mean $F_i(x)$)
and $V_i \sim \mbox{Bin}(Nx_i, \frac{1}{N})$ (namely a Binomial random variable with $Nx_i$ trials and probability of success $1/N$).

Define, 
\begin{equation}
	\tau^N_{\partial} \doteq \inf\{k \in \N_0: X_k^N \in \partial \Delta_N\}.
	\label{eq:tnpa}
	\end{equation}
For a bounded and measurable $f : \Delta_N \rightarrow \R$,
\begin{equation}\label{eq:pnnf}
P_n^N f (x) \doteq \EE_x[ f({X}^N_n);\; \tau_{\partial}^N > n].
\end{equation}
\begin{theorem}\label{thm:poisbin}
	Suppose that, for each $N$, $X^N$ is given by \eqref{eq:bmc} with $\theta^N \equiv \theta^{N,*}$.
	Further suppose that $F$ is a bounded Lipschitz map and Assumption \ref{assu:ap-classesfinite}(a)-(d) are satisfied with $G(x)=F(x)-x$.
	Then, there is a $\mu_N \in \clp(\Delta^o_N)$ such that for every $N \in \N$, and $x_N \in \Delta^o_N$, 
$$ \frac{\delta_{x_N} P_n^N}{\delta_{x_N} P_n^N(1_{\Delta_N^o})}$$ converges to $\mu_N$ in the total variation distance as $n \rightarrow \infty$.
The measure $\mu_N$ is a QSD for $\{X^N\}$.
The sequence $\{\mu_N\}_{N\in \N}$ is relatively compact as a sequence of probability measures on $\Delta$, and any weak limit point $\mu$ of this sequence  is $\{\varphi_t\}$-invariant and is supported by $\cup_{i=1}^l K_i$.
Finally, letting $\lambda_N \doteq [\PP_{\mu_N}(X_1^N \in \Delta^o)]^N$, 
	there is a $c>0$ and $N_0 \in \N$ such that $\lambda_N \ge 1 - e^{-cN}$ for all $N \ge N_0$.
\end{theorem}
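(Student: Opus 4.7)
The plan is to verify each hypothesis of Theorem \ref{thm:main} for the Binomial-Poisson family $\{X^N\}$ and to obtain existence of $\mu_N$ (with the claimed total variation convergence) via an application of the Foster-type criterion of Champagnat and Villemonais (Theorem \ref{thm:foster}). A preliminary remark is that Assumption \ref{assu:ap-classesfinite}(e) is in fact automatic here: with $G(x)=F(x)-x$ and $F$ bounded by some $M$, one has $\lan x,G(x)\ran \le M\|x\|-\|x\|^2 \le -\tfrac{1}{2}\|x\|^2$ whenever $\|x\|\ge 2M$. Thus parts (a)--(d) of the assumption (given by hypothesis) together with this deduced (e) put us in the setting of Theorem \ref{thm:main}.

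Verification of Assumptions \ref{assu:LLN} and \ref{assu:mgf} reduces to explicit moment generating function computations. The limiting kernel $\theta(\cdot|x)$ is the product law of $U_i-V_i$, $i=1,\dots,d$, with $U_i\sim \mathrm{Poi}(F_i(x))$ and $V_i\sim \mathrm{Poi}(x_i)$ independent; its log-MGF is
\[
H(x,\zeta)=\sum_{i=1}^d \bigl[F_i(x)(e^{\zeta_i}-1)+x_i(e^{-\zeta_i}-1)\bigr],
\]
which is continuous in $x$, finite for all $\zeta\in\R^d$, and approached locally uniformly by $H^N$ (only the Binomial factor differs, and $(1+(e^{-\zeta_i}-1)/N)^{Nx_i}\to \exp(x_i(e^{-\zeta_i}-1))$). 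The mean is $G$, the convex hull of the support is $\R^d$, and the standard small-noise LLN for difference schemes with Lipschitz drift yields Assumption \ref{assu:LLN}. For Assumption \ref{assu:irrbdr}(a), each coordinate of $NX^N$ can jump in one step to any value in $\N_0$ with positive probability. For (b), given $x$ close to $\partial \Delta$ in coordinate $i$, we use the event that no type-$i$ particle is born during $[0,T]$ and every initial type-$i$ particle dies by time $T$. On this event, $X^{N,i}_s\le x_i$ throughout, so by Assumption \ref{assu:ap-classesfinite}(d) the no-birth probability is $\ge \exp(-NT\beta(x_i))$ with $\beta(\delta)\doteq\sup\{F_i(y):y_i\le\delta\}\to 0$, while the all-die probability equals $(1-(1-1/N)^{NT})^{Nx_i}\approx(1-e^{-T})^{Nx_i}$. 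Their log is $\ge -N[T\beta(x_i)+x_i|\log(1-e^{-T})|]+o(N)$, and both bracketed terms vanish as $x_i\downarrow 0$; hence $U_\gamma\doteq\{x\in\Delta:\min_i x_i<\delta\}$ with $\delta$ small works for any $\gamma>0$.

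For existence of $\mu_N$ we set $V(x)=1+\|x\|^2$. A direct computation,
\[
\EE_x[V(X^N_1)-V(x)]=\tfrac{2}{N}\lan x,G(x)\ran+\tfrac{1}{N^2}\EE_x\|\eta^N_1(x)\|^2,
\]
combined with the deduced (e) and the bound $\EE_x\|\eta^N_1(x)\|^2\le C_0(1+\|x\|^2)$ (from Poisson/Binomial variances and boundedness of $F$), yields a one-step Foster inequality $P_1^N V\le (1-c/N)V+\tilde C/N$ on all of $\Delta_N^o$. Iterating exactly $N$ times gives the $N$-\emph{uniform} contraction $P_N^N V\le e^{-c}V+C'$. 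Together with (a) and a Doeblin minorization on compact subsets of $\Delta^o$ (direct from the positive Poisson/Binomial densities), the hypotheses of Theorem \ref{thm:foster} are met, producing the QSD $\mu_N$, the asserted TV convergence of $\delta_{x_N}P_n^N/\delta_{x_N}P_n^N(1_{\Delta_N^o})$ to $\mu_N$, and $\mu_N(V)<\infty$. Tightness of $\{\mu_N\}$ follows by combining $\mu_N P_N^N=\lambda_N\mu_N$ with the iterated Foster bound to obtain $(\lambda_N-e^{-c})\mu_N(V)\le C'$; the needed $N$-independent lower bound $\lambda_N\ge e^{-c}+\epsilon$ is obtained via a second Lyapunov function $\psi(x)=\prod_i x_i^{-\alpha}$ for small $\alpha>0$, whose Foster behavior under Assumption \ref{assu:ap-classesfinite}(c) (the field pushes away from each boundary face at rate at least $m$) gives a uniform bound $\sup_N \mu_N(\psi)<\infty$, hence $\mu_N\{\min_i x_i<\delta\}\le C\delta^\alpha$ uniformly in $N$, hence control of one-step absorption $\PP_{\mu_N}(X^N_1\in\partial\Delta)$ that pins $\lambda_N$ away from $e^{-c}$. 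The same estimate simultaneously gives the tightness of $\{\mu_N\}$ as probability measures on $\Delta^o$ required for Theorem \ref{thm:main}.

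With all hypotheses of Theorem \ref{thm:main} in hand, its conclusions transfer directly: every weak limit point is $\{\varphi_t\}$-invariant, is supported on $\cup_{i=1}^l K_i$, and $\lambda_N\ge 1-e^{-cN}$ for $N$ large. The main technical obstacle, as I see it, is the joint uniform-in-$N$ control of the two Lyapunov functions: the one-step Foster constants both degenerate as $1-c/N$, forcing one to iterate exactly over $N$ discrete steps (one continuous-time unit) to recover an $N$-independent contraction, and simultaneously one must produce a priori bounds on $\lambda_N$ without invoking the still-to-be-proved conclusion $\lambda_N\to 1$. The two Lyapunov functions play complementary roles — $V$ controls mass at infinity via the stability at large $\|x\|$, while $\psi$ controls mass near $\partial\Delta$ via the outward drift in Assumption \ref{assu:ap-classesfinite}(c) — and the ``bounds on exponential moments of hitting times'' alluded to in the introduction are the quantitative ingredient that glues both together.
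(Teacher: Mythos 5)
Your high-level structure (verify Assumptions 1--4, obtain QSD via Theorem \ref{thm:foster}, prove tightness, invoke Theorem \ref{thm:main}) matches the paper, and the verification of Assumptions 1--4 follows essentially the same computations (including the reduction of (e) to boundedness of $F$ and the extinction event for Assumption \ref{assu:irrbdr}(b)). However, your construction of the Foster data and the tightness argument diverge from the paper, and both contain genuine gaps.

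First, the quadratic Lyapunov function $V(x)=1+\|x\|^2$ with $N$-fold iteration gives $\tilde P_1^N V\le e^{-c}V+C'$ with $c\approx 2\kappa=1$, but Theorem \ref{thm:foster} requires $\theta_1<\theta_2$, and you never verify that $e^{-c}$ is below the $N$-uniform $\theta_2$ coming from the minorization/return estimate. For the $N$-step chain, $\theta_2$ is bounded below by a constant like $c_0(1-\alpha_0)$ with $\alpha_0=(1-e^{-2})^d$ (see Lemma \ref{lem:uniftheta2}); there is no reason $e^{-1}<\theta_2$. One could repair this with $V=1+\|x\|^{2p}$ for large $p$, but the paper avoids the issue entirely by choosing $\theta_1\in(0,\theta_2)$ first and then constructing the Lyapunov function $\varphi_1(x)=\EE_x(\theta_1^{-\hat\tau_r^N})$ from the exponential hitting-time moments of Lemma \ref{lem:expmoment}; since those moments are finite for every exponent, $\theta_1$ can be taken arbitrarily small by choosing $r$ large. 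Second, your tightness argument hinges on a Foster inequality for $\psi(x)=\prod_i x_i^{-\alpha}$ with an $N$-uniform contraction constant, asserted to follow from Assumption \ref{assu:ap-classesfinite}(c). This is the crux of your proposal and is not established: near $\partial\Delta$ the coordinates are of size $O(1/N)$, the stochastic fluctuations are of the same order as the drift, the coordinates are not independent (each $F_i$ depends on the whole state), and whether the expected decrease of $\psi$ actually compensates the blow-up on the event $\{N X^{N,i}_1=1\}$ needs a careful computation you do not provide; you also need $\mu_N(\psi)<\infty$ and a non-circular lower bound on $\lambda_N$ to even invoke $\mu_N\tilde P^N_1=\lambda_N\mu_N$. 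The paper's tightness proof (Theorem \ref{thm:tight}) circumvents all of this: it establishes tightness only on $\Delta$ (allowing mass near $\partial\Delta$) via a one-dimensional dominating coupling $Z^N\ge X^N\cdot 1$ and a ratio identity in $n$ that iterates the two Foster inequalities without ever bounding $\mu_N(\varphi_1)$ or $\lambda_N$ directly; the fact that limit points put no mass near $\partial\Delta$ is handled separately by Theorem \ref{thm:abs} using the uniform LDP and Assumption \ref{assu:irrbdr}(b), not by a boundary Lyapunov function. Your acknowledgment of the ``joint uniform-in-$N$ control'' difficulty is accurate; the paper's route is designed precisely to sidestep it.
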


Theorem \ref{thm:main} is  proved in Section \ref{sec:pfofmain} while Theorem \ref{thm:poisbin} is established in Section \ref{thm:absorption}.


\section{Absorption preserving pseudo-orbits}\label{sec:ap}
In this section we present some basic facts for absorption preserving  pseudo-orbits that will be used to prove Theorem \ref{thm:main}.
Throughout the section we will take Assumptions \ref{assu:LLN} and \ref{assu:ap-classesfinite} to hold.

The proofs of many of these results are similar to those found in \cite{fausch} for discrete time flows but we provide the details for completeness.
Recall that the solution of the  ODE \eqref{eq:dynsys} with initial value $\varphi(0)=x$ is denoted as $\{\varphi_t(x)\}_{t\ge 0}$. The following lemma is a consequence of the stability condition in Assumption \ref{assu:ap-classesfinite}(e).
\begin{lemma}\label{lem:stabilitylemma}
	For every $T>0$ and compact $A \subset \Delta$, there is a $\delta_0>0$ and a  compact $A_1 \subset \Delta$ such that for 
	any $(\delta_0, T)$ \ap--pseudo-orbit $\{\xi_i\}_{i=0}^{n+1}$ with $\xi_0 \in A$, we have $\xi_i \in A_1$ for all $i = 0, \ldots, n+1$.	
\end{lemma}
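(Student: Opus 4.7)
The plan is to use Assumption \ref{assu:ap-classesfinite}(e) as a Lyapunov-type condition and to iterate a contractive bound along the pseudo-orbit, exploiting the fact that each ``leg'' evolves under the flow for at least time $T$.

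First I will show the a priori bound
\[
\|\varphi_t(x)\| \le \max\bigl(M, e^{-\kappa t}\|x\|\bigr) \quad \text{for all } x \in \Delta,\ t \ge 0.
\]
To do this I let $V(x) \doteq \tfrac12 \|x\|^2$. Along the ODE \eqref{eq:dynsys}, $\tfrac{d}{dt} V(\varphi_t(x)) = \langle \varphi_t(x), G(\varphi_t(x))\rangle$, which by Assumption \ref{assu:ap-classesfinite}(e) is bounded above by $-2\kappa V(\varphi_t(x))$ whenever $\|\varphi_t(x)\| \ge M$. Two consequences follow: (i) the closed ball $\{x\in\Delta:\|x\|\le M\}$ is forward invariant, since $V$ is strictly decreasing at every boundary point; (ii) starting from $\|x\| > M$, Gronwall's inequality gives $\|\varphi_t(x)\| \le e^{-\kappa t}\|x\|$ until the flow enters this ball. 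Combining these two statements yields the displayed bound.

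Next, given $A$ compact, I set $R \doteq \sup_{x \in A}\|x\|$, fix $\delta_0 \doteq 1$ (any convenient positive value works), and write $r \doteq e^{-\kappa T} \in (0,1)$. For any $(\delta_0,T)$ \textsc{ap}--pseudo-orbit $\{\xi_i\}$ with $\xi_0\in A$, the triangle inequality gives $\|\xi_1\| \le R + \delta_0$, and for $i \ge 1$, using $T_i \ge T$ and the bound above,
\[
\|\xi_{i+1}\| \le \|\varphi_{T_i}(\xi_i)\| + \delta_0 \le \max\bigl(M, r\|\xi_i\|\bigr) + \delta_0 \le r\|\xi_i\| + (M + \delta_0).
\]
Iterating this linear contractive recursion shows that $\|\xi_i\| \le (R+\delta_0) + (M+\delta_0)/(1-r)$ uniformly in $i$, so setting $R_1$ equal to this constant and $A_1 \doteq \{x\in \Delta : \|x\| \le R_1\}$ (a compact subset of $\Delta$) completes the proof.

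The one point that requires genuine care — and which the stability assumption is tailored to handle — is ruling out that the pseudo-orbit drifts off to infinity by accumulating the small $\delta_0$ perturbations over arbitrarily many steps. The contraction factor $r < 1$ extracted from the exponential decay of $V$ over a time of at least $T$ is exactly what absorbs these perturbations in a geometric series and delivers the uniform bound; none of the other parts of Assumption \ref{assu:ap-classesfinite} play a role in this argument.
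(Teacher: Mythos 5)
Your proof is correct, and the underlying idea is the same as the paper's: Assumption \ref{assu:ap-classesfinite}(e) gives a Lyapunov bound on $\|\varphi_t(x)\|^2$, which makes the flow over any time span $\ge T$ contractive outside $B_M$, and this contraction absorbs the $\delta_0$-sized jumps at the joints of the pseudo-orbit. The execution differs slightly: the paper tunes $\delta_0 = \delta_0(T) \doteq \tfrac{\kappa T}{2}\wedge 1$ small enough that $\varphi_T$ maps $B_{R+\delta_0}$ into $B_R$, so an induction keeps every $\xi_i$ in the fixed ball $B_{R+1}$; you instead fix $\delta_0 = 1$, derive the affine recursion $\|\xi_{i+1}\| \le r\|\xi_i\| + (M+\delta_0)$ with $r = e^{-\kappa T}<1$, and sum the geometric series. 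Your variant is a hair more robust (it works for any $\delta_0>0$, a mild strengthening of the stated claim) at the cost of a less sharp $A_1$; the paper's choice of $\delta_0$ trades generality in $\delta_0$ for a cleaner conclusion. Both are valid, and the key observation you flagged — that the uniform-in-$n$ bound must come from the contraction over time $T$, not from any one step — is exactly the point of the paper's argument as well.
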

\begin{proof}
	For fixed, $x \in \Delta$, $\|\varphi_t(x)\|^2$ solves the 
	 ODE
	$$\frac{d}{dt} \|\varphi_t(x)\|^2 =  2 \lan G(\varphi_t(x)),  \varphi_t(x)\ran.$$
	From Assumption \ref{assu:ap-classesfinite} (e), when $\|x\| \ge M$
	\begin{align*}
		2 \lan G(x), x\ran 
		\le - 2\kappa\|x\|^2.
\end{align*}
This implies the following two facts:
\begin{enumerate}[(a)]
	\item If for any $R\ge M$, $x \in B_{R}\doteq \{z: \|z\| \le R\}$ then $\varphi_t(x) \in B_{R}$ for every $t\ge 0$.
	\item Given $T>0$, define $\delta_0= \delta_0(T) \doteq \frac{\kappa T}{2}\wedge 1$. Then  for any $\delta \le \delta_0$, and any $R\ge M$, 
	whenever $x \in B_{R+\delta}$, we have that $\varphi_t(x) \in B_R$ for all $t \ge T$.
\end{enumerate}
Now fix $T>0$ and a compact $A \subset \R_+^d$. Without loss of generality assume that there is a $R\ge M$ such that
$A \subset B_R$.  
Let $\delta_0 = \delta_0(T)$ be as defined above and consider a $(\delta_0,T)$ \ap--pseudo-orbit $\{\xi_i\}_{i=0}^{n+1}$ with $\xi_0 \in A$.  Then the above two facts imply that $\xi_i \in B_{R+1}$ for all $i= 0, 1, \ldots, n+1$.
The result follows on taking $A_1 = B_{R+1}$.
\end{proof}
As a consequence of Lemma \ref{lem:stabilitylemma} we get the following result on the boundedness of
\ap--basic classes.
\begin{lemma}\label{lem:apbasicbounded}
The \ap--basic classes are bounded.
\end{lemma}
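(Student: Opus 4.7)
The plan is to deduce the result directly from the preceding stability lemma, Lemma \ref{lem:stabilitylemma}, by exploiting the fact that every point in a given \ap--basic class can be reached from a fixed representative by a pseudo-orbit with prescribed parameters.

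First, I would fix an \ap--basic class $[x]_\ap$, choosing any representative $x \in \clr_\ap$. I would then fix any convenient time scale, say $T = 1$, and apply Lemma \ref{lem:stabilitylemma} to the compact set $A = \{x\}$. This yields a $\delta_0 > 0$ and a compact set $A_1 \subset \Delta$ with the property that every $(\delta_0, T)$ \ap--pseudo-orbit starting at $x$ is entirely contained in $A_1$ (including, crucially, its endpoint).

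Next, I would argue that $[x]_\ap \subset A_1$. For any $y \in [x]_\ap$ we have $y \sim_\ap x$, so in particular $x <_\ap y$, which by definition means that for every $\delta, T' > 0$ there exists a $(\delta, T')$ \ap--pseudo-orbit from $x$ to $y$. Specializing to $\delta = \delta_0$ and $T' = T = 1$, we obtain such a pseudo-orbit $(\xi_0 = x, \xi_1, \ldots, \xi_n = y)$, and Lemma \ref{lem:stabilitylemma} then forces $\xi_i \in A_1$ for every $i$; in particular $y = \xi_n \in A_1$. Hence $[x]_\ap \subset A_1$, which is a bounded subset of $\Delta$, completing the proof.

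There is essentially no obstacle here beyond applying the previous lemma correctly; the only mild subtlety is checking the quantifiers in the definition of $<_\ap$, namely that the pseudo-orbit can be chosen with exactly the parameters $(\delta_0, T)$ dictated by the stability lemma. This works without difficulty because $<_\ap$ demands the existence of such a pseudo-orbit for \emph{every} choice of $\delta, T > 0$. Note that the argument applies uniformly to interior and boundary \ap--basic classes, so no separate treatment is needed.
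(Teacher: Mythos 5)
Your proof is correct and follows essentially the same route as the paper's: apply Lemma \ref{lem:stabilitylemma} with $A = \{x\}$ to obtain a compact $A_1$ trapping all $(\delta_0, T)$ pseudo-orbits issuing from $x$, then use the definition of $<_\ap$ (which guarantees a pseudo-orbit from $x$ to $y$ for the specific parameters $(\delta_0, T)$) to conclude that every $y \in [x]_\ap$ lies in $A_1$. The only cosmetic difference is that you fix $T=1$ whereas the paper leaves $T$ arbitrary, which changes nothing.
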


\begin{proof}
Fix $x \in \mathcal{R}_\ap$ and $y \in [x]_\ap$.  Let $T> 0$ and $A= \{x\}$.
From Lemma \ref{lem:stabilitylemma}, there is a 
 $\delta_0>0$  and  a  compact $A_1$ in $\Delta$ such that for each $\delta \leq \delta_0$, any  $(\delta,T)$ \ap--pseudo-orbit starting at $x$  is contained in $A_1$. Since $y \in [x]_\ap$, there must exist a $(\delta, T)$ \ap--pseudo-orbit from $x$ to $y$
 which says that $y$ must lie in $A_1$.  The result follows. 
\end{proof}
 For $x \in \Delta^o$, we denote the forward orbit of $\varphi$ by
$
\gamma^+(x) \doteq \{ \varphi_t(x) | t \geq 0\}.
$
From Assumption \ref{assu:ap-classesfinite}(b)
and arguments as in Lemma \ref{lem:stabilitylemma} the following result is immediate.
\begin{lemma}
	\label{lem:bdryrepel}
	The following hold:
	\begin{enumerate}[(a)]
	\item There exists $\alpha_0 \in (0,1)$ such that if for some $\alpha \in (0,\alpha_0]$ and  $x \in \Delta^o$, $\dist(x, \partial \Delta) \ge \alpha$, then for all $t\ge 0$,
	$\dist(\varphi_t(x), \partial \Delta) > \alpha$. 
	\item There exists $M_0 \in (0,\infty)$ such that if for some $M \ge M_0$ and $x \in \Delta^o$, $\|x\|\le M$, then for all $t\ge 0$, $\|\varphi_t(x)\|< M$. 
	\item For every $A \in \clk$, there exist $T>0$, $A_1, A_2 \in \clk$ such that $A_1 \supset A$, $A_2 \subset A_1$, $\dist(A_2, \partial A_1) >0$, and for all $x \in A_1$ and $t\ge T$, $\varphi_t(x) \in A_2$.
	\item For every  $A_0 \in \clk$, there is an  $A_1\in \clk$ such that for every $x \in A_0$, the forward orbit
	$\gamma^+(x) \subset A_1$.
	\end{enumerate}
\end{lemma}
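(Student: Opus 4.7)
The plan is to prove the four parts in order, leaning on Assumption \ref{assu:ap-classesfinite}(c) for the boundary-repelling part (a), Assumption \ref{assu:ap-classesfinite}(e) for the boundedness part (b), and then combining them for (c) and (d).

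For (a), I would first observe that for $x \in \Delta$, $\dist(x,\partial\Delta) = \min_i x_i$. Let $\eps,m$ be as in Assumption \ref{assu:ap-classesfinite}(c) and set $\alpha_0 \doteq \eps$. Fix $\alpha \le \alpha_0$ and $x$ with $\min_i x_i \ge \alpha$. I would argue coordinate-by-coordinate: suppose by contradiction that $\varphi_t(x)_i \le \alpha$ for some $t>0$; let $t_0$ be the first such time. Then $\varphi_{t_0}(x)_i = \alpha \le \eps$, so by Assumption \ref{assu:ap-classesfinite}(c) we have $\dot\varphi_{t_0}(x)_i = G_i(\varphi_{t_0}(x)) > m\alpha > 0$, contradicting the definition of $t_0$. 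Hence $\varphi_t(x)_i > \alpha$ for all $t>0$ (strict, as the derivative is strictly positive at the candidate boundary crossing), yielding (a).

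For (b), I would take $M_0$ to be the $M$ from Assumption \ref{assu:ap-classesfinite}(e). Exactly as in the proof of Lemma \ref{lem:stabilitylemma}, $\frac{d}{dt}\|\varphi_t(x)\|^2 = 2\langle G(\varphi_t(x)),\varphi_t(x)\rangle \le -2\kappa\|\varphi_t(x)\|^2$ whenever $\|\varphi_t(x)\|\ge M_0$. Thus if $\|x\|\le M$ with $M\ge M_0$, then the derivative is strictly negative whenever the trajectory reaches norm $M$, so a first-crossing argument rules out $\|\varphi_t(x)\|\ge M$ for $t>0$.

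Part (c) is the main technical point and will combine the two quantitative estimates. Given $A\in\clk$, choose $\alpha^*\in(0,\alpha_0)$ and $R^*>M_0$ so that $A\subset\{x\in\Delta : \min_i x_i\ge 2\alpha^*,\ \|x\|\le R^*/2\}$, then define
\[
A_1 \doteq \{x\in\Delta : \min_i x_i \ge \alpha^*, \ \|x\|\le R^*\},\quad
A_2 \doteq \{x\in\Delta : \min_i x_i \ge \tfrac{\eps}{2}, \ \|x\|\le 2M_0\},
\]
enlarging $R^*$ and shrinking $\alpha^*$ if necessary to ensure $A_2\subset A_1$ with $\dist(A_2,\partial A_1)>0$; both sets lie in $\clk$ as intersections of half-spaces with a ball. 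By parts (a) and (b), $A_1$ is forward invariant. To produce $T$, I would use the coordinatewise differential inequality $\dot\varphi_t(x)_i \ge m\varphi_t(x)_i$ whenever $\varphi_t(x)_i\le\eps$, which gives a Gronwall-style lower bound forcing each coordinate to reach and remain above $\eps/2$ after a time depending only on $\alpha^*$, $m$, $\eps$; similarly the estimate $\frac{d}{dt}\|\varphi_t(x)\|^2\le -2\kappa\|\varphi_t(x)\|^2$ for $\|\varphi_t(x)\|\ge M_0$ ensures that from norm at most $R^*$ the trajectory enters $\{\|\cdot\|\le 2M_0\}$ after a time depending only on $R^*$, $\kappa$, $M_0$. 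Taking $T$ as the maximum of these two uniform times, and using forward invariance of $\{\min_i x_i\ge\eps/2\}\cap\{\|x\|\le 2M_0\}$ (again from (a) and (b), enlarging $M_0$ slightly to absorb $2M_0$ if needed), gives $\varphi_t(x)\in A_2$ for all $x\in A_1$ and $t\ge T$.

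Finally, (d) follows by applying (b): given $A_0\in\clk$, choose $R$ with $A_0\subset\bar B_R$ and $R\ge M_0$, take $\alpha^*>0$ with $A_0\subset\{\min_i x_i \ge \alpha^*\}$, and let $A_1\doteq\{\min_i x_i\ge\alpha^*,\ \|x\|\le R\}\in\clk$; parts (a) and (b) show $\gamma^+(x)\subset A_1$ for every $x\in A_0$. The principal obstacle is calibrating the constants in (c) so that $A_1,A_2\in\clk$, $A_2\subset A_1$ with positive gap, and the entrance time $T$ is uniform over $A_1$; all of this hinges on extracting quantitative rates from Assumptions \ref{assu:ap-classesfinite}(c) and (e), which is straightforward once the right Gronwall comparisons are in place.
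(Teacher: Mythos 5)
Your proof is correct and is, in substance, the argument the paper intends: the paper gives no detailed proof, merely declaring the lemma ``immediate'' from ``Assumption~\ref{assu:ap-classesfinite}(b)'' together with the arguments of Lemma~\ref{lem:stabilitylemma}. (The reference to part~(b) is evidently a typo for part~(c); as you correctly identified, the two inputs are the boundary-repulsion hypothesis~(c) and the norm-dissipativity hypothesis~(e), the latter appearing through Lemma~\ref{lem:stabilitylemma}.) Your coordinate-wise differential inequality $\dot\varphi_i > m\varphi_i$ for $\varphi_i\le\eps$, the norm estimate $\frac{d}{dt}\|\varphi_t\|^2\le -2\kappa\|\varphi_t\|^2$ for $\|\varphi_t\|\ge M$, and the first-crossing / Gronwall reasoning are exactly the pieces the paper wants the reader to assemble.

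One small imprecision in part~(a): you fix a coordinate $i$ and let $t_0$ be the first time $\varphi_t(x)_i$ drops to $\alpha$, then invoke Assumption~\ref{assu:ap-classesfinite}(c) at time $t_0$. But that hypothesis requires $\varphi_{t_0}(x)\in\Delta^o$, and at that $t_0$ some \emph{other} coordinate could a priori already have reached $0$. You should instead take $t_0$ to be the first time $\min_j\varphi_t(x)_j$ reaches $\alpha$; then at $t_0$ all coordinates are $\ge\alpha>0$, so $\varphi_{t_0}(x)\in\Delta^o$, and the derivative of the offending coordinate is strictly positive there, giving the contradiction. The rest — the simultaneous uniform entry time into $A_2$ in part~(c) (each coordinate reaches and then, by the same crossing argument, never again drops below $\eps/2$, while the norm decays into $\{\|\cdot\|\le 2M_0\}$ in uniform time), the invariance in parts~(b) and~(d), and the membership of the corner sets $\{\min_i x_i\ge\alpha^*,\;\|x\|\le R^*\}$ in $\clk$ — is sound.
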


 Proof of the following lemma follows from the observation (a) in the proof of Lemma \ref{lem:stabilitylemma}.
\begin{lemma}\label{lem:compactbound}
For each compact  $K \subset \Delta$, 
$
\sup\limits_{x \in K}\sup\limits_{t\geq0}\|\varphi_t(x)\| < \infty.
$
\end{lemma}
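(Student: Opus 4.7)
The plan is to directly invoke observation (a) from the proof of Lemma~\ref{lem:stabilitylemma}, which already does almost all the work. Recall that observation said: if $R \geq M$ (where $M$ is the constant from Assumption~\ref{assu:ap-classesfinite}(e)) and $x \in B_R = \{z \in \R^d : \|z\| \leq R\}$, then $\varphi_t(x) \in B_R$ for every $t \geq 0$. The derivation used only the dissipation bound $\langle x, G(x)\rangle \leq -\kappa\|x\|^2$ for $\|x\| \geq M$, together with the identity $\frac{d}{dt}\|\varphi_t(x)\|^2 = 2\langle G(\varphi_t(x)),\varphi_t(x)\rangle$, to conclude that $\|\varphi_t(x)\|^2$ is non-increasing whenever it exceeds $M^2$.

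Given a compact $K \subset \Delta$, let $R_0 \doteq \sup_{x \in K}\|x\| < \infty$ (which is finite by compactness), and set $R \doteq \max(R_0, M)$. Then $K \subset B_R$ and $R \geq M$, so observation (a) applied to every $x \in K$ gives $\varphi_t(x) \in B_R$ for all $t \geq 0$. Consequently,
\[
\sup_{x\in K}\sup_{t\ge 0} \|\varphi_t(x)\| \le R < \infty,
\]
which is the claim.

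There is no real obstacle here: the entire content of the lemma is a consequence of the dissipation assumption (e), and the hard analytic step has already been carried out inside Lemma~\ref{lem:stabilitylemma}. The only thing to be checked is that one may indeed take $R$ independent of $x \in K$, which follows from boundedness of $K$. No further appeal to the other parts of Assumption~\ref{assu:ap-classesfinite} is needed.
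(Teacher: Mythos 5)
Your proposal is correct and matches the paper's intent exactly: the paper itself states that Lemma~\ref{lem:compactbound} follows from observation~(a) in the proof of Lemma~\ref{lem:stabilitylemma}, and you have simply supplied the short omitted details (choose $R = \max(\sup_{x\in K}\|x\|, M)$ and apply the invariance of $B_R$).
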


We say a $(\delta,T)$ \ap--pseudo-orbit described by a collection of points $\xi = (\xi_0,\dots, \xi_n) \in \Delta^{n+1}$ and a collection of times $T \leq T_1, \dots,  T_{n-1}$ intersects a set $A \subset \Delta$, if for some $j \in  \{1, \ldots, n-1\}$,
and $t \in [0, T_j]$, $\varphi_t(\xi_j) \in A$. We say such an orbit lies in $A$ if its intersection with $A^c$ is empty.
The following lemma shows that for  small 
$\delta$ and large $T$, $(\delta,T)$ \ap--pseudo-orbits starting from the interior stay away from the boundary.
\begin{lemma}\label{lem:appobd}
Suppose $A\in \clk$. Then there exist $\eps_0>0$, $T>0$, $\delta>0$ such that any 
 $(\delta, T)$ \ap--pseudo-orbit, $\{\xi_k\}_{k=0}^n$ with $\xi_0\in A$ does not intersect
$E_{\eps_0}\doteq \{x\in \Delta: x_i \le \eps_0 \mbox{ for some } i= 1, \ldots, d\}$.
In particular, there is an $A_1\in \clk$ such that any such  \ap--pseudo-orbit starting in $A$ lies in $A_1$.
\end{lemma}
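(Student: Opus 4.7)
Given $A \in \clk$, since $A$ is bounded away from $\partial \Delta$, I choose $\delta_0 > 0$ small enough that the closed $\delta_0$-inflation $A_0 \doteq \{x \in \Delta : \dist(x, A) \le \delta_0\}$ still lies in $\clk$. Applying Lemma \ref{lem:bdryrepel}(c) to $A_0$ produces a time $T > 0$ and sets $B_1, B_2 \in \clk$ with $B_1 \supset A_0$, $B_2 \subset B_1$, $\dist(B_2, \partial B_1) > 0$, and $\varphi_t(B_1) \subset B_2$ for all $t \ge T$. I then set $\delta \doteq \min\{\delta_0, \tfrac{1}{2}\dist(B_2, \partial B_1)\}$.

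My first claim is that any $(\delta, T)$ \ap--pseudo-orbit $\{\xi_k\}_{k=0}^n$ with $\xi_0 \in A$ satisfies $\xi_k \in B_1$ for every $k$. Indeed $\xi_0 \in A \subset B_1$, and $\|\xi_1 - \xi_0\| < \delta \le \delta_0$ places $\xi_1 \in A_0 \subset B_1$. Inductively, if $\xi_i \in B_1$ for some $i \ge 1$, then because $T_i \ge T$, we have $\varphi_{T_i}(\xi_i) \in B_2$, and $\|\xi_{i+1} - \varphi_{T_i}(\xi_i)\| < \delta < \dist(B_2, \partial B_1)$ forces $\xi_{i+1} \in B_1$.

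To control the orbit pieces that connect consecutive $\xi_i$, I apply Lemma \ref{lem:bdryrepel}(d) to $B_1$: this yields an $A_1 \in \clk$ such that $\gamma^+(x) \subset A_1$ for every $x \in B_1$. Consequently each orbit segment $\{\varphi_t(\xi_i) : t \in [0, T_i]\}$ (for $1 \le i \le n-1$) lies in $A_1$, and the endpoints $\xi_0, \xi_n$ lie in $B_1 \subset A_1$ as well. Since $A_1 \in \clk$ is compactly contained in $\Delta^o$, there exists $\eps_0 > 0$ with $A_1 \cap E_{\eps_0} = \emptyset$, and both conclusions of the lemma follow.

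The argument is essentially a clean chaining of the already-established boundary-repelling facts, so I do not anticipate a serious obstacle. The one point that demands care is that the first transition $\xi_0 \mapsto \xi_1$ is a direct $\delta$-jump rather than a flow segment for time $\ge T$, so one must apply Lemma \ref{lem:bdryrepel}(c) to a pre-inflation $A_0$ of $A$ rather than to $A$ itself; otherwise the induction stumbles at its very first step.
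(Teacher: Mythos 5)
Your proof is correct, and it takes a somewhat different route from the paper's. The paper works directly from Assumption \ref{assu:ap-classesfinite}(c): it sets $\eps_0 \doteq (\eps\wedge\eps_1)/4$ where $\eps,m$ come from that assumption and $\eps_1 = \dist(A,\partial\Delta)$, derives the pointwise differential inequality $\frac{d}{dt}([\varphi_t(x)]_i)^2 > 2m([\varphi_t(x)]_i)^2$ near the boundary, chooses $T$ so that coordinates at level $\ge \eps_0$ grow past $3\eps_0$ in time $T$, fixes $\delta<\eps_0$, and then runs an inductive argument tracking the coordinates of $\xi_k$ directly (concluding $\xi_k \notin E_{2\eps_0}$ for all $k$, hence no intersection with $E_{\eps_0}$). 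The bounded-above part then comes from Lemma \ref{lem:stabilitylemma}. You instead treat Lemma \ref{lem:bdryrepel}(c)--(d) as black boxes that simultaneously package the boundary-repelling and dissipativity properties, obtain $B_1$ trapping all the $\xi_k$ by an induction analogous to the paper's (with the careful pre-inflation $A_0$ to absorb the first jump $\xi_0\mapsto\xi_1$, which is a nice observation since that step is a pure $\delta$-jump rather than a flow-then-jump), and then deduce the existence of $\eps_0$ soft\-ly from compactness of $A_1 \subset \Delta^o$. Your argument is cleaner and reverses the paper's logical order (compact containment first, boundary separation second), at the cost of being non-quantitative in $\eps_0$; the paper's version gives an explicit $\eps_0$ in terms of the assumption constants, which may matter if one wants to trace effective bounds, but is otherwise equivalent.
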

\begin{proof}
    Let $\eps_1 \doteq \dist(A, \partial \Delta)$ and let $\eps$ and $m$ be as in Assumption \ref{assu:ap-classesfinite}(c). Let $\eps_0 \doteq (\eps\wedge \eps_1)/4$. Note that for any $x \in \Delta$
    and $i=1, \ldots, d$,
    \begin{equation}
        \label{eq:lowbdode}
        \frac{d}{dt}([\varphi_t(x)]_i)^2 = 2[\varphi_t(x)]_i G_i(\varphi_t(x)) > 2m([\varphi_t(x)]_i)^2 \mbox{ whenever } [\varphi_t(x)]_i\le \eps.
    \end{equation}
    Since $m>0$, we can choose a $T>0$ such that for any $x\in \Delta$ and $i=1, \ldots , d$ with $x_i\ge \eps_0$, we have $[\varphi_t(x)]_i > 3\eps_0$ for all $t\ge T$. Fix $\delta \in (0, \eps_0)$.
    Consider a $(\delta, T)$ \ap--pseudo-orbit, $\{\xi_k\}_{k=0}^n$ with $\xi_0\in A$ and associated time instants
    $T \leq T_1, \dots,  T_{n-1}$. Clearly $\xi_0 \not\in E_{2\eps_0}$ and by \eqref{eq:lowbdode},
    $\varphi_t(\xi_0) \not\in E_{2\eps_0}$ for all $t \in [0, T_1]$. Also, by our choice of $T$,
    $\varphi_{T_1}(\xi_0) \not \in E_{3\eps_0}$ and consequently $\xi_1 \not \in E_{2\eps_0}$. A recursive argument now shows that the pseudo-orbit has no intersection with $E_{\eps_0}$. The result follows.
\end{proof}

%
%
%
%
%
%
%

We now recall a definition from the theory of dynamical systems.

\begin{defn}
The $\omega$-limit set of $B \subset \Delta$ is
\[
\omega(B) \stackrel{\cdot}{=} \left\{ x \in \Delta : \text{there is a sequence }t_n \uparrow \infty\text{ and a sequence }x_n \in B \text{ such that } \varphi_{t_n}(x_n) \rightarrow x\right\},
\]
so for $x \in \Delta$,
\[
\omega(x) \doteq \left\{ y \in \Delta :\text{ there is a sequence }t_n \uparrow \infty\text{ such that }\varphi_{t_n}(x) \rightarrow y\right\}.
\]

\end{defn}
The following result follows from classical arguments and on observing that under Assumption \ref{assu:ap-classesfinite}(b), if $x \in \Delta^o$, then $\omega(x)\subset \Delta^o$. For a proof of the lemma in the discrete time setting see \cite{fausch}. The proof for the continuous time setting considered here is  similar and we omit details.

\begin{lemma}\label{lem:prop4.12}
For any  $x \in  \Delta$, $\omega(x) \subset \mathcal{R}_\ap$. 
\end{lemma}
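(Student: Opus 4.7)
The plan is, given $y \in \omega(x)$ and arbitrary $\delta, T > 0$, to directly construct a three-point $(\delta,T)$ \ap--pseudo-orbit from $y$ to $y$, thereby showing $y \sim_\ap y$ and hence $y \in \clr_\ap$. This is the classical observation that $\omega$-limits are chain recurrent; the only new wrinkle is the absorption preserving constraint, which will be handled by showing that the flow preserves the decomposition $\Delta = \Delta^o \cup \partial \Delta$.

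First I would fix $x \in \Delta$, $y \in \omega(x)$, and a sequence $t_n \uparrow \infty$ with $\varphi_{t_n}(x) \to y$. Given $\delta, T > 0$, I pick indices $n < m$ large enough that $\|\varphi_{t_n}(x) - y\| < \delta$, $\|\varphi_{t_m}(x) - y\| < \delta$, and $t_m - t_n \geq T$; this is possible because $t_n \to \infty$ and the convergence holds along the entire tail of the sequence. Setting $\xi_0 \doteq y$, $\xi_1 \doteq \varphi_{t_n}(x)$, $\xi_2 \doteq y$, and $T_1 \doteq t_m - t_n \geq T$, the estimates $\|\xi_0 - \xi_1\| < \delta$ and $\|\xi_2 - \varphi_{T_1}(\xi_1)\| = \|y - \varphi_{t_m}(x)\| < \delta$ are immediate from the choice of $n, m$.

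It then remains to verify the absorption preserving condition that $\xi_{i+1} \in \partial\Delta$ whenever $\xi_i \in \partial\Delta$. The key observation is that the flow preserves each of $\Delta^o$ and $\partial \Delta$ separately. Forward invariance of $\Delta^o$ is Lemma \ref{lem:bdryrepel}(a). For $\partial \Delta$, Assumption \ref{assu:ap-classesfinite}(d) combined with the forward invariance of $\Delta$ under the ODE forces $G_i(z) = 0$ on every face $\{z_i = 0\}$, so each such face, and therefore $\partial \Delta$ itself, is invariant under $\{\varphi_t\}$. Consequently, if $x \in \Delta^o$ then $y$ and all three $\xi_i$ lie in $\Delta^o$, while if $x \in \partial \Delta$ then $\varphi_{t_n}(x)$, $\varphi_{t_m}(x)$, and $y$ all lie in $\partial \Delta$, so all three $\xi_i \in \partial\Delta$; in either case the absorption preserving condition is vacuous. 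Since $\delta$ and $T$ were arbitrary, $y \sim_\ap y$, so $y \in \clr_\ap$, and taking the union over $y \in \omega(x)$ gives $\omega(x) \subset \clr_\ap$.

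There is no genuine obstacle beyond bookkeeping the invariance of $\partial \Delta$ under $\{\varphi_t\}$; the rest is the standard dynamical systems argument that $\omega$-limit points are chain recurrent, here specialized to the absorption preserving variant by exploiting the clean dichotomy between interior and boundary orbits.
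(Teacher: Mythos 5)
Your argument is correct and is precisely the classical proof the paper alludes to: the paper itself omits the details, remarking only that the lemma ``follows from classical arguments and on observing that [\dots] if $x \in \Delta^o$, then $\omega(x)\subset \Delta^o$,'' and deferring to the discrete-time version in Faure--Schreiber. Your three-point chain $(\xi_0,\xi_1,\xi_2)=(y,\varphi_{t_n}(x),y)$ with $T_1 = t_m - t_n \geq T$ verifies $\|\xi_0-\xi_1\|<\delta$ and $\|\xi_2 - \varphi_{T_1}(\xi_1)\| = \|y-\varphi_{t_m}(x)\|<\delta$ directly, and the handling of the absorption-preserving constraint via the interior/boundary dichotomy is exactly what the paper's parenthetical hint is getting at (for the interior case). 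You go a bit further than the paper's hint by treating the case $x \in \partial\Delta$ explicitly, which is in fact needed since the lemma is invoked elsewhere for arbitrary $x \in \Delta$.

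One small step deserves to be made explicit. You deduce that each face $\{z_i=0\}$ is flow-invariant from $G_i \equiv 0$ on that face, but this implication does not hold for a general continuous vector field (take $\dot u = \sqrt{u}$ on $[0,\infty)$). What makes it go through here is the Lipschitz hypothesis on $G$ from Assumption~\ref{assu:LLN}: restricting $G$ to the face gives a Lipschitz ODE on the face whose solution, when embedded back in $\Delta$, also solves the full ODE, and then uniqueness forces $\varphi_t(z)$ to remain in the face. Since Assumption~\ref{assu:LLN} is in force throughout Section~\ref{sec:ap}, this is a citation rather than a genuine gap, but without it the ``so each such face [\dots] is invariant'' would be unjustified.
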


The following lemma gives a useful property of an \ap-quasiattractor.
\begin{lemma}\label{lem:maximalap}
If $[x]_\ap$ is maximal, then $x <_\ap z$ if and only if $z \in [x]_\ap$.  
\end{lemma}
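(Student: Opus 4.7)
The forward direction is immediate from the definition of $[x]_\ap$: if $z \in [x]_\ap$ then $z \sim_\ap x$, so in particular $x <_\ap z$. The substantive content is the reverse implication, which I plan to prove by using the $\omega$-limit set $\omega(z)$ as a bridge: I will produce a point $y \in \omega(z) \cap [x]_\ap$ and concatenate suitable pseudo-orbits to conclude $z <_\ap x$, which combined with the hypothesis yields $z \sim_\ap x$ and hence $z \in \clr_\ap$ and $z \in [x]_\ap$. The key obstacle is that the definition of maximality only quantifies over $y \in \clr_\ap$, whereas a priori $z$ need not be \ap--chain recurrent, so membership in $\clr_\ap$ has to be verified as part of the argument rather than obtained for free.

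To locate the bridge point $y$, note that Lemma~\ref{lem:compactbound} makes $\gamma^+(z)$ bounded, so $\omega(z)\neq\emptyset$; fix any $y\in\omega(z)$, and Lemma~\ref{lem:prop4.12} then gives $y\in\clr_\ap$. To show $x<_\ap y$, I will fix $\delta,T>0$ (small, resp.\ large) so that Lemma~\ref{lem:appobd} applies to a compact convex neighborhood of $x$ in $\Delta^o$, take a $(\delta,T)$ \ap--pseudo-orbit $(x=\xi_0,\xi_1,\dots,\xi_n=z)$ with flow times $T_1,\dots,T_{n-1}$, pick $t\ge T$ with $\|\varphi_t(z)-y\|<\delta$ (possible since $y\in\omega(z)$), and append $y$ to produce $(\xi_0,\dots,\xi_n,y)$ with extra time $T_n=t$. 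All pseudo-orbit conditions are preserved by construction, and boundary preservation is automatic because Lemma~\ref{lem:appobd} forces the pseudo-orbit to stay inside $\Delta^o$. Hence $x<_\ap y$, and maximality of $[x]_\ap$ forces $y\in[x]_\ap$; in particular $y<_\ap x$.

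To conclude $z<_\ap x$, fix $\delta,T>0$, take a $(\delta/3,T)$ \ap--pseudo-orbit $(y=\eta_0,\eta_1,\dots,\eta_m=x)$ with times $S_1,\dots,S_{m-1}$ realizing $y<_\ap x$, and pick $t_1\ge T$ with $\|\varphi_{t_1}(z)-y\|<\delta/3$. Form the sequence
\[
(\zeta_0,\zeta_1,\zeta_2,\dots,\zeta_{m+1})=(z,z,\eta_1,\eta_2,\dots,\eta_m)
\]
with times $t_1,S_1,\dots,S_{m-1}$. The initial jump is trivially small, the first flow step satisfies $\|\eta_1-\varphi_{t_1}(z)\|\le \|\eta_1-y\|+\|y-\varphi_{t_1}(z)\|<2\delta/3<\delta$ by the triangle inequality, and the remaining flow conditions hold since $\delta/3<\delta$. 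Thus $z<_\ap x$, and combined with the hypothesis $x<_\ap z$ we obtain $z\sim_\ap x$, so $z\in\clr_\ap$ and $z\in[x]_\ap$.

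The main technical subtlety will lie in the concatenation above: the definition of an \ap--pseudo-orbit permits a single ``free jump'' only at the initial index, so naively gluing a $z$-to-$y$ pseudo-orbit with a $y$-to-$x$ pseudo-orbit violates the definition. My remedy is to absorb the free jump at the start of the $y$-to-$x$ segment into the long flow step from $z$, at the cost of tightening the tolerance to $\delta/3$ so the triangle inequality delivers total error below $\delta$. Boundary preservation is handled uniformly by restricting the tolerances via Lemma~\ref{lem:appobd} to keep all relevant pseudo-orbits in $\Delta^o$; an entirely analogous argument handles boundary-based maximal classes if present.
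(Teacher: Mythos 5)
Your proposal is correct and follows essentially the same route as the paper: take a bridge point $y$ (the paper's $z'$) in $\omega(z)$, which is nonempty by boundedness of the forward orbit, note that Lemma~\ref{lem:prop4.12} puts $y$ in $\clr_\ap$, use maximality to conclude $y\in[x]_\ap$, and then argue $z<_\ap x$ by concatenation. The paper phrases this as showing $z<_\ap z'$ directly via the short pseudo-orbit $(z,z,z')$ and then applying transitivity twice, whereas you fold the transitivity steps into explicit concatenations (with the $\delta/3$ tolerance trick to absorb the free jump at the junction); this is a more careful accounting of what the paper elides as ``therefore $z<_\ap x$'', but it is the same argument.
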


\begin{proof}
Suppose that $x <_\ap z$. In order to show that $z \in [x]_\ap$, it is enough to show that $z <_\ap x$. Note that $\omega\left(z\right)$ is nonempty. Let $z' \in \omega\left(z\right)$.
From Lemma \ref{lem:prop4.12}   $z' \in \mathcal{R}_\ap$. 
We now show that $z <_\ap z'$. 
Since $z' \in \omega\left(z\right)$, there is a sequence $T_i \uparrow \infty$  such that $\varphi_{T_i}\left(z\right) \rightarrow z'$. Fix  $\delta, T > 0$. Then we can find  $T' > T$ such that $\| \varphi_{T'}\left(z\right) - z' \| < \delta$. This shows that $(z, z, z', z')$, is a $(\delta,T)$ \ap--pseudo-orbit from $z$ to $z'$. Since $\delta, T>0$ are arbitrary, we have $z <_\ap z'$.
Combining this with $x <_\ap z$ we now see that $x <_\ap z'$. Since $z' \in \mathcal{R}_\ap$ and $[x]_\ap$ is maximal, we must have $z' \in [x]_\ap$.
 and therefore $z <_\ap x$. This completes the proof of the  lemma.
 
\end{proof}

The following lemma provides an important invariance property of \ap-classes under the flow $\{\varphi_t\}$.
\begin{lemma}\label{lem:apbasicinvariant}
Any \ap--basic class $[x]_\ap$ is positively $\varphi_t$-invariant for all $t \geq 0$: $\varphi_t([x]_\ap) \subset [x]_\ap$.
Additionally, if $[x]_{\ap} \subset \Delta^o$, then $[x]_\ap$ is $\varphi_t$-invariant for all $t \geq 0$: $\varphi_t([x]_\ap) = [x]_\ap$.
\end{lemma}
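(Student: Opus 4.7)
The plan is to handle positive invariance first by performing surgery on a pseudo-orbit from $y$ to itself using the Gronwall Lipschitz bound for the flow, and then, when $[x]_\ap\subset\Delta^o$, to obtain the reverse inclusion from the dense-orbit property in Assumption \ref{assu:ap-classesfinite}(b).

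As a preliminary step I would verify that each coordinate face of $\partial\Delta$ is $\varphi_t$-invariant, so that $w\in\partial\Delta\iff\varphi_t(w)\in\partial\Delta$ for every $w\in\Delta$ and $t\geq 0$ (the direction ``$\Leftarrow$'' additionally uses Lemma \ref{lem:bdryrepel}(a)). The face invariance reduces to checking $G_i(z)=0$ whenever $z_i=0$: Assumption \ref{assu:ap-classesfinite}(c) combined with continuity of $G$ gives $G_i(z)\geq 0$ on $\{z_i=0\}$, Assumption \ref{assu:ap-classesfinite}(d) gives $G_i(z)\leq 0$ there, and uniqueness of ODE solutions for the Lipschitz field then pins $[\varphi_t(z)]_i$ to zero.

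Fix $y\in[x]_\ap$ and $t\geq 0$, and let $L_G$ be a Lipschitz constant for $G$ from Assumption \ref{assu:LLN}. By Gronwall, $\|\varphi_t(z)-\varphi_t(w)\|\leq L_t\|z-w\|$ on $\Delta$ with $L_t\doteq e^{L_Gt}$. Given $\delta,T>0$, choose $\delta'\leq\delta/L_t$ and, using $y\sim_\ap y$, a $(\delta',T+t)$ \ap--pseudo-orbit $(\eta_0=y,\eta_1,\ldots,\eta_n=y)$ with times $S_1,\ldots,S_{n-1}\geq T+t$. To show $y<_\ap\varphi_t(y)$, replace the terminal point by $\varphi_t(y)$ and the final time by $S_{n-1}+t$; the remaining closeness estimate reads
\[
\|\varphi_t(y)-\varphi_{S_{n-1}+t}(\eta_{n-1})\|=\|\varphi_t(y)-\varphi_t(\varphi_{S_{n-1}}(\eta_{n-1}))\|\leq L_t\delta'\leq\delta.
\]
To show $\varphi_t(y)<_\ap y$, modify the front by replacing $(\eta_0,\eta_1)$ with $(\varphi_t(y),\varphi_t(\eta_1))$ and $S_1$ with $S_1-t\geq T$; since $\varphi_{S_1-t}(\varphi_t(\eta_1))=\varphi_{S_1}(\eta_1)$, the step from $\varphi_t(\eta_1)$ to $\eta_2$ introduces no new error, while the opening jump is bounded by $L_t\|y-\eta_1\|\leq\delta$. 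Absorption preservation transfers from the original pseudo-orbit via the boundary equivalence above. Hence $\varphi_t(y)\sim_\ap y$ and $\varphi_t(y)\in[x]_\ap$.

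When $[x]_\ap\subset\Delta^o$, Assumption \ref{assu:ap-classesfinite}(a) identifies $[x]_\ap$ with some $K_i$, and Assumption \ref{assu:ap-classesfinite}(b) furnishes $x_i\in K_i$ whose forward orbit is dense in $K_i$; together with the positive invariance just proved and closedness of $K_i$ this gives $\omega(x_i)=K_i$. For $y\in K_i$ and $t\geq 0$, pick $t_n\to\infty$ with $\varphi_{t_n}(x_i)\to y$, and for $n$ large enough that $t_n\geq t$ set $z_n\doteq\varphi_{t_n-t}(x_i)\in K_i$; by compactness of $K_i$ (Lemma \ref{lem:apbasicbounded}) extract a subsequential limit $z\in K_i$, and continuity of $\varphi_t$ gives $\varphi_t(z)=y$. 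Thus $\varphi_t(K_i)\supset K_i$, which combined with $\varphi_t(K_i)\subset K_i$ yields equality. The main obstacle is keeping each surgically altered time $T_i\geq T$ while absorbing the Gronwall blow-up into the $\delta$-closeness budget; this is precisely why one starts from a pseudo-orbit at time scale $T+t$ and fineness $\delta/L_t$, leveraging the global Lipschitz bound on $\varphi_t$.
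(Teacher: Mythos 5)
Your proof is correct. For the positive invariance half your surgery is essentially the paper's argument (front-splice for $\varphi_t(y)<_\ap y$, back-splice for $y<_\ap\varphi_t(y)$), though you control the perturbation with a global Gronwall bound $L_t=e^{L_G t}$ while the paper uses a pointwise uniform-continuity modulus $\delta_0(y)$; both work, and your version makes the budget $\delta'=\delta/L_t$ explicit. You also do something the paper leaves implicit: you establish the invariance of the boundary faces (from parts (c) and (d) of Assumption~\ref{assu:ap-classesfinite}) and then verify explicitly that the spliced sequence still satisfies the absorption-preserving condition. That is a welcome completeness check, since the first half of the lemma does not restrict $[x]_\ap$ to $\Delta^o$ and the \ap--structure must genuinely be preserved after the splice.

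For the second half, your route is genuinely different from the paper's. The paper reconstructs a preimage $z$ of $y$ by truncating a sequence of finer and finer pseudo-orbits from $y$ to $y$, using the stability Lemma~\ref{lem:stabilitylemma} and Lemma~\ref{lem:compactbound} to extract a subsequential limit $z=\lim\varphi_{\tilde T^k}(\xi^k_{n(k)-1})$, and then runs a further pseudo-orbit argument to check $z\sim_\ap y$. You instead identify $[x]_\ap$ with some $K_i$ and invoke the dense-orbit point $x_i$ of Assumption~\ref{assu:ap-classesfinite}(b): picking $t_n\to\infty$ with $\varphi_{t_n}(x_i)\to y$ and setting $z_n=\varphi_{t_n-t}(x_i)\in K_i$ (by the positive invariance already shown), you extract $z\in K_i$ with $\varphi_t(z)=y$ by compactness of $K_i$ (Lemma~\ref{lem:apbasicbounded} plus closedness from Assumption (a)) and continuity. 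This is shorter, avoids the pseudo-orbit machinery, and automatically lands $z$ in $K_i$ so that no separate verification of $z\in[x]_\ap$ is needed. The trade-off is that the paper's argument never invokes Assumption (b), whereas yours leans on it; but Assumption~\ref{assu:ap-classesfinite} is standing for the whole section, so this is legitimate. Both approaches are valid; yours is a bit more economical.
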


\begin{proof}
Let $y \in [x]_\ap$. To begin, fix $t, \delta, T > 0$, and let $T' > T + t$. We can find some $\delta_0 \doteq \delta_0(y) < \delta$ such that if $\| y - x_0\| < \delta_0$, then $\| \varphi_t(y) - \varphi_t(x_0)\| < \delta$. Since $y \in \mathcal{R}_\ap$, there is a $(\delta_0, T')$ \ap--pseudo-orbit from $y$ to $y$, which we denote by $\xi = (y, \xi_1, \dots, \xi_{n-1}, y)$, with corresponding time instants $(T_1, \dots,T_{n-1})$. Then $\tilde{\xi} \doteq ( \varphi_t(y), \varphi_t(\xi_1), \xi_2, \dots, \xi_n = y)$ is a $(\delta, T)$ \ap--pseudo-orbit from $\varphi_t(y)$ to $y$ with corresponding time instants $(T_1 - t, T_2, \dots,T_{n-1})$, since
$$
\| \varphi_t(x) - \varphi_t(\xi_1)\| < \delta,
\mbox{ and }
\| \varphi_{T_1-t}(\varphi_t(\xi_1)) - \xi_2\|  = \| \varphi_{T_1}(\xi_1) - \xi_2\| < \delta.
$$
Thus $\varphi_t(y) <_\ap y$. 


Next, define $\tilde{\xi} \doteq (y,\xi_1,\dots,\xi_{n-1}, \varphi_t(y))$, and note that $\tilde{\xi}$ is a $(\delta,T)$ \ap--pseudo-orbit from $y$ to $\varphi_t(y)$ with time instants $(T_1,\dots,T_{n-2}, T_{n-1} + t)$, since
$$
\| \varphi_{T_{n-1}}(\xi_{n-1}) - y\| < \delta_0,
$$
which ensures that
$$
\| \varphi_{T_{n-1}+t}(\xi_{n-1}) - \varphi_t(y)\|   = \| \varphi_t(\varphi_{T_{n-1}}(\xi_{n-1})) - \varphi_t(y)\| < \delta.
$$
We have shown that $\varphi_t(y) \sim_\ap y$, and so $\varphi_t(y) \in [y]_\ap = [x]_\ap$. Since $y \in [x]_\ap$ is arbitrary, $\varphi_t([x]_\ap) \subset [x]_\ap$. This proves the first part of the lemma.

For the second part, suppose now that $[x]_\ap \subset \Delta^o$. 
In order to see that $[x]_\ap \subset \varphi_t([x]_\ap)$ for each $t \geq 0$, let $y \in [x]_\ap$ and fix $t > 0$. We need to show that there is some $z \in [x]_\ap$ such that $\varphi_t(z) = y$. Fix a sequence $(\delta_k, T_k)$ such that $\delta_k \downarrow 0$ and $T^k \uparrow \infty$.
Since $y \in \mathcal{R}_\ap$,  we can find a sequence of $(\delta_k,T^k)$ \ap--pseudo-orbits with corresponding time instants $\{T_i^k\}_{i=0}^{n(k)-1}$ from $y$ to $y$, which we  denote by $\xi^k = (\xi^k_0,\dots,\xi^k_{n(k)})$.  We  assume without loss of generality that $T^k > t$ for all $k$ and let $\tilde{T}^k \doteq T^k_{n(k) -1} - t$. From Lemma \ref{lem:stabilitylemma} there is a  compact $\tilde K$ in $\Delta$ such that for all sufficiently large $k$, $\xi^k_i \in \tilde{K}$ for all $i \in \{0,\dots,n(k)\}$.
From  Lemma \ref{lem:compactbound} we then have that, for all such $k$, $\varphi_{\tilde{T}^k}(\xi^k_{n(k)-1})$
lies in some compact set $\tilde{K}'$.
Thus (passing to a subsequence) we may assume that $\varphi_{\tilde{T}^k}(\xi^k_{n(k)-1}) \rightarrow z \in \tilde{K}'$. Since
$$
\varphi_{t}( \varphi_{\tilde{T}^k}(\xi^k_{n(k)-1}))  = \varphi_{T^k_{n(k)-1}}(\xi^k_{n(k)-1})  \rightarrow y,
$$
the continuity of $\varphi_t$ ensures that $\varphi_t(z) = y.$ 
Now we show that $z \in [x]_{\ap}$.
Fix $\delta, T > 0$, and let $k$ be large enough so that $\delta_k < \delta$, $\| \varphi_{\tilde{T}^k}(\xi^k_{n(k)-1}) - z\| < \delta_k$, $T^k > T$, and $\tilde{T}^k > T$. Then $(\xi^k_0, \dots, \xi^k_{n(k)-1}, z)$ is a $(\delta, T)$ \ap--pseudo-orbit from $y$ to $z$ with corresponding time instants $(T^k_1,\dots,T^k_{n(k)-2}, \tilde{T}^k)$, so $y <_\ap z$. Now, fix $\tilde{t} > \max\{t, T\}$, and note that 
$$
\varphi_{\tilde{t}}(z) = \varphi_{\tilde{t} -t }( \varphi_t(z)) = \varphi_{\tilde{t} - t}(y).
$$
Since $y \in [x]_\ap$, it follows from the positive $\varphi_t$-invariance of $[x]_\ap$ that $\varphi_{\tilde{t}-t}(y) \in [x]_\ap$, so there is a $(\delta,T)$ \ap--pseudo-orbit from $\varphi_{\tilde{t}-t}(y)$ to $y$, which we  denote by $(\xi_0,\dots,\xi_n)$. Denote the corresponding time instants by $T_1,T_2,\dots,T_{n-1}$. Then $\tilde{\xi} \doteq (z, z, \xi_1, \dots, \xi_n)$ is a $(\delta,T)$ \ap--pseudo-orbit from $z$ to $y$ with time instants $(\tilde{t},T_1,\dots,T_{n-1})$, so $z <_\ap y$ and $z \in [x]_\ap$.

\end{proof}

We now recall the definition of an attractor for the flow $\{\varphi_t\}$.
\begin{defn}
A compact set $A$ is an attractor for the flow $\{\varphi_t\}$ if $\varphi_t(A) =A$ for each $t \geq 0$ and there is some neighborhood $U$ of $A$ such that
\[
\lim\limits_{t\rightarrow\infty}\sup\limits_{x\in U}\dist(\varphi_t(x),A) = 0.
\]
The neighborhood $U$ is referred to as a fundamental neighborhood for the attractor $A$.
\end{defn}

The proof of Corollary \ref{cor:quasiattract} follows the proof of \cite[Proposition 4.2]{kif}.

\begin{corollary}\label{cor:quasiattract}
If $[x]_\ap \subset \Delta^o$ is an  \ap--quasiattractor, then $[x]_\ap$ is an attractor.
\end{corollary}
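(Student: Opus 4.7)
Set $K \doteq [x]_\ap$. The invariance requirement $\varphi_t(K)=K$ for all $t\geq 0$ is immediate from the second part of Lemma \ref{lem:apbasicinvariant}, which applies because we assumed $K\subset \Delta^o$. So the whole task is to produce a fundamental neighborhood of $K$, and the plan is to argue this by contradiction.

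Suppose no open neighborhood of $K$ is fundamental. Then there exist $\eps>0$ and sequences $y_n\in \Delta^o$ with $\dist(y_n,K)\to 0$, together with $t_n\uparrow \infty$, such that $\dist(\varphi_{t_n}(y_n),K)\geq \eps$ for every $n$. Since $K\subset \Delta^o$ is compact we have $\dist(K,\partial \Delta)>0$, so for all large $n$ the distance $\dist(y_n,\partial \Delta)$ stays above some fixed positive $\alpha$; Lemma \ref{lem:bdryrepel}(a) then ensures that the orbit $\{\varphi_t(y_n): t\geq 0\}$ remains uniformly bounded away from $\partial \Delta$, and Lemma \ref{lem:compactbound} (applied to a fixed compact neighborhood of $K$) keeps it bounded. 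Passing to a subsequence, $\varphi_{t_n}(y_n)\to z$ with $z\in \Delta^o$ and $\dist(z,K)\geq \eps$.

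The heart of the argument is to show $x<_\ap z$; by Lemma \ref{lem:maximalap} (applicable because $[x]_\ap$ is maximal) this forces $z\in [x]_\ap = K$, contradicting $\dist(z,K)\geq \eps$. To build the required ap-pseudo-orbits, I invoke Assumption \ref{assu:ap-classesfinite}(b) to fix $x_0\in K$ whose orbit $\{\varphi_t(x_0): t\geq T\}$ is dense in $K$ for every $T>0$. Given $\delta,T>0$, choose $n$ so large that $\dist(y_n,K)<\delta/2$, $t_n\geq T$, and $\|\varphi_{t_n}(y_n)-z\|<\delta$; pick $z_n\in K$ with $\|y_n-z_n\|<\delta/2$, and use density to pick $t_0\geq T$ with $\|\varphi_{t_0}(x_0)-z_n\|<\delta/2$, so that $\|\varphi_{t_0}(x_0)-y_n\|<\delta$. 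Then $(\xi_0,\xi_1,\xi_2,\xi_3)\doteq (x_0,x_0,y_n,z)$ with time instants $(t_0,t_n)$ is a valid $(\delta,T)$ ap-pseudo-orbit from $x_0$ to $z$; the absorption clause is vacuous since every $\xi_i$ lies in $\Delta^o$. This yields $x_0<_\ap z$, and since $x\sim_\ap x_0$ (so in particular $x<_\ap x_0$) the transitivity of $<_\ap$ gives $x<_\ap z$, completing the contradiction.

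The main subtlety is the construction of the pseudo-orbit: the two ``jumps'' in it (from $\varphi_{t_0}(x_0)$ to $y_n$, and from $\varphi_{t_n}(y_n)$ to $z$) must have magnitude less than $\delta$, but $y_n$ is a fixed sequence and $\dist(y_n,K)$ does not vanish instantaneously. This is resolved by coupling the index $n$ to the precision $\delta$: the density property of the orbit of $x_0$ provides enough freedom to land near $y_n$ after flowing for any prescribed duration $\geq T$, and the hypothesis $\varphi_{t_n}(y_n)\to z$ lets us hit $z$ within $\delta$ after time $t_n\geq T$. Together these keep the pseudo-orbit admissible for every $(\delta,T)$, which is exactly what is needed to conclude $x_0<_\ap z$.
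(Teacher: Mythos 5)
Your treatment of the invariance part is correct (it is immediate from the second half of Lemma~\ref{lem:apbasicinvariant}), and your pseudo-orbit construction in the second half is clean: using the dense orbit of $x_0$ from Assumption~\ref{assu:ap-classesfinite}(b), the transitivity of $<_\ap$, and Lemma~\ref{lem:maximalap} to force $z\in K$ is a valid alternative to the $\omega$-limit-set argument the paper runs at the corresponding stage. However, there is a genuine gap at the very start, and it is the crux of the whole corollary.

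You write: ``Suppose no open neighborhood of $K$ is fundamental. Then there exist $\eps>0$ and sequences $y_n$ with $\dist(y_n,K)\to 0$ and $t_n\uparrow\infty$ with $\dist(\varphi_{t_n}(y_n),K)\ge\eps$.'' This is not a correct negation. Failure of the attractor property only gives, for \emph{each} open $U\supset K$, some $\eps_U>0$ with $\limsup_{t\to\infty}\sup_{y\in U}\dist(\varphi_t(y),K)\ge\eps_U$. As you shrink $U$ in order to obtain $\dist(y_n,K)\to 0$, nothing forces $\eps_U$ to stay bounded away from $0$; if $\eps_U\to 0$ your limit point $z$ could land inside $K$ and the pseudo-orbit argument gives no contradiction. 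If instead you freeze a single small neighborhood $U_0$ to get a uniform $\eps$, then the points $y_n$ need not approach $K$, and the jump in your pseudo-orbit from $\varphi_{t_0}(x_0)\in K$ to $y_n$ has size roughly $\dist(y_n,K)$, which need not go below an arbitrary $\delta$; the construction then fails for small $\delta$. In short, you need $\dist(y_n,K)\to 0$ \emph{and} a uniform $\eps$, and the hypothesis ``no fundamental neighborhood'' gives you one or the other, not both.

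Closing this gap is exactly what occupies the bulk of the paper's proof. It first uses a compactness argument and the finiteness of the basic classes to produce two positively invariant neighborhoods of $K$ (the claims \eqref{eq:claim1} and \eqref{eq:claim2}): a $\cln^{\delta_0}(K)$ whose forward orbits stay in a set $K^\eps$ avoiding all other recurrence classes, and a smaller $\cln^{\delta_1}(K)$ whose forward orbits never leave $\cln^{\delta_0}(K)$. Only then does it establish the uniform convergence $\lim_{t\to\infty}\sup_{y\in U_1}\dist(\varphi_t(y),K)=0$, by showing $t(O)\doteq\inf\{t:\varphi_t(U_0)\subset O\}<\infty$ via a compactness plus $\omega$-limit-set argument with the clever $O_1\subset\subset O$ device, which lets one take $z_n\in U_0$ \emph{fixed} (not tending to $K$) and $T_n\to\infty$ with $\varphi_{T_n}(z_n)\in O^c$, and still reach a contradiction. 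Your pseudo-orbit idea can substitute for that final $\omega$-limit-set step, but it does not substitute for the invariance and uniformity work that precedes it; as written your proof skips that work.
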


\begin{proof} 
Recall that $\clr_{\ap}^*$ denotes the collection of all \ap-chain recurrent points in $\Delta^o$. Note that, from Assumption \ref{assu:ap-classesfinite}(a) and Lemma \ref{lem:apbasicbounded}, for each $z \in \clr_{\ap}^*$, $[z]_{\ap}$ is a compact set.
Choose $\delta>0$ such that $\cln^{\delta}([x]_{\ap})$ is an isolating neighborhood of $[x]_{\ap}$
with closure contained in $\Delta^o$. Then, from Lemma \ref{lem:compactbound} and Assumption \ref{assu:ap-classesfinite}(b), there is a compact $K_0 \subset \Delta^o$ such that for all $z\in \cln^{\delta}([x]_{\ap})$, $\varphi_t(z) \in K_0$ for all $t\ge 0$.
Let $\eps^* \doteq \inf\limits_{y \in \mathcal{R}_\ap\setminus [x]_\ap}\dist([y]_\ap,[x]_\ap)$
Let for $\eps \le \eps^*$, $K^{\eps} \doteq K_0 \setminus \left( \bigcup\limits_{y\in\mathcal{R}^*_\ap \setminus [x]_\ap}\cln^{\eps}([y]_\ap)\right)$. 
We claim that there is a $\eps \le \eps^*$ and a $\delta_0 \le \delta$ such that 
\begin{equation}\label{eq:claim1}
\mbox{ for all } z \in \cln^{\delta_0}([x]_{\ap}), \; \varphi_t(z) \in K^{\eps} \mbox{ for all } t \ge 0.
\end{equation}
We argue via contradiction. Suppose the claim is false, then, since there are finitely many \ap-basic classes in
$\clr_{\ap}^*$, there exist $\delta_n\downarrow 0$, $\eps_n \downarrow 0$, $z_n \in \cln^{\delta_n}([x]_{\ap})$, $t_n\ge 0$, $y \in \mathcal{R}^*_\ap \setminus [x]_\ap$, such that
$\varphi_{t_n}(z_n) \in \cln^{\eps_n}([y]_{\ap})$. Passing to a subsequence we may assume that $z_n \to z$ and $\varphi_{t_n}(z_n) \to u$. Then $z \in [x]_\ap$ and $u \in [y]_\ap$. 
We consider two cases: (I) along a further subsequence $t_n$ converges to some $t^* <\infty$; (II) $t_n\to \infty$. In case (I), $u= \varphi_{t^*}(z)$ and so by Lemma \ref{lem:apbasicinvariant} $u \in [x]_{\ap}$.
But this is a contradiction since $y\notin [x]_{\ap}$. In case (II), for every $\delta, T>0$, there is a $(\delta, T)$ \ap--pseudo-orbit from $z$ to $u$ which says that $z <_{\ap} u$. Since $[x]_\ap$ is a quasiattractor, from Lemma \ref{lem:maximalap} $u \in [x]_\ap$ which is once more a contradiction to the fact that $y\notin [x]_{\ap}$. Thus we have the claim.
Now fix $\delta_0 \le \delta^*$ and $\eps \le \eps^*$ so that \eqref{eq:claim1} holds.

We now argue that
\begin{equation}
    \label{eq:claim2}
    \mbox{ for some } \delta_1 \in (0, \delta_0), \mbox{ whenever } y \in \cln^{\delta_1}([x]_\ap), \mbox{ we have } \varphi_t(y) \in \cln^{\delta_0}([x]_\ap) \mbox{ for all } t \ge 0.
\end{equation}
Once more we proceed via contradiction. Suppose the statement is false. Then there exist $\delta_n \downarrow 0$,
$y_n \in \cln^{\delta_n}([x]_\ap)$, $t_n\ge 0$ such that $\varphi_{t_n}(y_n) \in \left(\cln^{\delta_0}([x]_\ap)\right)^c$. We can find a subsequence along which $y_n \to y$ and $\varphi_{t_n}(y_n) \to u$. We must have $y \in [x]_\ap$ and $u \in \left(\cln^{\delta_0}([x]_\ap)\right)^c$.
Once again we consider two cases as above. In case (I), $u=\varphi_{t^*}(y) \in [x]_{\ap}$ which contradicts the fact that $u \in \left(\cln^{\delta_0}([x]_\ap)\right)^c$. In case (II) $y<_{\ap} u$ and so as before, $u \in [x]_{\ap}$. Once more this is a contradiction. Thus we have shown \eqref{eq:claim2}. Now fix $\delta_1 \in (0, \delta_0)$ such that \eqref{eq:claim2} holds.
Let $U_0 \doteq \cln^{\delta_0}([x]_\ap)$  and $ U_1 \doteq \cln^{\delta_1}([x]_\ap)$.

We will now show that 
\begin{equation}\label{eq:attraprop}
\lim\limits_{t\rightarrow\infty}\sup\limits_{y\in U_1}\dist(\varphi_t(y), [x]_\ap) = 0.
\end{equation}
Together with Lemma \ref{lem:apbasicinvariant} we will then have that $[x]_\ap$ is an attractor, completing the proof of the result.
In order to show \eqref{eq:attraprop} we will show that for each open neighborhood $O$ of $[x]_\ap$,
$O\subset U_1$, there is some $t(O) <\infty$ such that $\varphi_t(U_1) \subset O$ for all $t \geq t(O)$. Let
for any such $O$, $O_1 \subset \subset O$ be an open neighborhood of $[x]_\ap$ such that for all $y \in O_1$,
$\varphi_t(y) \in O$ for all $t\ge 0$. Here for open sets $G_1, G_2$, we write $G_1 \subset \subset G_2$ if $\bar G_1 \subset G_2$. Existence of such an $O_1$ is shown in a similar manner as \eqref{eq:claim2}.
It suffices to show that
$$
t(O) \doteq \inf\{t : \varphi_t(U_0) \subset O\} < \infty,
$$
since then for each $t \geq t(O)$,
$$
\varphi_t(U_1) = \varphi_{t(O)} ( \varphi_{t - t(O)}(U_1)) \subset \varphi_{t(O)}(U_0)  \subset O,
$$
which will complete the proof.

 In order to see that $t(O) < \infty$ for each such $O$, we argue by contradiction.
 Suppose that there is some $O$ (with the associated $O_1$) such that $t(O) = \infty$. Then we can find sequences $\{z_n\} \subset U_0$ and $T_n \uparrow \infty$ such that $\varphi_{T_n}(z_n) \in O^c$. From the definition of $O_1$, this says that $\varphi_t(z_n) \in O^c_1$ for all $0\le t \le T_n$. Suppose that $z_n \to z$ along a subsequence. Then $\varphi_t(z) \in O_1^c$ for all $t > 0$. Also, since $z \in \cln^{\delta_0}([x]_\ap)$, by \eqref{eq:claim1},
 $\varphi_t(z) \in K^{\eps}$ for all $t \ge 0$. Thus we have $\omega(z) \subset K^{\eps} \setminus O_1$.
 The final statement of Lemma \ref{lem:bdryrepel} implies that for each $x \in \Delta^o$, $\omega(x) \neq \emptyset$ and therefore
  $\omega(z)$ is a nonempty subset of $\clr_{\ap}^*$. Thus we have that $(K^{\eps}\setminus O_1)\cap \clr_{\ap}^*$
 is nonempty which contradicts the definition of $K^{\eps}$ and $O_1$.
 Thus we have that $t(O)<\infty$ and the result follows.\\
\end{proof}

The following lemma shows that suitable \ap-pseudo-orbits come arbitrarily close to \ap-recurrence classes.

\begin{lemma}\label{lem:intersectrap} 
	\begin{enumerate}[(a)]
		\item For each $\delta>0$ and compact $A \in \Delta$, there is a  $\delta_0 \in (0,1]$  and $T_A \in (0,\infty)$ such that  any 
 $(\delta_0, T_{A})$ \ap--pseudo-orbit that starts in $A$ intersects $N^{\delta}(\mathcal{R}_\ap)$.
 \item  For each $\delta>0$ and  $A \in \clk$, there is a   $T_A^* \in (0,\infty)$ such that for every $x \in A$, there is a $t_0 \in [0, T_A^*]$ with
 $\varphi_{t_0}(x) \in N^{\delta}(\mathcal{R}_\ap^*)$.
 \end{enumerate}
\end{lemma}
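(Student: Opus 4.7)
Both parts rest on a continuity-and-compactness argument built on the fact that $\omega(x) \subset \mathcal{R}_\ap$ for every $x$ for which $\omega(x)$ is nonempty (Lemma \ref{lem:prop4.12}).

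For part (b), fix $A \in \clk$ and $\delta > 0$. By Lemma \ref{lem:bdryrepel}(d) the forward orbits of all points of $A$ are contained in a common $A_1 \in \clk$, and by Lemma \ref{lem:bdryrepel}(a) they are bounded away from $\partial\Delta$. Hence for each $x \in A$ the set $\omega(x)$ is a nonempty compact subset of $\mathcal{R}_\ap \cap \Delta^o = \mathcal{R}_\ap^*$, so one can select $t(x) > 0$ with $\varphi_{t(x)}(x) \in \cln^{\delta/2}(\mathcal{R}_\ap^*)$. Continuity of the flow in the initial condition yields an open neighborhood $U_x$ of $x$ on which $\varphi_{t(x)}$ sends every point into $\cln^{\delta}(\mathcal{R}_\ap^*)$. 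Extracting a finite subcover of $A$ from $\{U_x\}_{x \in A}$ and taking $T_A^*$ to be the maximum of the corresponding $t(x_j)$ completes the proof.

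For part (a), fix a compact $A \subset \Delta$ and $\delta > 0$. Apply Lemma \ref{lem:stabilitylemma} with any fixed $T > 0$ to produce $\delta_0 \in (0,1]$ and a compact set $A_1 \subset \Delta$ that contains every $(\delta_0, T)$ \ap--pseudo-orbit starting in $A$; inspection of that proof shows the same $A_1 = B_{R+1}$ works uniformly for all sufficiently large $T$. By Lemma \ref{lem:compactbound} each $x \in A_1$ has bounded forward orbit, so $\omega(x) \neq \emptyset$, and Lemma \ref{lem:prop4.12} gives $\omega(x) \subset \mathcal{R}_\ap$ (this also covers boundary points, since $\partial\Delta$ is forward-invariant by Assumption \ref{assu:ap-classesfinite}(d)). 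Repeating the argument of part (b) with $A_1$ in place of $A$ and $\mathcal{R}_\ap$ in place of $\mathcal{R}_\ap^*$ produces $T_0 < \infty$ such that for every $x \in A_1$ there is $t_x \in [0, T_0]$ with $\varphi_{t_x}(x) \in \cln^{\delta}(\mathcal{R}_\ap)$.

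Finally, take $T_A \geq T_0$ and shrink $\delta_0$ as needed so that Lemma \ref{lem:stabilitylemma} applies at this time. For any $(\delta_0, T_A)$ \ap--pseudo-orbit $(\xi_0, \dots, \xi_n)$ starting in $A$ (with $n \geq 2$, so that an actual orbit segment appears), each intermediate base point $\xi_i$ lies in $A_1$ and the associated dwell time satisfies $T_i \geq T_A \geq T_0$. Hence the segment $\{\varphi_t(\xi_1) : t \in [0, T_1]\}$ must meet $\cln^{\delta}(\mathcal{R}_\ap)$, which gives the required intersection. The only real obstacle is obtaining a uniform time $T_0$ valid over the whole of $A_1$: existence of $t(x)$ is immediate from $\omega(x) \subset \mathcal{R}_\ap$, but the uniformity relies critically on the compactness of $A_1$ (guaranteed by the stability Lemma \ref{lem:stabilitylemma} via Assumption \ref{assu:ap-classesfinite}(e)) together with continuity of $\varphi_t$ in the initial condition.
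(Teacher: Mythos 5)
Your proof is correct and takes essentially the same route as the paper: contain pseudo-orbits in a compact set via Lemma~\ref{lem:stabilitylemma}, use $\omega(x)\subset\mathcal{R}_\ap$ to get finite return times to $\cln^\delta(\mathcal{R}_\ap)$, make the return time uniform over the compact set, and then note that each dwell interval of length $\geq T_A$ must carry the orbit segment into the $\delta$-neighborhood. The one place where the technique differs is in the uniformization step: the paper proves that $T^\delta(x)=\inf\{t\geq 0:\varphi_t(x)\in\cln^\delta(\mathcal{R}_\ap)\}$ is upper semicontinuous (by showing its level sets $\{T^\delta\geq\alpha\}$ are closed) and invokes the fact that an usc function attains its supremum on a compact set, whereas you use a $\delta/2$-then-$\delta$ argument with continuity of $\varphi_{t(x)}(\cdot)$ in the initial condition and extract a finite subcover of $A_1$, taking the maximum of finitely many times $t(x_j)$. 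These two arguments do the same work with roughly the same effort; the usc route is marginally cleaner in that it avoids the $\delta/2$ bookkeeping, but yours is entirely sound. One small remark: after picking $\delta_0$ from Lemma~\ref{lem:stabilitylemma} at some fixed $T$, there is no need to shrink $\delta_0$ when you later enlarge the dwell time to $T_A\geq T$ — any $(\delta_0,T_A)$-pseudo-orbit is automatically a $(\delta_0,T)$-pseudo-orbit, so the same $A_1$ applies for free. Your parenthetical about $\partial\Delta$ being forward-invariant is also unnecessary, since Lemma~\ref{lem:prop4.12} already gives $\omega(x)\subset\mathcal{R}_\ap$ for every $x\in\Delta$.
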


\begin{proof}
Consider first part (a). Fix $\delta>0$, a compact $A \in \Delta$, and let $T=1$. With this choice of $A$ and $T$, let $\delta_0$ and $A_1$ be as given in Lemma \ref{lem:stabilitylemma}.
Let,
	for  $x \in \Delta$,  $T^{\delta}(x) \stackrel{\cdot}{=}\inf\{ t\geq0: \varphi_t(x) \in N^{\delta}(\mathcal{R}_\ap)\}$. 
Since $\omega(x)$ is a nonempty subset of $\mathcal{R}_\ap$, $T^{\delta}(x) <\infty$ for each $x \in \Delta$.	We now claim that 
$T^{\delta}$ is an upper semicontinuous (usc) function on $\Delta$. For this it suffices  to argue that 
for each $\alpha > 0$, the level set $L_{\alpha} \stackrel{\cdot}{=} \{x \in \Delta : T^{\delta}(x) \geq \alpha\}$ is closed.
Let $\{x_n\} \subset L_{\alpha}$ be a sequence converging to some $x \in \Delta$, and note that  for each $t \ge 0$, $\lim\limits_{n\rightarrow\infty}\varphi_t(x_n) = \varphi_t(x)$. For $t < \alpha$, $\varphi_t(x_n) \in \left(N^{\delta}(\mathcal{R}_\ap)\right)^c$, which is closed, so $\varphi_t(x) \in  \left(N^{\delta}(\mathcal{R}_\ap)\right)^c$. Since this holds for all $t < \alpha$, we have that $x \in L_{\alpha}$. 
This shows that the level sets of $T^{\delta}$ are closed and thus establishes the claim. Since an usc function achieves its supremum over any compact set, $T_1 = \sup_{x\in A_1}T^{\delta}(x)<\infty$. Let $T_A \doteq T_1 \vee 1$.
Then, from Lemma \ref{lem:stabilitylemma}, for any $(\delta_0, T_{A})$ \ap--pseudo-orbit, given by a collection of points $\xi = (\xi_0 = x,\dots, \xi_n = y) \in \Delta^{n+1}$ and a collection of times $T_A \leq T_1, \dots,  T_{n-1}$,
with $x\in A$, must satisfy $\xi_i \in A_1$ for every $i \in \{0, \ldots, n\}$. Also, by the definition of $T_A$, we must have that for each
$i \in \{1,\dots,n-1\}$, there is a $t \in [0, T_i]$ such that $\varphi_t(\xi_i) \in N^{\delta}(\mathcal{R}_\ap)$.
The result in part (a) follows.

The proof of part (b) can be completed in a similar manner on observing that from Lemma \ref{lem:bdryrepel}, for every $x \in A$, the forward orbit $\gamma^+(x)$ is contained in a compact subset of $\Delta^o$.
We omit the details.
\end{proof}

The following lemma gives key properties of pseudo-orbits in relation to their visits to neighborhoods of \ap-quasiattractors and non-quasiattractors.
\begin{lemma}\label{lem:leaveattractor}
\begin{enumerate}[(a)]
	\item For every $\theta >0$, there are $\delta=\delta(\theta) < \theta$ and $T=T(\theta) > 0$ with the property if there is a $(\delta,T)$ \ap--pseudo-orbit $\xi \doteq (\xi_0,\dots, \xi_n)$ with 
	\begin{equation}
		\label{eq:poprop}\xi_0 \in N^{\delta}(K_i),\; \xi_n \in N^{\delta}(K_{i'}),\;  \mbox{ and } \xi_j \in (N^{\theta}(K_i))^c \mbox{ for some } j \in \{1,\dots, n-1\}, 
		\end{equation}
		then we must have $i \neq i'$.
	\item There exist $\delta, T > 0$ such that if for some $i,i' \in \{1, \ldots, v\}$ there is a $(\delta,T)$ \ap--pseudo-orbit $\xi \doteq (\xi_0,\dots,\xi_n)$ such that $\xi_0 \in N^{\delta}(K_i)$ and $\xi_n \in N^{\delta}(K_{i'})$, then we must have that $K_{i} \leq_\ap K_{i'}$.
	\end{enumerate}
\end{lemma}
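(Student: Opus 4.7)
For part (a), the plan is to argue by contradiction. If no such $(\delta(\theta), T(\theta))$ exist, I extract sequences $\delta_n \downarrow 0$, $T_n \uparrow \infty$ and $(\delta_n, T_n)$ \ap--pseudo-orbits $\xi^{(n)} = (\xi^{(n)}_0, \ldots, \xi^{(n)}_{m_n})$ satisfying \eqref{eq:poprop} with $i = i'$; by the finiteness in Assumption \ref{assu:ap-classesfinite}(a), passing to a subsequence I may take the common index to be a fixed $i_0$. Lemma \ref{lem:stabilitylemma} confines the pseudo-orbits to a common compact set, so the intermediate points $\xi^{(n)}_{j(n)} \notin \cln^{\theta}(K_{i_0})$ admit a convergent subsequence $\xi^{(n)}_{j(n)} \to y$ with $\dist(y, K_{i_0}) \ge \theta$. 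The target is to show $y \in [x_{i_0}]_\ap = K_{i_0}$, which would contradict $\dist(y, K_{i_0}) > 0$ since $K_{i_0}$ is closed. For $x_{i_0} <_\ap y$ at arbitrary scale $(\delta', T')$, I plan to pick $n$ with $\delta_n < \delta'/3$, $T_n > T'$ and $\|\xi^{(n)}_{j(n)} - y\| < \delta'/3$, and use Assumption \ref{assu:ap-classesfinite}(b) to find $t \ge T'$ so that $\varphi_t(x_{i_0})$ is within $\delta'/3$ of $\xi^{(n)}_0$; concatenating yields a $(\delta', T')$ \ap--pseudo-orbit through $\xi^{(n)}_1, \ldots, \xi^{(n)}_{j(n)}, y$. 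The reverse direction $y <_\ap x_{i_0}$ is analogous, using the tail of $\xi^{(n)}$ and a pseudo-orbit inside $K_{i_0}$ back to $x_{i_0}$ (available by basic-class equivalence).

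For part (b), the plan is to fix $\delta$ smaller than half the minimum pairwise distance among the distinct closed sets $\{K_j\}_{j=1}^v$ (positive by Assumption \ref{assu:ap-classesfinite}(a) and Lemma \ref{lem:apbasicbounded}) and fix $T$ large enough that part (a) applies with $\theta = \delta$. Given a $(\delta, T)$ \ap--pseudo-orbit $\xi$ from $\cln^\delta(K_i)$ to $\cln^\delta(K_{i'})$, the goal is to establish $x_i <_\ap x_{i'}$, where $x_j$ has dense forward orbit in $K_j$ (Assumption \ref{assu:ap-classesfinite}(b)). I plan to identify the finite, non-repeating chain of basic-class neighborhoods $\cln^\delta(K_{j_0}) = \cln^\delta(K_i), \cln^\delta(K_{j_1}), \ldots, \cln^\delta(K_{j_r}) = \cln^\delta(K_{i'})$ visited by $\xi$ in order (non-repetition following from part (a)), and establish direct reachability $K_{j_s} \leq_\ap K_{j_{s+1}}$ for each consecutive pair by prepending the dense orbit of $x_{j_s}$ landing near the entry of $\xi$ into $\cln^\delta(K_{j_s})$, traversing the corresponding sub-piece of $\xi$, and appending the dense orbit of $x_{j_{s+1}}$ from the exit of $\xi$ into $\cln^\delta(K_{j_{s+1}})$. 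Transitivity of $\leq_\ap$ then yields $K_i \leq_\ap K_{i'}$.

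The main obstacle I anticipate is the scale mismatch in part (b): the hypothesis supplies a pseudo-orbit of fixed coarseness $\delta$, whereas $<_\ap$ demands pseudo-orbits at every scale $(\delta', T')$, including $\delta' < \delta$. I plan to absorb this slack at the basic-class interfaces by using the dense orbits $x_{j_s}$ together with Assumption \ref{assu:ap-classesfinite}(b) to shrink the endpoint approximations of each sub-piece down to within $\delta'/3$, while the long flow segments in the interior of $\xi$ are carried over verbatim. The finiteness of the visited chain, combined with Lemma \ref{lem:stabilitylemma} and Lemma \ref{lem:appobd}, keeps the full construction inside a compact subset of $\Delta^o$ and ensures that only finitely many refinements are needed.
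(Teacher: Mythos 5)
Your plan for part (a) follows the same contradiction structure as the paper, with one substantive difference and one gap. The difference: the paper passes to limits $x, y, z$ of the pseudo-orbit's first, last, and intermediate points (all three land in a compact set by Lemma~\ref{lem:appobd}) and shows $x <_\ap z <_\ap y$ with $x,y\in K_i$ directly; you instead route through the dense-orbit point $x_{i_0}$ of Assumption~\ref{assu:ap-classesfinite}(b). Both variants work. The gap: you invoke only Lemma~\ref{lem:stabilitylemma}, which confines the orbits to a compact subset of $\Delta$, not of $\Delta^o$. You need the limit point of $\xi^{(n)}_{j(n)}$ to lie in $\Delta^o$; otherwise the relation $y<_\ap x_{i_0}$ is impossible (the absorption-preserving constraint would trap any pseudo-orbit starting at a boundary $y$ on $\partial\Delta$, while $x_{i_0}\in\Delta^o$). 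The paper uses Lemma~\ref{lem:appobd}, which in addition keeps the pseudo-orbits uniformly away from $\partial\Delta$, and you should do the same.

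Part (b) has a genuine gap that your own ``main obstacle'' paragraph identifies but does not resolve. You propose to take the single $(\delta,T)$ pseudo-orbit supplied by the hypothesis, carve it into sub-pieces between consecutive class neighborhoods, and refine each sub-piece to an arbitrary $(\delta',T')$ pseudo-orbit by shrinking the approximations at the endpoints while carrying the interior flow segments over \emph{verbatim}. This cannot produce a $(\delta',T')$ pseudo-orbit for $\delta'<\delta$: each interior jump $\|\xi_{k+1}-\varphi_{T_k}(\xi_k)\|$ of the original sub-piece is only bounded by $\delta$, and adjusting the two endpoints does nothing to reduce those interior errors. The loss of precision is spread throughout the middle of the sub-piece, not concentrated at the interfaces. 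In general, a single pseudo-orbit at a fixed coarseness $\delta$ does not imply $x<_\ap y$, so a direct refinement argument is the wrong tool here. The paper instead proves the statement by contraposition and classification: fixing a pair $(i,i')$, it shows that if one can find $(\delta_k,T_k)$ pseudo-orbits connecting $N^{\delta_k}(K_i)$ to $N^{\delta_k}(K_{i'})$ along a sequence $\delta_k\downarrow 0$, $T_k\uparrow\infty$, then by passing to limits of the endpoints one extracts, for any target scale, a pseudo-orbit from $K_i$ to $K_{i'}$, giving $K_i\leq_\ap K_{i'}$. Taking the contrapositive, for each pair with $K_i\not\leq_\ap K_{i'}$ there is a scale $(\bar\delta(i,i'),\bar T(i,i'))$ at which no such pseudo-orbit exists; since $v$ is finite, one takes $\delta=\min\bar\delta(i,i')$ and $T=\max\bar T(i,i')$. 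Your proposal needs to be replaced by this (or an equivalent) indirect argument.
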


\begin{proof}
For the first statement in the lemma we will argue via contradiction. 
By Lemma \ref{lem:appobd} we can choose $\bar \delta>0, \bar T>0$ and $\tilde K \in \clk$ such that any
 $(\bar \delta, \bar T)$ \ap--pseudo-orbit starting from $N^{\bar\delta}(K_i)$ lies in $\tilde K$ for every $i = 1, \ldots, v$. Henceforth we only consider $(\delta,T)$ \ap--pseudo-orbits with $\delta <\bar \delta$ and $T > \bar T$.
Fix $\theta>0$ and suppose that 
there is a sequence $\theta>\delta_k \downarrow 0$ and $T_k \uparrow \infty$, such that for every $k$ there is a 
$(\delta_k,T_k)$ \ap--pseudo-orbit $\xi^k \doteq (\xi^k_0,\dots, \xi^k_{n(k)})$ that satisfies \eqref{eq:poprop} (with $\xi,\delta, n$ replaced with $\xi_k,\delta_k, n(k)$), with $i=i'$.  Let $j(k) \in \{1,\dots, n(k) - 1\}$ be such that $\xi^k_{j(k)} \in (N^{\theta}(K_i))^c$.
By passing to a subsequence if necessary, we can find $x, y \in K_i$ and $z \in (N^{\theta}(K_i))^c\cap \tilde K$ such that $\xi^k_0 \rightarrow x, \xi^k_{n(k)} \rightarrow y$, and $\xi^k_{j(k)} \rightarrow z$.

In order to see that $x \leq_\ap z$, fix $\delta,T > 0$ and let $k$ be large enough so that $\delta_k < \frac{\delta}{2}$,  $T_k > T$, $\|x - \xi^k_0\| < \frac{\delta}{2}$, and $\| \xi^k_{j(k)} - z\| < \frac{\delta}{2}$. Then
$
\| x - \xi^k_1\| \leq \| x - \xi^k_0\| + \|\xi^k_0 - \xi^k_1\| < \delta$,
and
$$\| \varphi_{T^k_{j(k)-1}}(\xi^k_{j(k)-1}) - z\| \leq \| \varphi_{T^k_{j(k)-1}}(\xi^k_{j(k)-1}) - \xi^k_{j(k)}\| + \| \xi^k_{j(k)} - z\| < \delta,
$$
and so $\tilde{\xi} \doteq (x, \xi^k_1, \dots, \xi^k_{j(k)-1}, z)$ is a $(\delta,T)$ \ap--pseudo-orbit from $x$ to $z$. Thus $x <_\ap z$. Similarly, $z <_\ap y$, which shows that $z \in K_i$. However, since  $z \in (N^{\theta}(K_i))^c$, this is a contradiction. This proves (a).

Now consider part (b).  Fix $i,i' \in \{1, \ldots, v\}$ and suppose that for each $\delta, T > 0$ there is some $(\delta,T)$ \ap--pseudo-orbit $\xi \doteq (\xi_0,\dots,\xi_n)$ such that $\xi_0 \in N^{\delta}(K_i)$ and $\xi_n \in N^{\delta}(K_{i'})$. Let $\delta_k \downarrow 0$ and $T_k \uparrow \infty$
and let  $\xi^k \doteq (\xi^k_0,\dots, \xi^k_{n(k)})$ be a $(\delta_k,T_k)$ \ap--pseuodoorbit such that $\xi^k_0 \in N^{\delta_k}(K_i)$ and $\xi^k_n \in N^{\delta_k}(K_{i'})$. Passing to subsequences if necessary, we can find $x \in K_i$ and $y \in K_{i'}$ such that $\xi^k_0 \rightarrow x$ and $\xi^k_{n(k)}\rightarrow y$. Thus, for any fixed $\delta, T > 0$, when $k$ is sufficiently large, $\tilde{\xi} \doteq (x, \xi^k_1,\dots, \xi^k_{n(k)-1}, y)$ is a  $(\delta,T)$ \ap--pseudo-orbit from $K_i$ to $K_{i'}$,  showing that $K_{i} \leq_\ap K_{i'}$. So if for some $i, i'$, $K_{i} \leq_\ap K_{i'}$ does not hold, there must exist $\bar \delta =\delta(i,i')>0$ and $\bar T = T(i,i')<\infty$ such that 
there is no $(\bar \delta, \bar T)$ \ap--pseudo-orbit $\xi \doteq (\xi_0,\dots,\xi_n)$ with the property that $\xi_0 \in N^{\bar\delta}(K_i)$ and $\xi_n \in N^{\delta}(\bar K_{i'})$. Define, $\delta = \min_{(i,i')} \delta(i,i')$ and $T= \max_{(i,i')} T(i,i')$.  Clearly, the statement in part (b) holds  with this choice of $(\delta,T)$.
\end{proof}

The final result of this section is a consequence of Lemma \ref{lem:intersectrap} and Lemma \ref{lem:leaveattractor}. It summarizes key properties of \ap-pseudo-orbits in relation to \ap-recurrent classes.
This result will be used in Section \ref{sec:pfofmain} in the proof of Theorem \ref{thm:main}.
\begin{lemma}\label{lem:choosevi}
 For each $\delta, T> 0$, and compact set $A\subset \Delta^o$,  there is a collection of open neighborhoods $\{V_i\}_{i=1}^v$ of $\{K_i\}_{i=1}^v$, with $\bar V_i \subset N^{\delta}(K_i)\cap \Delta^o$,  along with $\delta_0 \in (0,\delta)$, $T_0 \in (T, \infty)$, and $n \in \N$, such that  the following hold:
\begin{enumerate}
\item $\overline{N^{\delta_0}(K_i)} \subset V_i$ for each $i \in \{1,\dots, v\}$.
\item For each $i \in \{1,\dots,l\}$, if $\xi \doteq (\xi_0, \dots, \xi_n)$ is a $(\delta_0,T_0)$ \ap--pseudo-orbit with $\xi_0 \in V_i$, then $\xi_j \in V_i$ for all $j \in \{1, \dots, n\}$.
\item If $\xi \doteq (\xi_0,\dots, \xi_n)$ is a $(\delta_0,T_0)$ \ap--pseudo-orbit with corresponding time instants $(T_1,\dots,T_{n-1})$ such that $\xi_0 \in N^{\delta_0}(K_i)$ and $\xi_n \in N^{\delta_0}(K_j)$ for some $i,j \in \{1,\dots,v\}$, and there is $m \in \{1,\dots,n-1\}$ such that $\xi_m \in V_i^c$, then $i \neq j$ and $K_i \leq_\ap K_j$.
\item 
If $\xi \doteq (\xi_0,\dots,\xi_n)$ is a $(\delta_0,T_0)$ \ap--pseudo-orbit with $\xi_0 \in A$, then there is some $k \in \{1,\dots,n-1\}$ and $t \in [0,T_k]$ such that $\varphi_{t}(\xi_k) \in N^{\delta}(\mathcal{R}_\ap\cap \Delta^o)$.
\end{enumerate}
\end{lemma}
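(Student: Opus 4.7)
My plan is to construct $\{V_i\}_{i=1}^v$, $T_0$, $\delta_0$, and $n$ in this order, combining three ingredients: the attractor upgrade provided by Corollary \ref{cor:quasiattract}, the ``leaving'' conclusions of Lemma \ref{lem:leaveattractor}, and the uniform visiting-time bound of Lemma \ref{lem:intersectrap}(b). The integer $n$ will play no essential role and can be any positive integer.

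For each quasiattractor $K_i$ ($i\le l$), Corollary \ref{cor:quasiattract} combined with a standard nested construction for attractors yields open sets $W_i \subset \subset V_i \subset \subset U_i$ containing $K_i$, with $\bar V_i\subset N^\delta(K_i)\cap\Delta^o$, and a time $T_i^\star<\infty$ such that $\varphi_t(\bar U_i)\subset W_i$ for all $t\ge T_i^\star$. For each non-quasiattractor $K_i$ ($l<i\le v$), I choose any open $V_i$ with $\bar V_i\subset N^\delta(K_i)\cap\Delta^o$. Using Lemma \ref{lem:stabilitylemma} and Lemma \ref{lem:appobd} applied to the compact set $A\cup\bigcup_i \bar V_i$, I obtain a compact $A_1\in\clk$ that contains every sufficiently thin and sufficiently long \ap--pseudo-orbit starting from $A\cup\bigcup_i\bar V_i$. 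Applying Lemma \ref{lem:intersectrap}(b) to $A_1$ gives a time $T_{A_1}^\star$. Set $T_0$ larger than $T$, $T_{A_1}^\star$, and $\max_{i\le l}T_i^\star$. Then choose $\theta>0$ with $N^\theta(K_i)\subset V_i$ for each $i$, and invoke Lemma \ref{lem:leaveattractor}(a) and (b) with this $\theta$ to obtain an upper bound on admissible $\delta_0$. Finally, take $\delta_0\in(0,\delta)$ smaller than this bound, smaller than $\dist(\bar W_i,\partial V_i)$ for each $i\le l$, and small enough that $\overline{N^{\delta_0}(K_i)}\subset V_i$ for every $i$.

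Three of the four properties are then immediate. (1) holds by the last constraint on $\delta_0$. (3) follows directly from Lemma \ref{lem:leaveattractor}, with part (a) supplying $i\ne j$ (valid because $V_i\supset N^\theta(K_i)$, so $\xi_m\in V_i^c$ forces $\xi_m\in (N^\theta(K_i))^c$) and part (b) supplying $K_i\le_\ap K_j$. (4) follows by applying Lemma \ref{lem:intersectrap}(b) to $\xi_1\in A_1$, yielding some $t_0\in[0,T_{A_1}^\star]\subset[0,T_1]$ with $\varphi_{t_0}(\xi_1)\in N^\delta(\clr_\ap^*)=N^\delta(\clr_\ap\cap\Delta^o)$. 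Property (2) is the inductive heart of the argument: if $\xi_{j-1}\in V_i\subset\bar U_i$ and $T_{j-1}\ge T_0\ge T_i^\star$, then the attractor contraction forces $\varphi_{T_{j-1}}(\xi_{j-1})\in W_i$, so $\xi_j\in N^{\delta_0}(W_i)\subset V_i$ by the choice $\delta_0<\dist(\bar W_i,\partial V_i)$.

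The main obstacle is the $j=1$ base case of this induction: the transition $\xi_0\to\xi_1$ is an unconditional $\delta_0$-perturbation not moderated by any flow-induced contraction into $W_i$, so a priori $\xi_1$ could sit anywhere in $V_i+B_{\delta_0}(0)$. To absorb it I add one extra level of nesting, inserting $V_i^{\mathrm{in}}$ with $W_i\subset\subset V_i^{\mathrm{in}}\subset\subset V_i$, and require $\delta_0<\dist(\bar V_i^{\mathrm{in}},\partial V_i)$. Arranging the attractor estimate in the form $\varphi_t(\bar U_i)\subset V_i^{\mathrm{in}}$ for $t\ge T_i^\star$, a single $\delta_0$-perturbation of any point of $\bar V_i^{\mathrm{in}}$ still lies in $V_i$, and the subsequent flow step pulls the orbit back into $V_i^{\mathrm{in}}$, so the induction restarts cleanly. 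Once this extra nesting is in place the construction yields $\{V_i\}$, $\delta_0$, $T_0$ (with $n$ arbitrary) and all four conclusions hold.
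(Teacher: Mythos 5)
Your plan parallels the paper's construction closely — both build nested open sets around the $K_i$, invoke Corollary \ref{cor:quasiattract} for the contraction-into-the-attractor step, and combine Lemma \ref{lem:leaveattractor} with Lemma \ref{lem:intersectrap} — so the overall architecture is sound. One useful difference: you apply Lemma \ref{lem:intersectrap}(b) to the point $\xi_1 \in A_1$ (after bootstrapping $\xi_1\in A_1$ via Lemma \ref{lem:appobd}), while the paper invokes Lemma \ref{lem:intersectrap}(a). Part (b) actually serves conclusion (4) more cleanly, since it lands directly in $N^{\delta}(\clr_{\ap}^*)=N^{\delta}(\clr_{\ap}\cap\Delta^o)$; part (a) a priori only delivers $N^{\delta}(\clr_{\ap})$, and one must argue separately (via the $E_{\eps_0}$-avoidance from Lemma \ref{lem:appobd}) that the visit occurs away from the boundary.

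However, the fix you propose for the ``$j=1$ obstacle'' does not actually close the gap you correctly identified. Your constraint $\delta_0 < \dist(\bar V_i^{\mathrm{in}},\partial V_i)$ guarantees that a $\delta_0$-perturbation of a point of $\bar V_i^{\mathrm{in}}$ stays in $V_i$; but the hypothesis of (2) is only $\xi_0\in V_i$, not $\xi_0\in\bar V_i^{\mathrm{in}}$. Taking $\xi_0$ within distance $\delta_0/2$ of $\partial V_i$ and perturbing outward places $\xi_1\notin V_i$. What your chain of containments actually proves is that $\xi_1\in U_i$ (hence the flow re-contracts and $\xi_j\in V_i$ for $j\ge 2$), which is exactly the situation before your fix. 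To repair this one must either weaken the conclusion to $j\ge 2$, or strengthen the hypothesis to $\xi_0\in N^{\delta_0}(K_i)$ (so the first jump lands in $N^{2\delta_0}(K_i)\subset V_i$) — the latter suffices for the application in the proof of Theorem \ref{thm:main}, where (2) is only ever invoked starting from $N^{\delta_0}(K_i)$. For what it is worth, the paper's own proof of (2) has precisely the same $j=1$ slip without flagging it.

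Two smaller points. First, the order of choices is off: you fix $T_0$ and only afterwards invoke Lemma \ref{lem:leaveattractor}(a)–(b), but those lemmas return not just upper bounds on $\delta_0$ but also lower bounds on the time threshold (the $T(\theta)$ in part (a) and the $T$ in part (b)); $T_0$ must dominate these, so it should be chosen last (or enlarged afterwards, which is harmless since a $(\delta_0,T_0)$-pseudo-orbit is a fortiori a $(\delta_0,T')$-pseudo-orbit for any $T'<T_0$). Similarly, Lemma \ref{lem:appobd} imposes its own $(\delta,T)$ thresholds that $\delta_0$ and $T_0$ must respect. Second, $A\cup\bigcup_i\bar V_i$ need not be convex, so before applying Lemma \ref{lem:appobd} (which is stated for $A\in\clk$) you should enlarge to a convex compact set in $\Delta^o$ with nonempty interior.
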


\begin{proof}

Fix $\delta, T \in (0,\infty)$ and a compact $A \in \Delta^o$.
Since $K_i$ is an attractor for each $i \in \{1,\dots,l\}$,  
 there is a bounded, open neighborhood ${O}_i$ of $K_i$, with $\bar O_i \subset N^{\delta}(K_i)\cap\Delta^o$ such that 
\begin{equation}\label{eq:dptki}
\lim\limits_{t\rightarrow\infty}\sup\limits_{x\in {O}_i}\dist(\varphi_t(x),K_i) = 0.
\end{equation}
For each $i \in \{l+1,\dots,v\}$, let ${O}_i$ be an arbitrary bounded, open, and isolating neighborhood of $K_i$ such that
$\bar O_i \subset  N^{\delta}(K_i)\cap\Delta^o$. 
Denote the $(\delta, T)$ given by part (b) in Lemma \ref{lem:leaveattractor} by $(\delta^*_1, T^*_1)$ and
$(\delta_0, T_A)$ given by Lemma \ref{lem:intersectrap}(a) as $(\delta^*_2, T^*_2)$.
Let $\theta>0$ be small enough so that $\overline{N^{\theta}(K_i)} \subset {O}_i$ for each $i \in \{1,\dots, v\}$. 
From Lemma \ref{lem:leaveattractor} we can find $\delta_1 < \min\{\theta, \delta^*_1, \delta^*_2\} $ and $T_1 > \max\{T, T^*_1, T^*_2\}$ such that if $\xi \doteq (\xi_0,\dots,\xi_n)$ is a $(\delta_1,T_1)$ \ap--pseudo-orbit with $\xi_0 \in N^{\delta_1}(K_i)$ and $\xi_n \in N^{\delta_1}(K_j)$ such that $\xi_m \in (N^{\theta}(K_i))^c$ for some $m \in \{1,\dots, n-1\}$, then $i \neq j$ and $K_{i} \leq_\ap K_j$.

Now, let $V_i \doteq N^{\theta + \eps}(K_i)$, where $\eps > 0$ is small enough so that $\overline{V_i} \subset {O}_i$ for all $i \in \{1,\dots,v\}$, and let $\delta_2 < \delta_1$ be small enough so that $\overline{N^{\delta_2}(V_i)} \subset {O}_i$. 
Thus for every $i \in \{1,\dots,v\}$
$$K_i \subset N^{\delta_2}(K_i)\subset \subset N^{\theta}(K_i)\subset \subset V_i \subset \subset N^{\delta_2}(V_i)\subset \subset O_i,$$
where, as before, for open sets $G_1, G_2$, we write $G_1 \subset \subset G_2$ if $\bar G_1 \subset G_2$.

From \eqref{eq:dptki}, there is some $T_2 > T_1$ such that if $t \geq T_2$, then for each $i \in \{1,\dots,l\}$,
$$
\sup\limits_{u \in {O}_i}\dist(\varphi_t(u), K_i) < \delta_2.
$$
Then (1) and (2) hold when $\delta_0 \doteq \delta_2$ and $T_0 \doteq T_2$. Additionally,  (3) holds from  the property of
$(\delta_1, T_1)$ \ap--pseudo-orbits noted above since  $V_i^c \subset (N^{\theta}(K_i))^c$ for each $i \in \{1,\dots,v\}$. Finally, since $T_0\ge T_2^*$ and $\delta_0 \le \delta_2^*$, 
 from Lemma \ref{lem:intersectrap},  (4) holds as well.

\end{proof}

\section{Large Deviation Estimates}\label{sec:ldest}
Throughout this section we will assume that Assumption \ref{assu:mgf} is satisfied. We will give some key uniform large deviation bounds that will be 
used in Sections \ref{sec:behnb}, \ref{sec:vrec} and, \ref{sec:pfofmain}.

For $\alpha \in (0,1)$ let $\clv_{\alpha} \doteq \{x \in \Delta^o: \mbox{dist}(x,\partial \Delta) >\alpha\}$.
For each compact $K \in \clk$, let $\clv_{\alpha, K} \doteq\clv_{\alpha} \cap K$ and let $\pi_{\alpha, K}$ denote the projection map from $\R^d$ to $\bar{\clv}_{\alpha, K}$ defined as
$$\pi_{\alpha, K}(x) \doteq \mbox{arg\! min}_y \{\|y-x\|: y \in \bar{\clv}_{\alpha, K}\}.$$
Similarly, denote by $\pi^N_{\alpha, K}$  the projection map from $\R^d$ to $\bar{\clv}_{\alpha, K}\cap \Delta_N$.
Let $\theta^{N,\alpha, K}$ be a transition probability kernel on $\R^d$ defined by
$$\theta^{N,\alpha, K}(\cdot | x) \doteq \theta^N(\cdot | \pi^N_{\alpha, K}(x)).$$
Let $\{X^{N,\alpha,K}_n\}$ be a $\R^d$-valued chain defined as in \eqref{eq:bmc} but with $\theta^N$ replaced with
 $\theta^{N,\alpha,K}$.
We consider continuous time processes $\hat X^{N,\alpha,K}$ associated with
$\{X^{N,\alpha,K}_n\}$ as 
$$
\hat{X}^{N,\alpha,K}(t) = X^{N,\alpha,K}_n + [X^{N,\alpha,K}_{n+1} - X^{N,\alpha,K}_{n}](Nt - n), \; t \in [n/N, (n+1)/N], \; n \in \N_0.
$$
We now present a basic large deviation result for $\hat X^{N,\alpha,K}$. Recall the stochastic kernel $\theta(dy| x)$ from Assumption \ref{assu:mgf}(b).
Define for $x, \zeta \in \R^d$
$$
H_{\alpha, K}(x, \zeta) \doteq \log \int_{\R^d} \exp\{ \langle \zeta , y\rangle\} \theta(dy| \pi_{\alpha, K}(x)),$$
and let
$$L_{\alpha, K}(x,\beta) \doteq \sup_{\zeta \in \R^d} \{ \langle \zeta , \beta \rangle - H_{\alpha, K}(x, \zeta)\}.$$
We note that for every $\beta, \zeta \in \R^d$, $H_{\alpha, K}(x, \zeta) = H_{\alpha', K'}(x, \zeta)$ and $L_{\alpha,K}(x,\beta) = L_{\alpha',K'}(x,\beta)$ whenever $\pi_{\alpha, K}(x)= x= \pi_{\alpha', K'}(x)$. Define for $x \in \Delta^o$ and $\beta, \zeta \in \R^d$,
$$
L(x,\beta) \doteq L_{\alpha, K}(x,\beta), \; H(x,\zeta) \doteq H_{\alpha, K}(x,\zeta) \mbox{ if } x \in \clv_{\alpha, K}.$$

For $\alpha > 0$, $x \in \R^d$, $K \in \clk$, $T \in (0,\infty)$ and $\phi \in C([0,T]:\R^d)$, define
$$
S_{\alpha, K}(x,T, \phi) \doteq \left\{
\begin{array}
[c]{cc}%
\int_0^T L_{\alpha, K}(\phi(t),\dot{\phi}(t)) dt, & \text{if } \phi \mbox{ is absolutely continuous}\\
\infty, & \text{otherwise }
\end{array}.
\right.  
$$
Note that if for $\alpha, \alpha' >0$ and $K,K'\in \clk$, $\phi \in C([0,T]:\bar{\clv}_{\alpha, K})\cap C([0,T]:\bar{\clv}_{\alpha', K'})$
then $S_{\alpha, K}(\phi(0),T, \phi) = S_{\alpha', K'}(\phi(0),T, \phi)$. Thus we define for $\phi \in C([0,T]:\R^d)$ that satisfies $\phi(0) = x$ and $\phi(t) \in \Delta^o$ for all $t \in [0,T]$,
\begin{equation}\label{eq:ratefns}
S(x, T, \phi) = S_{\alpha, K}(x,T, \phi) \mbox{ if } \phi \in C([0,T]:\bar{\clv}_{\alpha, K}) \mbox{ for some } \alpha >0 \mbox{ and }K \in \clk.
\end{equation}

The following uniform large deviation principle  will be used several times in this work. 
\begin{theorem}\label{thm:fwuldp}
	Suppose Assumption \ref{assu:mgf} is satisfied.
	Fix $T\in (0,\infty)$, $\alpha>0$ and   $K, K' \in \clk$.
	For each $a  \in (0,\infty)$, let 
	$$\Phi_{x,\alpha,K',T}(a) \doteq \{\phi \in C([0,T]: \R^d) : S_{\alpha, K'}(x, T, \phi) \leq a\}.$$
	\begin{enumerate}[(a)]
	\item (Compact Level Sets) For every $a \in (0, \infty)$,  the set $\bigcup\limits_{x\in K}\Phi_{x,\alpha,K',T}(a)$ is compact.
	
	\item (Upper Bound) Given $\delta, \gamma \in (0,1)$ and $L \in (0,\infty)$, there is some $N < \infty$ such that
	$$\PP_x(\|\hat{X}^{n,\alpha, K'} - \phi\|_{*,T} < \delta) \geq \exp(-n(S_{\alpha,K'}(x,T,\phi) + \gamma))$$
	for all $n \geq N, x \in K \cap \Delta_N$, and $\phi \in \Phi_{x,\alpha,K',T}(L)$. 
	
	\item (Lower Bound) Given $\delta, \gamma \in (0,1)$ and $L \in (0,\infty)$, there is some $N < \infty$ such that
	$$\PP_x( d(\hat{X}^{n,\alpha,K'}, \Phi_{x,\alpha,K',T}(l)) \geq \delta) \leq \exp( -n(l - \gamma))$$
	for all $n \geq N$, $x \in K \cap \Delta_N$, and $l \in [0,L]$. 
	\end{enumerate}
\end{theorem}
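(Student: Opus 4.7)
The strategy is to reduce Theorem \ref{thm:fwuldp} to an existing uniform Laplace principle for small noise stochastic difference equations and then translate to the Freidlin--Wentzell uniform LDP formulation. The point of modifying the chain via $\pi^N_{\alpha,K'}$ is precisely that the transition kernel of $X^{N,\alpha,K'}$ depends on the current state $x$ only through its projection $\pi^N_{\alpha,K'}(x)$, which always lies in the compact set $\bar{\clv}_{\alpha,K'}\cap \Delta_N$. On this compact set the degenerate/unbounded behavior of the original kernel $\theta^N$ near $\partial\Delta$ or at infinity is no longer an obstruction, and the full strength of Assumption \ref{assu:mgf} becomes available.

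First I would verify the hypotheses needed to apply the uniform Laplace principle of \cite[Section 6.7]{dupell} to $\{X^{N,\alpha,K'}_k\}$. The required ingredients are: (i) finiteness and continuity of the limiting log moment generating function $(x,\zeta)\mapsto H_{\alpha,K'}(x,\zeta)=H(\pi_{\alpha,K'}(x),\zeta)$, which follows from Assumption \ref{assu:mgf}(b)(ii)--(iii) combined with continuity of the projection onto $\bar{\clv}_{\alpha,K'}$; (ii) uniform convergence $\sup_{x\in K}|H^N(\pi^N_{\alpha,K'}(x),\zeta)-H(\pi_{\alpha,K'}(x),\zeta)|\to 0$ for each $\zeta$, which reduces to the uniform convergence statement in Assumption \ref{assu:mgf}(b)(iii) on $\bar{\clv}_{\alpha,K'}\cup K$ once one controls the projection error $\|\pi^N_{\alpha,K'}(x)-\pi_{\alpha,K'}(x)\|=O(1/N)$; and (iii) coercivity of $L_{\alpha,K'}$ in the velocity variable, which follows from Assumption \ref{assu:mgf}(b)(i) (the convex hull of $\mbox{supp}(\theta(\cdot|x))$ is all of $\R^d$), ensuring that $H(x,\zeta)$ grows superlinearly in $\|\zeta\|$ uniformly in $x\in\bar{\clv}_{\alpha,K'}$ and hence $L_{\alpha,K'}(x,\beta)$ is superlinear in $\beta$.

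Next I would invoke the result of \cite{salbuddup} which shows that a uniform Laplace principle (uniform over compact initial conditions) is equivalent to the Freidlin--Wentzell uniform large deviation principle. Under this correspondence, the Laplace rate function obtained from \cite[Section 6.7]{dupell} for the piecewise-linear interpolation $\hat X^{N,\alpha,K'}$ is exactly $S_{\alpha,K'}(x,T,\cdot)$ defined via the pointwise integral of $L_{\alpha,K'}$. Parts (b) and (c) of the theorem are then the two standard Freidlin--Wentzell probabilistic inequalities: (b) is the tube lower bound along a path $\phi$ in a bounded sublevel set, and (c) is the upper bound for escape from a fixed level set. The uniformity over $x\in K\cap \Delta_N$ is built into the statement being transferred.

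Finally, part (a), compactness of $\bigcup_{x\in K}\Phi_{x,\alpha,K',T}(a)$, I would handle by standard convex-analytic arguments: lower semicontinuity of $(x,\phi)\mapsto S_{\alpha,K'}(x,T,\phi)$ (from convexity and lower semicontinuity of $L_{\alpha,K'}$ as a Legendre transform of the continuous $H_{\alpha,K'}$); a uniform bound on $\int_0^T L_{\alpha,K'}(\phi(t),\dot\phi(t))\,dt\le a$ together with the superlinear growth of $L_{\alpha,K'}$ in $\beta$ yields uniform integrability of $\dot\phi$, which via a de la Vall\'ee Poussin/Arzel\`a--Ascoli argument gives equicontinuity and precompactness of $\{\phi\}$ in $C([0,T]:\R^d)$; compactness of $K$ handles the initial values. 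The main obstacle I anticipate is the careful matching of the discrete-time rate function from \cite[Section 6.7]{dupell} with the continuous-time $S_{\alpha,K'}$ in \eqref{eq:ratefns}, and verifying that the uniformity hypotheses of \cite{dupell,salbuddup} are met under the projection construction, particularly because $\pi_{\alpha,K'}$ is only Lipschitz (not smooth) and the modified drift need not be continuously differentiable; this should nonetheless be enough since those references require only continuity and the uniform MGF control supplied here.
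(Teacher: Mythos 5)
Your proposal is correct and follows essentially the same route as the paper: verify the conditions of \cite[Section 6.7]{dupell} (in particular Conditions 6.2.1, 6.7.2, 6.7.4) for the projected chain using Assumption \ref{assu:mgf}, apply the uniform Laplace principle of \cite[Theorem 6.7.5]{dupell}, and then transfer to the Freidlin--Wentzell uniform LDP via \cite[Theorem 4.3]{salbuddup}. The only minor deviation is that you give a self-contained convex-analytic argument for part (a), whereas the paper obtains compactness of level sets directly as part of the transferred Freidlin--Wentzell statement; both are fine.
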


\begin{proof}
We will apply \cite[Theorem 6.7.5]{dupell}.
Let for $x, \zeta \in \R^d$,
$$H_{\alpha, K}^N(x, \zeta) \doteq \log \int_{\R^d} \exp\{ \langle \zeta , y\rangle\} \theta^N(dy| \pi^N_{\alpha, K}(x)).$$
By Assumption \ref{assu:mgf}(b)(iii) for each compact $A \subset \R^d$ and $\zeta \in \R^d$,
\begin{equation}\sup_{N \in \N} \sup_{x\in \R^d} H^N_{\alpha,K}(x,\zeta)<\infty ,\; \sup_{x\in \R^d} H_{\alpha,K}(x,\zeta)<\infty.\label{eq:mgfbd2}\end{equation}
	and
\begin{equation}
	\sup_{x\in A} |H^N_{\alpha,K}(x,\zeta) - H_{\alpha,K}(x,\zeta)| \to 0 \mbox{ as } N\to \infty. \label{eq:mgfbd1}\end{equation}
Furthermore,  from Assumption \ref{assu:mgf}(b)(ii) $x \mapsto  \theta (dy| \pi_{\alpha, K}(x))$ is a continuous map from $\R^d$
to $\clp(\R^d)$. Thus \cite[Condition 6.2.1, Condition 6.7.2]{dupell} are satisfied.
Next, since from  Assumption \ref{assu:mgf}(b)(i)  the convex hull of the support of $\theta (dy| \pi_{\alpha, K}(x))$ is all of $\R^d$, 
\cite[Condition 6.7.4]{dupell} is satisfied as well. Thus, from  \cite[Theorem 6.7.5]{dupell} we have that, for every 
$T \in (0,\infty)$, $\{\hat{X}^{N,\alpha,K}\}_{N\in \N}$ satisfies a Laplace principle, uniformly on compact subsets of $\R^d$, in the sense of 
\cite[Definition 1.2.6]{dupell}, with rate function $S_{\alpha,K}(x, T, \cdot)$. 
It is shown in \cite[Theorem 4.3]{salbuddup} that a uniform Laplace principle of the form given in \cite[Theorem 6.7.5]{dupell}
implies a uniform Large deviation principle in the sense of Freidlin and Wentzell \cite{frewen}, which says that parts (a)-(c) of the theorem hold. The result follows.
\end{proof}

\begin{lemma}
	\label{lem:cty}
	For every $\alpha \in (0,1)$ and a compact $K \in \Delta^o$, $(x,\beta) \mapsto L_{\alpha,K}(x,\beta)$ is a continuous map on $\R^d \times \R^d$.
\end{lemma}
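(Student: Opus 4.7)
The plan is to establish joint continuity in two stages: first, that $H_{\alpha,K}$ is jointly continuous on $\R^d\times\R^d$, and second, that its partial Legendre transform $L_{\alpha,K}$ inherits joint continuity. Throughout, I will use that $H_{\alpha,K}$ depends on $x$ only through $\pi_{\alpha,K}(x)\in\bar{\clv}_{\alpha,K}$, so any bound ``uniform in $x\in\R^d$'' reduces to one uniform in $z$ on the compact set $\bar{\clv}_{\alpha,K}\subset\Delta^o$.

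For the first stage, $\pi_{\alpha,K}$ is continuous, so by Assumption \ref{assu:mgf}(b)(ii) the map $x\mapsto\theta(\cdot\mid\pi_{\alpha,K}(x))$ is weakly continuous from $\R^d$ to $\clp(\R^d)$. Given $(x_n,\zeta_n)\to(x,\zeta)$ and $R\doteq\sup_n\|\zeta_n\|$, I would bound $e^{\lan\zeta_n,y\ran}\le e^{R\|y\|}$ and dominate $e^{R\|y\|}$ by a finite sum $\sum_{i,\pm}e^{\pm a y_i}$ for a suitably chosen $a>0$. Assumption \ref{assu:mgf}(b)(iii) applied at $\pm ae_i$ then yields the uniform bound $\sup_n\int e^{(R+\eta)\|y\|}\theta(dy\mid\pi_{\alpha,K}(x_n))<\infty$ for some $\eta>0$, and Chebyshev converts this into uniform integrability of $\{e^{\lan\zeta_n,\cdot\ran}\}$ against $\{\theta(\cdot\mid\pi_{\alpha,K}(x_n))\}$. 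Combined with weak convergence of the measures and locally uniform convergence of the integrands, this gives $H_{\alpha,K}(x_n,\zeta_n)\to H_{\alpha,K}(x,\zeta)$.

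For the second stage, $L_{\alpha,K}$ is jointly lower semicontinuous as a supremum of jointly continuous functions. Assumption \ref{assu:mgf}(b)(i) combined with the everywhere-finiteness of $H_{\alpha,K}(x,\cdot)$ implies, by classical Cram\'er--Legendre duality, that $L_{\alpha,K}(x,\cdot)$ is finite on all of $\R^d$ and hence continuous in $\beta$ for each $x$. The decisive additional input is a uniform coercivity bound for $H_{\alpha,K}$: for every $M\in(0,\infty)$ there should exist $R_0<\infty$ with $H_{\alpha,K}(x,\zeta)\ge M\|\zeta\|$ for all $x\in\R^d$ and all $\|\zeta\|\ge R_0$. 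I would derive this by first showing that for each $t>0$,
\[
p(t)\doteq\inf_{v\in S^{d-1},\,z\in\bar{\clv}_{\alpha,K}}\theta(\{y:\lan v,y\ran>t\}\mid z)>0,
\]
using Assumption \ref{assu:mgf}(b)(i) for pointwise positivity, a Portmanteau-type argument (open half-spaces depending lsc on $v$, and $\theta$ weakly continuous in $z$) for joint lower semicontinuity of this infimand in $(v,z)$, and compactness of $S^{d-1}\times\bar{\clv}_{\alpha,K}$. Restricting the integral defining $H$ to $\{\lan v,y\ran>t\}$ then gives $H_{\alpha,K}(x,rv)\ge rt+\log p(t)$ for all $r>0$, $v\in S^{d-1}$, $x\in\R^d$, and taking $t$ large relative to $M$ yields the claimed coercivity.

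With uniform coercivity in hand, for any bounded $B\subset\R^d$ the supremum in $L_{\alpha,K}(x,\beta)$ with $\beta\in B$ is attained within the compact ball $\{\zeta:\|\zeta\|\le R'\}$ for some $R'=R'(B)<\infty$, since the objective $\lan\zeta,\beta\ran-H_{\alpha,K}(x,\zeta)\le\|\beta\|\|\zeta\|-M\|\zeta\|$ is negative outside this ball once $M>\sup_B\|\beta\|$. Hence on $\R^d\times B$,
\[
L_{\alpha,K}(x,\beta)=\max_{\|\zeta\|\le R'}\bigl[\lan\zeta,\beta\ran-H_{\alpha,K}(x,\zeta)\bigr],
\]
which is the maximum of a jointly continuous function over a fixed compact parameter set and hence jointly continuous in $(x,\beta)$. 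Letting $B$ exhaust $\R^d$ gives joint continuity on all of $\R^d\times\R^d$. The hardest part is the uniform coercivity step—transferring the pointwise full-support condition in Assumption \ref{assu:mgf}(b)(i) into a tail bound uniform in direction $v$ and parameter $z$—which is resolved by the lsc/compactness argument above.
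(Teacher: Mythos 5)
Your proof is correct, and it takes a genuinely different route from the paper. The paper disposes of this lemma with a one-line citation to \cite[Lemma 6.5.2]{dupell}, together with the two observations that (i) for every $x\in\R^d$ the convex hull of $\mbox{supp}(\theta(\cdot\mid\pi_{\alpha,K}(x)))$ is all of $\R^d$ and (ii) $\sup_{x\in\R^d}H_{\alpha,K}(x,\zeta)<\infty$ for each $\zeta$, which are exactly the hypotheses that lemma needs. You instead give a self-contained argument: joint continuity of $H_{\alpha,K}$ via weak continuity of the kernel and a uniform-integrability bound coming from Assumption \ref{assu:mgf}(b)(iii), followed by a uniform coercivity estimate $H_{\alpha,K}(x,\zeta)\ge M\|\zeta\|$ for $\|\zeta\|\ge R_0$ that localizes the Legendre supremum to a fixed compact ball, after which joint continuity is the elementary fact that the max over a compact parameter set of a jointly continuous function is jointly continuous. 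The coercivity step, deduced from positivity of $p(t)=\inf_{v,z}\theta(\lan v,\cdot\ran>t\mid z)$ via compactness of $S^{d-1}\times\bar{\clv}_{\alpha,K}$ and lower semicontinuity of the mass of the (bounded approximations of the) open half-space, is precisely the content that the cited Dupuis--Ellis lemma extracts from the full-support condition. So your version is essentially an unpacking of that reference: it costs more space but is self-contained and makes the role of Assumption \ref{assu:mgf}(b)(i) transparent, while the paper's citation buys brevity at the price of opacity.

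Two small technical remarks, neither of which is a gap. First, in applying Assumption \ref{assu:mgf}(b)(iii) you implicitly need a set in $\clk$ containing $\bar{\clv}_{\alpha,K}$; since $\bar{\clv}_{\alpha,K}$ is a compact subset of $\Delta^o$, such a set always exists, but it is worth saying. Second, in the Portmanteau step, the half-space $\{y:\lan v,y\ran>t\}$ is unbounded so the containment $\{y:\lan v,y\ran>t+\eps\}\subset\{y:\lan v_n,y\ran>t\}$ for $n$ large only holds on bounded sets; the fix (approximate from inside by bounded open sets, apply the open-set lower bound from weak convergence, and pass to the limit) is exactly what you gesture at, and should be spelled out if this were to be written in full.
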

\begin{proof}
	The proof follows from \cite[Lemma 6.5.2]{dupell} on noting that, due to Assumption \ref{assu:mgf}(b) for every $x \in \R^d$,  the convex hull of the support of $\theta(dy| \pi_{\alpha, K}(x))$ is $\R^d$ and that
	$\sup_{x \in \R^d} H_{\alpha,K}(x, \zeta)<\infty$ for every $\zeta \in \R^d$.
\end{proof}

An important consequence of the above uniform large deviation principle is the following uniform upper bound for closed sets $F$ in
$C([0,T]:\R^d)$.

\begin{theorem}\label{thm:dzuldp}
	Fix $T\in (0,\infty)$, $\alpha>0$ and   $K,K' \in \clk$.
	Then, for every  closed set $F$ in $C([0,T]: \R^d)$ 
	$$ \limsup_{N\to \infty} \frac{1}{N} \log \sup_{x\in K\cap \Delta_N} 
	\PP_x(\hat X^{N,\alpha,K'} \in F) \le -\inf_{x \in K} \inf_{\phi \in F} S_{\alpha,K'}(x,T, \phi).$$ 
	
\end{theorem}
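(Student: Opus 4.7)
The plan is to deduce the closed-set upper bound from the Freidlin--Wentzell style uniform bound in part (c) of Theorem \ref{thm:fwuldp}, using the compactness of the joint level sets provided by part (a). Let $s \doteq \inf_{x \in K}\inf_{\phi \in F} S_{\alpha,K'}(x,T,\phi)$. If $s = 0$ the conclusion is immediate because probabilities are bounded by $1$, so I will assume $s > 0$ and fix an arbitrary $L \in (0, s)$; the strategy will be to prove $\limsup_N \frac{1}{N}\log \sup_{x \in K \cap \Delta_N} \PP_x(\hat X^{N,\alpha,K'} \in F) \le -L$ and then let $L \uparrow s$.

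Next, I would form the combined sublevel set $\Phi_K(L) \doteq \bigcup_{x \in K} \Phi_{x,\alpha,K',T}(L)$, which is compact by Theorem \ref{thm:fwuldp}(a). Because $L < s$, this compact set is disjoint from the closed set $F$, so
\[
\delta \doteq \dist(F, \Phi_K(L)) > 0.
\]
Since $\Phi_{x,\alpha,K',T}(L) \subseteq \Phi_K(L)$ for every $x \in K$, any path $\phi \in F$ satisfies $d(\phi, \Phi_{x,\alpha,K',T}(L)) \ge \delta$, and therefore
\[
\{\hat X^{N,\alpha,K'} \in F\} \subseteq \bigl\{ d(\hat X^{N,\alpha,K'}, \Phi_{x,\alpha,K',T}(L)) \ge \delta\bigr\}
\]
for every $x \in K \cap \Delta_N$. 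This is the key reduction: the $x$-dependent level sets have been replaced, uniformly in $x$, by an event of the form treated in Theorem \ref{thm:fwuldp}(c).

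Fix any $\gamma \in (0, L \wedge 1)$ and apply Theorem \ref{thm:fwuldp}(c) with this $\delta$, $\gamma$, and the single value $l = L$ (so any $L' \ge L$ works as the bound parameter). It yields an $N_0$ such that for all $N \ge N_0$ and all $x \in K \cap \Delta_N$,
\[
\PP_x(\hat X^{N,\alpha,K'} \in F) \le \PP_x\!\left(d(\hat X^{N,\alpha,K'}, \Phi_{x,\alpha,K',T}(L)) \ge \delta\right) \le \exp(-N(L - \gamma)).
\]
Taking the supremum over $x \in K \cap \Delta_N$, then $\frac{1}{N}\log$, and then $\limsup_{N \to \infty}$ produces the bound $-(L - \gamma)$; letting $\gamma \downarrow 0$ and $L \uparrow s$ completes the proof.

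The only step requiring any thought is the extraction of a uniform positive distance $\delta$, and that is handled cleanly by the compactness of $\Phi_K(L)$ from Theorem \ref{thm:fwuldp}(a); after that, the argument is a direct application of the Freidlin--Wentzell lower-bound estimate in Theorem \ref{thm:fwuldp}(c). I do not anticipate any genuinely hard step — the content of the theorem lies in Theorem \ref{thm:fwuldp} itself, and this statement is its standard closed-set packaging with uniformity in the initial condition preserved.
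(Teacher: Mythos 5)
Your proof is correct, and it is genuinely simpler than the one in the paper. The paper also needs a uniform-in-$x$ distance $\delta$ from $F$ to the relevant sublevel sets, but it obtains it indirectly: it first proves a deformation inclusion (equation \eqref{eq:eq351}) showing that if $\|x-y\|$ is small then the $\delta/2$-neighborhood of $\Phi_{y,\alpha,K',T}(s-\delta/4)$ sits inside the $\delta$-neighborhood of $\Phi_{x,\alpha,K',T}(s)$ (this costs a short path-concatenation argument using continuity of $L_{\alpha,K'}$ and compactness of the joint sublevel set). It then takes near-maximizing initial points $x_N$, extracts a subsequence $x_n\to\tilde x$, gets a separation $\delta$ from $F$ to the \emph{single} compact set $\Phi_{\tilde x,\alpha,K',T}(s)$, and finally uses \eqref{eq:eq351} to transfer that separation to the $x_n$'s before invoking Theorem \ref{thm:fwuldp}(c). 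You bypass the deformation lemma and the subsequence bookkeeping entirely by separating $F$ from the \emph{joint} compact sublevel set $\Phi_K(L)=\bigcup_{x\in K}\Phi_{x,\alpha,K',T}(L)$ given by Theorem \ref{thm:fwuldp}(a); since $\Phi_{x,\alpha,K',T}(L)\subset\Phi_K(L)$ for every $x\in K$, the resulting $\delta>0$ is automatically uniform in $x$, and \ref{thm:fwuldp}(c) applies directly. The only minor technical remark is that \ref{thm:fwuldp}(c) is stated for $\delta\in(0,1)$, so one should replace your $\delta$ by $\delta\wedge\tfrac12$; this only enlarges the event and costs nothing. What your route buys is economy and transparency: the content really is just parts (a) and (c) of \ref{thm:fwuldp} plus the standard positive-distance fact for a compact set disjoint from a closed set. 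What the paper's \eqref{eq:eq351} buys is a quantitative statement about how sublevel sets vary in $x$, which is not needed here but is in the spirit of other estimates in that section; as far as Theorem \ref{thm:dzuldp} itself is concerned, your argument is the cleaner one.
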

\begin{proof}
	Fix $T, \alpha, K, K'$ as in the statement of the theorem.
We begin by showing that 
for each $s \geq 0$ and $\delta > 0$ there is some $\eps \doteq \eps(\delta) \in (0,1)$ such that 
for all $x,y \in K$ with $\|x-y\|\le \eps$
\begin{equation}\label{eq:eq351}
\{ \phi \in C([0,T]: \R^d) : d(\phi, \Phi_{x,\alpha,K',T}(s)) \leq\delta\} \supseteq \left\{\phi \in C([0,T]: \R^d):
 d\left(\phi, \Phi_{y,\alpha,K',T}\left(s - \frac{\delta}{4}\right)\right) \leq \frac{\delta}{2}\right\}.
\end{equation}
Let $\kappa_0 \doteq 1+\sup_{x\in K, \|\beta\|\le 1} L(x,\beta)$. From Lemma \ref{lem:cty} $\kappa_0 <\infty$.
Since $\cup_{y \in K} \Phi_{y,\alpha,K',T}\left(s - \frac{\delta}{4}\right)$ is a compact set, we can find $\eps \in (0, \frac{\delta}{8\kappa_0})$ such that for all $\psi \in \cup_{y \in K} \Phi_{y,\alpha,K',T}\left(s - \frac{\delta}{4}\right)$
and $0 \le t_1\le t_2 \le T$ with $|t_1-t_2|\le \eps$, we have
$\|\psi(t_2)- \psi(t_1)\| \le \frac{\delta}{8}$.

Fix $y \in K$ and $\phi$ in the set on the right side of \eqref{eq:eq351}. Then there is a 
$\psi_1 \in \Phi_{y,\alpha,K',T}\left(s - \frac{\delta}{4}\right)$ such that
$\|\phi - \psi_1\|_{*,T} \le \frac{\delta}{2} + \frac{\delta}{8} = \frac{5\delta}{8}$.
Note in particular that $\psi_1(0)=y$.  Fix a $x \in K$ such that $\|y-x\| \le \eps$.
Let $t_0 \doteq \|x-y\|$ and define the function $\eta_{x,y} : [0,t_0] \rightarrow \R^d$ as 
\begin{equation}\label{eq:etaxy}
	\eta_{x,y}(t) \doteq x + \frac{(y-x)}{\|y-x\|}t.
\end{equation}
Define $\psi_2:[0,T]\to \R^d$ as
$$\psi_2(s) \doteq \eta_{x,y}(s) 1_{[0, t_0]}(s) + \psi_1(s-t_0)1_{(t_0, T]}(t).$$
Note that $\psi_2(0)=x$ and
\begin{align*}
	S_{\alpha,K'}(x,T,\psi_2) &= \int_0^{t_0} L_{\alpha, K'}(\psi_2(t),\dot{\psi}_2(t)) dt
	+ \int_{t_0}^T L_{\alpha, K'}(\psi_2(t),\dot{\psi}_2(t)) dt\\
	& \le \eps \kappa_0 + s - \frac{\delta}{4} = \frac{\delta}{8} +s - \frac{\delta}{4} \le s.
\end{align*}
Thus $\psi_2 \in \Phi_{x,\alpha,K',T}\left(s \right)$.
Furthermore
$$
\|\phi - \psi_2\|_{*,T} \le \|\phi - \psi_1\|_{*,T} + \|\psi_1 - \psi_2\|_{*,T} \le \frac{5\delta}{8} + \|\psi_1 - \psi_2\|_{*,T}.$$
Also, for $t \in (t_0, T]$,
$$\|\psi_1(t) - \psi_2(t)\| = \|\psi_1(t) - \psi_1(t- t_0)\| \le \frac{\delta}{8}$$
and for $t \in [0, t_0]$
$$
\|\psi_1(t) - \psi_2(t)\| \le \|\psi_2(t)-y\| + \|\psi_1(t) - \psi_1(0)\| \le \eps + \frac{\delta}{8}  \le \frac{\delta}{8} + \frac{\delta}{8} = \frac{\delta}{4}.$$
Thus
$$\|\phi - \psi_2\|_{*,T} \le \frac{5\delta}{8} + \frac{\delta}{4} \le \delta.$$
Since $\psi_2 \in \Phi_{x,\alpha,K',T}\left(s \right)$ , we  have 
$d(\phi, \Phi_{x,\alpha,K',T}\left(s \right)) \le \delta$ and thus $\phi$ is in the set on the left side of
\eqref{eq:eq351}. This proves the inclusion in \eqref{eq:eq351}.

 Now fix a closed set $F$ in $C([0,T]: \R^d)$. If $\inf\limits_{x\in K}\inf\limits_{\phi \in F}S_{\alpha,K'}(x,T,\phi) = 0$, the the result clearly holds, so we assume that $\bar S \doteq \inf\limits_{x\in K}\inf\limits_{\phi \in F}S_{\alpha,K'}(x,T,\phi)  > 0$.  Fix $s \in \left(0, \bar S \right)$ and let $\{x_n\} \subset K$ and $\eps \downarrow 0$.  Since $K$ is compact, we may pass to a subsequence and assume that $x_n \rightarrow \tilde{x}$ for some $\tilde{x} \in K$. Since $\inf\limits_{\phi\in F}S_{\alpha,K'}(\tilde{x}, T, \phi) > s$,  $F \cap \Phi_{\tilde{x},\alpha,K',T}(s) = \emptyset$. This, along with the facts that 
 $\Phi_{\tilde{x},\alpha,K',T}(s)$ is compact and $F$ is closed, ensures that there is some $\delta \in (0,s)$ such that 
 $$F \subset \{\phi \in C([0,T]:\R^d): d(\phi,\Phi_{\tilde{x},\alpha,K'}(s)) > \delta \}.$$
  Let $\eps = \eps(\delta) > 0$ be chosen as above \eqref{eq:eq351}.
  Without loss of generality we   assume that $\|\tilde{x} - x_n\| \le \eps$ for all $n$.
  Then, for every $n\in \N$,
$$
F \subset \{ \phi \in C([0,T]: \R^d : d(\phi, \Phi_{\tilde x,\alpha,K',T}(s)) > \delta\} \subset \left\{\phi \in C([0,T]: \R^d : d(\phi, \Phi_{x_n,\alpha,K',T}(s - \frac{\delta}{4})) > \frac{\delta}{2}\right\}.
$$

From the  upper bound in Theorem \ref{thm:fwuldp}(c) we  see that
\begin{equation*}
\begin{split}
\limsup\limits_{N\rightarrow\infty}\frac{1}{N}\log \PP_{x_N}(\hat{X}^{N, \alpha,K'} \in F) &\leq \limsup\limits_{N\rightarrow\infty}\frac{1}{N} \log \PP_{x_N}(d(\hat{X}^{N,\alpha,K'}, \Phi_{x_N,\alpha,K',T}(s - \frac{\delta}{4})) > \frac{\delta}{2})\\
&\leq - (s - \frac{\delta}{4}).\\
\end{split}
\end{equation*}
The result follows from letting $\delta \to 0$ and $s \to \bar S$.
\end{proof}


\section{Asymptotic Behavior of QSD}\label{sec:behnb}

In this section we assume that Assumptions \ref{assu:LLN}, \ref{assu:ap-classesfinite}, \ref{assu:mgf} and \ref{assu:irrbdr} are satisfied.
Using these assumptions we will provide several exponential probability estimates and use them to deduce some asymptotic properties of the QSD  $\{\mu_N\}$ (when they exist).
 Let for $N\in \N$ and $T\in (0,\infty)$
\[
D^N_T \stackrel{\cdot}{=} \sup\limits_{0\leq t \leq T}\|\hat{X}^N(t) - \varphi_t(X_0^N)\| = \|\hat X^N - \varphi_{\cdot}(X^N_0)\|_{*,T}.
\]

The estimates obtained in Lemma \ref{lem:expineq} and Lemma \ref{lem:eigenlemma} are the key steps in the proof of Theorem \ref{thm:eigentheorem} which gives the asymptotics of $\la_N \doteq [\PP_{\mu_N}(X^N_1 \in \Delta^o)]^N$, where $\mu_N$ is a QSD for $\{X^N\}.$ 
Recall the definition of $\clv_{\alpha}$ from Section \ref{sec:ldest}.
\begin{lemma}
	\label{lem:expineq}
	
	For each $ \alpha > 0$, compact set $K \subset \clv_{\alpha}$,
	  $\eps > 0$, and $T > 0$, there is a $c\in (0,\infty)$ and $N_0 \in \N$ such that for every  $N\ge N_0$,
	\[
	\sup\limits_{x \in K \cap \Delta_N} \PP_x\left[ D^N_T \geq \eps \right] \leq \exp(-Nc).
	\]
\end{lemma}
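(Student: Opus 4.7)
The plan is to reduce this uniform exponential tail bound to the uniform large deviation upper bound of Theorem \ref{thm:dzuldp} applied to a suitably chosen modified chain $\hat X^{N,\alpha',K'}$. Since the event $\{D^N_T \ge \eps\}$ is monotone decreasing in $\eps$, I may take $\eps$ as small as convenient.

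First I would pin down the compact region in which the deterministic trajectories live. The orbit set
\[
\Gamma \doteq \{\varphi_t(x) : x \in K, \; t \in [0,T]\}
\]
is the continuous image of $K \times [0,T]$, hence compact. Since $K \subset \clv_{\alpha} \subset \Delta^o$ and Assumption \ref{assu:ap-classesfinite}(c) prevents orbits started in $\Delta^o$ from reaching $\partial \Delta$ in finite time, $\Gamma \subset \Delta^o$ and $\eta \doteq \dist(\Gamma, \partial \Delta) > 0$. Fix $\alpha' \in (0, \eta/2)$ and a convex compact $K' \in \clk$ whose interior contains $\Gamma$ with $\dist(\Gamma, \R^d \setminus K') \ge \eta$, and shrink $\eps$ so that $\eps \le \eta/2$.

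Next, set
\[
F \doteq \{\phi \in C([0,T]:\R^d) : \|\phi - \varphi_\cdot(\phi(0))\|_{*,T} \ge \eps\},
\]
which is closed by continuity of $\phi \mapsto \phi(0)$ and the Lipschitz dependence of $\varphi_\cdot$ on its initial condition. The key coupling step is
\[
\{D^N_T \ge \eps\} \subset \{\hat X^{N,\alpha',K'} \in F\} \qquad \PP_x\text{-a.s.\ for every } x \in K \cap \Delta_N.
\]
Indeed, on $\{D^N_T \ge \eps\}$, if $\hat X^N(t) \in \clv_{\alpha',K'}$ for every $t \in [0,T]$, then $\pi^N_{\alpha',K'}$ acts as the identity along the trajectory, so $\hat X^{N,\alpha',K'} = \hat X^N$ on $[0,T]$ and the inclusion is immediate. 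Otherwise, letting $\tau$ be the first exit time of $\hat X^N$ from $\clv_{\alpha',K'}$, the chains agree up to $\tau$, hence $\hat X^{N,\alpha',K'}(\tau) = \hat X^N(\tau) \notin \clv_{\alpha',K'}$, and the choice of $\alpha', K'$ forces $\|\hat X^{N,\alpha',K'}(\tau) - \varphi_\tau(x)\| \ge \eps$.

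Theorem \ref{thm:dzuldp} then yields
\[
\limsup_{N \to \infty} \frac{1}{N} \log \sup_{x \in K \cap \Delta_N} \PP_x(\hat X^{N,\alpha',K'} \in F) \le -c_0, \qquad c_0 \doteq \inf_{x \in K} \inf_{\phi \in F} S_{\alpha',K'}(x, T, \phi),
\]
so it only remains to verify $c_0 > 0$, which I expect to be the main obstacle. If $c_0 = 0$, the compactness of level sets in Theorem \ref{thm:fwuldp}(a) furnishes $x_n \in K$ and $\phi_n \in F$ with $S_{\alpha',K'}(x_n, T, \phi_n) \to 0$ such that $\{\phi_n\}$ is relatively compact; passing to subsequences, $x_n \to x \in K$ and $\phi_n \to \phi$ uniformly on $[0,T]$. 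Combining continuity of $L_{\alpha',K'}$ from Lemma \ref{lem:cty} with standard lower semicontinuity for integral functionals gives $S_{\alpha',K'}(x,T,\phi) \le \liminf_n S_{\alpha',K'}(x_n, T, \phi_n) = 0$. This forces $\dot\phi = G(\phi)$ with $\phi(0) = x$, whence $\phi = \varphi_\cdot(x)$, contradicting the fact that $\phi \in F$ by closedness of $F$. Thus $c_0 > 0$, and choosing any $c \in (0, c_0)$ produces the claimed bound for all $N \ge N_0$.
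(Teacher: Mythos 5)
Your proof follows essentially the same route as the paper: reduce to the modified chain $\hat X^{N,\alpha',K'}$ via a coupling on the event that $\hat X^N$ remains close to the orbit, invoke Theorem \ref{thm:dzuldp} for the closed set $F$, and show the rate constant is positive by the compactness of level sets (Theorem \ref{thm:fwuldp}(a)) and lower semicontinuity of $\phi\mapsto S_{\alpha',K'}(\phi(0),T,\phi)$. The only genuine slip is the requirement $\dist(\Gamma,\RR^d\setminus K')\ge\eta$ with $K'\in\clk$: since $K'\subset\Delta^o$ forces $\RR^d\setminus K'\supset\partial\Delta$, this would give $\dist(\Gamma,\RR^d\setminus K')\le\dist(\Gamma,\partial\Delta)=\eta$, and equality would mean $\bar K'$ touches $\partial\Delta$, contradicting $K'\in\clk$. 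Replacing that requirement by $\dist(\Gamma,\RR^d\setminus K')\ge\eta/2$ (and correspondingly taking $\eps\le\eta/4$, $\alpha'<\eta/4$, say) repairs the coupling step: at the first discrete exit of $\hat X^N$ from $\clv_{\alpha',K'}$, either $\hat X^N$ has left $K'$ (deviation $\ge\eta/2$) or $\dist(\hat X^N,\partial\Delta)\le\alpha'$ (deviation $\ge\eta-\alpha'>\eta/2$), and up to that step the two chains coincide under the natural coupling; by convexity of $\clv_{\alpha'}$ and $K'$ the linear interpolation cannot leave $\clv_{\alpha',K'}$ between discrete times, so the inclusion $\{D^N_T\ge\eps\}\subset\{\hat X^{N,\alpha',K'}\in F\}$ holds. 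With that correction the argument is sound and matches the paper's; your version is actually a bit more explicit than the paper, which asserts the two probabilities are equal rather than spelling out the one-sided inclusion that is actually used.
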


\begin{proof}
	Let $\alpha > 0$ and $K \subset V_{\alpha}$ be compact. For each  $\eps \in (0,\alpha)$, let
	$$
	F_{\eps} = \left\{ \psi \in C([0,T] : \R^d) : \sup\limits_{0\leq t \leq T}\| \psi(t)- \varphi_t(\psi(0))\| \geq \eps \right\}.
	$$  
	Using Lemma \ref{lem:bdryrepel} we can (and will) assume without loss of generality that $\eps$ is small enough so that the  compact set
	$$
	K' \doteq \overline{N^{\eps}\left( \varphi_{[0,\infty)}(K)\right)} \subset \Delta^o.
	$$
	Note that
	$$
	\sup\limits_{x \in K \cap \Delta_N}\PP_x\left[ D^N_T \geq \eps\right]  = \sup\limits_{x \in K \cap \Delta_N}\PP_x\left[ \sup\limits_{0\leq t \leq T} 
	\| \hat{X}^{N, \alpha, K'}(t) - \varphi_t(\hat{X}^{N, \alpha, K'}(0))\| \geq \eps\right].
	$$
	Since $F_{\eps}$ is closed,   Theorem \ref{thm:dzuldp} says that for each $\delta>0$, there is a $N_{\delta}\in \N$ such that for all $N\ge N_{\delta}$	
	\begin{equation*}
	\begin{split}
	\log \sup\limits_{x \in K \cap \Delta_N}\PP_x\left[ \sup\limits_{0\leq t \leq T} \| \hat{X}^{N,\alpha,K'}(t) - \varphi_t(x)\| \geq \eps\right] 
	&= \log \sup\limits_{x\in K \cap \Delta_N}  \PP_x(\hat{X}^{N, \alpha,K'} \in F_{\eps})\\
	&\leq - N \left[\inf\limits_{x \in K }\inf\limits_{\psi \in F_{\eps}} S_{\alpha,K'}(x,T,\psi) - \delta\right].
	\end{split}
	\end{equation*}
	In order to prove the result it suffices to show that $\inf\limits_{x \in K }\inf\limits_{\psi \in F_{\eps}} S_{\alpha,K'}(x,T,\psi) >0$.
	Arguing by contradiction, suppose that this infimum is $0$. Then, there are sequences $\{x_n\} \subset K$ and $\{\psi_n\} \subset C([0,T]: \R^d)$ such that $\psi_n \in F_{\eps}$ for each $n$ and $\lim\limits_{n\rightarrow\infty} S_{\alpha,K'}(x_n,T,\psi_n) = 0$. Since $S_{\alpha,K'}(x, T, \phi) < \infty$ if and only if $\phi(0) = x$, we can assume without loss of generality that  $x_n = \psi_n(0)$ for every $n$. For each $\eps' > 0$, 
	\[
	\psi_n \in \{\phi \in C([0,T]: \R^d): S_{\alpha,K'}(y,T,\phi) \le \eps' \mbox{ for some } y \in K\}
	\]
	whenever $n$ is sufficiently large. Since $K$ is compact, Theorem \ref{thm:fwuldp} ensures that 
	$\{\psi_n\}$ is precompact in $C([0,T]: \R^d)$ and
	 so there is a convergent subsequence of $\{\psi_n\}$. Denoting this  subsequence by $\{\psi_{n_k}\}$ and its limit by $\psi$, 
	 we have that  
	 \[
	 \lim\limits_{k\rightarrow\infty}(\psi_{n_{k}}, x_{n_{k}}) = \lim\limits_{k\rightarrow\infty}(\psi_{n_{k}}, \psi_{n_{k}}(0))  = (\psi, \psi(0)).
	 \]
	Since $\phi \mapsto S_{\alpha,K'}(\phi(0), T, \phi)$ is lower semi-continuous, it follows that
	\[
	S_{\alpha,K'}(\psi(0), T, \psi) \leq \lim\limits_{k\rightarrow\infty} S_{\alpha,K'}(\psi_{n_k}(0), T, \psi_{n_{k}}) = 0,
	\]
	which says that $\psi(t) = \varphi_t(\psi(0))$. However,  this is a contradiction, since $\psi \in F_{\eps}$. The result follows.
\end{proof}

\begin{lemma}\label{lem:eigenlemma}
Let $U$ be a fundamental neighborhood of an attractor $A \subset \Delta^o$ such that $\bar U \subset \Delta^o$.
Then for every $T_0 \in (0,\infty)$, there are  $c_0 \in (0,\infty)$,  $T \in (T_0,\infty)$ and $N_0 \in \N$ such that 
$$
\sup\limits_{x \in U \cap \Delta_N}\PP_x( X_{\lfloor NT \rfloor}^N \in U^c) \leq  \exp\left(-c_0N\right)
$$
for all  $N \ge N_0$.
\end{lemma}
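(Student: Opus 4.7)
The plan is to exploit the attractor property of $A$ to trap the flow $\varphi_t(x)$ deep inside $U$ at time $T$, and then use the exponential estimate Lemma \ref{lem:expineq} to show the chain $X^N_{\lfloor NT\rfloor}$ stays close to this trapped value with exponentially high probability. Throughout we may assume $\bar U$ is compact (in applications, fundamental neighborhoods are obtained as $\delta$-neighborhoods of the compact \ap--basic classes, as in the proof of Corollary \ref{cor:quasiattract}).

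First I would fix $\eps_0 > 0$ small enough that $N^{2\eps_0}(A) \subset U$; this is possible since $A$ is compact and $U$ is open. Using the defining property of a fundamental neighborhood, choose $T > T_0$ such that
\[
\sup_{x \in U} \dist(\varphi_T(x), A) < \tfrac{\eps_0}{2}.
\]
Because $\bar U \times [T,T+1]$ is compact and the flow map $(s,x) \mapsto \varphi_s(x)$ is continuous (and hence uniformly continuous on this compact set), there is a $\delta_0 > 0$ such that
\[
\sup_{x \in \bar U}\,\|\varphi_s(x) - \varphi_T(x)\| < \tfrac{\eps_0}{2} \quad \text{whenever } |s-T| < \delta_0,
\]
so $\dist(\varphi_s(x), A) < \eps_0$ for every $x \in U$ and $|s-T|<\delta_0$.

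Next I would invoke Lemma \ref{lem:expineq} with the compact set $K = \bar U$ (which lies in some $\clv_\alpha$ since $\bar U \subset \Delta^o$ is compact), error $\eps = \eps_0$, and horizon $T+1$: there exist constants $c > 0$ and $N_1 \in \N$ such that for all $N \geq N_1$,
\[
\sup_{x \in \bar U \cap \Delta_N} \PP_x\bigl[D^N_{T+1} \geq \eps_0\bigr] \leq \exp(-Nc).
\]
Let $N_0 \ge N_1$ be large enough that $1/N_0 < \delta_0$. For $N \ge N_0$, set $s_N \doteq \lfloor NT \rfloor / N \in [T - 1/N, T]$, so $|s_N - T| < \delta_0$ and in particular $s_N \le T+1$. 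Then for any $x \in U \cap \Delta_N$, on the event $\{D^N_{T+1} < \eps_0\}$,
\[
\|X^N_{\lfloor NT \rfloor} - \varphi_{s_N}(x)\| = \|\hat X^N(s_N) - \varphi_{s_N}(x)\| \leq D^N_{T+1} < \eps_0,
\]
and since $\dist(\varphi_{s_N}(x), A) < \eps_0$ by the choice of $\delta_0$, we conclude
\[
\dist(X^N_{\lfloor NT \rfloor}, A) < 2\eps_0,
\]
so $X^N_{\lfloor NT \rfloor} \in N^{2\eps_0}(A) \subset U$. Therefore
\[
\PP_x(X^N_{\lfloor NT \rfloor} \in U^c) \le \PP_x(D^N_{T+1} \ge \eps_0) \le \exp(-Nc),
\]
uniformly in $x \in U \cap \Delta_N$, giving the desired bound with $c_0 = c$.

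The main technical nuisance is not any single deep step but rather the careful bookkeeping between (i) the discrete time $\lfloor NT \rfloor$ and the continuous time $T$, (ii) the inward margin $\eps_0$ needed so that $N^{2\eps_0}(A) \subset U$, and (iii) the need for the flow estimate $\dist(\varphi_s(x), A) < \eps_0$ to hold not only at $s = T$ but throughout a small interval containing the random-looking value $s_N = \lfloor NT \rfloor / N$; the uniform continuity of the flow on the compact set $\bar U \times [T, T+1]$ is exactly what handles this.
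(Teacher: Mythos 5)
Your proof is correct, and it takes a route that is related to but genuinely cleaner than the paper's. Both proofs reduce the problem to the event $\{D^N_{T'}\ge\text{const}\}$ and invoke Lemma~\ref{lem:expineq}, but they handle the mismatch between the discrete time $\lfloor NT\rfloor$ and the continuous time $T$ differently. The paper compares $X^N_{\lfloor NT\rfloor}$ with $\hat X^N(T)$ and then bounds the residual $\|X^N_{\lfloor NT\rfloor}-\hat X^N(T)\|\le\|X^N_{\lfloor NT\rfloor+1}-X^N_{\lfloor NT\rfloor}\|$ via a separate exponential-moment/Markov-inequality argument using Assumption~\ref{assu:mgf}. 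You instead evaluate the interpolant at the grid point $s_N=\lfloor NT\rfloor/N$, where $\hat X^N(s_N)=X^N_{\lfloor NT\rfloor}$ exactly, and absorb the shift $|s_N-T|<1/N$ into the flow via uniform continuity of $(s,x)\mapsto\varphi_s(x)$. This buys you a shorter proof that never needs the moment-generating-function estimate; the cost is a bit of extra bookkeeping with $s_N$. Two small cosmetic points: (i) since $s_N\in[T-1/N,T]\subset[0,T]$, it would suffice to use $D^N_T$ rather than $D^N_{T+1}$, and the compact set for the uniform-continuity argument should contain times $s<T$ (e.g.\ $\bar U\times[0,T]$ rather than $\bar U\times[T,T+1]$); (ii) your remark that $\bar U$ may be assumed compact is appropriate and in fact implicitly made by the paper as well (which picks $K\in\clk$ containing $U$).
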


\begin{proof}
Let $\alpha \doteq \dist(A, U^c)$. Since $U$ is a fundamental neighborhood of the attractor $A$,  we can find $T > T_0$ such that $\sup\limits_{t \geq T}\sup\limits_{x \in U}\dist(\varphi_t(x), A) < \alpha/2$.  Let $K \in \clk$ be a compact set containing $U$.
From Lemma \ref{lem:bdryrepel} there exists a $\sigma \in (0, \alpha/4)$ 
and a  $K' \in \clk$ such that 
$\overline{N^{\sigma}(\gamma^+(U))} \subset K'$, where $\gamma^+(U) = \cup_{x\in U}\gamma^+(x)$.
Then for each $x \in U \cap \Delta_N$, we have
\begin{equation}\label{eq:twoterms}
\begin{split}
\PP_x( X_{\lfloor NT \rfloor}^N \in U^c)  &\leq \PP_x(\dist(X_{\lfloor NT \rfloor}^N, A) > \alpha)
\leq \PP_x( \|X_{\lfloor NT\rfloor}^N - \varphi_T(x)\| > \alpha/2)\\
&\leq \PP_x(\|X_{\lfloor NT \rfloor}^N -\hat{X}^N(T)\| + \|\hat{X}^N(T)- \varphi_T(x)\| > \alpha/2)\\
&\leq  \PP_x\left(D_T^N > \sigma \right)+  \PP_x\left( \|X_{\lfloor NT \rfloor}^N- \hat{X}^N(T)\|  > \alpha/4, D_T^N \le \sigma\right).\\
\end{split}
\end{equation}
Using the Markov property, we have
\begin{equation*}
\begin{split}
	\PP_x\left( \|X_{\lfloor NT \rfloor}^N- \hat{X}^N(T)\|  > \alpha/4, D_T^N \le \sigma\right)
	&\le \PP_x\left( \|X_{\lfloor NT \rfloor}^N- X_{\lfloor NT \rfloor + 1}^N\|  > \alpha/4, D_T^N \le \sigma\right)\\
	&\le \sup\limits_{x\in K'\cap \Delta_N}\PP_x\left( \|X^N_1-x\|  > \alpha/4\right).
\end{split}
\end{equation*}

From Assumption \ref{assu:mgf} we have that for every $\lambda>0$
$$C(\lambda) \doteq \sup_{N \in \NN} \sup_{x \in K' \cap \Delta_N} \EE_x(e^{\lambda N \|X^N_1-x\| }) < \infty.$$
Thus for any $\lambda >0$
$$\sup\limits_{x\in K'\cap \Delta_N}\PP_x\left( \|X^N_1-x\|  > \alpha/4\right) \le c(\lambda)e^{-\lambda N \alpha/4}.$$
The result follows on using the above estimate and Lemma \ref{lem:expineq} in \eqref{eq:twoterms}.
\end{proof}

 The following lemma says that for every open $U \subset \Delta^o$, the support of $\mu_N$ (when it exists) has a nonempty intersection with $U$ when $N$ is sufficiently large.

\begin{lemma}
	\label{lem:posmzr}
	Suppose that for each $N \in \NN$, $X^N$ has a QSD $\mu_N$.
Then for each open $U \subset \Delta^o$, there is some $N_0 \in \NN$ such that $\mu_N(U) > 0$ for all $N \geq N_0$.
\end{lemma}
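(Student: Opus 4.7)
The plan is in two steps: first show that $\mu_N(\{x\}) > 0$ for every $x \in \Delta_N^o$, using the defining property of a QSD together with irreducibility (Assumption~\ref{assu:irrbdr}(a)); then conclude via the density of $\Delta_N^o$ in $\Delta^o$ as $N\to\infty$. The argument is purely algebraic at each finite $N$ and requires no large deviation input.

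For the first step, iterating Definition~\ref{def:qsd} (combined with a standard Markov-property computation that gives $\PP_{\mu_N}(X_k^N \in \Delta_N^o) = \bar{\lambda}_N^{\,k}$ by induction on $k$) yields the eigenvector relation
\[
\bar{\lambda}_N^{\,k}\,\mu_N(\{x\}) \;=\; \sum_{i\in\Delta_N^o} \mu_N(\{i\})\,\PP_i^N(X_k^N = x), \qquad x \in \Delta_N^o,\ k \in \N,
\]
where $\bar{\lambda}_N \doteq \PP_{\mu_N}(X_1^N \in \Delta_N^o)$. Note $\bar{\lambda}_N > 0$ automatically, since otherwise the conditional probability in Definition~\ref{def:qsd} would be ill-defined. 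Because $\mu_N$ is a probability measure on $\Delta_N^o$, some $y \in \Delta_N^o$ has $\mu_N(\{y\}) > 0$; for an arbitrary $x \in \Delta_N^o$, Assumption~\ref{assu:irrbdr}(a) supplies a $k \in \N$ with $\PP_y^N(X_k^N = x) > 0$, and keeping only the $i=y$ term in the sum above gives $\bar{\lambda}_N^{\,k}\,\mu_N(\{x\}) \geq \mu_N(\{y\}) \PP_y^N(X_k^N = x) > 0$, whence $\mu_N(\{x\}) > 0$.

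For the second step, given the open set $U \subset \Delta^o$ (which we may assume nonempty), pick any $x_0 \in U$ and choose $r > 0$ small enough that $B_r(x_0) \subset U \cap \Delta^o$; this is possible since $U$ is open in $\Delta$ and $x_0 \in \Delta^o$. For every $N > \sqrt{d}/r$ the lattice $\tfrac{1}{N}\Z^d$ meets $B_r(x_0)$, producing a point $x_N \in U \cap \Delta_N^o$, and the first step gives $\mu_N(U) \geq \mu_N(\{x_N\}) > 0$.

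No step presents a genuine obstacle; the whole argument is finite-$N$ bookkeeping. The only subtle point worth flagging explicitly is the strict positivity of $\bar{\lambda}_N$, which is automatic from the definition of a QSD but must be invoked to justify dividing by $\bar{\lambda}_N^{\,k}$ in the eigenvector relation.
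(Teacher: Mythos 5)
Your proof is correct and takes essentially the same route as the paper: both invoke Assumption~\ref{assu:irrbdr}(a) together with the QSD eigenvector identity to show $\mu_N(\{x\})>0$ for every $x \in \Delta_N^o$, then observe that $U \cap \Delta_N^o$ is nonempty for all large $N$. The only cosmetic difference is that you evaluate the normalizing constant as $\bar{\lambda}_N^k$, whereas the paper leaves it as the double sum $\sum_{z,y}\mu_N(y)\PP_y(X_k^N=z)$; this extra computation is harmless but not needed.
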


\begin{proof}
Let $N_0$ be large enough so that  $U \cap \Delta^o_N$ is nonempty for all $N\ge N_0$.  Fix $N \geq N_0$, $x \in U \cap \Delta^o_N$  and $w \in \Delta^o_N$ with $\mu_N(w)>0$.
From Assumption \ref{assu:irrbdr}(a), there is a $k \in \NN$ such that $\PP_w(X^N_k=x)>0$. Then
\begin{equation*}
\begin{split}
\mu_N(U) &\geq \mu_N(x) = \frac{\sum\limits_{y \in \Delta_N^o} \mu_N(y)\PP_y(X_k^N = x)}{ \sum\limits_{z\in \Delta^o_N}\sum\limits_{y \in \Delta_N^o} \mu_N(y)\PP_y( X_k^N = z)} \geq  \frac{ \mu_N(w)\PP_w(X_k^N = x)}{ \sum\limits_{z\in \Delta^o_N}\sum\limits_{y \in \Delta_N^o} \mu(y)\PP_y(X_k^N = z)} > 0.
\end{split}
\end{equation*}
\end{proof}

The following lemma quantifies the asymptotic behavior of the sequence $\{\lambda_N\}$ introduced in \eqref{eq:lan}.

\begin{theorem}\label{thm:eigentheorem}
Suppose that for each $N \in \NN$, $X^N$ has a QSD $\mu_N$. 
Then there exist $c, c' \in (0,\infty)$ such that for all $N\in \N$
$$
0 \leq 1 - \lambda_N \leq c' \exp(-cN). 
$$
\end{theorem}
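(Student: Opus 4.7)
The plan is to combine the QSD identity with the exponential escape bound from Lemma \ref{lem:eigenlemma}. Setting $p_N \doteq \PP_{\mu_N}(X_1^N \in \Delta^o)$, so that $\lambda_N = p_N^N$, the goal is to show that $p_N$ is already exponentially close to $1$, and then to deduce the stated bound via Bernoulli's inequality.

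First I will fix an interior \ap--quasiattractor $K_1$. Such a $K_1$ exists because Lemma \ref{lem:bdryrepel}(d) and Lemma \ref{lem:prop4.12} together force $\clr_{\ap}^* \ne \emptyset$, so under Assumption \ref{assu:ap-classesfinite}(a) we have $l \ge 1$. By Corollary \ref{cor:quasiattract}, $K_1$ is an attractor. I would then choose a fundamental neighborhood $U$ of $K_1$ with $\overline{U} \subset \Delta^o$ and apply Lemma \ref{lem:eigenlemma} to obtain $T \in (0,\infty)$, $c_0 > 0$, and $N_0 \in \NN$ such that
$$
\sup_{x \in U \cap \Delta_N} \PP_x\!\left(X_{\lfloor NT \rfloor}^N \in U^c\right) \le e^{-c_0 N}, \qquad N \ge N_0.
$$

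Next I will iterate the QSD defining identity in the standard way: for every $A \subseteq \Delta_N^o$ and $k \in \NN$,
$$
p_N^k\, \mu_N(A) \;=\; \PP_{\mu_N}(X_k^N \in A) \;=\; \int_{\Delta^o} \PP_x(X_k^N \in A)\, \mu_N(dx).
$$
Taking $A = U$ and $k = \lfloor NT \rfloor$ and restricting the right-hand integral to $U$ yields
$$
p_N^{\lfloor NT \rfloor}\, \mu_N(U) \;\ge\; (1 - e^{-c_0 N})\, \mu_N(U).
$$
Lemma \ref{lem:posmzr} guarantees $\mu_N(U) > 0$ for all sufficiently large $N$, so we may cancel to obtain $p_N^{\lfloor NT \rfloor} \ge 1 - e^{-c_0 N}$. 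Since $p_N \in [0,1]$ the sequence $k \mapsto p_N^k$ is nonincreasing, hence $p_N \ge 1 - e^{-c_0 N}$, and Bernoulli's inequality then gives
$$
\lambda_N \;=\; p_N^N \;\ge\; (1 - e^{-c_0 N})^N \;\ge\; 1 - N e^{-c_0 N}.
$$
For any $c < c_0$, $N e^{-c_0 N} \le e^{-cN}$ once $N$ is large, and the finitely many remaining values of $N$ can be absorbed into the constant $c'$.

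The crucial observation is that the factor $\mu_N(U)$ cancels cleanly from both sides, so that no quantitative lower bound on the mass $\mu_N$ places on $U$ is needed --- only positivity, which is precisely what Lemma \ref{lem:posmzr} supplies. Beyond this, the argument is essentially mechanical; I do not anticipate any significant technical obstacle.
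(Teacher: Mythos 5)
Your proof is correct and follows essentially the same approach as the paper: fix an interior attractor (via Corollary \ref{cor:quasiattract}) with fundamental neighborhood $U$, invoke Lemma \ref{lem:eigenlemma} for the exponential escape bound, then use the QSD identity and Lemma \ref{lem:posmzr} to cancel $\mu_N(U)$. The only cosmetic difference is at the end: the paper goes directly from $\lambda_N^T\ge 1-e^{-c_0N}$ to $\lambda_N\ge\lambda_N^T$ (since $\lambda_N\in[0,1]$ and $T\ge 1$), whereas you pass through $p_N\ge 1-e^{-c_0N}$ and then apply Bernoulli to $p_N^N$, which gives the marginally weaker but fully adequate bound $1-Ne^{-c_0N}$.
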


\begin{proof}
From Assumption \ref{assu:ap-classesfinite} and Corollary \ref{cor:quasiattract} there exists an  attractor  $A$  in $\Delta^o$. Let $U \subset \Delta^o$ be a fundamental neighborhood of $A$.
From Lemma \ref{lem:eigenlemma}  there are  $c_0 \in (0,\infty)$ and $T,N_1 \in \N$  such that  for all $N\ge N_1$
\begin{equation}\label{eq:ucomp}
\sup\limits_{x \in U \cap \Delta_N}\PP_x( X_{NT}^N \in U^c) \leq  \exp\left(-c_0N\right).
\end{equation}
From Lemma \ref{lem:posmzr} there is a $N_2 \in \N$ such that for all $N\ge N_2$,
 $\mu_N(U) > 0$. Fixing $N \geq N_1\vee N_2$, we have

\begin{equation*}
\begin{split}
\lambda^{T}_N \mu_N(U) &= \sum\limits_{x \in \Delta^o_N}\PP_{x}(X^N_{NT} \in U)\mu_N(x)\\
&\geq \sum\limits_{x \in U \cap \Delta^o_N}\PP_x(X_{NT}^N \in U)\mu_N(x)\\
&\geq \inf\limits_{x\in U \cap \Delta^o_N}\PP_x(X_{NT}^N \in U) \sum\limits_{x \in U \cap \Delta^o_N}\mu_N(x)\\
&=\inf\limits_{x\in U \cap \Delta^o_N}\PP_x(X_{NT}^N \in U)\mu_N(U).
\end{split}
\end{equation*}
Thus for all $N\ge N_1\vee N_2$
$$
\la_N\ge \lambda^{T}_N \geq \inf\limits_{x\in U \cap \Delta^o_N}\PP_x(X_{NT}^N \in U) = 1 - \sup\limits_{x\in U \cap \Delta^o_N}\PP_x(X_{NT}^N \in U^c)
\ge 1-\exp\left(-c_0N\right),
$$
where the last inequality uses \eqref{eq:ucomp}.
The result follows.
\end{proof}

For $\delta>0$, $T \in \N$, and  $K\in \clk$, let 
\begin{equation}\label{eq:betadlnt}
\beta_{\delta, K}^N(T) \doteq \sup_{x \in \Delta_N\cap K} \PP_x[\|\hat X^N - \varphi_{\cdot}(x)\|_{*,T} \ge \delta].\end{equation}

The following lemma gives a different lower bound on $\lambda_N$. This bound will be needed in the proof of Theorem
\ref{thm:abs} below.  
\begin{lemma}\label{lem:eigenlemma2}
	Suppose that for each $N \in \NN$, $X^N$ has a QSD $\mu_N$.
Let $A$ be an attractor in $\Delta^o$, $\tilde U \subset \Delta^o$ be an open set containing $A$, and    $K \in \clk$ be such that $\tilde U\subset K$. Then  there is some $\delta > 0$ and 
$T, N_0 \in \N$ such that $\lambda_N^{T} \geq 1 - \beta_{\delta,K}^N(T)$ for each $N \ge N_0 $.
\end{lemma}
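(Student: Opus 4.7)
The strategy is to follow the pattern of Theorem~\ref{thm:eigentheorem}, but to choose the test neighborhood more carefully so that the uniform deviation quantity $\beta_{\delta,K}^N(T)$ appears naturally in the lower bound. The key identity from the QSD property is that for any open $U \subset \Delta^o$ and $T \in \N$,
\begin{equation*}
\lambda_N^T \mu_N(U) = \sum_{x \in \Delta^o_N} \mu_N(x)\, \PP_x(X^N_{NT} \in U);
\end{equation*}
hence it suffices to find $U \subset K$, $\delta > 0$, and $T \in \N$ such that $\PP_x(X^N_{NT} \in U) \ge 1 - \beta_{\delta,K}^N(T)$ uniformly in $x \in U \cap \Delta^o_N$, and then invoke Lemma~\ref{lem:posmzr} to cancel the (positive) factor $\mu_N(U)$.

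I would pick a fundamental neighborhood $U_1$ of $A$ with $\overline{U_1} \subset \tilde U$ (shrinking the given fundamental neighborhood of $A$ if necessary, using that any open set sandwiched between $A$ and a fundamental neighborhood of $A$ is again fundamental; concretely, $U_1 = \cln^{\eps}(A)$ for small enough $\eps$ works). Set $\alpha \doteq \dist(A, U_1^c) > 0$ and $\delta \doteq \alpha/2$. By the defining property of a fundamental neighborhood, $\sup_{x \in U_1}\dist(\varphi_t(x), A) \to 0$ as $t \to \infty$, so I can choose $T \in \N$ large enough that $\sup_{x \in U_1}\dist(\varphi_T(x), A) < \delta$. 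Then for any $x \in U_1$, on the event $\{\|\hat X^N - \varphi_{\cdot}(x)\|_{*,T} < \delta\}$, using that $T \in \N$ forces $\hat X^N(T) = X^N_{NT}$,
\begin{equation*}
\dist(X^N_{NT}, A) \le \|\hat X^N(T) - \varphi_T(x)\| + \dist(\varphi_T(x), A) < 2\delta = \alpha,
\end{equation*}
which forces $X^N_{NT} \in U_1$.

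Since $U_1 \subset \tilde U \subset K$, the definition of $\beta_{\delta,K}^N$ in \eqref{eq:betadlnt} therefore yields $\PP_x(X^N_{NT} \in U_1) \ge 1 - \beta_{\delta,K}^N(T)$ for every $x \in U_1 \cap \Delta^o_N$. Substituting into the QSD identity with $U = U_1$ gives
\begin{equation*}
\lambda_N^T \mu_N(U_1) \ge \sum_{x \in U_1 \cap \Delta^o_N} \mu_N(x)\, \PP_x(X^N_{NT} \in U_1) \ge (1 - \beta_{\delta,K}^N(T))\, \mu_N(U_1),
\end{equation*}
and Lemma~\ref{lem:posmzr} furnishes $N_0$ so that $\mu_N(U_1) > 0$ for all $N \ge N_0$; dividing then yields the claim. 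The proof is essentially routine; the only point needing real care is coupling the size of $\delta$ to the attractor margin $\dist(A, U_1^c)$ and $T$ to the rate at which $\varphi_t$ contracts $U_1$ toward $A$, so that the deviation event controlled by $\beta_{\delta,K}^N(T)$ is guaranteed to land the chain back in $U_1$ itself (rather than only in some larger neighborhood), which is exactly what allows the $\mu_N(U_1)$ factors to cancel.
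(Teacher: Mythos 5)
Your proof is correct and follows essentially the same route as the paper: fix a fundamental neighborhood $U_1 \subset \tilde U$, choose $\delta$ and $T$ so that the deviation event $\{\|\hat X^N - \varphi_\cdot(x)\|_{*,T} < \delta\}$ together with the attraction to $A$ forces $X^N_{NT} \in U_1$, plug into the QSD eigenvalue identity, and cancel $\mu_N(U_1) > 0$ via Lemma~\ref{lem:posmzr}. The only cosmetic difference is that the paper phrases the geometric condition as $\cln^{\delta}(\varphi_T(\overline{U})) \subset U$ while you encode it via $\delta = \tfrac12\dist(A,U_1^c)$ and the contraction of $\varphi_T(U_1)$ into $\cln^{\delta}(A)$, which is logically equivalent.
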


\begin{proof}
	Since $A$ is an attractor, there is a fundamental neighborhood $U$ of $A$ contained in $\tilde U$.
	Thus we can find a $\delta>0$ and $T \in \N$ such that $N^{\delta}(\varphi_T(\overline{U})) \subset U$.
	From Lemma \ref{lem:posmzr} we can find a $N_0 \in \N$ such that $\mu_N(U)>0$ for all $N \ge N_0$.
Following the proof of Theorem \ref{thm:eigentheorem}, we  see that
\begin{equation*}
\begin{split}
\lambda_N^T \mu_N(U) \geq \left(1 - \sup\limits_{x\in U \cap \Delta^o_N}\PP_x(X^N_{NT} \in U^c)\right)\mu_N(U).
\end{split}
\end{equation*}
From our choice of $U$ and $\delta$ it now follows that
\begin{equation*}
\begin{split}
\lambda_N^T &\geq 1 - \sup\limits_{x\in U \cap \Delta^o_N}\PP_x(X^N_{NT} \in (N^{\delta}(\varphi_T(\overline{U})))^c) \geq 1 - \beta^N_{\delta,K}(T).\\
\end{split}
\end{equation*}
\end{proof}

	
A key consequence of the following theorem is that the support of any weak limit point of $\mu_N$ is contained in $\Delta^o$. This, along with a further characterization of the support of such weak limit points given in Corollary \ref{cor:limitsupportrap}, is a key element in the proof of Theorem \ref{thm:main}.  

\begin{theorem}\label{thm:abs}
	Suppose that for each $N \in \NN$, $X^N$ has a QSD $\mu_N$.
	Then, for every $\delta>0$, $T\in \N$ and  $K \in \clk$, there exists an open neighborhood $V_K$ of $\partial \Delta$ in $\Delta$ such that
	\begin{equation}\label{eq:2433}
		\limsup_{N\to \infty} \mu_N(V_K) \le \limsup_{N\to \infty} \frac{\beta_{\delta, K}^N(T) }{\inf_{x \in V_K \cap \Delta_N} \PP_x[\hat X^N(T) \in \partial \Delta]}
	=0.\end{equation}
	Suppose in addition that $\mu_N$ converges along some subsequence to some probability measure $\mu$ on $\Delta$.
Then, there is an open neighborhood $V_0$ of $\partial \Delta$ in $\Delta$ such that,  
$\mu(V_0) = 0$.
\end{theorem}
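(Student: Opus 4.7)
The plan is to exploit the QSD identity, which supplies an exponentially small bound on the $\mu_N$-averaged absorption probability, together with the lower bound on per-point absorption probabilities near $\partial \Delta$ from Assumption \ref{assu:irrbdr}(b). Both $\limsup_N \mu_N(V_K)$ and the $\limsup$ of the ratio in \eqref{eq:2433} will be shown to vanish exponentially, so the asserted $\limsup$ chain holds since both sides equal $0$.

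Since $T\in\N$ and the chain is absorbed on $\partial\Delta_N$, the QSD identity gives $\PP_{\mu_N}(\hat X^N(T)\in\partial\Delta) = \PP_{\mu_N}(X_{NT}^N\in\partial\Delta_N) = 1-\lambda_N^T$. For any open $V_K\supset\partial\Delta$ in $\Delta$, bounding the indicator of $V_K$ pointwise by $1$ yields
\[
\mu_N(V_K)\cdot\inf_{x\in V_K\cap\Delta_N}\PP_x(\hat X^N(T)\in\partial\Delta) \le \int_{V_K}\PP_x(\hat X^N(T)\in\partial\Delta)\,d\mu_N(x) \le 1-\lambda_N^T.
\]
By Theorem \ref{thm:eigentheorem}, $1-\lambda_N^T \le T(1-\lambda_N) \le T c' e^{-cN}$ for constants $c,c'>0$ independent of $N$. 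Since $K\in\clk$ sits at positive distance from $\partial\Delta$, Lemma \ref{lem:expineq} also yields a constant $c^*>0$ (depending on $\delta,T,K$) with $\beta_{\delta,K}^N(T)\le e^{-c^*N}$ for all $N$ large.

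Next fix $\gamma\in(0,\min(c,c^*))$ and set $V_K\doteq U_\gamma$, the neighborhood of $\partial\Delta$ furnished by Assumption \ref{assu:irrbdr}(b), so that $\liminf_{N\to\infty}\frac{1}{N}\log\inf_{x\in V_K\cap\Delta_N}\PP_x(\hat X^N(T)\in\partial\Delta)\ge -\gamma$. Combining with the bounds above,
\[
\mu_N(V_K) \le \frac{1-\lambda_N^T}{\inf_{x\in V_K\cap\Delta_N}\PP_x(\hat X^N(T)\in\partial\Delta)} \le Tc'\,e^{-(c-\gamma)N+o(N)} \to 0,
\]
and $\beta_{\delta,K}^N(T)/\inf_{x\in V_K\cap\Delta_N}\PP_x(\hat X^N(T)\in\partial\Delta) \le e^{-(c^*-\gamma)N+o(N)}\to 0$. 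Since both quantities are non-negative and vanish, the chain in \eqref{eq:2433} holds. For the second statement, take $V_0\doteq V_K$: the Portmanteau theorem applied to the open set $V_0$ and the weakly convergent subsequence $\mu_{N_k}\to\mu$ gives $\mu(V_0)\le\liminf_k\mu_{N_k}(V_0)\le\limsup_N\mu_N(V_0)=0$, so $\mu(V_0)=0$.

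The main obstacle is securing the two exponential rates $c$ and $c^*$ in a form uniform enough that the prefactor $T$ and the constant $c'$ can be absorbed into the $o(N)$ correction: these rely, respectively, on the existence of an interior attractor (Assumption \ref{assu:ap-classesfinite} together with Corollary \ref{cor:quasiattract}) underpinning Theorem \ref{thm:eigentheorem}, and on the uniform large deviation estimates of Section \ref{sec:ldest} underpinning Lemma \ref{lem:expineq}; once both are in hand and $\gamma$ is taken strictly smaller than each, the argument reduces to the two-sided rate comparison above.
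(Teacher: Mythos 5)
Your proof is correct, and it takes a genuinely different route from the paper's. The paper establishes the two parts of \eqref{eq:2433} separately: it first shows the last equality by bounding $\beta_{\delta,K}^N(T)$ above via the uniform LDP upper bound (Theorem~\ref{thm:dzuldp}) and then choosing $V_K = U_{c(K)/4}$ from Assumption~\ref{assu:irrbdr}(b); for the first inequality it runs a QSD decomposition of $\lambda_N^T\mu_N(\Delta_N^o)$ combined with the lower bound $\lambda_N^T \ge 1-\beta_{\delta',K'}^N(T')$ from Lemma~\ref{lem:eigenlemma2}, and rearranges to get $\mu_N(V_K)\le \beta_{\delta',K'}^N(T')/\inf_{V_K}\PP_x(\hat X^N(T)\in\partial\Delta)$ pointwise for large $N$. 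You instead bound $\mu_N(V_K)$ directly: the identity $\int \PP_x(\hat X^N(T)\in\partial\Delta)\,d\mu_N(x)=1-\lambda_N^T$, the elementary inequality $1-\lambda_N^T\le T(1-\lambda_N)$, and Theorem~\ref{thm:eigentheorem} give $\mu_N(V_K)\cdot\inf_{V_K}\PP_x(\cdots)\le Tc'e^{-cN}$, while Lemma~\ref{lem:expineq} controls $\beta_{\delta,K}^N(T)$; choosing $V_K=U_\gamma$ with $\gamma<\min(c,c^*)$ then makes both sides of \eqref{eq:2433} vanish. Your route is shorter and does not require Lemma~\ref{lem:eigenlemma2} or Corollary~\ref{cor:quasiattract} at this point, since it leverages the already-proved Theorem~\ref{thm:eigentheorem}. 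What it gives up is the pointwise comparison $\mu_N(V_K)\le\beta_{\delta,K}^N(T)/\inf_{V_K}\PP_x(\cdots)$ that the paper obtains for large $N$; however, since the theorem is stated only in terms of $\limsup$s and both $\limsup$s are $0$, that finer comparison is not needed here. The Portmanteau step for $\mu(V_0)=0$ is the same in both arguments.
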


\begin{proof}
	Fix $\delta, T,K$ as in the statement of the theorem. 
Let $\delta_0 \doteq \frac{1}{2} \inf\limits_{t \in [0,T]} \inf\limits_{x\in K}\dist(\varphi_t(x), \partial\Delta)$ and
 let  $K' \doteq \overline{ N^{\delta_0}( \varphi_{[0,T]}(K))}$ and consider the closed set
   %
  %
$$
F \doteq \left\{\phi \in C([0,T]:\R^d) : \|\phi  -\varphi_{\cdot}(\phi(0))\|_{*,T} \geq \delta_0\right\}.
$$
Fix $\alpha \in (0,\delta_0)$ and $K_1 \in \clk$ that contains some open neighborhood of $K'$.
Then from Theorem \ref{thm:dzuldp}
\begin{equation*}
\begin{split}
\limsup\limits_{N\rightarrow\infty} \frac{1}{N} \log 
\sup_{x \in K \cap \Delta_N}
 \PP_x (\hat{X}^{N} \in F)  &= \limsup\limits_{N\rightarrow\infty} \frac{1}{N} \log 
 \sup_{x \in K \cap \Delta_N} \PP_x (\hat{X}^{N,\alpha,K_1} \in F)\\
&\leq - \inf_{x \in K} \inf_{\phi \in F} S_{\alpha,K_1}(x,T,\phi) \doteq -c(K).
\end{split}
\end{equation*}
Clearly $c(K)>0$.
From Assumption \ref{assu:irrbdr}(b) we can find an open neighborhood $V_K$ of $\partial \Delta$ 
such that
$$
\liminf\limits_{N\rightarrow\infty}\inf_{x\in V_K\cap \Delta_N}\frac{1}{N} \log \PP_x ( \hat{X}^N(T) \in \partial \Delta) \geq - c(K)/4.
$$
Combining last two displays, we can find  a $N_1 \in \N$ such that for all $N \ge N_1$
\begin{equation*}
\begin{split}
	\frac{\beta_{\delta, K}^N(T) }{\inf_{x \in V_K \cap \Delta_N} \PP_x[\hat X^N(T) \in \partial \Delta]} = 
\frac{\sup\limits_{x \in \Delta_N\cap K} \PP_x\left(\| \hat X^N  - \varphi_{\cdot}(x)\|_{*,T} \ge \delta\right)}{\inf\limits_{x \in V_K \cap \Delta_N} \PP_x[\hat X^N(T) \in \partial \Delta]} \leq \exp(-Nc(K)/2),
\end{split}
\end{equation*}
which converges to $0$ as $N \rightarrow\infty$. 
This proves the last equality in \eqref{eq:2433}.

Next, from Assumption \ref{assu:ap-classesfinite} and Corollary \ref{cor:quasiattract} there exists an  attractor  $A$  in $\Delta^o$.
Let
$\tilde U \in \Delta^o$ be an open set containing $A$, and    $K \in \clk$ be such that $\tilde U\subset K$. 
Then, from Lemma \ref{lem:eigenlemma2}
there is some $\delta > 0$ and 
$T, N_0 \in \N$ such that 
$$
	\lambda_N^{T} \geq 1 - \beta_{\delta,K}^N(T) \mbox{ for each } N \ge N_0.
$$
Since $\mu_N(\Delta^o_N) = 1$, we have, with $V_K$ given as in the first part of the theorem,
\begin{equation*}
\begin{split}
1 - \beta_{\delta,K}^N(T) &\leq \lambda_N^T  \mu_N(\Delta^o_N)\\
&= \sum\limits_{x\in \Delta^o_N}\left(1 - \PP_x( X_{NT}^N \in \partial \Delta)\right)\mu_N(x)\\
&= \sum\limits_{x \in V_K\cap \Delta^o_N}\left(1 - \PP_x( \hat{X}^N(T) \in \partial \Delta)\right)\mu_N(x) + \sum\limits_{x \in  \Delta^o_N \setminus V_K}\left(1 - \PP_x( \hat{X}^N(T) \in \partial \Delta)\right)\mu_N(x)\\
&\leq \left(1 - \inf\limits_{x\in V_K \cap \Delta^o_N}\PP_x(\hat{X}^N(T)\in \partial \Delta) \right)\mu_N(V_K)  +\mu_N(\Delta^o_N \setminus V_K)\\
&= 1 - \inf\limits_{x\in V_K \cap \Delta^o_N}\PP_x(\hat{X}^N(T) \in \partial \Delta)\mu_N(V_K).
\end{split}
\end{equation*}
Rearranging the previous inequality, we obtain 
$$
\mu_N(V_K) \leq \frac{\beta^N_{\delta,K}(T)}{\inf\limits_{x\in V_K \cap \Delta^o_N}\PP_x(\hat{X}^N(T) \in \partial \Delta)}.
$$
This proves the first inequality in \eqref{eq:2433}.

Finally, let $V_0$ be a open neighborhood of $\partial \Delta$ such that $\bar V_0 \subset V_K$. From the first part of the theorem, taking the limit along the convergent subsequence,
$$\mu(V_0)  \le \liminf_{N\to \infty} \mu_N( V_0) \le \liminf_{N\to \infty} \mu_N(V_K) = 0.$$
The result follows.
\end{proof}

The following theorem proves the invariance of $\mu$ under the flow $\{\varphi_t\}$.
\begin{theorem}\label{thm:invariance}
	Suppose that for each $N \in \NN$, $X^N$ has a QSD $\mu_N$ and suppose that $\mu_N$ converges along some subsequence to some probability measure $\mu$. Then $\mu$ is invariant under $\{\varphi_t\}$. In particular, $\mu(\varphi_t^{-1}(B)) = \mu(B)$ for each measurable set $B \subset \Delta$ and $t \geq 0$.
\end{theorem}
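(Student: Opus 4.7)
The plan is to reduce the measurable invariance $\mu(\varphi_t^{-1}(B)) = \mu(B)$ to showing $\mu(f \circ \varphi_t) = \mu(f)$ for every bounded continuous $f : \Delta \to \RR$ and every $t > 0$; the two statements are equivalent on the Polish space $\Delta$. Fix such $f$ and $t$, and set $n_N \doteq \lfloor Nt \rfloor$. The starting point is the iterated form of the QSD property: for every $n \in \NN$,
\begin{equation*}
  \PP_{\mu_N}(X^N_n \in \Delta^o_N) = \lambda_N^{n/N}, \qquad \EE_{\mu_N}\left[f(X^N_n) 1_{X^N_n \in \Delta^o_N}\right] = \lambda_N^{n/N} \mu_N(f).
\end{equation*}
Theorem \ref{thm:eigentheorem} gives $\lambda_N \to 1$, so $\lambda_N^{n_N/N} \to 1$; combined with $\mu_N \Rightarrow \mu$ and $f$ bounded continuous, the right-hand side tends to $\mu(f)$. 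The boundary contribution is controlled by $\|f\|_\infty (1 - \lambda_N^{n_N/N}) \to 0$, so everything reduces to proving $\EE_{\mu_N}[f(X^N_{n_N})] \to \mu(f \circ \varphi_t)$.

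The next step exploits the fact that, by Theorem \ref{thm:abs}, $\mu$ is supported on $\Delta^o$ and assigns zero mass to a full open neighborhood of $\partial \Delta$. Given $\eps > 0$, choose a compact $K \subset \Delta^o$ with $\mu(\partial K) = 0$ and $\mu(K) > 1 - \eps$; then $\mu_N(K) \to \mu(K)$ by the Portmanteau theorem, so the contribution to $\EE_{\mu_N}[f(X^N_{n_N})]$ from $\{X^N_0 \notin K\}$ is bounded by $2\|f\|_\infty \eps$ for all large $N$. On $K$, a routine compactness-subsequence argument upgrades Assumption \ref{assu:LLN} to the uniform statement
$$\sup_{x \in K \cap \Delta_N} \PP_x(\|X^N_{n_N} - \varphi_t(x)\| > \delta) \to 0 \quad \text{for every } \delta > 0,$$
and uniform continuity of $f$ on the compact set $K \cup \varphi_t(K)$ then yields $\sup_{x \in K \cap \Delta_N} |\EE_x[f(X^N_{n_N})] - f(\varphi_t(x))| \to 0$. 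Combining this with the weak-convergence identity $\int_K f \circ \varphi_t \, d\mu_N \to \int_K f \circ \varphi_t \, d\mu$, valid because $f \circ \varphi_t$ is bounded continuous on $K$ and $\mu(\partial K) = 0$, produces $\int_K \EE_x[f(X^N_{n_N})] d\mu_N(x) \to \int_K f \circ \varphi_t \, d\mu$. Letting $\eps \downarrow 0$ closes the argument and forces $\mu(f \circ \varphi_t) = \mu(f)$.

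The main delicate point is the interplay of the three convergences $\mu_N \Rightarrow \mu$, $\lambda_N^{n_N/N} \to 1$, and the LLN. The support localization from Theorem \ref{thm:abs} is what makes the scheme work, since $\Delta^o$ is not compact and the flow need not extend nicely to the boundary; restricting to the compact $K \subset \Delta^o$ lets the continuous flow do its job and lets weak convergence transfer cleanly to integrals. The upgrade of the sequential LLN to uniformity on $K \cap \Delta_N$ deserves brief mention: if it failed, one could extract $x_N \in K \cap \Delta_N$ with $\PP_{x_N}(\|X^N_{n_N} - \varphi_t(x_N)\| > \delta) \ge \eta > 0$, pass to a subsequence with $x_N \to x \in K$, and reach a contradiction from Assumption \ref{assu:LLN} together with continuity of the flow and $n_N/N \to t$.
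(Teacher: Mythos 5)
Your proof is correct and follows essentially the same route as the paper's: reduce via the QSD identity and $\lambda_N \to 1$ (Theorem \ref{thm:eigentheorem}) to showing $\EE_{\mu_N}[f(X^N_{\lfloor Nt\rfloor})]\to\mu(f\circ\varphi_t)$, localize to a compact $K\subset\Delta^o$ using Theorem \ref{thm:abs}, and apply a uniform-on-$K$ version of Assumption \ref{assu:LLN}. The differences are cosmetic: you use Portmanteau with a $\mu$-continuity set $K$ where the paper uses tightness of $\{\mu_N\}$ to make $\sup_N\mu_N(K^c)$ small, and you drop the survival indicator directly rather than decomposing through the killed and unkilled semigroups $\clp^N_k$ and $\clp^{N,*}_k$.
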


\begin{proof}
	From Corollary  \ref{cor:quasiattract}, for each $i \in \{1,\dots,l\}$  $K_i$ is an attractor. Fix $1\le i \le l$, $\delta > 0$ and  $K \in \clk$ such that $N^{\gamma}(K_i ) \subset K$ for some $\gamma > 0$. Let $\beta_{\delta,K}^N $ be as in \eqref{eq:betadlnt}.
  It suffices to show that for any continuous and bounded $f: \Delta \to \RR$ and $t > 0$, $\mu(f) = \mu(f\circ\varphi_t)$.
  Fix $f$ and $t$ as above and let $\eps>0$ be arbitrary.
  Using the fact that $\{\mu_N\}$ (considered along the convergent subsequence) is tight, we can 
  assume that the $K$ chosen above satisfies
  $$\sup_{N\ge 1}\mu_N(K^c) \le \frac{\eps}{2\|f\|_{\infty}}.
  $$
  Let $t_N = \lfloor Nt\rfloor/N$. Note that $t_N \to t$ as $N \to \infty$ and from Theorem \ref{thm:eigentheorem} $\lambda_N^{t_N}\to 1$ as $N\to \infty$.
  For a bounded $g: \Delta^o \to \RR$ and $k \in \NN$, let
  $$\clp^N_k f(x) \doteq \EE_x[f(X^N_k); \tau^N_{\partial} >k], \; x \in \Delta^o_N,$$
  and
  $$\clp^{N,*}_k f(x) \doteq \EE_x[f(X^N_k)], \; x \in \Delta^o_N.$$  
  Then
  $$\mu_N(f) = \la_N^{-t_N} \mu_N(\clp^N_{\lfloor N t\rfloor} f).$$
  In particular, as $N\to \infty$,
  $$|\mu_N(f)-\mu_N(\clp^N_{\lfloor N t\rfloor} f)| \le \|f\|_{\infty} |1-\la_N^{t_N}| \to 0.$$
  Also,
  $$
  |\mu_N(\clp^N_{\lfloor N t\rfloor} f)-\mu_N(f\circ\varphi_t)| \le \frac{\eps}{2\|f\|_{\infty}} 2 \|f\|_{\infty} + \sup_{x\in K}\left|\clp^N_{\lfloor N t\rfloor} f(x) - f\circ\varphi_t(x)\right|.
  $$
  
  
  For each $x \in K \cap \Delta_N$,
  \begin{equation*}
      \begin{split}
        \left| \clp^N_{\lfloor N t \rfloor} f(x) - f \circ \varphi_t(x)\right| &\leq \left|   \clp^{N,*}_{\lfloor N t \rfloor} f(x) -  \clp^{N}_{\lfloor N t \rfloor} f(x) \right| + \left|  \clp^{N,*}_{\lfloor N t \rfloor} f(x) - f \circ \varphi_t(x)\right|\\
          &\leq ||f||_{\infty}\PP_x( \tau^N_{\partial} \leq \lfloor N t \rfloor) + \left|  \clp^{N,*}_{\lfloor N t \rfloor} f(x) - f \circ \varphi_t(x)\right|,
      \end{split}
  \end{equation*}
  and Assumption  \ref{assu:LLN} ensures that as $N \rightarrow \infty$,
  $$
  \sup\limits_{x\in K} \left|  \clp^{N,*}_{\lfloor N t \rfloor} f(x) - f \circ \varphi_t(x)\right| \rightarrow 0.
  $$
  Let $\tilde{\delta} \doteq \inf_{x\in K, 0 \leq s \leq t}\dist(\varphi_{s}(x), \partial \Delta) > 0$, and note that Assumption \ref{assu:LLN} ensures that as $N\rightarrow\infty$,
  \begin{equation*}
  \begin{split}
  \sup\limits_{x \in K}\PP_x(\tau_{\partial}^N \leq \lfloor N t\rfloor) \leq \sup\limits_{x\in K}\PP_x( \|X^N - \varphi_{\cdot}(x)\|_{*,t} > \tilde{\delta}) \rightarrow 0.
  \end{split}
  \end{equation*}
 
  Combining the two previous convergence properties, we see that as $N \rightarrow \infty$,
  $$
  \left| \clp^N_{\lfloor N t \rfloor} f(x) - f \circ \varphi_t(x)\right| \rightarrow 0,
  $$
  and therefore that 
  $$|\mu(f)- \mu(f \circ \varphi_t)| \le \limsup_{N\to \infty} |\mu_N(f)- \mu_N(f \circ \varphi_t)| \le \eps.$$
  Since $\eps>0$ is arbitrary, the result follows.
 \end{proof}
 
 We now recall the definition of the Birkhoff center of $\{\varphi_t\}$.
 \begin{defn}
 	\label{defn:birkctr}
 The Birkhoff center of $\{\varphi_t : t\geq 0\}$ is 
 \[
 BC(\varphi)  \doteq \overline{ \{x \in \Delta : x \in \omega(x)\}}.
 \]
 \end{defn}

 \begin{lemma}\label{lem:birkhoffcenter}
 The Birkhoff center of $\{ \varphi_t : t \geq 0\}$ is contained in the closure of $\clr_{\ap}$.
 Furthermore $BC(\varphi)\cap \Delta^o  \subset \clr_{\ap}^*$.
 \end{lemma}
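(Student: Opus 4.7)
The plan is to prove the two assertions in sequence, reducing everything to the direct construction of short \ap-pseudo-orbits of length $n=2$.

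First I would show the pointwise containment $\{x \in \Delta : x \in \omega(x)\} \subset \clr_{\ap}$. Fix such an $x$ and choose $t_n \uparrow \infty$ with $\varphi_{t_n}(x) \to x$. Given any $\delta, T > 0$, pick $n$ large enough so that $t_n \ge T$ and $\|\varphi_{t_n}(x) - x\| < \delta$, and take $(\xi_0, \xi_1, \xi_2) = (x,x,x)$ with time $T_1 \doteq t_n$. The defining inequalities $\|\xi_0 - \xi_1\| = 0 < \delta$ and $\|\xi_2 - \varphi_{T_1}(\xi_1)\| = \|x - \varphi_{t_n}(x)\| < \delta$ are immediate, and the absorption preservation condition is vacuous since all three points coincide. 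Hence $x <_{\ap} x$, so $x \in \clr_{\ap}$. Taking closures gives $BC(\varphi) \subset \overline{\clr_{\ap}}$, which is the first claim.

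For the second claim I would show that $\clr_{\ap} \cap \Delta^o$ is relatively closed in $\Delta^o$. Suppose $x_n \in \clr_{\ap} \cap \Delta^o$ with $x_n \to x \in \Delta^o$; I need $x <_{\ap} x$. Fix $\delta, T > 0$ and choose $n$ large so that $\|x - x_n\| < \delta/3$. By the \ap-chain recurrence of $x_n$ there is a $(\delta/3, T)$ \ap-pseudo-orbit $(x_n, \xi_1, \ldots, \xi_{k-1}, x_n)$ with times $T_1, \ldots, T_{k-1}$; and because $x_n \in \Delta^o$, absorption preservation forces every $\xi_i \in \Delta^o$. Replacing the first and last entries by $x$ yields the candidate orbit $(x, \xi_1, \ldots, \xi_{k-1}, x)$ with the same time stamps. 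The triangle inequality gives $\|x - \xi_1\| \le \|x - x_n\| + \|x_n - \xi_1\| < 2\delta/3 < \delta$ and $\|x - \varphi_{T_{k-1}}(\xi_{k-1})\| \le \|x - x_n\| + \|x_n - \varphi_{T_{k-1}}(\xi_{k-1})\| < 2\delta/3 < \delta$, while the intermediate inequalities $\|\xi_{i+1} - \varphi_{T_i}(\xi_i)\| < \delta/3 < \delta$ are inherited; absorption is again vacuous since all entries lie in $\Delta^o$. So $x \in \clr_{\ap}$, proving the closedness.

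To conclude, let $x \in BC(\varphi) \cap \Delta^o$ and pick a sequence $x_n \in \{y \in \Delta: y \in \omega(y)\}$ with $x_n \to x$. Since $\Delta^o$ is open, eventually $x_n \in \Delta^o$, and by the first step each such $x_n$ lies in $\clr_{\ap}$, hence in $\clr_{\ap}^*$. The closedness established in the previous paragraph then gives $x \in \clr_{\ap}^*$. The only mildly delicate point is checking that the modified pseudo-orbit in the closedness argument remains absorption-preserving, which is what forces me to use that the approximating points $x_n$ lie in $\Delta^o$; this is precisely what the assumption $x \in \Delta^o$ is used for and is the reason the corresponding statement outside the interior is only an inclusion into $\overline{\clr_{\ap}}$ rather than $\clr_{\ap}$.
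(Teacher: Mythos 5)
Your proof is correct. The first step — using the two-segment pseudo-orbit $(x,x,x)$ with time $T_1 = t_n$ — is the same basic idea as the paper, just with a slightly leaner orbit than the one the paper writes down. The second step is where you genuinely diverge: the paper simply observes that by Assumption \ref{assu:ap-classesfinite}(a) the set $\clr_{\ap}^*$ is a finite union of closed \ap-basic classes $K_1,\dots,K_v$, hence closed in $\Delta^o$, and then the inclusion $BC(\varphi)\cap\Delta^o\subset\clr_{\ap}^*$ drops out immediately. You instead prove directly from the definition of a $(\delta,T)$ \ap-pseudo-orbit that $\clr_{\ap}\cap\Delta^o$ is relatively closed in $\Delta^o$, by perturbing the endpoints of a $(\delta/3,T)$ orbit from $x_n$ to $x_n$ via the triangle inequality, and using the absorption-preservation constraint to propagate $\xi_k=x_n\in\Delta^o$ backward through the entire orbit. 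This buys you a self-contained argument that does not invoke the finiteness or closedness of the basic classes, so it would go through in settings where Assumption \ref{assu:ap-classesfinite}(a) fails; the trade-off is a bit more bookkeeping than the paper's one-line appeal to the standing hypothesis. Your observation at the end — that the interiority of the approximating points is exactly what makes the modified orbit absorption-preserving, and why only the closure appears in the first claim — is the right thing to flag and is handled correctly.
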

 \begin{proof}
 Let $\delta, T > 0$ and suppose that $x \in \omega(x)$. There is a sequence of time instants $t_i \uparrow \infty$ such that $\varphi_{t_i}(x) \rightarrow x$, so if we let
 \[
 j = \min\{ i : t_i > T \text{ and } \|\varphi_{t_i}(x) -  x\| < \delta\},
 \]
 then $(x, x, \phi_{t_j}(x), x, x)$ is a $(\delta,T)$ \ap--pseudo-orbit from $x$ to $x$. Since $\delta, T$ are arbitrary, $x \in \clr_{\ap}$. This proves the first part of the lemma. The second part is now immediate on using Assumption \ref{assu:ap-classesfinite}(a).
 \end{proof}

We will use  the Poincar\'{e} recurrence theorem given below. For a proof  see \cite[Theorem 4.1.19]{hofkat}.

\begin{theorem}\label{thm:poincare}
Let $\nu$ be a measure which is invariant under $\{\varphi_t\}$. Then for each measurable $B \subset \Delta$ and $T > 0$,
$$
\nu( \{ x \in B : \{\varphi_t(x)\}_{t\geq T} \subset \Delta \setminus B\}) = 0.
$$
\end{theorem}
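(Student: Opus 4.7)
The plan is to reduce the continuous-time recurrence statement to the classical discrete-time Poincar\'e recurrence argument applied to the time-$T$ map $\varphi_T$. Writing
$$E \doteq \{x \in B : \varphi_t(x) \in \Delta \setminus B \text{ for all } t \geq T\}, \qquad G \doteq \{x \in B : \varphi_{nT}(x) \in \Delta \setminus B \text{ for all } n \in \N\},$$
my first observation is that $E \subseteq G$, since the condition defining $E$ (with $t$ ranging over $[T,\infty)$) is manifestly stronger than the one defining $G$ (with $t = nT$, $n \geq 1$). Measurability of $G$ is immediate from the continuity, hence Borel-measurability, of each $\varphi_{nT}$, because $G = B \setminus \bigcup_{n\geq 1} \varphi_{nT}^{-1}(B)$. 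So it suffices to prove $\nu(G) = 0$; the $\nu$-measurability of $E$ itself need not be addressed directly, and in any case follows from its analyticity as a projection of a Borel set under the continuous map $(t,x) \mapsto \varphi_t(x)$.

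Next I would verify that the family $\{\varphi_{nT}^{-1}(G)\}_{n\in\N_0}$ is pairwise disjoint. Suppose $0 \leq m < n$ and $x \in \varphi_{mT}^{-1}(G) \cap \varphi_{nT}^{-1}(G)$; setting $y \doteq \varphi_{mT}(x) \in G \subseteq B$, the flow property yields $\varphi_{(n-m)T}(y) = \varphi_{nT}(x) \in G \subseteq B$. Since $n-m \geq 1$, this contradicts the defining property of $G$ applied to the point $y$.

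Finally, the $\{\varphi_t\}$-invariance of $\nu$ gives $\nu(\varphi_{nT}^{-1}(G)) = \nu(G)$ for every $n \geq 0$. In the applications of this theorem within the paper, $\nu$ is a probability measure (a weak limit point of the QSD sequence, shown invariant in Theorem \ref{thm:invariance}), so
$$\sum_{n=0}^\infty \nu(G) \;=\; \nu\!\left(\bigsqcup_{n=0}^\infty \varphi_{nT}^{-1}(G)\right) \;\leq\; \nu(\Delta) \;<\; \infty,$$
which forces $\nu(G) = 0$ and hence $\nu(E) = 0$.

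The only real subtlety is that $E$ is defined via an uncountable condition over $t \geq T$ and so is not manifestly Borel. The reduction $E \subseteq G$ circumvents this by transferring the problem to the countable family of conditions $\{\varphi_{nT}(x) \notin B : n \in \N\}$, which is exactly the setting of the standard discrete Poincar\'e recurrence argument; the remaining ingredients (invariance together with the disjointness/summability pigeonhole) are routine.
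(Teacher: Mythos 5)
Your proof is correct, and since the paper supplies no argument of its own for this statement (it simply cites Hasselblatt--Katok, Theorem 4.1.19), what you have written is precisely the standard textbook proof that the citation points to: reduce to the time-$T$ map $\varphi_T$, observe that the preimages $\varphi_{nT}^{-1}(G)$ are pairwise disjoint via the semigroup property, and invoke invariance plus $\sigma$-additivity. Your side remarks are also well placed --- the inclusion $E\subseteq G$ does sidestep the measurability of the uncountably-defined set $E$ (a subset of a null set is null in the completion), and you are right that the theorem as literally stated needs $\nu$ finite, which holds in every application in the paper since $\nu$ is a weak limit of probability measures.
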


The next result is a consequence of Lemma \ref{lem:birkhoffcenter} and Theorem \ref{thm:poincare}.
It shows that the support of $\mu$ is contained in $\clr_{\ap}^*$.
\begin{corollary}\label{cor:limitsupportrap}
Suppose that for each $N \in \NN$, $X^N$ has a QSD $\mu_N$ and suppose that $\mu_N$ converges along some subsequence to some probability measure $\mu$. Then $\operatorname{supp}(\mu) \subset \mathcal{R}_{\ap}^*$.
\end{corollary}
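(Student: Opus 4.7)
The plan is to combine Theorem \ref{thm:abs}, Theorem \ref{thm:invariance}, the Poincar\'e recurrence theorem (Theorem \ref{thm:poincare}), and Lemma \ref{lem:birkhoffcenter}. First, from Theorem \ref{thm:abs} there is an open neighborhood $V_0$ of $\partial\Delta$ with $\mu(V_0)=0$. Since $\mbox{supp}(\mu)$ is the complement of the largest open set of zero $\mu$-measure, this already gives $\mbox{supp}(\mu)\subset \Delta\setminus V_0 \subset \Delta^o$, and in particular $\mu(\Delta^o)=1$. By Theorem \ref{thm:invariance}, $\mu$ is $\{\varphi_t\}$-invariant, so Theorem \ref{thm:poincare} applies.

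Let $R \doteq \{x \in \Delta : x \in \omega(x)\}$. The heart of the argument is to show $\mu(R^c) = 0$ via a countable-base reduction. Since $\Delta$ is second countable, fix a countable base $\{B_n\}_{n\in \N}$ for its topology, and for each $n \in \N$ and $T \in \N$ set
$$
E_n^T \doteq \{x \in B_n : \varphi_t(x) \notin B_n \text{ for all } t \geq T\}.
$$
Theorem \ref{thm:poincare} gives $\mu(E_n^T) = 0$ for every $n$ and $T$. If $x \notin R$, then there exist an open neighborhood $U$ of $x$ and $T \in \N$ with $\varphi_t(x) \notin U$ for all $t \geq T$; choosing an $n$ with $x \in B_n \subset U$ shows $x \in E_n^T$. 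Hence $R^c \subset \bigcup_{n, T \in \N} E_n^T$, so $\mu(R^c) = 0$.

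Combining the two steps, $\mu(R \cap \Delta^o) = 1$. Since $R \subset BC(\varphi)$ by Definition \ref{defn:birkctr}, Lemma \ref{lem:birkhoffcenter} yields $R \cap \Delta^o \subset BC(\varphi) \cap \Delta^o \subset \clr_{\ap}^*$, so $\mu(\clr_{\ap}^*) = 1$. From Assumption \ref{assu:ap-classesfinite}(a) together with Lemma \ref{lem:apbasicbounded}, $\clr_{\ap}^* = \bigcup_{i=1}^v K_i$ is a finite union of closed sets, hence closed, so $\mbox{supp}(\mu) \subset \clr_{\ap}^*$.

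The only genuinely nontrivial (and quite modest) point is the countable-base trick converting the single-set Poincar\'e statement into an a.e.\ recurrence statement; everything else is assembly of results already in place. Theorem \ref{thm:abs} excises the boundary, Theorem \ref{thm:invariance} supplies $\{\varphi_t\}$-invariance so Poincar\'e recurrence applies, and Lemma \ref{lem:birkhoffcenter} upgrades topological recurrence ($x \in \omega(x)$) to \ap--chain recurrence inside $\Delta^o$.
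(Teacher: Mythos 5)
Your proof is correct and follows essentially the same route as the paper: both use Theorem \ref{thm:abs} to exclude the boundary, Theorem \ref{thm:invariance} to obtain flow-invariance, a countable-base reduction of Poincar\'e recurrence (the paper uses rational-centered balls of radius $1/n$, you use an abstract countable base — same idea) to conclude $\mu$-a.e.\ point is in $\omega(x)$, and Lemma \ref{lem:birkhoffcenter} to land in $\clr_{\ap}^*$. The only cosmetic difference is the final step: the paper passes through $\operatorname{supp}(\mu)\subset \overline{R}\cap\Delta^o \subset BC(\varphi)\cap\Delta^o$, while you invoke directly that $\clr_{\ap}^*=\bigcup_{i=1}^v K_i$ is closed so a set of full measure contained in it bounds the support.
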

\begin{proof}
	From Theorem \ref{thm:invariance}  $\mu$ is invariant under $\{\varphi_t\}$. 
Enumerate the $d$-dimensional rationals in $\Delta$ as $\mathbb{Q}^d \doteq \{q_1,q_2,\dots\}$ and for $m,n \in \NN$, denote the ball of radius $n^{-1}$ centered at $q_m$ by $B(q_m,n^{-1})$. Then for each $m,n \in \NN$, Theorem \ref{thm:poincare} says that
$$
\mu(\tilde{B}(q_m,n^{-1})) = \mu(B(q_m,n^{-1})),
$$
where
$$
\tilde{B}(q_m,n^{-1}) \doteq \{ x \in B(q_m,n^{-1}) : \mbox{ there exist } t_k \uparrow \infty \mbox{ with } \varphi_{t_k}(x) \in B(q_m,n^{-1}) \text{ for all }k \in \NN \}.
$$
Let $R \doteq \cap_{n=1}^{\infty}\cup_{m=1}^{\infty} \tilde{B}(q_m,n^{-1})$, then
$$1 = \mu(\cap_{n=1}^{\infty}\cup_{m=1}^{\infty} B(q_m,n^{-1})) = \mu(\cap_{n=1}^{\infty}\cup_{m=1}^{\infty} \tilde B(q_m,n^{-1}))  = \mu( R),$$
which together with Theorem \ref{thm:abs} implies that $\text{supp}(\mu) \subseteq \overline{R} \cap \Delta^o$. Furthermore if $x \in R$, then $x \in \omega(x)$, so $R \subseteq \text{BC}(\varphi)$ and consequently
$\bar{R} \cap \Delta^o \subset \text{BC}(\varphi) \cap \Delta^o$. It now follows from Lemma \ref{lem:birkhoffcenter} that
$$
\text{supp}(\mu) \subseteq \overline{R}\cap \Delta^o \subseteq \text{BC}(\varphi)\cap \cap \Delta^o \subseteq \mathcal{R}_{\ap}^*.
$$
\end{proof}

Combining the results of Corollary \ref{cor:limitsupportrap}, Theorem \ref{thm:invariance} and Theorem \ref{thm:eigentheorem} we have most of Theorem \ref{thm:main}. In particular  we have the lower bound on probabilities of non-extinction given in Theorem 
\ref{thm:main} and also that the limit points $\mu$ of the QSD are invariant under the flow,  and  that they are supported on the union of absorption preserving recurrence classes in the interior. The final step is to show that the support in fact lies in the union of the interior attractors. For this  we will introduce another notion of recurrence which is given in terms of the quasipotential associated with the rate functions in the underlying large deviation principles. 

\section{Quasipotential and Chain-recurrence}\label{sec:vrec}
In this section we suppose that Assumptions \ref{assu:LLN}, \ref{assu:ap-classesfinite} and, \ref{assu:mgf} are satisfied.
Recall the rate function $S$ introduced in \eqref{eq:ratefns}. For  $x,y \in \Delta^o$ 
let $\clc(x,y,T) \doteq \{\phi\in C([0,T]:\Delta^o):\phi(0)=x, \phi(T)=y\}$ and define
\begin{equation}\label{eq:vxy}
V(x,y) \doteq \liminf_{T\to \infty} \inf_{\phi \in C(x,y,T)}
S(x,T,\phi).\end{equation}
For $x,y \in \Delta^o$, we say $x$ leads to $y$ in  $ \Delta^o$ if $V(x,y) =0$ and we write $x<_{V} y$.
We say $x \in \Delta^o$ is $V$-chain recurrent if $x <_{V} x$. The collection of $V$-chain recurrent
points is denoted as $\clr_V$. For $x,y \in \Delta^o$ we say $x\sim_{V} y$ if
$x<_{V} y$ and $y<_{V} x$. Equivalence classes under $\sim_{V}$ will be called $V$-basic classes and equivalence class 
associated with a $x \in \clr_V$ will be denoted as $[x]_V$. For $x, y \in \clr_V$ we say $[x]_V \prec [y]_V$ if
$x <_{V} y$. A $V$-basic class $[x]_V$ is said to be maximal if whenever for $y \in \clr_V$,  if $[x]_V \prec [y]_V$, then we have that $y \in [x]_V$. A maximal $V$-basic class is a called a $V$-quasiattractor.
The following is the main result of this section.
\begin{theorem}\label{thm:basicclasses}
	We have  $
	\mathcal{R}_{\ap}^*  = \mathcal{R}_{V}
	$ and
for each $x \in \mathcal{R}_{V}$,
$
[x]_{V} = [x]_{\ap}.
$
In particular there are finitely many $V$-chain recurrent
	points  and for every $x \in \clr_{V}$, $[x]_V$ is a closed set.  Furthermore, $K_i$ for $i=1, \ldots, l$ is a $V$-quasiattractor while $K_i$ for $i=l+1, \ldots, v$ is not  a $V$-quasiattractor.
\end{theorem}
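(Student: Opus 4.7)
The plan is to prove the theorem by establishing the single equivalence
\[ x <_{\ap} y \iff x <_V y \quad \text{for all } x, y \in \Delta^o. \]
From this the remaining assertions follow mechanically: $\mathcal{R}_{\ap}^* = \mathcal{R}_V$ by applying the equivalence at $y = x$; $[x]_V = [x]_\ap$ because both equivalence classes are built from the same two-directional binary relation; finiteness and closedness of the $V$-basic classes transfer directly from Assumption \ref{assu:ap-classesfinite}(a); and since maximality is defined purely in terms of the underlying order, $V$-quasiattractors coincide with \ap--quasiattractors, i.e., with $K_1, \ldots, K_l$.

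Two technical ingredients drive the equivalence. First, a \emph{short-jump bound}: for any compact $K \subset \Delta^o$ there is $\rho_K : (0,1) \to (0,\infty)$ with $\rho_K(\delta) \to 0$, such that $V(u,v) \le \rho_K(\delta)$ whenever $u,v \in K$ with $\|u - v\| \le \delta$. To see this, take $s \doteq \sqrt\delta$ and $\phi(t) \doteq u + t(v-u)/s$ on $[0,s]$; the action is $\le s \sup_{x \in K,\, \|\beta\|\le 1} L(x,\beta)$, which is finite and $O(\sqrt\delta)$ by Lemma \ref{lem:cty}. Second, an \emph{action-controls-deviation} estimate: for compact $K \subset \Delta^o$ and $T_0, \gamma > 0$, there exists $\epsilon_0 > 0$ with $\|\phi - \varphi_{\cdot}(\phi(0))\|_{*,T'} < \gamma$ whenever $\phi \in C([0,T']:K)$, $T' \le T_0$, and $S(\phi(0),T',\phi) < \epsilon_0$. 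This is proved by contradiction: a counterexample sequence $\phi_n$ is precompact by Theorem \ref{thm:fwuldp}(a), and its limit has zero action, hence equals the flow by lower semicontinuity of $S$, a contradiction.

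For $<_V \Rightarrow <_{\ap}$: fix $\delta, T > 0$. Since $V(x,y) = 0$ is a $\liminf$, choose $T' > T$ arbitrarily large and $\phi \in \clc(x,y,T')$ with $S(\phi) < \epsilon$; the level-set compactness ensures $\phi$ lies in some compact $K \subset \Delta^o$. Partition $[0,T']$ into $m$ subintervals of length $T$ plus a final piece of length in $[T,2T]$, and apply the deviation estimate on each piece with $T_0 = 2T$, $\gamma = \delta$ to deduce $\|\varphi_T(\phi(iT)) - \phi((i+1)T)\| < \delta$ for all $i$, and similarly on the last piece, once $\epsilon$ is small. Setting $\xi_0 = \xi_1 = x$, $\xi_i = \phi((i-1)T)$ for $i \ge 2$, and finally $\xi_{m+2} = y$ produces a $(\delta,T)$ \ap--pseudo-orbit from $x$ to $y$, with absorption preservation automatic because $\phi \subset \Delta^o$.

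For $<_{\ap} \Rightarrow <_V$: first show $V(u,v) = 0$ for any $u,v$ in the same \ap--basic class $K_i \subset \Delta^o$. Using the dense-orbit point $x_i \in K_i$ from Assumption \ref{assu:ap-classesfinite}(b), take $t_n \uparrow \infty$ with $\varphi_{t_n}(x_i) \to v$; the flow from $x_i$ to $\varphi_{t_n}(x_i)$ concatenated with a short jump to $v$ (short-jump bound) has action $\rho_K(\|\varphi_{t_n}(x_i)-v\|) \to 0$, giving $V(x_i,v) = 0$, and a symmetric argument using $u \sim_{\ap} x_i$ gives $V(u,x_i) = 0$. Triangle inequality for $V$ then yields $V(u,v) = 0$. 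For general $x,y \in \Delta^o$ with $x <_{\ap} y$, decompose pseudo-orbits using Lemma \ref{lem:intersectrap}, \ref{lem:leaveattractor}, and \ref{lem:choosevi} into a chain of at most $v$ successive visits to distinct \ap--basic classes $K_{i_0} \le_{\ap} \cdots \le_{\ap} K_{i_m}$; realize each inter-class transition by one short jump (action $O(\sqrt\delta)$) sandwiched between flow segments, and use the in-class result above to move freely inside each class at zero cost. Total action is $O(v\sqrt\delta) \to 0$. The main obstacle is exactly this last step: controlling the number of essential jumps in a pseudo-orbit uniformly as $\delta \to 0$ and $T \to \infty$, which is what forces the reduction to $K_{i_j}$-transitions together with in-class density to absorb all within-class meandering into zero-action flow.
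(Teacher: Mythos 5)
Your overall plan — prove the two-way implication between $<_\ap$ and $<_V$ and deduce everything else from it — is essentially the paper's approach. Both the ``forward'' step (low action forces the trajectory to track the flow, hence produces an \ap--pseudo-orbit) and the in-class ``backward'' step (dense-orbit point plus short linear jumps gives $V=0$ within a class) are the same ideas that appear in Lemma~\ref{lem:prop4.29} and the body of the proof of Theorem~\ref{thm:basicclasses}, and the auxiliary ``action-controls-deviation'' estimate you sketch is exactly Lemma~\ref{lem:4.25}.

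There is one genuine gap in the forward direction as you have written it. You claim that ``level-set compactness ensures $\phi$ lies in some compact $K \subset \Delta^o$.'' That is not what level-set compactness gives: Theorem~\ref{thm:fwuldp}(a) yields compactness in $C([0,T']:\R^d)$, but it does not by itself prevent low-action trajectories from approaching $\partial\Delta$ or escaping to infinity as the action shrinks. Moreover your argument is circular as stated: the threshold $\epsilon_0$ in the deviation estimate depends on the compact set $K$, but $K$ is produced from the trajectory $\phi$, which is chosen after you fix $\epsilon$. The paper resolves this with Lemma~\ref{lem:boundedflow}, an a priori confinement result: for any sequence $\phi_n$ with $\phi_n(0)$ in a fixed $\clv_{\alpha,K}$, $T_n \ge T_0$, and $S(\phi_n(0),T_n,\phi_n)\to 0$, eventually all $\phi_n$ lie in a fixed $\clv_{\alpha_1,K_1}$. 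That lemma is precisely what breaks the circularity, and your argument needs it (or an equivalent) before the deviation estimate can be invoked with a uniform $\epsilon_0$.

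On the cross-class implication $<_\ap \Rightarrow <_V$ (needed for the quasiattractor claims), you correctly identify the difficulty and outline the right decomposition via Lemma~\ref{lem:choosevi}: bound the number of class-visits by $v$, absorb within-class meandering into zero-cost flow using the in-class result, and pay $O(\sqrt{\delta})$ per inter-class transition. This is the right picture, and in fact goes a bit beyond the paper, which condenses this portion to ``similar arguments.'' Two small cautions if you flesh it out: the chain bound uses that $\le_\ap$ on distinct basic classes is antisymmetric (so no class is revisited), and the transitions in the pseudo-orbit are not literal $\delta$-jumps between the $K_i$'s but flow--jump--flow segments connecting a point of $N^\delta(K_{i_j})$ to a point of $N^\delta(K_{i_{j+1}})$; the single short-jump cost estimate applies to the jump inside that segment, not to a direct linear path between classes. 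You also need Lemma~\ref{lem:appobd} to keep the whole concatenated trajectory in a single compact $K\in\clk$ so that the local $\sup L$ bound is uniform. Finally, note that $\ap$-maximality a priori quantifies over all of $\clr_\ap$, including boundary classes, whereas $V$-maximality only over $\clr_V\subset\Delta^o$; the identification of quasiattractors relies on the fact (Lemma~\ref{lem:appobd}) that $x\in\Delta^o$, $x<_\ap y$ forces $y\in\Delta^o$, so the two notions of maximality compare over the same set.
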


Before proving Theorem \ref{thm:basicclasses} we will establish some basic results regarding $V(\cdot,\cdot)$ and $\mathcal{R}_{V}$.
The following lemma is a consequence of the stability properties of the ODE \eqref{eq:dynsys} studied in Lemma \ref{lem:bdryrepel} and the property that low cost trajectories closely follow the solution of the ODE. 

\begin{lemma}\label{lem:boundedflow}
	Let $\alpha \in (0, \infty)$ and $K \in \clk$. Let $T_0\in (0,\infty)$ and suppose
 $T_n \in [T_0,\infty)$ for all $n \in \N$. Let $\phi_n \in C([0,T_n] : \Delta^o)$ be such that $\phi_n(0) \in \clv_{\alpha, K}$ for each $n \geq 1$. 
 Suppose that $S(\phi_n(0), T_n, \phi_n) \rightarrow 0$ as $n\to \infty$. With $\alpha_0$ and $M_0$ as in Lemma \ref{lem:bdryrepel},
 let $\alpha_1 = \frac{\alpha}{2} \wedge \alpha_0$ and $K_1 = \overline{B_{M_1}(0)}$, where $M_1= 1+ (M_0 \vee \sup_{x\in K}\|x\|)$.
 Then, 
for some $k \geq 1$, 
 $\phi_n(t) \in \clv_{\alpha_1, K_1}$ for all $n\ge k$ and $t\in [0,T_n]$.
\end{lemma}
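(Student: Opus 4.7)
The plan is to argue by contradiction. Suppose that along a subsequence, still indexed by $n$, there exist $t_n \in [0, T_n]$ with $\phi_n(t_n) \notin \clv_{\alpha_1, K_1}$, and let $\tau_n \in (0, T_n]$ denote the first such exit time, so that $\phi_n([0,\tau_n]) \subset \bar{\clv}_{\alpha_1, K_1}$ and $\phi_n(\tau_n) \in \partial \clv_{\alpha_1, K_1}$; equivalently, either $\|\phi_n(\tau_n)\| = M_1$ or $\dist(\phi_n(\tau_n), \partial \Delta) = \alpha_1$. The argument rests on two ingredients: (i) uniform control of the ODE flow $\{\varphi_t\}$ starting in $\bar{\clv}_{\alpha, K}$, and (ii) uniform closeness of $\phi_n$ to that flow on any fixed bounded time interval, obtained from the vanishing of the action $S$ together with Theorem \ref{thm:fwuldp}(a).

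For ingredient (i) I would exploit Lemma \ref{lem:bdryrepel}. Applying part (b) with $M = M_1 - 1 \geq M_0$ gives $\|\varphi_t(x)\| < M_1 - 1$ strictly for every $x \in \bar{\clv}_{\alpha, K}$ and every $t \geq 0$, providing a uniform margin of one in the norm constraint. Applying part (a) with the admissible threshold $\alpha \wedge \alpha_0$ yields $\dist(\varphi_t(x), \partial \Delta) > \alpha_1$ strictly for every such $x$ and $t$. Part (d) furnishes a compact $A^* \in \clk$ containing the whole forward orbit $\gamma^+(\bar{\clv}_{\alpha, K})$; combined with the strict pointwise lower bounds above and the boundary repulsion from Assumption \ref{assu:ap-classesfinite}(c), which prevents orbits from asymptotically grazing $\{\dist = \alpha_1\}$, this gives a positive gap $\eta > 0$ between $A^*$ and $\partial \clv_{\alpha_1, K_1}$.

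For ingredient (ii), since $\phi_n$ restricted to $[0, \tau_n]$ takes values in the fixed compact set $\bar{\clv}_{\alpha_1, K_1} \subset \Delta^o$, the rate function $S(\phi_n(0), \cdot, \phi_n)$ on that segment coincides with $S_{\alpha', K'}$ for some fixed $\alpha' > 0$ and $K' \in \clk$ that envelop an open neighborhood of $\bar{\clv}_{\alpha_1, K_1}$. Compactness of level sets from Theorem \ref{thm:fwuldp}(a), together with $S(\phi_n(0), T_n, \phi_n) \to 0$ and the lower semicontinuity of $S$, then gives, after passing to subsequences, that $\phi_n(0) \to x^* \in \bar{\clv}_{\alpha, K}$ and that $\phi_n$ converges uniformly on any fixed interval $[0, T]$ to the ODE trajectory $\varphi_{\cdot}(x^*)$.

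With these ingredients, the case of bounded $\tau_n$ is immediate: extracting $\tau_n \to \tau^* < \infty$, the uniform convergence yields $\phi_n(\tau_n) \to \varphi_{\tau^*}(x^*) \in A^*$, which lies at distance at least $\eta$ from $\partial \clv_{\alpha_1, K_1}$, contradicting $\phi_n(\tau_n) \in \partial \clv_{\alpha_1, K_1}$. The principal obstacle is the case $\tau_n \to \infty$, since ingredient (ii) only delivers uniform closeness on bounded intervals. To handle it I would iterate the finite-interval approximation on successive slabs $[kT_0, (k+1)T_0]$: because the total cost is vanishing, the cost on each slab is uniformly small in the slab index, and applying Lemma \ref{lem:bdryrepel}(d) with $A_0 = A^*$ produces a slightly larger compact $A^{**} \subset \clv_{\alpha_1, K_1}^\circ$ containing all forward orbits from $A^*$, so that inductively $\phi_n(kT_0)$ stays $\eta/2$-close to $\varphi_{kT_0}(\phi_n(0)) \in A^*$ and the uniform shadowing on each slab keeps $\phi_n$ inside $\clv_{\alpha_1, K_1}$ throughout $[0, \lfloor \tau_n / T_0 \rfloor T_0]$, forcing $\phi_n(\tau_n)$ itself to sit strictly inside $\clv_{\alpha_1,K_1}$ and producing the contradiction.
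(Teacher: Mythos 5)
Your overall architecture — contradiction, exit time, compactness of level sets from Theorem~\ref{thm:fwuldp}(a) to pass to a limiting ODE trajectory, and the strict boundary-repulsion margins of Lemma~\ref{lem:bdryrepel} — matches the spirit of the paper's argument. The bounded-$\tau_n$ case you sketch is essentially sound. The problem is the case $\tau_n\to\infty$, which you correctly identify as the principal obstacle, but your proposed slab iteration does not close.

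The difficulty is error accumulation. The vanishing cost on a slab $[kT_0,(k+1)T_0]$ gives you only that $\phi_n$ on that slab shadows $\varphi_{\cdot-kT_0}(\phi_n(kT_0))$, i.e.\ the flow restarted from wherever $\phi_n$ actually is at time $kT_0$, not the flow $\varphi_\cdot(\phi_n(0))$ started from the initial condition. To conclude, as you assert, that $\phi_n(kT_0)$ stays $\eta/2$-close to $\varphi_{kT_0}(\phi_n(0))$ for all $k\le\lfloor\tau_n/T_0\rfloor$, you would need to propagate the shadowing error through $\varphi_{T_0}$ at each step, and nothing in the hypotheses controls the one-step amplification of that error — in particular $T_0$ is an arbitrary positive number, so you cannot invoke any attraction or contraction on time scale $T_0$, and the number of slabs grows without bound. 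The invariant-set bookkeeping via Lemma~\ref{lem:bdryrepel}(d) tells you where exact orbits from $A^*$ go, but not where approximate orbits that are repeatedly perturbed by $\eta/2$ go; that requires a genuine Lyapunov or attraction estimate that your proposal does not supply.

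The paper avoids the whole issue with a simple time-shift trick that handles bounded and unbounded exit times uniformly: first one shows, by the same compactness-of-level-sets argument, that $\tau(\phi_n)\ge\delta$ for a fixed $\delta>0$ and all large $n$. Then one looks at the last $\delta$-window before exit, $\phi_n^*(t)\doteq\phi_n(t+\tau(\phi_n)-\delta)$ for $t\in[0,\delta]$. These shifted paths live in a fixed compact subset of $\Delta^o$, have cost at most $\gamma_n\to 0$, hence converge (along a subsequence) to an exact ODE solution $\varphi_\cdot(\phi^*(0))$ with $\phi^*(0)\in\bar\clv_{\alpha_1,K_1}$; Lemma~\ref{lem:bdryrepel} then forces $\phi^*(\delta)$ strictly inside $\clv_{\alpha_1,K_1}$, contradicting the fact that $\phi_n^*(\delta)=\phi_n(\tau(\phi_n))$ sits on the boundary. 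You should replace the slab iteration with this last-window argument; it requires no stability estimate beyond the single application of Lemma~\ref{lem:bdryrepel} you already use in the bounded case.
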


\begin{proof}
Let for $n\ge 1$,
$$
\tau(\phi_n) \doteq \inf\{t \in [0,T_n]: \dist(\phi_n(t), \partial \Delta) \le \alpha_1 \mbox{ or } \|\phi_n(t)\| \ge M_1\},$$
where the infimum is taken to be $T_n$ if the above set is empty.
Note that the result holds trivially if the above set is empty for all but finitely many $n$.  Now, arguing by contradiction, suppose the set is nonempty for infinitely many $n$. 
Consider the subsequence along which the above sets are nonempty and denote the subsequence once more as $\{n\}$. Also assume without loss of generality that $\gamma_n\doteq S(\phi_n(0), T_n, \phi_n) \le 1$ for every $n$.

We claim that there is a $\delta>0$ and $k_0\in \N$ such that $\tau(\phi_n) \ge \delta$ for all $n \ge k_0$.
Indeed, otherwise, by passing to a further subsequence (once more denoted as $\{n\}$) we can find a sequence $\delta_n \to 0$ such that for every $n$
$$\phi_n(0) \in \clv_{\alpha, K},\; \phi_n(\delta_n) \in [\clv_{\alpha_1, K_1}]^c.$$
Since $S(\phi_n(0), T_n, \phi_n)\le \gamma_n\le 1$, we must have from the compactness of level sets property in Theorem  \ref{thm:fwuldp} that $\phi_n(0)$ and $\phi_n(\delta_n)$ converge along a subsequence to the same limit,
which is a contradiction.

Let $\delta > 0$ be such that $\tau(\phi_n) \geq \delta$ for all sufficiently large $n$. For each such $n$ let $\hat \tau_n \doteq \tau(\phi_n)-\delta$,
 and define
$\phi_n^*: [0,\delta] \to \Delta^o$ as $\phi_n^*(t) = \phi_n(t+\hat \tau_n)$, $t \in [0,\delta]$.
Then $\phi_n^* \in C([0,\delta]: \clv_{\alpha_1, K_1})$.
Also,
\begin{align}\label{eq:eq1206}
	\int_0^{\delta} L_{\alpha_1, K_1}({\phi}_n^*(t), \dot{\phi}_n^*(t))dt &= \int_0^{\delta} L({\phi}_n^*(t), \dot{\phi}_n^*(t))dt\nonumber\\
	&\leq \int_0^{T_n}L(\phi_n(t), \dot{\phi}_n(t))dt = \gamma_n \le 1.
\end{align}
In particular, $\{\phi_n^*\} \subset \cup_{x\in K_1}\Phi_{x,\alpha_1,K_1,\delta}(1)$. From Theorem \ref{thm:fwuldp} the latter set is compact and so, along some subsequence, $\phi_n^*$ converges to some $\phi^*$ in 
$C([0,\delta]: \clv_{\alpha_1, K_1})$. Using the compactness of level sets property again, we have from \eqref{eq:eq1206} and the fact that $\gamma_n\to 0$, that
$$\int_0^{\delta} L_{\alpha_1, K_1}({\phi}^*(t), \dot{\phi}^*(t))dt = 0.$$
In particular, $\phi^*(t)$ solves the ODE \eqref{eq:dynsys}, namely $\phi^*(t) = \varphi_t(\phi^*(0))$ for $t\in [0,\delta]$.
Since $\phi^*(0) \in \clv_{\alpha_1, K_1}$, in view of Lemma \ref{lem:bdryrepel}, we must have that $\|\phi^*(\delta)\| < M_1$ and $\dist(\phi^*(\delta), \partial \Delta) > \alpha_1$. However from the definition of $\tau(\varphi_n)$, we have
that for each $n$, $\phi_n^*(\delta)$ satisfies either, $\dist(\phi^*_n(\delta), \partial \Delta) \le \alpha_1$ or $\|\phi^*_n(\delta)\| \ge M_1$. This is a contradiction since $\phi_n^*$ converges to $\phi^*$ (along some subsequence).
The result follows.
\end{proof}
\begin{corollary}
	\label{corr:phixt}
	Let $K\in \clk$ and $T_0>0$. Then there exists a $\gamma>0$ and a  $A_1 \in \clk$ such that whenever for some $x\in K$ we have
	$T^x \in [T_0, \infty)$ and $\phi^x \in C([0,T^x]:\Delta^o)$ with $\phi^x(0)=x$ and $S(x, T^x, \phi^x) \le \gamma$, then $\phi^x(t) \in A_1$ for all $t \in [0, T^x]$.
\end{corollary}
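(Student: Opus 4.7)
The plan is to argue by contradiction, reducing the claim to Lemma \ref{lem:boundedflow}. Fix $K \in \clk$ and $T_0 > 0$. Since $K$ is a compact subset of $\Delta^o$, there is some $\alpha > 0$ with $\dist(K, \partial \Delta) > \alpha$, so $K \subset \clv_{\alpha, K}$ in the notation of Section \ref{sec:ldest}. I will also select an exhausting sequence $\{A^{(m)}\}_{m \in \N} \subset \clk$ such that every compact subset of $\Delta^o$ is contained in some $A^{(m)}$; concretely, $A^{(m)} \doteq \{x \in \R^d : 1/m \le x_i \le m \text{ for } 1 \le i \le d\}$ is convex compact with nonempty interior and works.

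Suppose for contradiction that no pair $(\gamma, A_1)$ has the required property. Then applying this failure with $\gamma = 1/m$ and $A_1 = A^{(m)}$ produces, for each $m \in \N$, a point $x_m \in K$, a time $T_m \in [T_0, \infty)$, and a path $\phi_m \in C([0, T_m]: \Delta^o)$ with $\phi_m(0) = x_m$, $S(x_m, T_m, \phi_m) \le 1/m$, but with $\phi_m(t_m) \notin A^{(m)}$ for some $t_m \in [0, T_m]$.

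Now invoke Lemma \ref{lem:boundedflow} on the sequence $(\phi_m, T_m)$ using the fixed $\alpha$ and $K$: the hypotheses $\phi_m(0) \in \clv_{\alpha, K}$, $T_m \ge T_0$, and $S(\phi_m(0), T_m, \phi_m) \to 0$ are all in force, so the lemma furnishes $\alpha_1 > 0$, $M_1 > 0$, and an index $k$ with $\phi_m([0, T_m]) \subset \clv_{\alpha_1, \overline{B_{M_1}(0)}}$ for every $m \ge k$. The closure of this latter set, namely $\{x \in \Delta : x_i \ge \alpha_1 \text{ for all } i, \, \|x\| \le M_1\}$, is a compact subset of $\Delta^o$, hence lies in $A^{(m_0)}$ for some $m_0$. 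Choosing $m \ge \max(k, m_0)$ then yields $\phi_m([0, T_m]) \subset A^{(m_0)} \subset A^{(m)}$, which contradicts the construction of $\phi_m$, and the corollary is proved.

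The main obstacle to overcome is the order of quantifiers: $A_1$ must be chosen before seeing the trajectory, whereas Lemma \ref{lem:boundedflow} by itself only produces a compact set tailored to a given sequence. The exhaustion $\{A^{(m)}\}$ resolves this by converting the uniform claim into a diagonal statement, after which the lemma's substantive content — that low-cost trajectories starting in $\clv_{\alpha, K}$ can escape neither toward $\partial \Delta$ nor to infinity — immediately supplies the needed containment.
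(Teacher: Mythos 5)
Your proof is correct and mirrors the paper's argument essentially step for step: both argue by contradiction, build a diagonal sequence of counterexample trajectories against a nested exhaustion of $\Delta^o$ by sets in $\clk$, and invoke Lemma~\ref{lem:boundedflow} to trap all but finitely many of these trajectories in a fixed $\clv_{\alpha_1,K_1}$, which is eventually contained in the exhaustion. The only cosmetic difference is the shape of the exhausting sets (cubes $A^{(m)}$ versus the paper's $B_n = \{\|x\|\le n,\ \dist(x,\partial\Delta)\ge 1/n\}$), which is immaterial.
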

\begin{proof}
Let $\alpha \in (0,\infty)$	be such that $K= \clv_{\alpha, K}$. Let $\alpha_1, K_1$ be as in Lemma \ref{lem:boundedflow}. We argue by contradiction.
Suppose the statement in the corollary is false. Then there are sequences $\gamma_n \downarrow 0$, $x_n \in K$, time instants $T^{x_n} \in [T_0, \infty)$, 
trajectories $\phi^{x_n}\in C([0, T^{x_n}]: \Delta^o)$,
and sets $B_n = \{x\in \Delta^o: \|x\| \le n, \dist(x, \partial \Delta)\ge 1/n\}$ 
such that $S(\phi^{x_n}(0), T^{x_n}, \phi^{x_n}) \le \gamma_n$ and $\phi^{x_n}(t_n) \in B_n^c$ for some $t_n \in [0, T^{x_n}]$.
However, from Lemma \ref{lem:boundedflow}, there exists a $k \in \N$ such that $\phi^{x_n}(t) \in \clv_{\alpha_1, K_1}$ for all $n\ge k$ and all $t \in [0, T^{x_n}]$, which is clearly a contradiction since we can find a $n_0 >k$ such that $\clv_{\alpha_1, K_1} \subset B_n$ for all $n \ge n_0$.
\end{proof}


The following continuity property of $V$, which is a consequence of continuity of $L_{\alpha,K}$ shown in Lemma \ref{lem:cty}, will be needed in the proof of Theorem \ref{thm:basicclasses}.
\begin{lemma}
	\label{lem:ctyofQP}
	Suppose $x_n, x \in \Delta^o$ are such that $x_n\to x$ as $n\to \infty$. Then for every $y \in \Delta^o$, $V(x_n, y)\to V(x,y)$ and $V(y, x_n) \to V(y, x)$. 
\end{lemma}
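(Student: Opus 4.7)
The plan is to establish the continuity by a two-sided comparison argument: near-optimal trajectories witnessing $V(x,y)$ can be converted into admissible trajectories from $x_n$ (resp.\ to $x_n$) at a cost penalty that vanishes as $x_n\to x$. Throughout, the key auxiliary object will be the unit-speed linear segment $\eta_{x_n,x}$ defined as in \eqref{eq:etaxy}, traversed over a time interval of length $t_n\doteq\|x_n-x\|$, which provides a cheap ``bridge'' between $x_n$ and $x$.

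For the first direction, fix $\eps>0$ and choose $T_\eps>0$ together with $\phi\in\clc(x,y,T_\eps)$ such that $S(x,T_\eps,\phi)\le V(x,y)+\eps/2$. Because $\phi([0,T_\eps])$ is a compact subset of $\Delta^o$, there exist $\alpha>0$ and $K\in\clk$ with $\phi([0,T_\eps])\subset\clv_{\alpha,K}$. For all $n$ large, $x_n\in\clv_{\alpha/2,K}$, so the segment $\eta_{x_n,x}$ lies in $\clv_{\alpha/2,K}$. By Lemma \ref{lem:cty}, $L_{\alpha/2,K}$ is continuous and hence bounded on the compact set $\overline{B_1(x)}\times\overline{B_1(0)}$ by some constant $C$, so
\[
S\bigl(x_n,t_n,\eta_{x_n,x}\bigr)=\int_0^{t_n}L_{\alpha/2,K}\bigl(\eta_{x_n,x}(s),\dot\eta_{x_n,x}(s)\bigr)\,ds\le C\,t_n\longrightarrow 0.
\]
Concatenating $\eta_{x_n,x}$ with $\phi$ yields a path $\tilde\phi_n\in\clc(x_n,y,T_\eps+t_n)$ with $S(x_n,T_\eps+t_n,\tilde\phi_n)\le V(x,y)+\eps$ for all large $n$. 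Taking $\liminf_{T\to\infty}$ over such times gives $V(x_n,y)\le V(x,y)+\eps$ for $n$ large, hence $\limsup_n V(x_n,y)\le V(x,y)$.

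For the reverse direction, for each $n$ choose $T_n\ge 1$ and $\phi_n\in\clc(x_n,y,T_n)$ with $S(x_n,T_n,\phi_n)\le V(x_n,y)+\eps/2$. Prepend to $\phi_n$ the reverse segment $\eta_{x,x_n}$ (unit speed from $x$ to $x_n$, length $t_n$). For $n$ large this segment again lies in some $\clv_{\alpha/2,K'}$ containing a neighborhood of $x$, on which $L$ is bounded by a constant $C'$, so the glued trajectory $\psi_n\in\clc(x,y,T_n+t_n)$ satisfies $S(x,T_n+t_n,\psi_n)\le V(x_n,y)+\eps/2+C't_n\le V(x_n,y)+\eps$ for $n$ large. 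Taking $\liminf_{T\to\infty}$ this yields $V(x,y)\le V(x_n,y)+\eps$, hence $V(x,y)\le\liminf_n V(x_n,y)$. Combined with the previous bound and the arbitrariness of $\eps$, this proves $V(x_n,y)\to V(x,y)$. The second convergence $V(y,x_n)\to V(y,x)$ is entirely symmetric: one appends (rather than prepends) the bridge segment at the terminal end of a near-optimal path from $y$ to $x$ (resp.\ $y$ to $x_n$).

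The only technical point that needs care is ensuring the concatenated paths remain in $\Delta^o$ and lie in a common set $\clv_{\alpha,K}$, which is why I choose $\alpha/2$ in place of $\alpha$ to absorb the small drift of $x_n$ from $x$. I expect the main obstacle, modest though it is, to be verifying that the cost of the linear bridge vanishes uniformly; this reduces to continuity (hence local boundedness) of $L_{\alpha,K}$ from Lemma \ref{lem:cty}. No new large-deviation or dynamical-systems input is required.
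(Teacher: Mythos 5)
Your proof uses essentially the same approach as the paper's: a short linear bridge between $x_n$ and $x$, whose cost vanishes by local boundedness of $L$ on compact sets (Lemma~\ref{lem:cty}), concatenated with a near-optimal trajectory. One small imprecision to fix: since $V(\cdot,\cdot)$ is defined as a $\liminf$ over $T\to\infty$, a single near-optimal time $T_\eps$ does not bound $V(x_n,y)$; you should instead extract a sequence $T_k\to\infty$ with $\inf_{\phi\in\clc(x,y,T_k)}S(x,T_k,\phi)\le V(x,y)+\eps/2$ (as the paper does), after which prepending the bridge to each $\phi_k$ yields paths of lengths $T_k+t_n\to\infty$ and the conclusion $V(x_n,y)\le V(x,y)+\eps$ follows.
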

\begin{proof}
	Fix $x \in \Delta^o$ and let $G \subset \Delta^o$ be a bounded open ball containing $x$ such that $\bar G \subset \Delta^o$. Without loss of generality assume that $x_n \in G$ for every $n$.
	Choose $\alpha \in (0,1)$ and a  $K \in \clk$ such that $\clv_{\alpha, K} \supset \bar G$.
	Since $\bar G$ is compact, from Lemma \ref{lem:cty}, we have that
	$$\sup_{z \in \bar G, \|\beta\|\le 1 } L_{\alpha,K}(z,\beta) \doteq \kappa_0 <\infty,$$
	where $B_1(0)$ is the unit ball in $\R^d$. Let $\eps \in (0,\infty)$ be arbitrary. Take $x_1, x_2 \in G$ such that $x_1\neq x_2$ and $\|x_1-x_2\| \le \eps/(2\kappa_0)$. Also, fix $y \in \Delta^o$.
	From the definition of $V(x_2,y)$ we can find a sequence $T_k\to \infty$ and $\phi_k \in C([0,T_k]: \Delta^o)$ such that for all $k$, $\phi_k(0)=x_2$, $\phi_k(T_k)=y$ and
	$$S(x_2, T_k, \phi_k) \le V(x_2, y) + \eps/2.$$
	Let $\delta = \|x_1-x_2\|$, $\beta \doteq \frac{(x_2-x_1)}{\|x_2-x_1\|}$, $\tilde T_k \doteq T_k+\delta$ and define for $t \le \tilde T_k$
	\[\tilde \phi_k(t) = \begin{cases}
	x_1 +\beta t & t \le \delta\\
	\phi_k(t-\delta)& t \ge \delta
	\end{cases}.
	\]
	Then
	\begin{align*}
		S(x_1, \tilde T_k, \tilde \phi_k) &= 
		\int_0^{\tilde{T}_k} L(\tilde \phi_k(t), \dot{\tilde{\phi}}_k(t)) dt \\
		&= \int_0^{\delta} L(\tilde \phi_k(t), \dot{\tilde{\phi}}_k(t)) dt + S(x_2, T_k, \phi_k)\\
		&= \int_0^{\delta} L_{\alpha,K}(x_1 +\beta t, \beta) dt + S(x_2, T_k, \phi_k)\\
		&\le \kappa_0 \frac{\eps}{2\kappa_0} + V(x_2,y)+\frac{\eps}{2} = V(x_2,y) + \eps.
	\end{align*}
	Thus $V(x_1, y) \le V(x_2, y) + \eps$ which proves the convergence $V(x_n, y)\to V(x,y)$. The proof of $V(y, x_n) \to V(y, x)$ is similar and is omitted.
\end{proof}

The following result is a consequence of compactness of level sets property in Theorem \ref{thm:fwuldp}
and the uniqueness of the path where the rate function vanishes.
\begin{lemma}\label{lem:4.25}
Fix $T \in (0,\infty)$ and a  $K \in \clk$. For each $\delta > 0$, there is some $\eps \doteq \eps(K,T,\delta) > 0$ such that for any $\phi \in C([0,T]:\clv_{\alpha,K})$ and
$x \in K$, if $S_{\alpha,K}(x,T,\phi) \leq \eps$, then $\sup\limits_{0\leq t \leq T}\|\phi(t) - \varphi_t(x)\| < \delta$.
\end{lemma}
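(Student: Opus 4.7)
The proof will proceed by contradiction, essentially reproducing the template already established in the argument for Lemma \ref{lem:boundedflow}. Suppose the statement fails: then there exist $\delta>0$, sequences $\eps_n\downarrow 0$, $x_n\in K$ and $\phi_n\in C([0,T]:\bar{\clv}_{\alpha,K})$ with $\phi_n(0)=x_n$, satisfying
$S_{\alpha,K}(x_n,T,\phi_n)\le \eps_n$ but $\|\phi_n-\varphi_\cdot(x_n)\|_{*,T}\ge \delta$ for every $n$.

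The first task is to extract a limit. Since $K$ is compact, we may assume $x_n\to x_*\in K$ along a subsequence. Because eventually $S_{\alpha,K}(x_n,T,\phi_n)\le 1$, each $\phi_n$ lies in $\bigcup_{x\in K}\Phi_{x,\alpha,K,T}(1)$, which is compact in $C([0,T]:\R^d)$ by Theorem \ref{thm:fwuldp}(a). Passing to a further subsequence gives $\phi_n\to \phi_*$ uniformly on $[0,T]$, and $\phi_*(0)=\lim_n\phi_n(0)=\lim_n x_n=x_*$. Since $\bar{\clv}_{\alpha,K}$ is closed, $\phi_*\in C([0,T]:\bar{\clv}_{\alpha,K})$.

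Next I would identify $\phi_*$ as the ODE solution. Lower semicontinuity of the rate function (a consequence of the compact level sets in Theorem \ref{thm:fwuldp}(a)) gives
\[
S_{\alpha,K}(x_*,T,\phi_*)\le \liminf_{n\to\infty} S_{\alpha,K}(x_n,T,\phi_n)=0.
\]
Hence $L_{\alpha,K}(\phi_*(t),\dot\phi_*(t))=0$ for a.e.\ $t$. The point $\phi_*(t)$ lies in $\bar{\clv}_{\alpha,K}$, so $\pi_{\alpha,K}(\phi_*(t))=\phi_*(t)$ and $L_{\alpha,K}$ at $\phi_*(t)$ is the Legendre transform of the log moment generating function of $\theta(\cdot|\phi_*(t))$. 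This vanishes precisely at the mean $\nabla_\zeta H(\phi_*(t),0)$, which must coincide with $G(\phi_*(t))$: indeed, this identification is forced by compatibility of the LDP with the law of large numbers (Assumption \ref{assu:LLN} combined with the convergence of $H^N$ to $H$ in Assumption \ref{assu:mgf}(b)(iii)), and is precisely the point used in Lemma \ref{lem:boundedflow} to conclude that the limit solves \eqref{eq:dynsys}. Thus $\dot\phi_*(t)=G(\phi_*(t))$ for a.e.\ $t$ with $\phi_*(0)=x_*$, and Lipschitz continuity of $G$ (Assumption \ref{assu:LLN}) gives $\phi_*(t)=\varphi_t(x_*)$ for all $t\in[0,T]$.

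Finally, continuous dependence of $\varphi_t$ on initial conditions (from Gr\"onwall applied to Lipschitz $G$) yields $\|\varphi_\cdot(x_n)-\varphi_\cdot(x_*)\|_{*,T}\to 0$, so passing to the limit in the contradictory inequality produces $\|\phi_*-\varphi_\cdot(x_*)\|_{*,T}\ge \delta>0$. This contradicts $\phi_*(t)=\varphi_t(x_*)$ and completes the proof. The only nontrivial step is the identification of the zero set of $L_{\alpha,K}$ with the vector field $G$; this is essentially a bookkeeping check already made in Lemma \ref{lem:boundedflow}, but it must be pinned down to ensure the argument goes through.
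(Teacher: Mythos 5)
Your proposal is correct and takes essentially the same approach as the paper's proof: argue by contradiction, use compactness of level sets from Theorem \ref{thm:fwuldp}(a) plus lower semicontinuity of $S_{\alpha,K}$ to extract a limiting path with zero cost, and then invoke the fact that zero cost forces $\phi=\varphi_\cdot(x)$. The paper packages the same content more tersely by defining $c\doteq\inf\{S_{\alpha,K}(x,T,\phi):x\in K,\ \|\phi-\varphi_\cdot(x)\|_{*,T}\ge\delta\}$, observing $c>0$ via the same compactness/uniqueness facts, and then deriving the contradiction $c=0$, whereas you make the subsequence extraction explicit and also unpack the identification of the zero set of $L_{\alpha,K}$ with the vector field $G$ (a fact the paper simply ``recalls'' and has already used implicitly in Lemma \ref{lem:boundedflow}).
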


\begin{proof}
Arguing via contradiction, suppose that there is some $\delta > 0$ such that for all $\eps > 0$, there is some $x \in K$ and $\phi_{\eps} \in C([0,T]:\clv_{\alpha,K})$ such that $S_{\alpha,K}(x,T,\phi_{\eps})< \eps$ but $\|\phi_{\eps}(t)- \varphi_x(t)\| \ge \delta$ for some $t \in [0,T]$. Using the compactness of level sets property in part (a) of Theorem \ref{thm:fwuldp}
and recalling that $S_{\alpha,K}(x,T,\phi)=0$ if and only if $\phi(t)=\varphi_t(x)$ for $t \in [0,T]$, we see that
$$
c \doteq \inf\{ S_{\alpha,K}(x,T,\phi) : x \in K, \sup_{t\in [0,T]}\|\phi(t) - \varphi_t(x)\| \ge \delta\} > 0.
$$
Thus $c \leq S_{\alpha,K}(x,T,\phi_{\eps}) < \eps$ for all $\eps > 0$. Letting $\eps \downarrow 0$, we obtain $c = 0$, which is a contradiction.
\end{proof}

As an intermediate step we now prove a somewhat weaker statement than that in Theorem \ref{thm:basicclasses}.
\begin{lemma}\label{lem:prop4.29}
Suppose that $x \in \mathcal{R}_{V}$. Then $x \in \mathcal{R}_{\ap}^*$ and $[x]_{V} \subset [x]_{\ap}$.
\end{lemma}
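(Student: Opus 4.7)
The plan is to show that trajectories with small action, whose existence is guaranteed by $V(x,x)=0$, can be discretized into $(\delta,T)$ \ap--pseudo-orbits using the small-action-implies-ODE-closeness principle from Lemma \ref{lem:4.25}. The same discretization applied to near-optimizers of $V(x,y)=V(y,x)=0$ will then yield $[x]_V\subset[x]_{\ap}$.

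Fix $\delta,T>0$. By Corollary \ref{corr:phixt} applied with $K=\{x\}$ and $T_0=T$, there are $\gamma>0$ and $A_1\in\clk$ such that any trajectory from $x$ with action at most $\gamma$ stays in $A_1$. By Lemma \ref{lem:bdryrepel}(d) there is $A_2\in\clk$ containing the full forward orbit $\gamma^+(z)$ for every $z\in A_1$. Pick $\alpha>0$ and $K\in\clk$ with $A_2\subset\clv_{\alpha,K}$, and let $\eps_0=\eps(K,2T,\delta)>0$ be the constant furnished by Lemma \ref{lem:4.25}. Using $V(x,x)=0$, choose $T'>T$ and $\phi\in\clc(x,x,T')$ with $S(x,T',\phi)\le\min(\gamma,\eps_0)$, so $\phi$ is $A_1$-valued. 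Partition $[0,T']$ by $s_0=0$, $s_i=iT$ for $1\le i\le k-1$, and $s_k=T'$ with $k=\lfloor T'/T\rfloor$, so each $T_i\doteq s_i-s_{i-1}$ lies in $[T,2T)$. Set $\xi_0=x$ and $\xi_i=\phi(s_{i-1})$ for $1\le i\le k+1$; in particular $\xi_1=\xi_{k+1}=x$. To verify $\|\xi_{i+1}-\varphi_{T_i}(\xi_i)\|<\delta$ for $i\ge 1$, I would extend $\phi|_{[s_{i-1},s_i]}$ to a trajectory on $[0,2T]$ by appending $\varphi_{\cdot-T_i}(\phi(s_i))$; this extension remains in $\clv_{\alpha,K}$ since $\gamma^+(\phi(s_i))\subset A_2$, and has the same action as the original piece because $L(z,G(z))\equiv 0$ (which follows from $H(z,0)=0$ and $\nabla_\zeta H(z,0)=G(z)$ via Legendre duality). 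Lemma \ref{lem:4.25} applied to the extension then yields $\|\phi(s_i)-\varphi_{T_i}(\phi(s_{i-1}))\|<\delta$. The first condition $\|\xi_0-\xi_1\|=0<\delta$ is immediate, the time gaps satisfy $T_i\ge T$, and the boundary constraint is vacuous since every $\xi_i$ lies in $A_1\subset\Delta^o$. Hence $(\xi_0,\dots,\xi_{k+1})$ is a $(\delta,T)$ \ap--pseudo-orbit from $x$ to $x$, which proves $x\in\mathcal{R}_{\ap}^*$.

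For $y\in[x]_V$, the very same discretization applied to near-optimizers of $V(x,y)=0$ and $V(y,x)=0$ produces, for arbitrary $\delta,T>0$, $(\delta,T)$ \ap--pseudo-orbits from $x$ to $y$ and from $y$ to $x$, giving $x<_{\ap}y$ and $y<_{\ap}x$. Concatenating near-optimizers of $V(y,x)$ and $V(x,y)$ shows $V(y,y)\le V(y,x)+V(x,y)=0$, so $y\in\mathcal{R}_V$, and applying the first claim to $y$ gives $y\in\mathcal{R}_{\ap}^*$. Therefore $x\sim_{\ap}y$, i.e., $y\in[x]_{\ap}$. The main technical point I expect to be careful about is the uniform handling of the variable piece lengths $T_i\in[T,2T)$; the ODE-extension trick above is what reduces everything to a single application of Lemma \ref{lem:4.25} with horizon $2T$, and it works precisely because Lemma \ref{lem:bdryrepel}(d) keeps every extension trapped in a fixed compact subset of $\Delta^o$.
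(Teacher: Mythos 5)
Your proof is correct and follows essentially the same strategy as the paper's: take a near-optimizer of $V$, use the a priori compactness estimates to keep it inside a fixed $\clv_{\alpha,K}$, and then use Lemma \ref{lem:4.25} to discretize it into a $(\delta,T)$ \ap--pseudo-orbit. The differences are organizational rather than substantive. The paper invokes Lemma \ref{lem:boundedflow} directly (rather than Corollary \ref{corr:phixt} plus Lemma \ref{lem:bdryrepel}(d)) to trap the near-optimizers in a compact $\clv_{\alpha_1,K_1}$ that is already forward-invariant under the flow, so the ``extension trick'' for the final piece of length in $[T,2T)$ is handled automatically; you spell out that step explicitly, which is fine. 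The paper also works with $y\in[x]_V$ throughout, discretizing the trajectory from $x$ to $y$ directly, whereas you first establish $x\in\clr^*_{\ap}$ from $V(x,x)=0$ and then handle $y$ separately; both routes go through. One tiny imprecision: you apply Corollary \ref{corr:phixt} with $K=\{x\}$, but $\clk$ consists of convex compacta with nonempty interior, so $\{x\}\notin\clk$; replacing $\{x\}$ by any small closed ball around $x$ contained in $\Delta^o$ (or, in the second paragraph, a $K\in\clk$ containing both $x$ and $y$) fixes this trivially.
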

\begin{proof}
Let $y \in [x]_{V}$. Then there exist time instants $T_n \uparrow \infty$ and $\phi_n \in C([0,T_n] : \Delta^o)$
such that for all $n\ge 1$, $\phi_n(0) = x, \phi_n(T_n) = y$, and $S(x,T_n,\phi_n) < \frac{1}{n}$. 
From Lemma \ref{lem:boundedflow} there exists a $k \in \N$, $\alpha_1>0$ and a  $K_1 \in \clk$ such that, for all
$n\ge k$, $\phi_n \in C([0,T_n] : \clv_{\alpha_1, K_1})$.

Now fix $T,\delta>0$. From Lemma \ref{lem:4.25} there is a $\eps>0$ such that, with $T^*= T$ and $T^*= 2T$, 
\begin{equation}\label{eq:cptcty}
\begin{aligned}
\mbox{ whenever for some }	&\phi \in C([0,T^*], \clv_{\alpha_1, K_1}) \mbox{ and } z \in \clv_{\alpha_1, K_1},
S_{\alpha_1, K_1}(z, T^*, \phi) \le \eps,\\
& \mbox{ then } \|\phi - \varphi_{\cdot}(z)\|_{*,T^*} <\delta.
\end{aligned}
\end{equation}
Choose $n_0$ such that $1/n_0 \le \eps$ and $T_{n_0}\ge T$. Write $T_{n_0} = mT +t_0$ where $m\in \N$ and $t_0 \in [0,T)$.
Then,  from \eqref{eq:cptcty}, with $\phi=\phi_{n_0}$
$$\|\phi(jT)- \varphi_T(\phi((j-1)T))\| <\delta \mbox{ for } j=1, \ldots, m-1, \mbox{ and } \|\phi(mT+t_0)- \varphi_{T+t_0}(\phi((m-1)T))\| <\delta.$$
Thus
with $\xi_0=\xi_1= x$, $\xi_2=\phi(T), \ldots , \xi_m= \phi((m-1)T), \xi_{m+1}= \phi(T_{n_0})$, the sequence
$\xi = (\xi_0, \ldots, \xi_{m+1})$ along with time instants $(T, T, \ldots, T+t_0)$ defines a $(\delta,T)$ \ap--pseudo-orbit from $x$ to $y$.
Since $\delta, T>0$ are arbitrary
$x <_{\ap} y$. Similarly, $y <_{\ap} x$, showing that  $x \in \mathcal{R}_{\ap}$ and $y \in [x]_{\ap}$. This shows $[x]_V \subset [x]_{\ap}$
and completes the proof.
\end{proof}

From Lemma \ref{lem:prop4.29} and Assumption \ref{assu:ap-classesfinite} (see also Lemma \ref{lem:apbasicbounded})
 the closure of $\mathcal{R}_V $ is a compact set in $\Delta^o$. 

We  now complete the proof of  Theorem \ref{thm:basicclasses} by establishing the reverse inclusion from the one
established in Lemma \ref{lem:prop4.29}.

{\em Proof of Theorem \ref{thm:basicclasses}.}
From Lemma \ref{lem:prop4.29}
if $x \in \mathcal{R}_{V}$, then $x \in \mathcal{R}_{\ap}^*$ and $[x]_{V} \subset [x]_{\ap}$.
Now suppose that $x \in \clr_{\ap}^*$. From Assumption \ref{assu:ap-classesfinite} there is a $x^* \in [x]_{\ap}$
such that $\{\varphi_t(x^*); t \ge T\}$ is dense in $[x]_{\ap}$ for every $T>0$.
Fix $y \in [x]$. Let for $n\in \NN$, $t_n, \tilde t_n \in (0,\infty)$ be such that $t_n \uparrow \infty$,
$\tilde t_n \uparrow \infty$ as $n\to \infty$, and for every $n$
$$\|\varphi_{t_n}(x^*) - x\| \le 1/n, \;\; \|\varphi_{t_n+\tilde t_n}(x^*) - y\| \le 1/n.$$
Using Lemma \ref{lem:ctyofQP} it follows that $x<_V y$. This shows that $x\in \clr_V$ and that
$[x]_{\ap}\subset [x]_V$. We thus have that $\clr^*_{\ap}= \clr_V$ and for all $x \in \clr_V=\clr_{\ap}$,
$[x]_{\ap}=[x]_V$. Similar arguments show that $[x]$ is a $V$-quasiattractor if and only if it is an \ap-quasiattractor.
The result follows.

\hfill \qed

In view of Theorem \ref{thm:basicclasses}, henceforth we will use the qualifier `$V$' or `\ap' interchangeably when referring to recurrence classes and quasiattractors in $\Delta^o$.

\section{Proof of Theorem \ref{thm:main}}\label{sec:pfofmain}
In this section we assume that Assumptions \ref{assu:LLN}, \ref{assu:ap-classesfinite}, \ref{assu:mgf} and \ref{assu:irrbdr} are satisfied.
The following lemma shows that there are low cost trajectories that take any given point in a recurrence class to any other point in the same class.
\begin{lemma}\label{lem:discgrid}
	For any $\gamma>0$ and $K\in \clr_V$, there is a $T\in (1,\infty)$ such that for all $x,y\in K$, there exist $T_{x,y}\in (1, T)$
	and $\phi_{x,y} \in C([0, T_{x,y}]: \Delta^o)$ with
	$$
	S(x, T_{x,y}, \phi_{x,y}) \le \gamma, \; \phi_{x,y}(0)=x, \; \phi_{x,y}(T_{x,y})=y.$$
\end{lemma}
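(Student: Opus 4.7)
The plan is to exploit Theorem \ref{thm:basicclasses}, which identifies the $V$-basic classes with the \ap-basic classes and hence tells us that $V(x,y) = 0$ for all $x,y$ in the same recurrence class $K$. For each fixed pair $(x,y)$ the definition \eqref{eq:vxy} gives trajectories of arbitrarily small cost between them, but the time may be enormous; the real task is to upgrade this per-pair existence statement into a uniform time bound over $K \times K$. I expect this to be achieved by a compactness argument, combined with the observation that short straight-line segments can be used to connect nearby endpoints at a cost controlled by the local size of $L$.

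First I would set up the ambient geometry. Since $K$ is a compact subset of $\Delta^o$ (by Theorem \ref{thm:basicclasses} and Lemma \ref{lem:apbasicbounded}), I can choose $\alpha \in (0,1)$ and $K_1 \in \clk$ so that $K$ lies in the interior of $\clv_{\alpha,K_1}$, and so that every segment joining two points of $K$ that are within some fixed positive radius $r_0$ of each other remains in $\clv_{\alpha,K_1}$. By Lemma \ref{lem:cty} the quantity
\[
\kappa_0 \doteq \sup_{z \in \clv_{\alpha,K_1},\, \|\beta\| \le 1} L_{\alpha,K_1}(z,\beta)
\]
is finite. Fix any pair $(x,y) \in K \times K$. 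Since $V(x,y)=0$, the definition of $V$ as a liminf over $T \to \infty$ produces a time $T^*_{x,y} > 1$ and a path $\phi^*_{x,y} \in \clc(x,y,T^*_{x,y})$ with $S(x,T^*_{x,y},\phi^*_{x,y}) < \gamma/3$.

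Next I would extend this to a small neighborhood of $(x,y)$ in $K \times K$. For any $(x',y')$ with $\|x-x'\|+\|y-y'\| < r_{x,y}$, where $r_{x,y} \le r_0$ is chosen small enough that $\kappa_0 r_{x,y} < \gamma/3$, I would define a modified path by concatenating three pieces: the straight segment from $x'$ to $x$ traversed at unit speed (time $\|x-x'\|$), then $\phi^*_{x,y}$, then the straight segment from $y$ to $y'$ at unit speed. This path lies in $\Delta^o$ by the choice of $r_0$, has total time $T^*_{x,y}+\|x-x'\|+\|y-y'\| < T^*_{x,y}+r_{x,y}$, and total cost at most $\gamma/3 + \kappa_0(\|x-x'\|+\|y-y'\|) < \gamma$. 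Since $T^*_{x,y} > 1$, the time is automatically in $(1,T^*_{x,y}+r_{x,y})$.

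Finally, I would use compactness of $K \times K$ to extract a finite subcover $\{B_{r_i/2}(x_i,y_i)\}_{i=1}^{n}$ from this family of neighborhoods, and set
\[
T \doteq 1 + \max_{1 \le i \le n}\bigl(T^*_{x_i,y_i} + r_i\bigr).
\]
Any $(x,y) \in K \times K$ lies in some $B_{r_i/2}(x_i,y_i)$, and the construction above produces the required $T_{x,y} \in (1,T)$ and $\phi_{x,y}$. The main point requiring care is the first step: keeping the straight-line correctors in $\Delta^o$ and controlling their cost uniformly. Both are handled by working inside a fixed enclosing set $K_1 \in \clk$ and using the continuity of $L$ from Lemma \ref{lem:cty}, so no genuine obstacle arises beyond this bookkeeping.
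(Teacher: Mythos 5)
Your proposal is correct and follows essentially the same route as the paper's proof: both rely on $V(x,y)=0$ for points in the same recurrence class (via Theorem \ref{thm:basicclasses}), use linear correction segments with cost controlled by $\kappa_0 = \sup L$ over a compact set (via Lemma \ref{lem:cty}), and invoke compactness to extract a finite family of anchor paths giving a uniform time bound. The only cosmetic difference is that you cover $K \times K$ directly by finitely many balls, whereas the paper takes a finite $\gamma/(4\kappa_0)$-net $\{v_1,\dots,v_k\}$ of $K$ and considers the $k^2$ pairs of net points—mathematically the same idea.
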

\begin{proof}
	Fix $\gamma \in (0,1)$ and $K\in \clr_V$. Let $\gamma_0 \doteq \sup_{z\in K,\|\beta\|\le 1} L(z,\beta)$. 
	Let $k \in \NN$ and $v_1, \ldots, v_k \in K$ be such that for any $x\in K$, there exists $1\le i \le k$ with $\|x-v_i\| \le \gamma/(4\kappa_0)$.
	For $i,j \in \{1, \ldots, k\}$, let $\tilde T_{i,j} \in (1, \infty)$ and $\psi_{i,j} \in C([0,\tilde T_{i,j}]: \Delta^o)$ be such that
	$\psi_{i,j}(0)= v_i$, $\psi_{i,j}(\tilde T_{i,j})= v_j$ and $S(v_i, \tilde T_{i,j}, \psi_{i,j}) \le \gamma/2$.
	Let $x,y\in K$ be arbitrary and select $i,j \in \{1, \ldots, k\}$ such that $\|x-v_i\|\le \gamma/(4\kappa_0)$ and $\|y-v_j\|\le \gamma/(4\kappa_0)$.
	Consider the continuous trajectory $\phi_{x,y}$ in $\Delta^o$ defined over the time interval of length $T_{x,y} = \|x-v_i\| + \tilde T_{i,j}+ \|y-v_j\|$
	as follows.
	 \begin{equation}\label{eq:eq257ne}
	 x \mathrel{\substack{\mbox{\tiny{lin}}\\\longrightarrow\\\|v_i-x\|}} v_i
	 \mathrel{\substack{\psi_{i,j}\\\longrightarrow\\\tilde T_{i,j}}} v_j
	  \mathrel{\substack{\mbox{\tiny{lin}}\\\longrightarrow\\\|v_j-y\|}} y
	 \end{equation}
	In the above display for a term of the form $a \mathrel{\substack{c\\\longrightarrow\\d}} b$, the trajectory connects the points $a$ and $b$
	in time length $d$ in a manner described by $c$.  When $c=\mbox{{lin}}$, the trajectory is just a linear path connecting $a$ and $b$ and 
	 when $c= \psi_{i,j}$, the trajectory is defined by $\psi_{i,j}$ introduced above.
	 Clearly $S(x, T_{x,y}, \phi_{x,y}) \le \gamma$, $\phi_{x,y}(0)=x$ and $\phi_{x,y}(T_{x,y})=y$.
	 Also, $T_{x,y}\le \max_{1\le i,j\le k} \tilde T_{i,j} + 2 \doteq T$.
	 The result follows.
	
\end{proof}

Recall that for  a set $B \subset \Delta$,  $\tau_B^N \doteq \inf\{ t \ge 0: \hat X^N(t) \not \in B\}$.
The following lemma gives an upper bound on the probabilities of long residence times of the Markov chain near non-quasiattractors.

\begin{lemma}\label{lem:timeaway}
Suppose that $K_j \in \mathcal{R}_{V}$ is not a quasiattractor. Then we can find some $\lambda > 0$ such that for all $\gamma  > 0$, there is some $N_0 \doteq N_0(\gamma)$ and $\zeta \doteq \zeta_{\gamma} : \mathbb{N} \rightarrow \mathbb{R}$ satisfying $\lim\limits_{n\rightarrow\infty}\zeta_{\gamma}(n) = 0$ such that
$$
\sup\limits_{x\in N^{\lambda}(K_j)} \PP_x\left( \tau_{N^{\lambda}(K_j)}^N > \exp( N \gamma)\right) \leq \zeta_{\gamma}(N)
$$
for all $N \geq N_0$.
\end{lemma}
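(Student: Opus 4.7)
The plan is to construct, for each $\gamma > 0$, a bounded-time trajectory of cost at most $\gamma/4$ from any point of $N^\lambda(K_j)$ to a point well outside $\overline{N^\lambda(K_j)}$, apply the uniform large deviation lower bound of Theorem \ref{thm:fwuldp}(b) to get an exit probability of at least $\exp(-N\gamma/2)$ in each window of length $T_\gamma$, and iterate via the Markov property. Because $K_j$ is not a $V$-quasiattractor, Theorem \ref{thm:basicclasses} supplies $x_0 \in K_j$ and $y_* \in \clr_V \setminus K_j$ with $V(x_0, y_*) = 0$. I fix $\lambda > 0$ small enough that $\overline{N^{2\lambda}(K_j)} \subset \Delta^o$, $\overline{N^{2\lambda}(K_j)} \cap K_i = \emptyset$ for $i \neq j$, and $y_* \notin \overline{N^{2\lambda}(K_j)}$; such a $\lambda$ exists by Assumption \ref{assu:ap-classesfinite}(a) and Lemma \ref{lem:apbasicbounded}. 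With this $\lambda$, whenever a forward orbit $\{\varphi_t(x)\}_{t \ge 0}$ stays in $\overline{N^{1.5\lambda}(K_j)}$, the set $\omega(x)$ is a connected subset of $\overline{N^{1.5\lambda}(K_j)} \cap \clr_\ap$ (Lemma \ref{lem:prop4.12}) and is therefore contained in $K_j$. An upper-semicontinuity argument modeled on the one in the proof of Lemma \ref{lem:intersectrap}, applied to the hitting time of the open set $\{\dist(\cdot, K_j) < \eta/2\} \cup \{\dist(\cdot, K_j) > 1.5\lambda\}$ on the compact $\overline{N^{1.5\lambda}(K_j)}$, then yields, for each $\eta \in (0, \lambda)$, a finite $T(\eta)$ such that for every $x \in N^\lambda(K_j)$ either $\varphi_t(x) \notin \overline{N^{1.5\lambda}(K_j)}$ for some $t \le T(\eta)$, or $\varphi_t(x) \in N^{\eta/2}(K_j)$ for some $t \le T(\eta)$.

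Fix $\gamma > 0$. By Lemma \ref{lem:cty}, $C_0 \doteq \sup\{L(z,\beta) : z \in \overline{N^{2\lambda}(K_j)},\ \|\beta\| \le 1\} < \infty$, so a linear segment of length at most $\eta$ traversed in time $\sqrt{\eta} \le 1$ costs at most $C_0 \sqrt{\eta}$. Pick $\eta = \eta(\gamma) \in (0, \lambda \wedge 1)$ with $C_0 \sqrt{\eta} \le \gamma/16$. Applying Lemma \ref{lem:discgrid} with tolerance $\gamma/16$ yields $T_1 < \infty$ such that every pair of points in $K_j$ is joined by a trajectory of cost $\le \gamma/16$ and duration $\le T_1$; and from $V(x_0, y_*) = 0$ extract a trajectory $\chi \in C([0, T_2] : \Delta^o)$ from $x_0$ to $y_*$ of cost $\le \gamma/16$. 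For $x \in N^\lambda(K_j)$, define $\phi_x$ as follows: if the forward flow exits $\overline{N^{1.5\lambda}(K_j)}$ within time $T(\eta)$, let $\phi_x$ be the flow trajectory continued up to the first time the distance to $K_j$ exceeds $1.5\lambda$; otherwise the flow enters $N^{\eta/2}(K_j)$ by time $T(\eta)$ at some $x_1$, and I concatenate the flow segment, the linear segment of duration $\sqrt{\eta}$ from $x_1$ to its nearest $\tilde x_1 \in K_j$, the Lemma \ref{lem:discgrid} trajectory from $\tilde x_1$ to $x_0$, and finally $\chi$. In either case $\phi_x$ has duration at most $T_\gamma \doteq T(\eta) + \sqrt{\eta} + T_1 + T_2$, cost $S(x, T_\gamma, \phi_x) \le 3\gamma/16 < \gamma/4$, and endpoint at distance at least $\lambda/2$ from $\overline{N^\lambda(K_j)}$. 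By Corollary \ref{corr:phixt} the pieces of $\phi_x$ lie in a compact subset of $\Delta^o$ independent of $x$; I then pick $K' \in \clk$ and $\alpha > 0$ such that every $\phi_x$ lies in $C([0, T_\gamma] : \overline{\clv_{\alpha, K'}})$, so $S_{\alpha, K'}(x, T_\gamma, \phi_x) = S(x, T_\gamma, \phi_x)$.

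Applying Theorem \ref{thm:fwuldp}(b) with $T = T_\gamma$, $L = \gamma/4$, tolerance $\gamma/4$, and $\delta \le \lambda/4$ gives, for all $N$ large,
\[
\inf_{x \in \overline{N^\lambda(K_j)} \cap \Delta_N} \PP_x\bigl(\tau^N_{N^\lambda(K_j)} \le T_\gamma\bigr) \ge \inf_x \PP_x\bigl(\|\hat X^{N, \alpha, K'} - \phi_x\|_{*, T_\gamma} < \delta\bigr) \ge \exp(-N\gamma/2),
\]
where the first inequality uses that $\hat X^{N, \alpha, K'}$ coincides with $\hat X^N$ while both remain in $K'$, which holds on the event above. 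Iterating the strong Markov property at times $\lceil k N T_\gamma \rceil / N$ (the $O(1/N)$ grid rounding is absorbed in the tolerance for large $N$) yields
\[
\PP_x\bigl(\tau^N_{N^\lambda(K_j)} > k T_\gamma\bigr) \le \bigl(1 - e^{-N\gamma/2}\bigr)^k \le \exp\bigl(-k e^{-N\gamma/2}\bigr),
\]
and the choice $k = \lceil e^{N\gamma}/T_\gamma \rceil$ produces $\zeta_\gamma(N) \doteq \exp(-e^{N\gamma/2}/T_\gamma) \to 0$, as required. The main obstacle is the construction of $\phi_x$: because $\lambda$ is fixed once for all while $\gamma$ may be taken arbitrarily small, one cannot bridge directly from a typical $x \in N^\lambda(K_j)$ to $K_j$ with a linear segment of cost $O(\gamma)$; the resolution is the two-stage drift, where the cost-free ODE flow first drives $x$ into the $\eta(\gamma)$-neighborhood of $K_j$, after which the subsequent linear bridge has cost only $O(\sqrt{\eta(\gamma)})$, small enough to be absorbed.
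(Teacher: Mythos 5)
The proposal is correct and follows essentially the same approach as the paper's proof: both fix $\lambda$ once using the separation of basic classes, build for each $\gamma$ a bounded-time escape trajectory of cost $O(\gamma)$ out of $N^\lambda(K_j)$ by letting the cost-free flow first drive the state toward a small neighborhood of $K_j$ (or out of the ambient tube) before bridging to a fixed low-cost path to a $y_* \notin K_j$ given by the failure of $K_j$ to be a quasiattractor, and then apply the uniform LDP lower bound and iterate the Markov property. The only cosmetic differences are that you derive the bounded-time dichotomy via a localized upper-semicontinuity argument in place of Lemma \ref{lem:intersectrap}(b), that you traverse the linear bridge at speed $\sqrt{\eta}$ rather than unit speed, and that you omit the harmless step of extending $\phi_x$ to full duration $T_\gamma$ in the first case before applying Theorem \ref{thm:fwuldp}(b).
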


\begin{proof}
	Since $K_j$ is not a quasiattractor, there exists a $\la_0\in (0,1)$, $u_1 \in K_j$, $y_1 \in \Delta^o \cap [N^{2\la_0}(K_j)]^c$ such that $u_1 <_{V} y_1$. 
Choose $\la_1 \in (0, \la_0)$ 
	such that, for some  $A_0 \in \clk$, $y_1\in A_0$,
	$\cup_{k=1}^v \overline{ N^{\la_1}(K_k)} \subset A_0$, and for each $i,k \in \{1,\dots,v\}$ such that $i \neq k$,
	$\dist(N^{\la_1}(K_k), N^{\la_1}(K_i)) \ge \la_1$.
	From Lemma \ref{lem:bdryrepel} we can find a $A_1\in \clk$ such that the forward orbit $\gamma^+(x) \subset A_1$ for every $x \in A_0$.
	Let $\sup_{z \in  A_1, \|\beta\|\le 1 } L_{\alpha,K}(z,\beta) \doteq \kappa_0$.
	Let $\gamma>0$ be given and let $\gamma_0=\gamma/6$. Fix $\delta \in (0, \la_1\wedge \frac{\gamma_0}{\kappa_0})$. Then, denoting by 
	$\eta_{x, y}$ the linear trajectory from $x$ to $y$,
	$$\mbox{ for } x^*, y^* \in A_1 \mbox{ with } \|x^*-y^*\| \le \delta, S(x^*, \|y^*-x^*\| , \eta_{x^*, y^*})\le \gamma_0.$$
	With $\delta$ as above, choose, $T_{A_1}^*$ as in Lemma \ref{lem:intersectrap}(b) (with $A$ replaced with $A_1$).
	Then, in view of Theorem \ref{thm:basicclasses}, for every $x \in A_1$, there exists a $t_0 \in [0, T_{A_1}^*]$ such that $\varphi_{t_0}(x) \in N^{\delta}(\clr_V)$.
	
	Define for $x \in N^{\la_1}(K_j)$ the continuous trajectory $\phi^{\gamma}_x(\cdot)$ according to the following two cases: Case I: $\varphi_{t_0}(x) \in N^{\delta}(K_i)$ for some $i \neq j$, Case II: $\varphi_{t_0}(x) \in N^{\delta}(K_j)$.
	
	In Case I, we simply take $\phi^{\gamma}_x(t) = \varphi_t(x)$ for  $t \in [0, t_0]$. In particular, $T^{\gamma}_x \doteq  t_0$ is the length of the time interval over which the trajectory is defined.
	
	For Case II we proceed as follows.  Taking $K=A_0$ and $T_0=1$ in Corollary \ref{corr:phixt}, denote by $(\gamma^*, A^*)$ the $(\gamma, A_1)$ given by the corollary.
	Let $u_0 \in K_j$ be such that $\|u_0 - \varphi_{t_0}(x)\| \le \delta$.  Then $u_0 <_{V} u_1 <_{V} y_1$.
	Let $t_1(x) \in [1,\infty)$ and $\phi_1 \in C([0,t_1(x)]:\Delta^o)$ be such that $\phi_1(0)=u_0$, $\phi_1(t_1(x))= y_1$ and $S(u_0, t_1(x), \phi_1) \le \gamma^*\wedge \gamma/3$.
	Using Lemma \ref{lem:discgrid} we can assume without loss of generality that $\sup_{w \in N^{\lambda}(K_j)} t_1(w)\doteq \bar t_1 <\infty$.
	From Corollary \ref{corr:phixt}, $\phi_1(t)\in A^*$ for all $t \in [0, t_1(x)]$.
	Consider the  continuous trajectory $\phi^{\gamma}_x$ in $\Delta^o$ that connects $x$ and $y_1$ in the manner described by the display:
 $$
 x \mathrel{\substack{\mbox{\tiny{flow}}\\\longrightarrow\\t_0}} \varphi_{t_0}(x)
 \mathrel{\substack{\mbox{\tiny{lin}}\\\longrightarrow\\\|u_0-\varphi_{t_0}(x)\|}} u_0
 \mathrel{\substack{\phi_1\\\longrightarrow\\t_1(x)}} y_1
 $$
The above display is interpreted in a similar manner as \eqref{eq:eq257ne} with  a term of the form $a \mathrel{\substack{c\\\longrightarrow\\d}} b$, when
$c=\mbox{{flow}}$, representing the  segment of  $\varphi_t(a)$ until it reaches $b$. In this case let $T^{\gamma}_x = t_0+ \|u_0-\varphi_{t_0}(x)\| + t_1(x)$ denote the length of the time interval over which 	$\phi^{\gamma}_x$ is defined.

Note that in both cases, $T^{\gamma}\doteq \sup_{x \in N^{\la_1}(K_j)} T^{\gamma}_x \le t_0+ 1 + \bar t_1<\infty$.
Also, in both cases, $\phi_x^{\gamma}(t) \in A_1 \cup A^*\doteq A_2$ for all $t \in [0, T^{\gamma}_x]$.
Furthermore, in Case II,
$$S(x, \phi^{\gamma}_x, T^{\gamma}_x) \le 0 + \gamma_0+ \gamma/3 = \gamma/2$$
and in Case I the cost on the left side of the above display is $0$.

Let $\la \in (0,\la_1)$, $\alpha'>0$ be such that $K' \doteq \overline{N^{\la}(A_2)} \subset \Delta^o$ and $K' = \clv_{\alpha', K'}$.
Extend the trajectory $\phi^{\gamma}_x$ from $[0, T^{\gamma}_x]$ to $[0, T^{\gamma}]$ by defining 
$\phi^{\gamma}_x(t+T^{\gamma}_x) \doteq \varphi_t(\phi^{\gamma}_x(T^{\gamma}_x))$ for $t \in (T^{\gamma}_x, T^{\gamma}]$.
The   bound from Theorem \ref{thm:fwuldp}(b) ensures that for each $\tilde{\delta} \in (0,1)$ there is some $N_0(\tilde{\delta}) \in \N$ such that,
whenever $N \geq N_0(\tilde{\delta})$,
\begin{align*}
\PP_x( \|\phi^{\gamma}_x - \hat{X}^N\|_{*, T^{\gamma}_x} < \lambda) &= \PP_x(\|\phi_x^{\gamma}- \hat{X}^{N,\alpha,K'}\|_{*,T^{\gamma}_x} < \lambda)
\ge \PP_x(\|\phi_x^{\gamma}- \hat{X}^{N,\alpha,K'}\|_{*,T^{\gamma}} < \lambda)\\
&\geq \exp\left( -N(S(x,T^{\gamma},\phi_x^{\gamma}) + \tilde{\delta}/4)\right)
= \exp\left( -N(S(x,T^{\gamma}_x,\phi_x^{\gamma}) + \tilde{\delta}/4)\right)\\
&\geq \exp( -N(\gamma/2 +\tilde{ \delta}/4))
\end{align*}
for all $x \in N^{\lambda}(K_j)$. 

It follows that for each $x \in N^{\lambda}(K_j)$, if $N \geq N_0(\gamma)$, then 
\begin{equation*}
\begin{split}
\PP_x\left( \tau_{N^{\lambda}(K_j)}^N > T^{\gamma}\right) & \leq 1 - \PP_x( \|\phi^{\gamma}_x - \hat{X}^N\|_{*, T^{\gamma}_x} < \lambda)\\
&\leq 1 - \exp(-N(\gamma/2 + \gamma/4))\\
\end{split}
\end{equation*}
Using the Markov property we see that, if $N \geq N_0(\gamma)$ and $x \in N^{\lambda}(K_j)$, then
\begin{equation*}
\begin{split}
\PP_x\left( \tau^N_{N^{\lambda}(K_j)} > \exp(N \gamma)\right) &\leq \PP_x\left( \tau^N_{N^{\lambda}(K_j)} > \left\lfloor \frac{\exp(N \gamma)}{T^{\gamma}} \right\rfloor T^{\gamma}\right)\\
&\leq (1 - \exp(-3N\gamma/4))^{\left\lfloor \frac{\exp(N \gamma)}{T^{\gamma}} \right\rfloor}.\\
\end{split}
\end{equation*}
We can assume without loss of generality that $N(\gamma)$ is large enough so that $\left\lfloor \frac{\exp(N \gamma)}{T^{\gamma}} \right\rfloor > \frac{\exp(N \gamma)}{2T^{\gamma}}$.
Then for  all $N \geq  N_0(\gamma)$, 
\begin{equation*}
\begin{split}
\sup\limits_{x\in N^{\lambda}(K_j)}\PP_x\left( \tau^N_{N^{\lambda}(K_j)} > \exp(N \gamma)\right)  &\leq  (1 - \exp(-3N\gamma/4))^{ \frac{\exp(N \gamma)}{2T^{\gamma}} }\\
&= \exp\left( \log\left( 1 - \exp(-3N\gamma/4))\right)^{ \frac{\exp(N\gamma)}{2T^{\gamma}}}\right)\\
&\leq \exp\left( -\frac{\exp(N\gamma)}{2T^{\gamma}}\exp(-3N\gamma/4)\right)\\
&= \exp\left( - \frac{\exp(N\gamma/4)}{2T^{\gamma}}\right).
\end{split}
\end{equation*}
The result follows from taking $\zeta_{\gamma}(N) \doteq \exp\left( - \frac{\exp(N\gamma/4)}{2T^{\gamma}}\right)$.
\end{proof}

{\em Proof of Theorem \ref{thm:main}.}
Recall that we assume that Assumptions \ref{assu:LLN}, \ref{assu:ap-classesfinite}, \ref{assu:mgf} and \ref{assu:irrbdr} are satisfied.
Also, by assumption, for every $N\in \N$, there exists a quasi-stationary distribution $\mu_N$ for $\{X^N\}$ and that  the sequence $\{\mu_N\}$ is relatively compact. 
From Theorem \ref{thm:eigentheorem} there are $a_1, c_1 \in (0,\infty)$ such that
$$\la_N \ge 1 - a_1 e^{-c_1N}, \mbox{ for all } N \in \N.$$
Let $\mu$ be a limit point of $\mu_N$. From Theorem \ref{thm:invariance} $\mu$ is invariant under the flow $\{\varphi_t\}$.
From Corollary \ref{cor:limitsupportrap} $\operatorname{supp}(\mu) \subset \mathcal{R}_{\ap}^*$.  Thus to finish the proof, it suffices to
show that for every $j \in \{l+1, \ldots, v\}$, there is a neighborhood $V_j$ of $K_j$ such that $\mu(V_j)=0$.
Fix $\eps>0$ and  choose a  $F_0 \in \clk$ such that $\mu_N(F_0^c)< \eps$ for every $N\in \NN$. This can be done in view of Theorem \ref{thm:abs} and our assumption that the sequence $\{\mu_N\}$ is relatively compact.

Using Lemma \ref{lem:bdryrepel}(c) we can assume that $F_0$ is large enough so that for some $T_1 \in (0,\infty)$ and $\hat \delta >0$,
$\varphi_t(x) \in F_1$ for all $t \ge T$ and $x \in F_0$ where $F_1 \subset F_0$ is such that $\dist(F_1, \partial F_0)> \hat \delta$.


Let $\la$ be as in Lemma \ref{lem:timeaway}. Fix $\delta =\la\wedge \hat \delta$.
From Lemma \ref{lem:choosevi}, we can choose $\delta_0 \in (0, \delta)$, an integer $T_0>T_1$, and open sets $V_i$ with $\bar V_i \subset N^{\delta}(K_i)\cap \Delta^o$ such that (1)-(3)
of Lemma \ref{lem:choosevi} hold.

Consider
$$\beta_{\delta_0, T_0, F_0}^N \doteq \sup_{x \in \Delta_N\cap F_0} \PP_x[\| \hat X^N - \varphi_{\cdot}(x) \|_{*,T_0} \ge \delta_0].$$
Then, from Lemma \ref{lem:expineq} there exist $c_2>0$ and $a_2 \in (0,\infty)$ such that
$$\beta_{\delta_0, T_0, F_0}^N  \le a_2 e^{-N c_2}.$$
Define $c^* = \min\{1, c_1, c_2\}$.
Let, with $\gamma = c^*/8$,  $\zeta(N, \gamma)\doteq \zeta^*(N)$ be as in Lemma \ref{lem:timeaway}. Then, for some $a_3 \in (0,\infty)$,
$$
\sup\limits_{x\in N^{\la}(K_j)} \PP_x\left( \tau_{N^{\la}(K_j)}^N > \exp( N c^*/8)\right) \leq a_3\zeta^*(N), \mbox{ for all } N \in \N.
$$
Define $m_N = \exp(Nc^*/2)$ and $m'_N = \exp(Nc^*/4)$.

Define the events
\begin{align*}
	\cle_N &= \{(\hat X^N(0), \hat X^N(T_0), \hat X^N(2T_0), \ldots, \hat X^N(m_NT_0)), (T_0, T_0, \ldots, T_0),\\
	&\quad \quad \mbox{ defines a } (\delta_0, T_0) \mbox{ \ap--pseudo-orbit.}\}
	\end{align*}
and
\begin{align*}\cle'_N &= \{\mbox{for any } i \in \{l+1, \ldots , v\} \mbox{ and any } q \ge m'_N, \mbox{ and } p \ge 0,\\
&\quad \quad \mbox{ if } \hat X^N(pT_0) \in N^{\delta_0}(K_i), \mbox{ then }
\hat X^N((p+q)T_0) \not \in N^{\delta_0}(K_i)\}.\end{align*}
Without loss of generality we can assume that $m_N> (b+2)(m'_N+1)$.
Then, for $x \in \Delta^o$
$$\PP_x(\hat X^N(m_NT_0) \in V_i) \le  \PP_x(\hat X^N(m_NT_0) \in V_i, \cle_N, \cle'_N) + \PP_x(\cle_N, (\cle'_N)^c ) + \PP_x((\cle_N)^c).$$
Define for $\alpha = 1, \ldots, b+1$, $t^N_{\alpha} = \lfloor\alpha m_N/(b+2)\rfloor$.
Then, from Corollary \ref{lem:choosevi} (2) and definition of $\cle'_N$,
with $K = \cup_{j=1}^v K_j$,
$$\PP_x(\hat X^N(m_NT_0) \in V_i, \cle_N, \cle'_N) \le \sum_{\alpha=1}^{b+1} \PP_x(\hat X^N( t^N_{\alpha}T_0)  \in [N^{\delta_0}(K))]^c \cap \Delta^o).$$
Using Lemma \ref{lem:choosevi} (3), for every $x \in \Delta^o$
\begin{align*}
	\PP_x(\cle_N, (\cle'_N)^c ) &\le \sum_{i=l+1}^{v}  \sup_{x \in N^{\delta_0}(K_i)} \PP_x(\tau^N_{V_i} > T_0m'_N)\\
&\le \sum_{i=l+1}^{v}  \sup_{x \in N^{\delta}(K_i)} \PP_x(\tau^N_{N^{\delta}(K_i)} > \exp(Nc^*/4)) \le 
b \zeta^*(N).
\end{align*}
From our choice of $\delta_0, T_0$ we see that if for some $k$,
$\hat X^N((k-1)T_0) \in F_0$, then
$\varphi_{T_0}(\hat X^N((k-1)T_0)) \in F_1$, and if in addition,
$\| \hat X^N(kT_0) - \varphi_{T_0}(\hat X^N((k-1)T_0))\| \le \delta_0$, then
$\hat X^N(kT_0) \in F_0$. Using this observation, we see that, with
$$k^* \doteq \min\{1\le k \le m_N: \|\hat X^N(kT_0) - \varphi_{T_0}(\hat X^N((k-1)T_0))\| > \delta_0\},$$
for every $x \in F_0$,
\begin{align*}
	\PP_x((\cle_N)^c) &= \PP_x(\| \hat X^N(kT_0) - \varphi_{T_0}(\hat X^N((k-1)T_0))\| \ge \delta_0 \mbox{ for some } k=1, \ldots, m_N)\\
	&= \PP_x(k^*\le m_N)\\
	&\le \sum_{k=1}^{m_N} \PP_x(\| \hat X^N(kT_0) - \varphi_{T_0}(\hat X^N((k-1)T_0))\| \ge \delta_0, \hat X^N((k-1)T_0 \in F_0)\\
	&\le m_N \sup_{x\in F_0} \PP_x(\| \hat X^N(T_0) - \varphi_{T_0}(\hat X^N(0))\| \ge \delta_0) \\
	&\le m_N\beta_{\delta_0, T_0, F_0}^N
	\le a_2\exp(Nc^*/2)\exp(-Nc^*)= a_2\exp(-Nc^*/2).
\end{align*}	

Thus, from our choice of $F_0$
\begin{align*}
\la_N^{m_NT_0}\mu_N(V_j) &=  \int \mu_N(dx) \PP_x(\hat X^N(m_NT_0) \in V_j)\\
&\le \int \mu_N(dx) \PP_x(\cle_N, (\cle'_N)^c ) + \sum_{\alpha=1}^{b+1} \int \mu_N(dx)  \PP_x(\hat X^N( t^N_{\alpha}T_0)  \in [N^{\delta_0}(K)]^c \cap \Delta^o)\\
&\quad + \int_{F_0} \mu_N(dx) \PP_x((\cle_N)^c) + \eps\\
&\le b \zeta^*(N) + (b+1)\mu_N([N^{\delta_0}(K)]^c) + a_2\exp(-Nc^*/2) + \eps
\end{align*}
Note that $\mu_N([N^{\delta_0}(K))]^c) \to 0$, in view of Theorem \ref{thm:abs} and Corollary \ref{cor:limitsupportrap}.
Since $\la_N \ge 1 - a_1 e^{-c_1N}$ and $m_Ne^{-c_1N} \le e^{-c_*N/2}\to 0$,
$\la_N^{m_NT_0} \to 1$. 
Thus sending $N\to \infty$ in the above display, we have
$\mu(V_j)\le \eps$. Since $\eps>0$ is arbitrary, the result follows.\hfill \qed

\section{Proof of Theorem \ref{thm:poisbin}}\label{sec:qsd}
In this section we prove Theorem \ref{thm:poisbin}. For this  we first show that when $\theta^N=\theta^{N,*}$, under the conditions of the theorem,  Assumptions \ref{assu:LLN}, \ref{assu:ap-classesfinite}, \ref{assu:mgf} and \ref{assu:irrbdr} are satisfied.
These assumptions are verified in Sections \ref{sec:verassu1}, \ref{sec:verassu2}, \ref{sec:verassu3}, \ref{sec:verassu4}, respectively.
We then argue in Section \ref{sec:verassu5} that, for every $N$, $X^N$ has a QSD $\mu_N$ of the form in the statement of Theorem \ref{thm:poisbin}. In Section \ref{sec:verassu6} we show that the sequence $\{\mu_N\}$ is tight.  Finally, in Section \ref{sec:verassu7}
we combine the results of previous sections to complete the proof of Theorem \ref{thm:poisbin}.

\subsection{Verification of Assumption \ref{assu:LLN}}\label{sec:verassu1}
We need to show that when $\theta^N = \theta^{N,*}$, and $x^N \to x$, then
\eqref{eq:llnps} holds. The proof follows by a standard application of Gr{\"o}nwall's lemma and from moment formulas of Poisson and Binomial random variables and thus we only give a sketch.
First, using the relation \eqref{eq:bmc} and the discrete time Gr{\"o}nwall inequality it is easy to verify that for every $T<\infty$
\begin{equation}\label{eq:unifmombd}
	\sup_{N\in \NN} \EE_{x^N} \max_{0 \le k \le \lfloor N T\rfloor} \|X^N_k\|^2 <\infty.
	\end{equation}
Next, using the relation,
$$
\eta^N_{k+1}(x) = G(x) + [\eta^N_{k+1}(x) - \EE(\eta^N_{k+1}(x))], \; x \in \Delta,$$
and the Lipschitz property of $G$, it can be checked that
\begin{equation}
	\label{eq:discsde}
	 \hat X^N(t) = x^N + \int_0^t G(\hat X^N(s)) ds + M^N(t) + R^N(t), \; t \in [0,T],\; N\in \NN
\end{equation}
where $M^N$ is a martingale and $\sup_{0\le t \le T} \|R^N(t)\|$ converges to $0$ in probability as $N\to \infty$.
Standard moment estimates show that
$\EE (\sup_{0\le t \le T} \|M^N(t)\|^2) \to 0$ as $N\to \infty$. Next, using the moment bound \eqref{eq:unifmombd} and the convergence properties noted above, it can be checked that $\hat X^N$ is tight in $C([0,T]:\Delta)$. Finally, if $\hat X^N$ converges in distribution along a subsequence to $\hat X$, then from \eqref{eq:discsde} it follows that $\hat X$ must satisfy
$$
\hat X(t) = x + \int_0^t G(\hat X(s)) ds, \; t \in [0,T].$$
From the unique solvability of the ODE in \eqref{eq:dynsys}, which is a consequence of the Lipschitz property of $G$, it now follows
that
$\hat X(t) = \varphi_t(x)$ for all $t\in [0,T]$, a.s. This proves the convergence in \eqref{eq:llnps}. \hfill \qed

\subsection{Verification of Assumption \ref{assu:ap-classesfinite}}\label{sec:verassu2}
Parts (a)-(d) hold by assumption. We now verify part (e) of Assumption \ref{assu:mgf}.
Since $G(x)= F(x)-x$, for each $x \in \Delta$ $\lan x, G(x)\ran = \lan x, F(x)\ran - \|x\|^2$.
As $F$ is bounded, taking $M \doteq 2 \|F\|_{\infty}$, we see that
$\|F(x)\| \le \|x\|/2$ for all $\|x\| \ge M$. Thus
$$
\lan x, G(x)\ran \le \frac{1}{2} \|x\|^2 - \|x\|^2 = -\frac{1}{2} \|x\|^2  \mbox{ for all } x \in \Delta \mbox{ with } \|x\| \ge M.$$
Thus Assumption \ref{assu:mgf}(e) holds with $\kappa =1/2$ and $M$ as above. \hfill \qed

\subsection{Verification of Assumption \ref{assu:mgf}}\label{sec:verassu3}
Part (a) of the assumption is immediate from the fact that for $x\in \Delta^o$, $\theta^{N,*}(\cdot|x)$ is the probability law of $U^N-V^N$, where 
 $U^N=(U_i^N)_{i=1}^d$ and $V^N=(V_j^N)_{j=1}^d$ are $d$-dimensional random variables such that $\{U_i^N, V_j^N, \; i,j=1, \ldots, d\}$ are mutually independent and $U_i^N \sim \mbox{Poi}(F_i(x))$, $V_j^N \sim \mbox{Bin}(Nx_j, 1/N)$, for $i,j=1, \ldots, d$.
 
For part (b), define, for $x\in \Delta^o$, $\theta(\cdot|x)$ as the probability law of $U-V$, where 
 $U=(U_i)_{i=1}^d$ and $V=(V_j)_{j=1}^d$ are $d$ dimensional random variables such that $\{U_i, V_j, \; i,j=1, \ldots, d\}$ are mutually independent and $U_i \sim \mbox{Poi}(F_i(x))$, $V_j \sim \mbox{Poi}(x_j)$, for $i,j=1, \ldots, d$.
Then with this choice of $\theta$, Assumption \ref{assu:mgf}(b) parts (i) and (ii) are clearly satisfied.
 Finally, part (iii) is a consequence of the observation that if $z_N\to z \in (0,\infty)$, then for every $\la \in \RR$, as $N\to \infty$
 $$
 \left[\left(1 - \frac{1}{N}\right) + \frac{1}{N} e^{\lambda}\right]^{Nz_N} \to e^{z(e^{\la}-1)}.$$
 \hfill \qed
\subsection{Verification of Assumption \ref{assu:irrbdr}}\label{sec:verassu4}
Part (a) of the assumption is clearly satisfied (in fact with $k=1$). Part (b) is verified in the following lemma.
\begin{lemma}\label{thm:absorption}
	Suppose that $\theta^N = \theta^{N,*}$. Then,
	 for every $\gamma \in (0,\infty)$ and $T\in \N$, there is an open neighborhood $U_{\gamma}$ of $\partial \Delta$ in $\Delta$ such that
	$$\liminf_{N\to \infty} \inf_{x \in U_{\gamma}\cap \Delta_N} \frac{1}{N} \log \PP_x(\hat X^{N}(T) \in \partial \Delta) \ge -\gamma.$$
\end{lemma}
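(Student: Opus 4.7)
The plan is to construct, near the boundary, an explicit event that drives a single coordinate of $X^N$ down to zero via a controlled sequence of Binomial deaths with no Poisson births; the probability of this event can be estimated directly from the independence structure of the model. Suppose $x \in \Delta_N$ satisfies $x_i \in (0,\delta]$ for some index $i$ and some $\delta \in (0,T]$ to be chosen later. Set $m \doteq Nx_i$. I will consider the event
\[
E \doteq \bigcap_{k=0}^{m-1}\{U^N_i(k) = 0 \text{ and } V^N_i(k) = 1\},
\]
where $U^N_i(k) \sim \mbox{Poi}(F_i(X^N_k))$ and $V^N_i(k) \sim \mbox{Bin}(NX^{N,i}_k, 1/N)$ are the Poisson birth and Binomial death counts for type $i$ at step $k$. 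On $E$ the type-$i$ coordinate drops by exactly one at each of the first $m$ steps, so $NX^{N,i}_m = 0$; combined with absorption at $\partial \Delta$ and $m \le N\delta \le NT$, this forces $\hat X^N(T) \in \partial \Delta$. Accordingly I will take $U_\gamma \doteq \{x \in \Delta : \min_i x_i < \delta\}$, an open neighborhood of $\partial\Delta$.

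To lower-bound $\PP_x(E)$, I will apply iterated conditioning, exploiting the fact that $U^N_i(k)$ and $V^N_i(k)$ are conditionally independent given $X^N_k$ and that on the intersection of all preceding successful events we deterministically have $NX^{N,i}_k = m-k$. This gives a per-step lower bound of $e^{-\|F\|_\infty}(m-k)N^{-1}(1-1/N)^{m-k-1}$, and telescoping yields
\[
\PP_x(E) \;\ge\; e^{-m\|F\|_\infty} \,\frac{m!}{N^m}\left(1-\frac{1}{N}\right)^{m(m-1)/2}.
\]
Taking logarithms, applying Stirling's inequality $\log(m!) \ge m\log m - m$, and substituting $m = Nx_i$, I will obtain
\[
\frac{1}{N}\log\PP_x(E) \;\ge\; x_i\log x_i - x_i(1 + \|F\|_\infty + x_i) + o(1),
\]
uniformly in $x_i \in (0,\delta]$ as $N \to \infty$. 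Since the right-hand side tends to $0$ as $x_i \downarrow 0$, I can choose $\delta$ small enough that it exceeds $-\gamma$ for all $x_i \in (0,\delta]$, completing the bound. The case $x_i = 0$ is trivial since then $\PP_x(\hat X^N(T) \in \partial\Delta) = 1$ by absorption.

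The main conceptual obstacle is recognizing why the naive one-step strategy---requiring all $m$ type-$i$ particles to die at step $1$---is inadequate: it contributes a factor $(1/N)^m = e^{-Nx_i \log N}$, whose $1/N$-log is $-x_i \log N$ and diverges to $-\infty$ for any fixed $x_i > 0$. The gradual one-at-a-time strategy replaces $(1/N)^m$ by $m!/N^m$, whose log, by Stirling, contributes $m\log x_i - m + O(\log m)$; divided by $N$, this becomes $x_i\log x_i - x_i + o(1)$, which vanishes as $x_i \downarrow 0$. This is precisely what allows $\delta$ to be chosen as a fixed (i.e., $N$-independent) positive constant and hence $U_\gamma$ to be a genuine open neighborhood of $\partial\Delta$.
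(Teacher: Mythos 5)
Your proof is correct, but it takes a genuinely different route from the paper's. The paper uses the event that no type-$i_x$ births occur during the entire $NT$ steps and that all $Nx_{i_x}$ initial particles die at some point within those $NT$ steps; it estimates the death probability via a coupling with i.i.d.\ Geometric lifetimes, giving the factor $(1-(1-1/N)^{NT})^{Nx_{i_x}}$, and it controls the Poisson-suppression cost over $NT$ steps by invoking Assumption~\ref{assu:ap-classesfinite}(d) to make the rate $F_{i_x}$ smaller than $\gamma/(2T)$ on a neighborhood of the boundary. You instead use the ``one death per step, no births'' event over only $m=Nx_i$ steps, compute the exact product $e^{-m\|F\|_\infty}\,m!\,N^{-m}(1-1/N)^{m(m-1)/2}$, and extract the leading behavior $x_i\log x_i - x_i$ via Stirling. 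Your strategy needs only boundedness of $F$ rather than Assumption~(d), because you suppress Poisson births for only $Nx_i$ rather than $NT$ steps; on the other hand, you rely on the boundary being absorbing to carry coordinate $i$ from step $m$ to step $NT$, whereas the paper's event never leaves the boundary unvisited and so makes no explicit use of absorption. In the Binomial--Poisson model absorption does follow from Assumption~\ref{assu:ap-classesfinite}(d) (which forces $F_i(x)=0$ whenever $x_i=0$), so this is not a gap, but it would be worth stating explicitly. Both proofs hinge on the same quantitative phenomenon: the exponential cost of driving a coordinate of size $x_i$ to zero scales like $x_i\log(1/x_i)$ and hence vanishes as $x_i\downarrow 0$, which is what lets $\delta$ be taken $N$-independent.
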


\begin{proof}
	
	For  $x \in \Delta^o$, let $i_x \doteq \arg\min\limits_{1\leq i \leq d}x_i$.
	From Assumption \ref{assu:ap-classesfinite}(d)
	 we can find
 $\delta_0 > 0$   such that $\sup\limits_{y\in N^{\delta_0}(\partial \Delta)} F(y)_{i_y} < \frac{\gamma}{2T}$. Let  $\delta_1 \doteq \frac{\gamma}{2(T - \log(e^T-1))}$, $\delta \doteq \min\{\delta_0 , \delta_1\}$, and  $U_{\gamma} \doteq N^{\delta}(\partial \Delta)$.  Fix $x \in U_{\gamma}\cap \Delta_N^o$, and
note that, under $\PP_x$,
$\hat X^N(T) = x + \frac{1}{N} \sum_{j=1}^{NT} \eta_j^N(X^N_{j-1})$, where $\eta_j^N(X^N_{j-1}) = U^j-V^j$ 
and the conditional distribution of $(U^j-V^j)$ given that $X^N_{j-1}=x$ is that of $(U^N, V^N)$ as in Section \ref{sec:verassu3}.
Thus
\begin{align*}
\PP_x( \hat{X}^N(T) \in \partial \Delta) &\geq \PP_x\left( x_{i_x}  + \frac{1}{N} \sum\limits_{j=1}^{NT}(U^j_{i_x} - V^j_{i_x}) = 0\right)\\
&\geq \PP_x\left( U^1_{i_x} = \cdots = U^{NT}_{i_x} = 0, \sum\limits_{j=1}^{NT} V^j_{i_x}= Nx_{i_x}\right).
\end{align*}
Let $\tilde U^1, \ldots, \tilde U^{NT}$ be i.i.d.~Poisson random variables with mean $\gamma/2T$,
and let $\tilde V^1,  \tilde V^{2}, \ldots, \tilde V^{NT}$ be iid Geometric random variables with probability of success $1/N$ such that
$\{\tilde U^j, \tilde V^k; j,k\}$ are mutually independent. 
Then
\begin{align*}
&\PP_x\left( U^1_{i_x} = \cdots = U^N_{i_x} = 0, \sum\limits_{j=1}^N V^j_{i_x}= Nx_{i_x}\right)\\
&\quad= \PP_x\left( U^1_{i_x} = \cdots = U^N_{i_x} = 0, \text{by time instant }NT \text{ all initial $Nx_{i_x}$ type $i_x$ particles die}
\right)\\
&\quad\ge \PP_x\left( \tilde U^1 = \cdots = \tilde U^{NT} = 0, \tilde V^1\le NT,  \tilde V^{2}\le NT \ldots, \tilde V^{Nx_{i_x}}\le NT\right)\\
&\quad= [\PP(\tilde U^1=0)]^{NT} \left( 1 - \left( 1 - \frac{1}{N}\right)^{NT}\right)^{Nx_{i_x}}\\
&\quad = \exp\left(-NT\frac{\gamma}{2T}\right)\left( 1 - \left( 1 - \frac{1}{N}\right)^{NT}\right)^{Nx_{i_x}}
%
%
\end{align*}
Combining the last two displays
\begin{equation*}
\begin{split}
\frac{1}{N}\log \PP_x( \hat{X}^N(T) \in \partial \Delta) &\geq  \frac{1}{N} \left(  \log \left(\exp\left(-N \frac{\gamma}{2}\right)\right) +   \log \left( \left( 1 - \left( 1 - \frac{1}{N}\right)^{NT}\right)^{Nx_{i_x}}\right)\right)\\
&= -\frac{\gamma}{2} + x_{i_x}\log\left( 1 - \left( 1 - \frac{1}{N}\right)^{NT}\right)\\
&\geq - \frac{\gamma}{2} + \delta  \log\left( 1 - \left( 1 - \frac{1}{N}\right)^{NT}\right),
\end{split}
\end{equation*}
and thus from our choice of $\delta$,
$$
\liminf_{N\to \infty} \inf_{x \in U_{\gamma}\cap \Delta_N} \frac{1}{N} \log \PP_x(\hat X^{N}(T) \in \partial \Delta)  \geq -\frac{\gamma}{2} + \delta(-T + \log(e^T-1)) \ge -  \gamma.
$$
\end{proof}

\subsection{Existence of Quasi-stationary Distributions}\label{sec:verassu5}

In this section we prove the existence of a QSD $\mu_N$ for the Markov chain $\{X_n^N\}$, for each $N \in \N$, and show that the sequence $\{\mu_N\}$ of QSD is relatively compact in $\clp(\Delta)$.
For some uniform bounds needed for the tightness proof in Section \ref{sec:verassu6},	it will be convenient to consider 
%
the $N$-step processes $\{\tilde{X}^N_n\}_{n\in \N_0}$, where
\begin{equation}
\tilde{X}^N_n \doteq X^N_{nN}, n \in \NN_0, N\in \NN. \label{eq:nstepproc}
\end{equation}
Recall the definition of  $\tau^N_{\partial}$ and $P_n^N$ from \eqref{eq:tnpa} and \eqref{eq:pnnf}.

For existence of QSD, we will use the following result from \cite{chavil}.
\begin{theorem}\label{thm:foster}(\cite[Theorem 2.1, Proposition 3.1]{chavil})
Fix $N \in \N$. Suppose that there are    $\theta_1,\theta_2, c_1 \in (0,\infty)$,  functions $\varphi_1, \varphi_2 : \Delta^o_N \rightarrow \R_+$, 
	and a measurable subset $K \subset \Delta^o_N$ such that: 
	\begin{enumerate}[(B1)]
		\item For each $x\in K$, for some $n_2(x)\in \NN$,
		 	$$
			\PP_x({X}^N_n \in K) > 0, \; \mbox{ for all } x \in K \mbox{ and } n \ge n_2(x).
		 	$$
	\item  We have $\theta_1 < \theta_2$ and
	 \begin{enumerate}[(a)]
	 \item $\inf\limits_{x\in \Delta^o_N}\varphi_1(x) \geq 1$, $\sup\limits_{x\in K} \varphi_1(x) < \infty$
	 \item $\inf\limits_{x\in K} \varphi_2(x) > 0$, $\sup\limits_{x\in \Delta^o_N}\varphi_2(x) \leq 1$
	 \item $P_1^N \varphi_1(x) \leq \theta_1\varphi_1(x) + c_1 1_{K}(x)$ for all $x \in \Delta^o_N$
	 \item $P_1^N \varphi_2(x) \geq \theta_2 \varphi_2(x)$ for all $x \in \Delta^o_N$.
	 \end{enumerate}
\end{enumerate}
Suppose also that there exist  $C \in (0,\infty)$ and $n_0, m_0 \in \N$ such that $n_0 \leq m_0 $ and
\begin{equation}\label{eqn:prop3.1}
\begin{split}
\PP_x({X}^N_{n_0} \in \cdot \cap K) \leq C \PP_y({X}^N_{m_0} \in \cdot), \text{ for all }x \in \Delta^o_N \text { and }y \in K.
\end{split}
\end{equation}
Then
there exist $C_1 \in (0,\infty)$, $\alpha \in (0,1)$, and a probability measure $\mu_N$ on $\Delta^o_N$ such that, for all $n\in \N$
$$
\left|\left| \frac{\mu P_n^N}{\mu P_n^N (1_{\Delta_N^o})} - \mu_N\right|\right|_{TV} \leq C \alpha^n \frac{\mu (\varphi_1)}{\mu(\varphi_2)},
$$
for all probability measures $\mu$ on $\Delta^o_N$ which satisfy $\mu(\varphi_1) < \infty$ and $\mu(\varphi_2) > 0$. Moreover, $\mu_N$ is the unique QSD of $\{{X}^N\}$ that satisfies $\mu_N(\varphi_1) < \infty$ and $\mu_N(\varphi_2) > 0$. Additionally, $\mu_N(K) > 0$.
\end{theorem}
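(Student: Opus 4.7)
The strategy is to prove this as a Doeblin-type contraction result for the sub-Markovian semigroup $\{P_n^N\}$, viewed as an operator on a weighted space of finite signed measures. The ratio $\theta_1/\theta_2 < 1$ will provide the fundamental decay, the Lyapunov function $\varphi_1$ will control how mass escapes from $K$, the Lyapunov function $\varphi_2$ will give a uniform lower bound on the non-absorption probability, and condition \eqref{eqn:prop3.1} will serve as the minorization that drives the contraction on $K$. I would begin by iterating the two Lyapunov inequalities to derive the bounds $P_n^N\varphi_2(x)\ge \theta_2^n\varphi_2(x)$ and $P_n^N\varphi_1(x)\le \theta_1^n\varphi_1(x)+c_1\sum_{k=0}^{n-1}\theta_1^{n-1-k} P_k^N 1_K(x)$. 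Since $\sup_K \varphi_1<\infty$ and $\inf_K \varphi_2>0$, these combine to show that, for any initial distribution $\mu$ with $\mu(\varphi_1)<\infty$ and $\mu(\varphi_2)>0$, the conditional distribution $\mu P_n^N/\mu P_n^N(1_{\Delta^o_N})$ places most of its mass on $K$ with an error term that decays like $(\theta_1/\theta_2)^n \mu(\varphi_1)/\mu(\varphi_2)$.

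Next I would formulate the key contraction step. For any two probability measures $\nu_1,\nu_2$ on $\Delta_N^o$, the minorization \eqref{eqn:prop3.1} together with (B1) implies that $\PP_y(X_{m_0}^N\in\cdot)$ admits a common component across $y\in K$ of mass bounded below by a positive constant $c=c(C,n_0,m_0)$, so that for $x\in\Delta^o_N$ and $y\in K$ one can write $\PP_x(X_{n_0}^N\in \cdot\cap K)\le C\PP_y(X_{m_0}^N\in\cdot)$. Using this, I would argue that the conditional kernel $y\mapsto P_{m_0}^N(y,\cdot)/P_{m_0}^N 1_{\Delta^o_N}(y)$ restricted to $y\in K$ satisfies a Doeblin minorization, which by a standard Dobrushin coefficient estimate contracts the total variation distance between any two such conditional laws by a fixed factor strictly less than $1$. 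Combining this $K$-contraction with the fact that the conditional distributions of $\mu_1 P_n^N$ and $\mu_2 P_n^N$ become essentially supported on $K$ after a controlled number of steps yields, by induction in blocks of length $m_0$, the claimed inequality $\|\mu P_n^N/\mu P_n^N(1)-\mu_N\|_{TV}\le C\alpha^n \mu(\varphi_1)/\mu(\varphi_2)$ for the limit $\mu_N$, defined as the Cauchy limit of these conditional distributions in total variation.

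Finally, to identify $\mu_N$ as a QSD one uses the semigroup identity $\mu_N P_n^N/\mu_N P_n^N(1_{\Delta_N^o})=\mu_N$ obtained by passing to the limit in the definition (which requires verifying $\mu_N(\varphi_1)<\infty$ and $\mu_N(\varphi_2)>0$, both consequences of the Lyapunov bounds and the strict positivity $\mu_N(K)>0$ guaranteed by the concentration on $K$). Uniqueness of $\mu_N$ among QSD satisfying the integrability conditions follows by applying the contraction bound with $\mu$ equal to any such candidate QSD.

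I expect the main obstacle to be the Doeblin contraction step: the minorization \eqref{eqn:prop3.1} compares $\PP_x(X_{n_0}^N\in\cdot\cap K)$ with $\PP_y(X_{m_0}^N\in\cdot)$ for differing time horizons $n_0\le m_0$, so one must carefully track the survival denominators $P_k^N 1_{\Delta_N^o}(\cdot)$ when normalizing, and it is here that the strict ordering $\theta_1<\theta_2$ is essential to absorb the polynomial losses arising from the mismatch in time indices. Everything else is bookkeeping around iteration of the Lyapunov inequalities and a Cauchy-sequence argument in total variation.
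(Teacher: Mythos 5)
The paper does not actually prove this statement: it is cited verbatim from Champagnat and Villemonais, and the only ``proof'' the paper supplies is the remark immediately following, which explains that \cite[Proposition 3.1]{chavil} upgrades (B1)--(B2) and \eqref{eqn:prop3.1} into two separate estimates -- a Doeblin minorization $\PP_x(X^N_{n_1}\in\cdot)\ge c_2\,\nu(\cdot\cap K)$ for all $x\in K$, and a uniform-in-time Harnack bound $\sup_n\bigl(\sup_{y\in K}\PP_y(n<\tau^N_\partial)\bigr)/\bigl(\inf_{y\in K}\PP_y(n<\tau^N_\partial)\bigr)\le c_3$ -- which together are exactly the hypotheses of \cite[Theorem 2.1]{chavil}.

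Your outline is faithful to the spirit of that argument (iterate the Lyapunov inequalities, obtain concentration of conditional mass on $K$, feed a Doeblin minorization into a contraction estimate), but there is a genuine gap precisely where you flag a worry. The claim that ``a standard Dobrushin coefficient estimate contracts the total variation distance between any two such conditional laws by a fixed factor'' does not follow from an unconditioned Doeblin minorization alone. The conditional kernel $y\mapsto P^N_{m_0}(y,\cdot)/P^N_{m_0}1_{\Delta^o_N}(y)$ has a $y$-dependent normalizer, and two measures with a common unconditioned component can fail to have a common component after dividing by different survival probabilities. What controls this is the spatial Harnack bound above -- $\sup_K/\inf_K$ of the $n$-step survival probability, uniformly over $n$ -- which the paper's remark calls out explicitly as a second output of \cite[Proposition 3.1]{chavil}. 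Your sketch neither states nor derives this bound; moreover, the remedy you suggest (``$\theta_1<\theta_2$ absorbs the polynomial losses from the mismatch in time indices'') attacks the wrong quantity: the potential losses in the survival denominator are multiplicative over time blocks, hence exponential, and the Lyapunov separation $\theta_1<\theta_2$ controls the escape of mass from $K$, not the ratio of survival probabilities among two points both in $K$. Without a uniform Harnack-type bound the Cauchy-in-TV argument for the conditional laws does not close, so the proposal remains an outline rather than a proof. The end pieces -- identifying $\mu_N$ as a QSD by a limiting argument, uniqueness via plugging a candidate QSD into the contraction bound, and $\mu_N(K)>0$ from concentration on $K$ -- are fine once the contraction step is in hand.
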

\begin{remark}\label{rem:sufftosuff}
	The above theorem combines two different results from \cite{chavil}. Proposition 3.1 of \cite{chavil} shows that under the assumptions of Theorem \ref{thm:foster}
	we have for some $c_2 \in (0,\infty)$, $n_1 \in \N$ and a probability measure $\nu$ supported on $K$
	$$
	\PP_x({X}_{n_1}^N \in \cdot) \geq c_2 \nu(\cdot \cap K),\; \mbox{ for all } x \in K.
	$$
	Also this proposition shows that for some $c_3 \in (0,\infty)$,
	$$
	\sup\limits_{n \in \N_0} \frac{\sup\limits_{y \in K}\PP_y(n < \tau_{\partial}^N)}{\inf\limits_{y\in K}\PP_y(n < \tau_{\partial}^N)} \leq c_3.
 	$$
	Using these facts, it then follows that, under the assumptions of Theorem \ref{thm:foster}, all the conditions of Theorem 2.1 in \cite{chavil} are satisfied, which gives the existence of QSD $\mu_N$ with the properties stated in the above theorem.
\end{remark}

 In Lemma \ref{lem:qsdcond} we use the above result to establish existence of a QSD for the sequence $\{X_n^N\}$ considered in this work, for each $N\in \N$.
 We begin with some preliminary estimates.

Consider for $r\in \N$
\begin{equation}K_r \doteq \{x \in \Delta^o : x \cdot 1 \leq r\},\;\; K_r^N \doteq K_r \cap \Delta^o_N,\label{eq:krdefn}\end{equation}
and let  
\begin{equation*}
	\sigma^N_{\partial} \doteq \inf\{k \in \N_0: \tilde X_k^N \in \partial \Delta_N\},
	\end{equation*}
$$\tau_r^N \doteq \inf\{k \in \N_0: X_k^N \in K_r^N\}, \;\; \sigma_{r}^N = \inf\{k: \tilde{X}^N_{k} \in K_r^N\},$$
and
$$\hat \tau_r^N \doteq \tau_r^N \wedge \tau_{\partial}^N, \;\; \hat{\sigma}_r^N \doteq \sigma^N_r \wedge \sigma_{\partial}^N.$$

\begin{lemma}\label{lem:expmoment}
Fix $\lambda_0 \in (0,\infty)$. There exists a $c(\la_0)\in (0,\infty)$ and $r_0 > 0$ such that for all $r \geq r_0$ and $\la \le \la_0$
$$
\EE_x\left( e^{\lambda \hat{\sigma}^N_r}\right) \le e^{x\cdot 1} c(\la_0) \text{ for all } x\in \Delta^o_N \mbox{ and } N\in \N.
$$
Furthermore, if $r \geq r_0$, then	
$$\EE_x\left(e^{\frac{\lambda}{N} \hat \tau^N_r}\right) \le e^{x\cdot 1} c(\la_0) \mbox{ for all } x \in \Delta_N^o \mbox{ and } N\in \N.$$
\end{lemma}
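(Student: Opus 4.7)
The plan is to use the Lyapunov function $V(x) := e^{x \cdot 1}$: first establish a geometric drift for the $N$-step chain $\tilde X^N$ outside $K_r$, then deduce the exponential moment bound on $\hat \sigma_r^N$ via a supermartingale argument, and finally reduce the second assertion to the first via the pathwise comparison $\hat \tau_r^N \le N \hat \sigma_r^N$.

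The starting point is the factorization of the moment generating function of $X_1^N \cdot 1$. Since the components of $\eta_1^N(y)$ have the form $U_i - W_i$ with $U_i \sim \mathrm{Poi}(F_i(y))$ and $W_i \sim \mathrm{Bin}(Ny_i, 1/N)$ all mutually independent, applying $(1 + p(e^s-1))^n \le \exp(np(e^s-1))$ to the binomial factors yields
\[
\EE_y V(X_1^N) \le V(y)^{\alpha_N} \exp(C_F \beta_N), \qquad y \in \Delta,
\]
where $\alpha_N := e^{-1/N} \in (0,1)$, $\beta_N := e^{1/N} - 1$, and $C_F := \sup_{y \in \Delta} F(y) \cdot 1 < \infty$ since $F$ is bounded. (This bound remains valid on the boundary: Assumption \ref{assu:ap-classesfinite}(d) together with $F \ge 0$ forces $F_i(y) = 0$ whenever $y_i = 0$, so the vanishing terms cause no trouble.) Iterating $N$ times via the tower property and Jensen's inequality applied to the concave map $z \mapsto z^{\alpha_N}$ (valid because $V \ge 1$) produces
\[
\EE_y V(\tilde X_1^N) \le \exp\bigl(e^{-1}(y \cdot 1) + C_0\bigr),
\]
with $C_0$ \emph{independent of $N$}, thanks to the identities $\alpha_N^N = e^{-1}$ and $\beta_N/(1 - \alpha_N) = e^{1/N} \le e$. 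Choosing $r_0$ large enough that $e^{-1} r_0 + C_0 + \lambda_0 \le r_0$ gives the geometric drift $\EE_y V(\tilde X_1^N) \le e^{-\lambda} V(y)$ for all $r \ge r_0$, $\lambda \le \lambda_0$, and $y \in \Delta_N^o$ with $y \cdot 1 > r$, uniformly in $N$.

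With this drift in hand, set $M_k := e^{\lambda(k \wedge \hat \sigma_r^N)} V(\tilde X_{k \wedge \hat \sigma_r^N}^N)$. On $\{\hat \sigma_r^N > k\}$, $\tilde X_k^N \in \Delta_N^o$ with $\tilde X_k^N \cdot 1 > r$, so the drift gives $\EE[M_{k+1} \mid \mathcal F_k] \le M_k$; on $\{\hat \sigma_r^N \le k\}$ this is automatic. Hence $\{M_k\}$ is a non-negative supermartingale and $\EE_x M_k \le M_0 = V(x) = e^{x \cdot 1}$. Since $V \ge 1$, we have $e^{\lambda(k \wedge \hat \sigma_r^N)} \le M_k$, so $\EE_x e^{\lambda(k \wedge \hat \sigma_r^N)} \le e^{x \cdot 1}$; this forces $\hat \sigma_r^N < \infty$ almost surely, and monotone convergence then delivers the first bound $\EE_x e^{\lambda \hat \sigma_r^N} \le e^{x \cdot 1}$ (so $c(\lambda_0) = 1$ suffices).

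For the second bound, the identity $\tilde X_k^N = X_{kN}^N$ gives the pathwise inequalities $\tau_r^N \le N \sigma_r^N$ and $\tau_\partial^N \le N \sigma_\partial^N$, hence $\hat \tau_r^N \le N \hat \sigma_r^N$, from which $\EE_x e^{(\lambda/N) \hat \tau_r^N} \le \EE_x e^{\lambda \hat \sigma_r^N} \le e^{x \cdot 1}$. The main delicate point is the $N$-uniformity of the constant $C_0$ in the drift, which is secured by the explicit asymptotics above; the rest is a routine Lyapunov argument.
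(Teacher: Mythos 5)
Your proof is correct, and it reaches the same $N$-step geometric drift for the Lyapunov function $V(x) = e^{x\cdot 1}$ that drives the paper's argument, but by a genuinely different route. The paper bounds the $N$-step MGF directly via a stochastic domination: it observes that $(\tilde X_1^N - x)\cdot 1 \le_d \frac{1}{N}(U - V_{x\cdot 1})$ where $U \sim \mathrm{Poi}(Nad)$ dominates all births over $N$ steps and $V_{x\cdot 1} \sim \mathrm{Bin}(Nx\cdot 1, \gamma(N))$, $\gamma(N) = 1-(1-1/N)^N$, tracks deaths of the original particles. It then computes the associated $N$-step Laplace transform $C_N(1)e^{-V_0^N(1)x\cdot 1}$, controls the ratio $\log C_N(1)/V_0^N(1)$ uniformly in $N$, obtains a geometric tail bound on $\PP_x(\hat\sigma_r^N > n)$ by a recursive argument, and sums the geometric series. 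You instead derive a one-step MGF inequality $\EE_y V(X_1^N) \le V(y)^{e^{-1/N}} e^{C_F(e^{1/N}-1)}$ from the Poisson/Binomial factorization, telescope it $N$ times with Jensen's inequality for the concave map $z\mapsto z^{\alpha_N}$ to get $\EE_y V(\tilde X_1^N) \le V(y)^{e^{-1}} e^{C_0}$ with $C_0$ $N$-uniform (via $\alpha_N^N = e^{-1}$, $\beta_N/(1-\alpha_N) = e^{1/N}$), and then run an explicit supermartingale $M_k = e^{\lambda(k\wedge\hat\sigma_r^N)}V(\tilde X^N_{k\wedge\hat\sigma_r^N})$ plus monotone convergence. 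Your route avoids having to identify the $N$-step survival/birth distributions explicitly and yields the sharper constant $c(\lambda_0)=1$, whereas the paper's coupling makes the $N$-step dynamics more transparent at the cost of a slightly messier constant. Your aside about the boundary (that Assumption \ref{assu:ap-classesfinite}(d) forces $F_i(y)=0$ when $y_i=0$, so the one-step bound remains valid on $\partial\Delta$) is correct and is exactly what is needed for the intermediate iterates; the reduction of the second assertion via $\hat\tau_r^N \le N\hat\sigma_r^N$ matches the paper's.
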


\begin{proof}
		Let $a= \max_i\|F_i\|_{\infty}$. Given $u \in N^{-1}\N$, consider the random variable $V_u$ that represents the number of particles among $Nu$ initial particles that die in $N$ steps when at each step any particle can die independently of the remaining particles with probability $1/N$.  Note that
		$V_u \sim \mbox{Bin}(Nu, \gamma(N))$ where 
		$$\gamma(N) = 1 - \left(1-\frac{1}{N}\right)^N.$$
		Let $U \sim \text{Poi}(Nad)$ be independent of $V_x$. 
		Then, under $\PP_x$, $(\tilde X^N_1 - x)\cdot 1 \le_d \frac{1}{N}(U-V_{x\cdot 1})$, where for two real random variables $Z_1, Z_2$, we write $Z_1 \le_d Z_2$ if $\PP(Z_2 \ge u) \ge \PP(Z_1 \ge u)$ for all $u \in \R$.
		Also,
		$$\EE_x[e^{\frac{1}{N}(U-V_{x\cdot 1})}] = C_N(1)e^{- V_0^N(1) x\cdot 1},$$
		where $C_N(1) = \EE(\exp\{\frac{1}{N}U\})$ and 
	$$\EE \exp\left\{-\frac{1}{N}V_{x\cdot 1}\right\} = e^{-V_0^N(1) x\cdot 1}.$$

	Note that for $x \in (K_r^N \cup \partial \Delta_N)^c$
	\begin{align*}
		\PP_x(\hat \sigma^N_r > 1) &\le \EE_x( e^{ \tilde X^N_1\cdot 1} 1_{\hat \sigma^N_r > 1}) = e^{ x\cdot 1} \EE_x( e^{ (\tilde X^N_1-x)\cdot 1} 1_{\hat \sigma^N_r > 1})\\
		&= e^{x\cdot 1} C_N(1) e^{-V_0^N(1) x \cdot 1} \le e^{ x\cdot 1} C_N(1) e^{-V_0^N(1) r}.
	\end{align*}
	By a recursive argument, for $n \in \N$
	\begin{equation}\label{eq:recurs}\PP_x(\hat \sigma^N_r > n) \le e^{x\cdot 1} e^{-n( r V_0^N(1) -  \log C_N(1))}.\end{equation}
	Note that
	$$\log C_N(1) = Nad (e^{1/N}-1).$$
	Also,
	$$\EE \exp\left\{-\frac{1}{N}V_{x\cdot 1}\right\} = \left[1- \gamma(N)(1-e^{-1/N})\right]^{N(x\cdot 1)},$$
	and thus
	$$V_0^N(1) = -N \log\left[1-\gamma(N)(1-e^{-1/N})\right].$$
	Combining the above observations
	$$\frac{\log C_N(1)}{V_0^N(1)} = \frac{Nad (e^{1/N}-1)}{-N \log\left[1-\gamma(N)(1-e^{-1/N})\right]}
	\le \frac{ad (e^{1/N}-1)}{\gamma(N)(1-e^{-1/N})} = ad \frac{e^{1/N}}{\gamma(N)}.$$
	Since $\gamma(N)\to (1-e^{-1})$ we can assume without loss of generality that for all $N\in \N$
	$$\frac{\log C_N(1)}{V_0^N(1)} \le \frac{2ade^2}{e-1} \doteq \vartheta, \; V_0^N(1) \ge \frac{1}{2}(1-e^{-1})\doteq \varsigma.$$
	Thus, for $r \ge r_0 \doteq (\frac{\lambda}{\varsigma} + \vartheta)$
	\begin{align*}
			e^{-n( r V_0^N(1) -  \log C_N(1))} &\le e^{-n  V_0^N(1) \left(\frac{\lambda}{\varsigma}+\vartheta - \frac{\log C_N(1)} {V_0^N(1)}\right)}
			\le e^{-n  V_0^N(1) \frac{\lambda}{\varsigma }} \le e^{-n\lambda }.
			\end{align*}
		
	Combining this with \eqref{eq:recurs}, for all $N\in \N$, $x \in (K_r^N \cup \partial)^c$ and $\lambda_0<\lambda$
	$$\EE_x\left(e^{\lambda_0 \hat \sigma^N_r}\right) \le e^{x\cdot 1} \frac{e^{\lambda_0} - e^{(\lambda_0-\lambda)}}{1-e^{(\lambda_0-\lambda)}}.$$
	This proves the first statement in the lemma.
	The second statement follows on noting that 
	$
	\hat{\tau}^N_r \leq N\hat{\sigma}^N_r,
	$
	for each $r \in \R_+$ and $N \in \N$.
\end{proof}

\begin{lemma}\label{lem:cfinite}
 Fix $\lambda_0 \in (0,\infty)$ and let $r_0$ be as in Lemma \ref{lem:expmoment}.
Then for each $\lambda \in (0, \lambda_0)$ and $r\ge r_0$,
$$
\sup\limits_{N\in \N} \sup\limits_{y\in K^N_r}\EE_y\left(\EE_{\tilde{X}^N_1}\left(e^{\lambda \hat{\sigma}^N_r}\right)1_{1 <\sigma_{\partial}^N}\right) < \infty.
$$
Furthermore, for every $N\in \NN$
$$
 \sup\limits_{y\in K^N_r}\EE_y\left(\EE_{{X}^N_1}\left(e^{\frac{\lambda}{N} \hat{\tau}^N_r}\right)1_{1 <\tau_{\partial}^N}\right) < \infty.
$$
\end{lemma}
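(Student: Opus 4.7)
The plan is to reduce to Lemma \ref{lem:expmoment} by conditioning on the first step (respectively the first $N$ steps) and then to control a routine exponential moment of the total population via Poisson stochastic domination. Since $\tilde X_1^N \in \Delta_N^o$ on $\{1 < \sigma^N_\partial\}$, applying Lemma \ref{lem:expmoment} pointwise inside $\EE_y$ gives
\[
\EE_y\!\left(\EE_{\tilde X_1^N}(e^{\lambda \hat\sigma^N_r})\, 1_{1 < \sigma^N_\partial}\right) \le c(\lambda_0)\, \EE_y\!\left(e^{\tilde X_1^N \cdot 1}\right),
\]
and the same argument (with $\hat\tau^N_r, \tau^N_\partial, X_1^N$ in place of $\hat\sigma^N_r, \sigma^N_\partial, \tilde X_1^N$) reduces the second display to bounding $\EE_y(e^{X_1^N \cdot 1})$. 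It therefore suffices to control these exponential moments uniformly over $y \in K^N_r$, and in addition uniformly in $N$ for the first statement.

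For the first moment, write $\tilde X_1^N = X^N_N$, so that $X^N_N \cdot 1 = y \cdot 1 + \tfrac{1}{N}\sum_{k=1}^N \sum_{i=1}^d (U^k_i - V^k_i)$ where $U^k, V^k$ are the Binomial-Poisson jump components; the terms $V^k_i \ge 0$ can be discarded. Conditionally on $X^N_{k-1}$, the sum $\sum_i U^k_i$ is Poisson with parameter $\sum_i F_i(X^N_{k-1}) \le ad$, where $a = \max_i \|F_i\|_\infty$. By the stochastic monotonicity of the Poisson family, one can construct (via inverse-cdf transforms, step by step) a coupling with i.i.d.\ $\tilde U^k \sim \mbox{Poi}(ad)$ such that $\sum_i U^k_i \le \tilde U^k$ almost surely for each $k$. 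Hence
\[
\EE_y(e^{X^N_N \cdot 1}) \le e^{y \cdot 1}\, \EE\!\left(\exp\!\left(\tfrac{1}{N}\sum_{k=1}^N \tilde U^k\right)\right) = e^{y\cdot 1} \exp\!\big(Nad(e^{1/N} - 1)\big).
\]
Since $Nad(e^{1/N}-1) \to ad$ as $N \to \infty$ and $y \cdot 1 \le r$ on $K^N_r$, the right-hand side is bounded uniformly in both $N$ and $y$, which yields the first assertion of the lemma.

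The second assertion follows from the identical one-step estimate: conditionally on $X^N_0 = y$, the total offspring $\sum_i U^1_i$ is Poisson with parameter $\sum_i F_i(y) \le ad$, so $\EE_y(e^{X_1^N \cdot 1}) \le e^{y \cdot 1}\exp(ad(e^{1/N} - 1))$, which for each fixed $N$ is bounded on $K^N_r$ by $e^r \exp(ad(e^{1/N}-1))$. There is no genuine obstacle; the only mild subtleties are the pathwise Poisson-domination coupling and the elementary asymptotic $N(e^{1/N}-1) \to 1$ that delivers uniformity in $N$ in the first statement.
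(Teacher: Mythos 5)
Your proof is correct and takes essentially the same approach as the paper: apply Lemma \ref{lem:expmoment} at the point $\tilde X^N_1$ (resp.\ $X^N_1$) to reduce the problem to bounding $\EE_y(e^{\tilde X^N_1\cdot 1})$ (resp.\ $\EE_y(e^{X^N_1\cdot 1})$), and then control that exponential moment by coupling the total offspring to a $\mathrm{Poi}(Nad)$ (resp.\ $\mathrm{Poi}(ad)$) random variable. The only minor departure is that the paper first splits on $\{\tilde X^N_1\in K_r^N\}$ versus its complement before invoking Lemma \ref{lem:expmoment}, whereas you apply it uniformly, a harmless simplification since the bound in that lemma holds (trivially) for $x\in K_r^N$ as well.
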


\begin{proof}
	We only prove the first statement. The second statement is shown in a similar manner.
Fix $r \ge r_0$ and $\la <\la_0$. For notational simplicity, denote $K^N_r$ by $K$. Then, for $y \in K$,

\begin{equation*}
\begin{split}
\EE_y \left( \EE_{\tilde{X}^N_1} \left( e^{\lambda \hat{\sigma}_r^N}\right) 1_{1 < \sigma_{\partial}^N}\right) &= \EE_y \left( \EE_{\tilde{X}^N_1} \left( e^{\lambda \hat{\sigma}_r^N}\right) 1_{1 < \sigma^N_{\partial}}(1_{\tilde{X}^N_1 \in K} + 1_{\tilde{X}^N_1 \in K^c})\right)\\
&\leq 1 + \EE_y \left( \EE_{\tilde{X}^N_1} \left( e^{\lambda \hat{\sigma}_r^N}\right) 1_{1 < \sigma_{\partial}^N}1_{\tilde{X}^N_1 \in K^c})\right).
\end{split}
\end{equation*}
Let $a \doteq \max_i||F_i||_{\infty}$ and $U \sim \text{Poi}(Nad)$. Then with $c(\la_0)$ as in Lemma \ref{lem:expmoment}
we have
\begin{equation*}
\begin{split}
 \EE_y \left( \EE_{\tilde{X}_1^N} \left( e^{\lambda \hat{\sigma}_r^N}\right) 1_{1 < \sigma_{\partial}^N}1_{\tilde{X}^N_1 \in K^c})\right) &\leq \EE_y \left( c(\lambda_0)e^{\tilde{X}_1^N \cdot 1} 1_{1 < \sigma_{\partial}^N}1_{\tilde{X}_1^N \in K^c}\right)\\
&\leq c(\lambda_0) \EE_y\left( e^{y\cdot 1 + \frac{1}{N}U}\right)\\
&= e^{y\cdot 1} c(\lambda_0)  e^{dNa(e^{\frac{1}{N}}-1)}.
\end{split}
\end{equation*}
Since $\sup_{N\in \N} N(e^{1/N}-1) \le e$, the result follows.
\end{proof}
The following lemma will be used to verify condition (B2)(d) of Theorem \ref{thm:foster}.

\begin{lemma}\label{lem:uniftheta2}
There exists $r_1 \in (0,\infty)$ such that  $$\theta_2 \doteq \inf_{r\ge r_1}\inf\limits_{N\in \N} \inf\limits_{x\in K_r^N}\PP_x(\tilde{X}^N_1 \in K_r^N;\; {\sigma}^N_{\partial} > 1) > 0.$$
Furthermore, for each $N\in \NN$, there exists $r_1 \in (0,\infty)$ such that
$$\theta_2(N) \doteq \inf_{r\ge r_1} \inf\limits_{x\in K_r^N}\PP_x({X}^N_1 \in K_r^N;\; {\tau}^N_{\partial} > 1) > 0.$$
\end{lemma}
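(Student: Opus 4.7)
The two assertions will be handled separately; I begin with the pointwise-in-$N$ statement since it follows from a single-step event. Fix $N$ and $x \in K_r^N$, writing $s_i \doteq Nx_i \ge 1$, and consider
\[
A_x \doteq \{U^{(1)} \equiv 0\} \cap \bigcap_{i=1}^d \{V^{(1),i} \le s_i - 1\},
\]
where $(U^{(1)}, V^{(1)})$ are the Poisson and Binomial variables driving the first step of $X^N$. On $A_x$ no births occur and at least one particle of each type survives, so $X^N_1 \cdot 1 \le x\cdot 1 \le r$ and $X^{N,i}_1 \ge 1/N$ for each $i$; hence $X^N_1 \in K_r^N$ and $\tau^N_\partial > 1$. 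Independence of $U^{(1)}$ from $V^{(1)}$ together with $\|F_i\|_\infty \le a$ componentwise give
\[
\PP_x(A_x) = e^{-F(x)\cdot 1}\prod_{i=1}^d \bigl[1 - (1/N)^{s_i}\bigr] \ge e^{-ad}(1-1/N)^d,
\]
which is uniformly positive over $r$ and $x \in K_r^N$ for every $N \ge 2$. The isolated case $N=1$ reduces to a finite infimum over $K_r^1 \cap \N^d$; on this set positivity of each $F_i$ (a consequence of Assumption \ref{assu:ap-classesfinite}(c) close to $\partial\Delta$ together with continuity/compactness away from it) makes the infimum positive.

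For the first (uniform-in-$N$) assertion, I will decompose the target event as $\{X^N_N \cdot 1 \le r\} \cap \{\tau^N_\partial > N\}$ and lower-bound each piece uniformly in $N$ and in $x \in K_r^N$, then combine via Bonferroni. The total-mass process $S^N_k \doteq X^N_k \cdot 1$ has the one-step drift $\EE[S^N_{k+1} - S^N_k \mid \mathcal{F}_k] \le (ad - S^N_k)/N$, which iterates to $\EE_x[S^N_N] \le e^{-1}r + ad$ for $x \in K_r^N$. Markov's inequality then gives $\PP_x(S^N_N > r) \le e^{-1} + ad/r$, which is bounded strictly below $1$ uniformly in $N$ and $x$ once $r \ge r_1$ is chosen sufficiently large.

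The principal technical obstacle is the uniform lower bound on $\PP_x(\tau^N_\partial > N)$. For $x$ bounded away from $\partial\Delta$, i.e., $x \in K_r \cap \clv_\alpha$ for some $\alpha > 0$, Lemma \ref{lem:expineq} applied with $K = K_r \cap \clv_\alpha$ yields $\PP_x(\tau^N_\partial \le N) \le \exp(-cN)$. The hard case $x \in K_r^N$ close to $\partial \Delta$ I will handle via the repelling drift of Assumption \ref{assu:ap-classesfinite}(c): since $F_i(x) \ge (m+1)x_i$ when $x_i \le \epsilon$, the integer coordinate $s^{(k)}_i \doteq NX^{N,i}_k$ satisfies $\EE[s^{(k+1)}_i \mid \mathcal{F}_k] \ge (1 + m/N)s^{(k)}_i$ as long as $s^{(k)}_i \le N\epsilon$. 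An exponential-moment computation on the Poisson and Binomial generating functions, expanded near $\lambda=0$, shows that $e^{-\lambda s^{(k)}_i}$ is a supermartingale on the regime $\{s^{(k)}_i \le N\epsilon\}$ for every $\lambda \in (0,\log(m+1))$, uniformly in $N$ and in the random environment $(X^N_k)$. Optional stopping at the first exit from $\{1,\ldots,\lceil N\epsilon\rceil\}$ then yields $\PP_x(\text{coordinate } i \text{ ever hits } 0) \le e^{-\lambda s^{(0)}_i} \le e^{-\lambda}$, and a union bound over $i$ produces $\PP_x(\tau^N_\partial \le N) \le d e^{-\lambda}$. Combining this bound with the total-mass control via Bonferroni gives $\theta_2 > 0$ once $r_1$ and $\lambda$ are suitably chosen; the main delicacy lies in the parameter regime where the union bound across coordinates is tight, which if necessary can be refined by tracking the chain jointly rather than marginally (using positive correlations coming from the shared total-mass bound).
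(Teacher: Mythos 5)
Your proof of the first (uniform-in-$N$) statement has a genuine gap in the survival lower bound, which is the crux of the lemma. The paper establishes $\sup_{N\ge 2}\sup_{x\in\Delta^o_N}\PP_x(\tau^N_\partial \le N) \le \alpha_0 < 1$ by exploiting the independent-particle structure of the Binomial-Poisson model directly: each initial particle survives $N$ steps with probability $(1-1/N)^N \ge 1/4$, so the event that at least one initial particle of every type survives has probability bounded away from $0$ uniformly in $N$, with no dependence on $F$, $m$, or $d$. Your drift/supermartingale route does not produce such a bound. First, the optional-stopping inequality as stated only controls $\PP(\text{coord } i \text{ hits } 0 \text{ before exceeding } N\epsilon)$, not $\PP(\text{coord } i \text{ hits } 0 \text{ within } N \text{ steps})$: once the coordinate exits the repelling regime you lose the supermartingale property and have no control over excursions that later return to $0$. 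Second, and more fatally, even granting the claimed $\PP_x(\tau^N_\partial \le N) \le d e^{-\lambda}$, the constraint $\lambda \in (0,\log(1+m))$ forces $d e^{-\lambda} > d/(1+m)$, which is $\ge 1$ whenever $m \le d-1$; the assumptions give no relation between $m$ and $d$, so the union bound simply cannot close for general models. You flag this delicacy at the end, but the paper circumvents it entirely by a model-specific argument rather than any refinement of the union bound — that model-specific ingredient is the missing idea here.

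Your total-mass control by one-step drift plus Markov's inequality is a valid and simpler alternative to the paper's Chernoff bound on partial sums of the increment process, and the single-step event $A_x$ is a clean construction for the $N$-fixed statement when $N\ge 2$. But $A_x$ is empty for $N=1$ (where $V^{(1),i}\sim\mbox{Bin}(x_i,1)$ is deterministically $x_i$), and your proposed fix rests on $F_i(x)>0$ for all $i$ and all lattice $x$, which Assumption \ref{assu:ap-classesfinite}(c) gives only near $\partial\Delta$; away from the boundary the assumptions do not rule out $F_i=0$, so that case is not actually resolved by the stated handwave and would need either an explicit positivity hypothesis or a genuinely separate argument.
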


\begin{proof}
	Once again, we only prove the first statement.
Consider, for $z \in \N/N$, a collection of $Nz$ particles of a single type, where each particle, independently of all other particles, has a $\frac{1}{N}$ chance of dying at each time step. Then the probability that all $Nz$ particles are dead in  $N$ time steps is
$$
p(z,N) \doteq \left(1 - \left(1 - \frac{1}{N}\right)^N\right)^{Nz}.
$$
Note that for any $K \subset \Delta^o$,  $x \in K \cap \Delta_N$ and $N \geq 1$, $\min\limits_{1 \leq i \leq d}x_i  \geq \frac{1}{N}$, and so
$$
p(x\cdot 1,N) \leq p(dN^{-1},N) = \left[1 - \left( 1 - \frac{1}{N}\right)^N\right]^d.
$$
In particular,
\begin{equation*}
\begin{split}
\PP_x( \sigma_{\partial}^N\leq 1) = \PP_x(\tau_{\partial}^N \leq N) & \le  \left[1 - \left(1 - \frac{1}{N}\right)^{N}\right]^d,
\end{split}
\end{equation*}
and
so for any $K \subset \Delta^o$
$$
\sup\limits_{N> 1}\sup\limits_{x\in K}\PP_x(\tau_{\partial}^N \leq N) \leq (1 - e^{-2})^d \doteq \alpha_0.
$$
Thus, for $K \subset \Delta^o$,
\begin{equation*}
\begin{split}
\PP_x(\tilde{X}^N_1 \in K | \sigma^N_{\partial} > 1) &= 1 - \PP_x(\tilde{X}^N_1 \in K^c | \sigma^N_{\partial} > 1) = 1 -  \frac{\PP_x(\tilde{X}_1^N \in K^c;\; \sigma^N_{\partial} > 1)}{\PP_x(\sigma^N_{\partial} > 1)},
\end{split}
\end{equation*}
and
\begin{equation*}
\begin{split}
\sup\limits_{N > 1}\sup\limits_{x\in K}\frac{\PP_x(\tilde{X}^N_1 \in K^c;\; \sigma^N_{\partial} > 1)}{\PP_x(\sigma^N_{\partial} > 1)}
&\leq  \frac{\sup\limits_{N>1}\sup\limits_{x\in K}\PP_x(\tilde{X}^N_1 \in K^c;\; \sigma^N_{\partial} > 1)}{ 1-\alpha_0}.
\end{split}
\end{equation*}
We will now argue that for some $r_1\in (0,\infty)$ 
\begin{equation}\label{stricknr}
\sup_{r\ge r_1}\sup\limits_{N>1}\sup\limits_{x\in K^N_r}\PP_x(\tilde{X}^N_1 \in (K^N_r)^c;\; \sigma^N_{\partial} > 1) < 1-\alpha_0.
\end{equation}

Fix $r>0$ and let $x \in K^N_r$. As before,  let $a= \max_i\|F_i\|_{\infty}$. Fix $k \in \N$ and define
for $a_1 \in (0,\infty)$
$$m = m(N,k,a_1)\doteq \max\{1 \le j \le k : X^N_j \cdot 1 \le a_1\}.$$
Let $Y^N_k \doteq X^N_k\cdot 1$. Then
\begin{align*}
	Y^N_{k} &= Y^N_m + \frac{1}{N} \sum_{j=m+1}^k \eta^{N}_{j} (X^N_k)\cdot 1 \le_d a_1 + \frac{1}{N} \max_{\{1\le l \le k\}}\sum_{j=l}^k (U_j -V_j),
\end{align*}
where $U_j$ are iid $\mbox{Poi} (ad)$, $V_j$ are iid $\mbox{Bin}(Na_1, 1/N)$, and $\{U_j, V_j', j, j' \in \N\}$ are mutually independent. For $a_2>a_1$
\begin{align*}
	\PP_x(Y^N_{k}\ge a_2) \le \PP\left(\max_{\{1\le l \le k\}} \sum_{j=l}^k (U_j -V_j) \ge N(a_2-a_1)\right) \le \sum_{l=1}^k\PP\left(\sum_{j=l}^k (U_j -V_j) \ge N(a_2-a_1)\right)
\end{align*}
Thus for each $\gamma>0$, by Markov's inequality,
\begin{align*}
	\PP_x(Y^N_{k}\ge a_2) &\le e^{-\gamma N(a_2-a_1)}\sum_{l=1}^k [\EE e^{\gamma  U_1}]^{(k-l+1)} [\EE e^{-\gamma  V_1}]^{(k-l+1)}\\
	&= e^{-\gamma N(a_2 - a_1)} \frac{\EE e^{\gamma U_1}\EE e^{-\gamma V_1}\left(1 -  \left(\EE e^{\gamma U_1} \EE e^{-\gamma V_1}\right)^k \right)}{1 - \EE e^{\gamma U_1}\EE e^{-\gamma V_1} }.
\end{align*}

Note that for each $N \geq 1$, 
\begin{equation*}
\begin{split}
\left(\EE e^{\gamma  U_1} \EE e^{-\gamma  V_1}\right) &= e^{ad (e^{\gamma} -1)} \left(1 - \frac{1}{N} +  \frac{1}{Ne^{\gamma}}\right)^{Na_1} \leq e^{ad (e^{\gamma} - 1)} e^{-a_1(1-e^{-\gamma})}.
\end{split}
\end{equation*}
Let $r_1$ be large enough so that $e^{ad (e^{\gamma} - 1)} e^{r_1(e^{-\gamma} - 1)/2} < \frac{1}{2}$ and $r_1> -2\frac{\log (1-\alpha_0)}{\gamma}$. If we fix $r\ge r_1$ and let $a_2=r$ and $a_1= r/2$, then
\begin{equation*}
\begin{split}
\PP_x(Y^N_{k}\ge r)  &\le e^{-\gamma N\frac{r}{2}} \le e^{-\gamma N\frac{r_1}{2}} < (1-\alpha_0)^N \leq (1-\alpha_0).
\end{split}
\end{equation*}
This proves \eqref{stricknr} and hence 
$$\inf_{r\ge r_1}\inf\limits_{N>1}\inf\limits_{x\in K^N_r}
\PP_x(\tilde{X}^N_1 \in K^N_r \mid \sigma^N_{\partial} > 1) \doteq c_0 >0.$$
Finally, for all $N>1$, $r\ge r_1$, and $x \in K^N_r$
$$
\PP_x(\tilde{X}^N_1 \in K^N_r;\; \sigma^N_{\partial} > 1) =
\PP_x(\tilde{X}^N_1 \in K^N_r\mid \sigma^N_{\partial} > 1)\PP_x(\sigma^N_{\partial} > 1) \ge c_0(1-\alpha_0) >0.
$$
The result follows.
\end{proof}

Denote by $\clq^N$ the collection of all $\mu \in \clp(\Delta^o_N)$ such that for every $c\in (0,\infty)$, there exists a $r \in (0, \infty)$ such that
$\EE_{\mu}(e^{c\hat \sigma^N_r}) <\infty$.
The following result gives the existence of QSD for the chain $X^N$ for each $N$ and provides an important characterization of these QSD.

\begin{theorem}\label{lem:qsdcond}
	There is a probability measure $\mu_N$ on $\Delta^o_N$ such that for all $x_N \in \Delta^o_N$,
	$ \frac{\delta_{x_N} P_n^N}{\delta_{x_N} P_n^N(1_{\Delta_N^o})} \to \mu_N$ in the total variation distance.
For each $N \in \N$, the measure $\mu_N$  is a QSD for $\{{X}_n^N\}$. It is the unique QSD for $\{{X}_n^N\}$ that 
belongs to $\clq^N$. 
\end{theorem}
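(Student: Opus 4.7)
The plan is to apply Theorem \ref{thm:foster} (the Champagnat--Villemonais Foster criterion) to the chain $\{X^N_n\}$ for each fixed $N$, with Lyapunov functions built from the exponential-moment bounds of the preceding lemmas. Fix $N$. Using the second assertion of Lemma \ref{lem:uniftheta2} I would pick $r_1$ so that $\theta_2(N) \doteq \inf_{x\in K^N_{r_1}}\PP_x(X_1^N \in K^N_{r_1};\, \tau^N_\partial > 1) > 0$, then select $\lambda > 0$ large enough that $e^{-\lambda/N} < \theta_2(N)$, and apply Lemma \ref{lem:expmoment} with $\lambda_0 = \lambda$ to obtain a threshold $r_0$. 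I then set $r \geq \max(r_0,r_1)$, $K \doteq K^N_r$, $\varphi_1(x) \doteq \EE_x[e^{(\lambda/N)\hat\tau^N_r}]$, and $\varphi_2 \doteq \mathbf{1}_K$.

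Conditions (B2)(a), (b), (d) follow directly: $\varphi_1 \ge 1$ and $\sup_K \varphi_1 \le e^r c(\lambda_0) < \infty$ by Lemma \ref{lem:expmoment}, $\varphi_2 \in [0,1]$ with $\inf_K\varphi_2 = 1$, and (B2)(d) is the content of Lemma \ref{lem:uniftheta2}. The critical step is (B2)(c): a one-step strong Markov decomposition gives
\[
\varphi_1(x) = e^{\lambda/N}\PP_x(\tau^N_\partial = 1) + e^{\lambda/N}\,P_1^N\varphi_1(x)
\]
for $x\in\Delta^o_N\setminus K$ (using $\hat\tau^N_r\ge 1$ there), whence $P_1^N\varphi_1 \le e^{-\lambda/N}\varphi_1$ off $K$; on $K$, $\varphi_1\equiv 1$ and Lemma \ref{lem:cfinite} supplies a finite uniform bound on $\sup_{x\in K}P_1^N\varphi_1(x)$, completing (B2)(c) with $\theta_1 = e^{-\lambda/N} < \theta_2(N)$. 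Condition (B1) reduces to the observation that $\PP_x(X^N_1 = x) > 0$ for $x\in\Delta^o_N$ (zero births and zero deaths have positive probability under $\theta^{N,*}$), so $n_2(x)=1$ works. For \eqref{eqn:prop3.1}, finiteness of $K$ together with Assumption \ref{assu:irrbdr}(a) produces, for $m_0$ large enough, a uniform lower bound $\inf_{y,z\in K}\PP_y(X^N_{m_0} = z) \ge \eps > 0$; hence $\PP_x(X^N_{n_0}\in A\cap K) \le |A\cap K| \le \eps^{-1}\PP_y(X^N_{m_0}\in A)$ for any $n_0\le m_0$, $x\in\Delta^o_N$, $y\in K$.

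Theorem \ref{thm:foster} then produces $\mu_N\in\clp(\Delta^o_N)$ which is a QSD and for which $\|\mu P_n^N/(\mu P_n^N)(1_{\Delta^o_N}) - \mu_N\|_{TV} \to 0$ whenever $\mu(\varphi_1)<\infty$ and $\mu(\varphi_2)>0$. For $x_N\in K$, $\delta_{x_N}$ satisfies both conditions and the stated convergence follows directly; for $x_N\notin K$, I would invoke Assumption \ref{assu:irrbdr}(a) to find $n_1$ with $\PP_{x_N}(X^N_{n_1}\in K;\, \tau^N_\partial > n_1) > 0$, apply the theorem with $\nu \doteq \delta_{x_N}P_{n_1}^N/(\delta_{x_N}P_{n_1}^N)(1_{\Delta^o_N})$ (for which $\nu(\varphi_1)<\infty$ thanks to iterated one-step bounds $\EE_x e^{X^N_1\cdot 1} \le e^{x\cdot 1}e^{d\|F\|_\infty(e^{1/N}-1)}$), and use the semigroup identity $\delta_{x_N}P_{n_1+m}^N = (\delta_{x_N}P_{n_1}^N)P_m^N$ to transfer the convergence back to $\delta_{x_N}$. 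For uniqueness in $\clq^N$: any QSD $\mu\in\clq^N$ satisfies, on enlarging $r$, both $\mu(K^N_r)>0$ (since $\Delta^o_N = \bigcup_r K^N_r$) and $\mu(\varphi_1)<\infty$ (via $\hat\tau^N_r \le N\hat\sigma^N_r$ and monotonicity of $\hat\sigma^N_r$ in $r$), so the uniqueness clause of Theorem \ref{thm:foster} applied with this enlarged $r$ identifies $\mu$ with the same $\mu_N$, the latter being independent of $(r,\lambda)$ since it is characterized by the TV limit. The main obstacle is the delicate coordination in (B2)(c): the exponential-moment definition of $\varphi_1$ is engineered so that the strong-Markov recursion yields a strict contraction off $K$ while Lemma \ref{lem:cfinite} controls $P_1^N\varphi_1$ uniformly on $K$, and the parameters $\lambda$ and $r$ must be chosen compatibly with $\theta_2(N)$ and the thresholds of Lemmas \ref{lem:expmoment} and \ref{lem:uniftheta2}.
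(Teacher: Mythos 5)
Your proposal is correct and takes essentially the same route as the paper: both invoke Theorem \ref{thm:foster} with $\varphi_1$ defined as an exponential moment of the truncated hitting time $\hat\tau^N_r$ (Lemma \ref{lem:expmoment} for finiteness, Lemma \ref{lem:cfinite} for the bound on $K$, the strong Markov decomposition for (B2)(c)), $\varphi_2 = \mathbf{1}_K$ with (B2)(d) from Lemma \ref{lem:uniftheta2}, and then a ``vary $r$'' argument to place an arbitrary $\mu\in\clq^N$ into the scope of the uniqueness clause. The only cosmetic differences are that the paper verifies \eqref{eqn:prop3.1} directly from $\inf_{x,y\in K}\PP_y(X^N_1=x)>0$ with $n_0=m_0=1$ (a feature of the Binomial-Poisson kernel), rather than via Assumption \ref{assu:irrbdr}(a) plus aperiodicity, and it extends the total-variation convergence to all $x_N\in\Delta^o_N$ simply by enlarging $K$ to contain $x_N$ rather than by shifting the initial distribution through the semigroup.
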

\begin{proof}
Fix $N \in \N$ and let $r_1 \in (0,\infty)$ and $\theta_2 \in (0, 1]$ be as in the second statement in Lemma \ref{lem:uniftheta2}. Fix $r_2\ge r_1$, let
%
%
$K= K_{r_2}^N$ and define $\varphi_2 : \Delta^o_N \rightarrow \R_+$ by $\varphi_2(x) \doteq 1_{K}(x)$.

Fix an arbitrary $\theta_1 \in (0, \theta_2)$. From Lemma \ref{lem:expmoment} there is a $r_3 > r_2$ such that for any fixed $r\ge r_3$
\begin{equation}\label{eq:eqphi1x}
\varphi_1(x) \doteq \EE_x\left( \theta_1^{ - \hat{\tau}^N_{r}}\right) < \infty \text{ for all } x \in \Delta^o_N.
\end{equation}
We now verify the conditions of Theorem \ref{thm:foster} with the above choice of $K, \varphi_1, \varphi_2, \theta_1$ and $\theta_2$.
It is clear that condition (B1)  is satisfied with $n_2(x)=1$.
Also, (B2)(b) is satisfied, since $\varphi_2(x) = 1$ for each $x \in K$.
Since $\theta_1 \in (0,1)$,  $\inf\limits_{x\in \Delta^o_N} \varphi_1(x) \geq 1$. Also, since $K \subset K_{r}^N$, $\sup\limits_{x\in K}\varphi_1(x) = 1$, and so (B2)(a) holds.
 Next, an application of Lemma \ref{lem:cfinite} and Markov property show that (B2)(c) holds with
\begin{equation*}
\begin{split}
c_2 \doteq \sup\limits_{y \in K} \EE_y\left( \varphi_1({X}_1^N)1_{\{\sigma_{\partial}^N>1\}} \right).
\end{split}
\end{equation*}
Finally the validity of (B2)(d) follows from Lemma \ref{lem:uniftheta2}.

  Also, since
$$\inf_{x,y \in K} \PP_y({X}_1^N=x) \doteq \kappa_1>0,$$
the inequality in \eqref{eqn:prop3.1} is satisfied with $C= \kappa_1^{-1}$.  
Thus, from Theorem \ref{thm:foster} it follows that there exists a QSD $\mu_N$ for $\{{X}_n^N\}$ that satisfies 
\begin{equation}\label{eq:355}
	\EE_{\mu_N}(\theta_1^{ - \hat{\tau}^N_{r}}) <\infty, \mbox{ and } \mu_N = \lim_{n\to \infty}  \frac{\delta_{x_N} P_n^N}{\delta_{x_N} P_n^N(1_{\Delta_N^o})}, \; \mbox{ for any } x_N \in K.
\end{equation}
We now show that $\mu_N \in \clq^N$.  
Fix $c \in (0, \infty)$.  Let $\varphi_2$, $\theta_2$ and $K$ be as above. Choose $\theta_1^* \in (0, \theta_2 \wedge e^{-c})$. From the second statement in Lemma \ref{lem:expmoment} 
there exists a $r_4> r_3$ such that 
$$
\tilde\varphi_1(x) \doteq \EE_x\left( (\theta_1^*)^{ - \hat{\tau}^N_{r_4}}\right) < \infty \text{ for all }x \in \Delta^o_N.$$
Then from the previous argument, there is a QSD $\tilde \mu_N$ for $\{{X}_n^N\}$ such that 
$$\EE_{\tilde \mu_N}(e^{ c \hat{\tau}^N_{r_4}}) \le \EE_{\tilde \mu_N}((\theta_1^*)^{ - \hat{\tau}^N_{r_4}}) <\infty.$$
and
$$\tilde \mu_N = \lim_{n\to \infty}  \frac{\delta_{x_N} P_n^N}{\delta_{x_N} P_n^N(1_{\Delta_N^o})}, \mbox{ for any } x_N \in K.$$
From \eqref{eq:355} we now see that  $\mu_N= \tilde \mu_N$ and that $\EE_{ \mu_N}(e^{ c \hat{\tau}^N_{r_4}}) <\infty$. Since $c>0$ is arbitrary, it follows that $\mu_N \in \clq^N$.
Also, since $r_2\ge r_1$ is arbitrary, we see (by choosing a larger $K$ if needed) that the convergence in \eqref{eq:355} holds for all $x_N \in \Delta^o_N$.

Finally we argue uniqueness. Let $\tilde \mu_N \in \clq^N$ be a QSD for $\{{X}_n^N\}$. 
Choose $r_5\ge r_1$ such that $\tilde \mu_N(K^N_{r_5}) >0$.
Consider $\tilde K = K^N_{r_5}$ and $\tilde \varphi_2 = 1_{\tilde K}$.
Fix $ \theta_1 \in (0,  \theta_2)$ and let $r> r_5$ be such that
$$\EE_{\tilde \mu_N}(( \theta_1)^{- \hat{\tau}^N_{r}}) < \infty, \mbox{ and } \EE_{x}(( \theta_1)^{- \hat{\tau}^N_{r}}) < \infty
\mbox{ for all } x \in \Delta_N^o.$$
Then by the previous argument (and Theorem \ref{thm:foster}) 
$$\EE_{\mu_N}(( \theta_1)^{- \hat{\tau}^N_{r}}) < \infty \mbox{ and } \mu_N(\tilde K) >0.$$
But since the above two properties are also satisfied by $\tilde \mu_N$, from Theorem \ref{thm:foster} we must have $\mu_N= \tilde \mu_N$.
\end{proof}

\subsection{Tightness of Quasi-Stationary Distributions}\label{sec:verassu6}

We now prove the tightness of the sequence of QSD $\{\mu_N\}$ given in Theorem \ref{lem:qsdcond}.

\begin{theorem}\label{thm:tight}
Let for $N\in \NN$, $\mu_N$ be as given in Lemma \ref{lem:qsdcond}. Then, the sequence $\{\mu_N\}$ is tight.
\end{theorem}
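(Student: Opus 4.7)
The plan is to establish a Foster--Lyapunov drift inequality for the $N$-step sub-Markov kernel $P^N_N$ that is uniform in $N$, combine it with the QSD identity $\mu_N P^N_N = \lambda_N \mu_N$ and the asymptotic $\lambda_N \to 1$ from Theorem \ref{thm:eigentheorem}, and thereby obtain a uniform first-moment bound on $\mu_N$. Tightness on $\Delta$ then follows by Markov's inequality.

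I will take $V(x) \doteq 1 + x \cdot 1$. The key step is to prove the uniform $N$-step drift estimate $\EE_x[V(\tilde X^N_1)] \le V(x)/e + C$ for a constant $C$ independent of $N$. Using the Poisson--Binomial structure of $\eta^{N,*}$, the conditional one-step drift of $x \cdot 1$ is $(1/N)(\sum_i F_i(x) - x \cdot 1)$, and since $F$ is bounded (by some $a$) and the negative increments $V_i$ are nonnegative,
\[ \EE[X^N_{k+1} \cdot 1 \mid X^N_k = x] \le (1 - 1/N) \, x \cdot 1 + ad/N. \]
Iterating this $N$ times by the tower property and using the elementary bound $(1-1/N)^N \le 1/e$ yields $\EE_x[\tilde X^N_1 \cdot 1] \le x \cdot 1/e + ad$, hence $\EE_x[V(\tilde X^N_1)] \le V(x)/e + C$ with $C$ depending only on $a, d$. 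The same bound holds for the sub-Markov operator $P^N_N$ since $P^N_N f \le \EE_{\cdot}[f(\tilde X^N_1)]$ for $f \ge 0$.

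Granted $\mu_N(V) < \infty$ for each $N$, the QSD identity combined with this drift estimate gives
\[ \lambda_N \mu_N(V) = \mu_N(P^N_N V) \le \mu_N(V)/e + C, \]
so $\mu_N(V) \le C/(\lambda_N - 1/e)$. Theorem \ref{thm:eigentheorem} provides $\lambda_N \ge 1 - a_1 e^{-c_1 N}$, whence $\lambda_N - 1/e \ge (1 - 1/e)/2$ for all $N$ beyond some $N_0$, giving the uniform bound $\sup_{N \ge N_0} \mu_N(V) < \infty$; the finitely many small-$N$ measures contribute finite moments individually. Markov's inequality then shows $\sup_N \mu_N(\{x : \|x\| > R\}) \to 0$ as $R \to \infty$, which is the desired tightness.

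The main obstacle is justifying the a priori finiteness $\mu_N(V) < \infty$ for each fixed $N$, needed to activate the Foster--Lyapunov step. I will extract this from the existence argument in Lemma \ref{lem:qsdcond}, which produces $\mu_N(\varphi_1) < \infty$ for $\varphi_1(x) = \EE_x[\theta_1^{-\hat{\tau}^N_r}]$ with any $\theta_1 \in (0, \theta_2)$. The contractive drift in Assumption \ref{assu:ap-classesfinite}(e) implies that the chain started at $x$ with $x \cdot 1$ large needs at least $\asymp N \log(x \cdot 1 / r)$ steps to enter $K^N_r$ with positive probability; a concentration bound based on Lemma \ref{lem:expmoment} then yields $\varphi_1(x) \gtrsim (x \cdot 1)^{N \log(1/\theta_1)/2}$ for $x \cdot 1$ large. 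Choosing $\theta_1 \in (0, \theta_2)$ small enough that $N \log(1/\theta_1) \ge 2$ for all $N \ge 1$ (possible since $\theta_2 > 0$), one has $\varphi_1 \gtrsim V$, so $\mu_N(V) \le c^{-1}\mu_N(\varphi_1) < \infty$ for every $N$.
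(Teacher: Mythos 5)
Your main Foster--Lyapunov idea is clean and the drift calculation is correct: with $V(x) = 1 + x\cdot 1$, the Poisson--Binomial structure gives $\EE[X^N_{k+1}\cdot 1 \mid X^N_k = x] = (1-1/N)x\cdot 1 + F(x)\cdot 1/N$, and iterating yields $P^N_N V(x) \le \EE_x[V(\tilde X^N_1)] \le e^{-1}V(x) + C$ with $C$ depending only on $\|F\|_\infty$ and $d$. Combined with the QSD identity $\mu_N(P^N_N V) = \lambda_N \mu_N(V)$ (extended to nonnegative $V$ by monotone convergence) and the bound $\lambda_N \to 1$ from Theorem~\ref{thm:eigentheorem}, this would give $\sup_{N\ge N_0}\mu_N(V) < \infty$ and hence tightness --- \emph{provided} $\mu_N(V) < \infty$ is known a priori.

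That a priori finiteness is where the argument has a genuine gap, and you correctly identified it as the main obstacle, but the sketch you offer to close it does not work. You claim $\varphi_1(x) = \EE_x[\theta_1^{-\hat\tau^N_r}] \gtrsim (x\cdot 1)^{N\log(1/\theta_1)/2}$ because a chain started from large $x\cdot 1$ ``needs at least $\asymp N\log(x\cdot 1/r)$ steps to enter $K^N_r$.'' But $\hat\tau^N_r = \tau^N_r \wedge \tau^N_\partial$, and $K^N_r \subset \Delta^o_N$ is disjoint from $\partial\Delta_N$: absorption is a separate exit. Consider $x$ with $x\cdot 1$ arbitrarily large but $\min_i x_i = 1/N$. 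Then $\PP_x(X^{N,1}_1 = 0) = e^{-F_1(x)}/N > 0$ uniformly in $x\cdot 1$, so $\hat\tau^N_r$ can equal $1$ with probability bounded below, and in general the chain can be absorbed long before $x\cdot 1$ has had time to contract down to $r$. So the asserted lower bound on $\varphi_1$ is not uniform over $x$ with $x\cdot 1$ large; it visibly fails near the lateral faces of the orthant. Since nothing in the construction rules out $\mu_N$ placing mass on such points, $\mu_N(\varphi_1) < \infty$ does not transfer to $\mu_N(V) < \infty$ by this route, and you cannot ``activate'' the Foster--Lyapunov step.

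The paper sidesteps this difficulty entirely by never working with $\mu_N$ directly. Instead it proves a moment bound, uniform in both $n$ and $N$, on the conditioned distributions $\nu^N_n \doteq \delta_{x_N}\tilde P^N_n / \delta_{x_N}\tilde P^N_n(1_{\Delta^o})$ started from a fixed interior point $x_N$; each $\nu^N_n$ trivially has all moments because the one-step increments have exponential moments, so no a priori finiteness issue arises. The uniform bound is obtained by coupling $X^N_k\cdot 1$ from above by a one-dimensional ``Poisson birth, binomial death'' chain $Z^N_k$, using the Champagnat--Villemonais-style Lyapunov pair ($\varphi_1^N$ built from exponential moments of $\hat\sigma^N_r$ for $Z$, and $\varphi_2 = 1_{K_r}$), and iterating separate drift inequalities for the numerator $\EE(\varphi_1^N(\tilde Z^N_n)1_{\sigma_\partial > n})$ and the denominator $\EE(\varphi_2(\tilde X^N_n)1_{\sigma_\partial > n})$. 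The uniform bound then passes to $\mu_N = \lim_n \nu^N_n$. Your approach would recover the result more cheaply if you could establish $\mu_N(V) < \infty$ for each fixed $N$ --- for instance by proving $\sup_n \nu^N_n(V) < \infty$ (not necessarily uniformly in $N$) and invoking lower semicontinuity of $V$ under the total-variation limit --- but as written that step is missing, and the route you propose via $\varphi_1 \gtrsim V$ does not go through.
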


\begin{proof}
	Recall the definition of $P_n^N$ from \eqref{eq:pnnf} and let $\tilde P_n^N \doteq P^N_{nN}$.
From Lemma \ref{lem:qsdcond}, 
for all $x_N \in \Delta^o_N$
$$\lim_{n\to \infty}\frac{\delta_{x_N} \tilde P_{n}^N}{\delta_{x_N} \tilde P_{n}^N (1_{\Delta^o})} = \mu_N.$$

Thus in order to show that the sequence $\{\mu_N\}$ is tight it suffices to show that the collection
\begin{equation}\left\{\frac{\delta_{x_N} \tilde P_{n}^N}{\delta_{x_N} \tilde P_{n}^N (1_{\Delta^o})}, n,N \in \N\right\}\label{eq:tightnm}\end{equation}
is tight for some sequence $\{x_N\}$, where $x_N \in \Delta^o_N$ for each $N$.
For this it suffices to show that for every $\veps>0$, there is a $L_1\in (0,\infty)$ such that
$$\sup\limits_{N\in \N}\sup_{n\in \N} \PP_{x_N}(\tilde{X}_n^N \cdot 1 \ge L_1 \mid \sigma_{\partial}^N > n) \le \veps.$$

From Lemma \ref{lem:uniftheta2}, for all $r\ge r_1$
$$
\theta_2^r \doteq \inf\limits_{N\geq 1}\inf\limits_{x\in K_r} \PP_x(\tilde{X}_1^N \in K_r ; \sigma^N_{\partial} > N) \ge \theta_2 > 0,
$$
so for every $r\ge r_1$, with $\varphi_2^r(x) = \varphi_2(x) \doteq  1_{K_r}(x)$, for each $N \in \N$,
$$P_1^N\varphi_2^r(x) \ge \theta_2\varphi_2^r(x) \mbox{ for all } x \in \Delta^o_N.$$

Recall that  $a = \max_{1\leq i \leq d}\|F_i\|_{\infty} <\infty$ and $\tilde X^N_k = X^N_{Nk}$ for $k \in \N$.
We now consider a coupling between the sequence of $d$-dimensional random variables $\{X^n_k\}$ and a sequence $\{Z^N_k\}$ of $\N/N$-valued random variables that preserves certain monotonicity properties.
Note that $\{X^n_k\}$ can be constructed as follows.
Consider a collection of iid random fields
$\{(U^N_k(x), V^N_k(x)), x \in \Delta^o_N\}_{k\in \N}$ where $U^N_k(x)$ is a $d$-dimensional random variable with mutually independent coordinates distributed as Poisson random variables with means
$F^N_i(x)$, $i \in \{1, \ldots, d\}$, and $V^N_k(x)$ is a $d$-dimensional random variable, independent of $U^N_k(x)$, of mutually independent Binomial random variables with parameters $(N x_i, 1/N)$, $i \in \{ 1, \ldots, d\}$.
Then
\begin{equation}\label{eq:bmc2}
\begin{aligned}
	X_{k+1}^N &= X_k^N + \frac{1}{N} (U^N_{k+1}(X_k^N) - V^N_k(X_k^N)), \; k \in \N_0,
	X_0^N &= x_N.
\end{aligned}
\end{equation}
gives a construction for the Markov chain $\{X^n_k\}$. 
Then we can construct, along with the above iid random fields, iid fields
$\{(A^N_k(z), B^N_k(z)); z \in \N/N\}_{k\in \N}$ such that
$$A^N_k(x\cdot 1) \sim \mbox{Poi}(ad - F(x)\cdot 1), \; \mbox{ and }  D^N_k(x) \doteq A^N_k(x\cdot 1) + U^N_k(x)\cdot 1 \sim \mbox{Poi}(ad), \mbox{ for all } x \in \Delta_N^o$$
and
$$B^N_k(z) \sim \mbox{Bin}(Nz, 1/N) \mbox{ and whenever } z \ge x\cdot 1, (B^N_k(z)- V^n_k(x)\cdot 1)\le z - x\cdot 1, \mbox{ for } x \in \Delta_N^o \mbox{ and } z \in \N/N.$$
Define, for $z_N \in \N/N$ with $z_N \ge x_N\cdot 1$,
$$Z^N_{k+1} = Z^N_k + \frac{1}{N} [D^N_k(X^N_k) - B^N_k(Z^N_k)], \; Z^N_0 = z_N.$$
The sequence $Z^N_k$ describes the evolution of the (scaled) population size of a single-type population in which at each time step any particle can die with probability $1/N$ independently of other particles, and 
$\mbox{Poi}(ad)$ new particles are born. Let $Y^N_k \doteq X^N_k \cdot 1$. Then, by construction, $Z^N_k \ge Y^N_k$ for all $k,N$.

Fix $r\ge r_1$ and let $x_N$ be in $K_r \cap \Delta_N$ for each $N$. Also, let $z_N = x_N\cdot 1$.
In order to prove the tightness of the collection in \eqref{eq:tightnm}
 it suffices  to show that for every $\veps>0$, there is a $L_1\in (0,\infty)$ such that
$$\sup\limits_{N\in \NN}\sup_{n\in \NN} \PP_{x_N}(\tilde{X}_n^N \cdot 1 \ge L_1 \mid \sigma_{\partial}^N > n) \le \veps.$$

 Let $\tilde Z_n^N \doteq Z^N_{nN}$ for $n \in \NN_0$, $N\in \NN$, and define
 $$\sigma_r^{N,Z} \doteq \inf\{n\in \NN_0: \tilde Z_n^N \le r\}, \;\; \sigma^{Z,N}_{\partial} \doteq \inf\{n: \tilde Z_n^N=0\}.$$ 
Using similar arguments as in the proofs of Lemmas \ref{lem:expmoment}  and \ref{lem:cfinite} we can assume without loss of generality that $r$ is large enough so that 
there is a $\theta_1 \in (0,\theta_2)$  such that for
$$\varphi_1^N(z) \doteq \EE_z\left(\theta_1^{-(\sigma_r^{N,Z} \wedge \sigma^{Z,N}_{\partial})}\right), \; z \in \NN/N$$
and
$$
C \doteq \sup\limits_{N \geq 1} \sup\limits_{y \in \NN/N, y \le r}\EE_y \left( \EE_{\tilde Z^N_1} \left( \theta_1^{-(\sigma_r^{N,Z} \wedge \sigma_{\partial}^{Z,N})}1_{1 < \sigma_{\partial}^{Z,N}}\right)\right),
$$
we have $C<\infty$ and
$$\EE_z\left(\varphi^N_1(\tilde Z^N_1)1_{\sigma^{Z,N}_{\partial}>1}\right) \le \theta_1 \varphi_1^N(z) + C 1_B(z), z \in \NN/N, N \in \NN.$$
For fixed $L < \infty$, there is a $L_1 \in (r_0, \infty)$ such that for all $z \ge L_1$, we have $\varphi_1^N(z) \ge L$ for all $N \in \NN$.
Then, with $\varphi_2= \varphi_2^{r_0}$


\begin{align*}
\PP_{x_N}(\tilde{X}_{n}^N \cdot 1 \ge L_1 \mid \sigma_{\partial}^N > n) &\le \PP_{z^N}(\tilde Z_n^N \ge L_1 \mid \sigma_{\partial}^N > n) \le \PP(\varphi_1^N(\tilde Z_n^N) \ge L \mid \sigma_{\partial}^N > n)\\
&\le L^{-1} \EE(\varphi_1^N(\tilde Z_n^N)\mid \sigma_{\partial}^N > n) = L^{-1} \frac{\EE\left(\varphi_1^N(\tilde Z_n^N)1_{\sigma_{\partial}^N > n}\right)}{\PP(\sigma_{\partial}^N > n)}\\
&\le L^{-1} \frac{\EE\left(\varphi_1^N(\tilde Z_n^N)1_{\sigma_{\partial}^N > n}\right)}{\EE\left(\varphi_2(\tilde{X}_{n}^N)1_{\sigma_{\partial}^N > n}\right)},
\end{align*}
where the last inequality uses the property $\varphi_2 \le 1$.
Also
\begin{align*}
\EE_{x_N}\left(\varphi_2(\tilde{X}_{n}^N)1_{\sigma_{\partial}^N > n)}\right) &\ge \theta_2 \EE_{x_N}\left(\varphi_2(\tilde{X}_{n-1}^N)1_{\sigma_{\partial}^N > n-1)}\right) 
= \theta_2 \EE_{x_N}\left(1_{[0,r]}(\tilde{X}_{n-1}^N\cdot 1)1_{\sigma_{\partial}^N > n-1}\right),
\end{align*}
and, with $\clf_{n} = \sigma \{\tilde X_k, \tilde Z_k, k \le n\}$,
\begin{align*}
\EE_{x_N}\left(\varphi_1^N(\tilde Z_n^N)1_{\sigma_{\partial}^N > n}\right) &= \EE_{x_N}\left(\EE(\varphi_1^N(\tilde Z_n^N)1_{\sigma_{\partial}^N > n}1_{\sigma_{\partial}^N > n-1} \mid \clf_{n-1})\right)\\
&= \EE_{x_N}\left(\EE_{x_N}(\varphi_1^N(\tilde Z_n^N)1_{\sigma_{\partial}^N > n}\mid \clf_{n-1}) 1_{\sigma_{\partial}^N > n-1} \right)\\
&\le \EE_{x_N}\left(\EE_{x_N}(\varphi_1^N(\tilde Z_n^N)1_{\sigma_{\partial}^{Z,N} > n}\mid \clf_{n-1}) 1_{\sigma_{\partial}^N > n-1} \right)\\
&\le \theta_1 \EE_{x_N}\left(\varphi_1^N(\tilde Z_{n-1}^N) 1_{\sigma_{\partial}^N > n-1} \right) + C \EE_{x_N}\left(1_{[0,r]}(\tilde Z_{n-1}^N)1_{\sigma_{\partial}^N > n-1}\right)\\
&\le \theta_1 \EE_{x_N}\left(\varphi_1^N(\tilde Z_{n-1}^N) 1_{\sigma_{\partial}^N > n-1} \right) + C \EE_{x_N}\left(1_{[0,r]}(\tilde X_{(n-1)}^N\cdot 1)1_{\sigma_{\partial}^N > n-1}\right)
\end{align*}
Thus
\begin{align*}
\frac{\EE_{x_N}\left(\varphi_1^N(Z_n^N)1_{\sigma_{\partial}^N > n}\right)}{\EE_{x_N}\left(\varphi_2(\tilde{X}_{n}^N)1_{\sigma_{\partial}^N > n}\right)} &\le \frac{\theta_1}{\theta_2} \frac{\EE_{x_N}\left(\varphi_1^N(Z_{n-1}^N) 1_{\sigma_{\partial}^N > n-1} \right)}{\EE_{x_N}\left(\varphi_2(\tilde{X}_{n-1}^N) 1_{\sigma_{\partial}^N > n-1} \right)} + \frac{C}{\theta_2}.
\end{align*}
Iterating this inequality
\begin{align*}
\frac{\EE_{x_N}(\varphi_1^N(Z_n^N)1_{\sigma_{\partial}^N > n)}}{\EE_{x_N}(\varphi_2(\tilde{X}_{n}^N)1_{\sigma_{\partial}^N > n)})} &\le \left(\frac{\theta_1}{\theta_2}\right)^n \frac{\varphi_1^N(z_N)}{\varphi_2(x_N)} +
\frac{C}{\theta_2} \frac{1}{1- (\theta_1/\theta_2)}.
\end{align*}

Since $x_N \in K_{r}$ for each $N$,
$$\PP_{x_N}(\tilde{X}_{n}^N \cdot 1 \ge L_1 \mid \sigma_{\partial}^N > n ) \le L^{-1}\left[1+\frac{C}{\theta_2-\theta_1} \right].$$

Tightness follows.

\end{proof}

\subsection{Completing the Proof of Theorem \ref{thm:poisbin}}\label{sec:verassu7}
We can now complete the proof of Theorem \ref{thm:poisbin}. We will apply Theorem \ref{thm:main}. 
From Sections \ref{sec:verassu1}, \ref{sec:verassu2}, \ref{sec:verassu3}, \ref{sec:verassu4} it follows that Assumptions \ref{assu:LLN}, \ref{assu:ap-classesfinite}, \ref{assu:mgf} and \ref{assu:irrbdr} are satisfied.
From Section \ref{sec:verassu5} it follows that there is a $\mu_N \in \clp(\Delta^o_N)$ such that for every $N \in \N$, and $x_N \in \Delta^o_N$, 
$$ \frac{\delta_{x_N} P_n^N}{\delta_{x_N} P_n^N(1_{\Delta_N^o})}$$ converges to $\mu_N$ in the total variation distance as $n \rightarrow \infty$.
Furthermore, the measure $\mu_N$ is a QSD for $\{X^N\}$.
From Section \ref{sec:verassu5} the sequence $\{\mu_N\}_{N\in \NN}$ is relatively compact 
 as a sequence of probability measures on $\Delta$.  Theorem \ref{thm:poisbin} is now immediate from Theorem \ref{thm:main}.
 \hfill \qed

\vspace{\baselineskip}\noindent \textbf{Acknowledgment:} 
The research of AB was supported in part by the NSF (DMS-1814894, DMS-1853968).

\bibliographystyle{amsplain}
\bibliography{main}

\vspace{\baselineskip}
\noindent
\scriptsize{\textsc{\noindent A. Budhiraja, N. Fraiman, A. Waterbury\newline
Department of Statistics and Operations Research\newline
University of North Carolina\newline
Chapel Hill, NC 27599, USA\newline
email: budhiraj@email.unc.edu, fraiman@email.unc.edu, atw02@live.unc.edu\vspace{\baselineskip} }

%
}

\end{document}